\crefname{thm}{Theorem}{Theorems}
\newaliascnt{eqfloat}{equation}
\newcommand*{\ORGeqfloat}{}
\let\ORGeqfloat\eqfloat
\def\eqfloat{%
  \let\ORIGINALcaption\caption
  \def\caption{%
    \addtocounter{equation}{-1}%
    \ORIGINALcaption
  }%
  \ORGeqfloat
}
\providecommand*{\shuffle}{%
  \mathbin{\mathpalette\shuffle@{}}%
}
\newcommand*{\shuffle@}[2]{%
  \sbox0{$#1\vcenter{}$}%
  \kern .15\ht0 
  \rlap{\vrule height .25\ht0 depth 0pt width 2.5\ht0}%
  \raise.1\ht0\hbox to 2.5\ht0{%
    \vrule height 1.75\ht0 depth -.1\ht0 width .17\ht0 %
    \hfill
    \vrule height 1.75\ht0 depth -.1\ht0 width .17\ht0 %
    \hfill
    \vrule height 1.75\ht0 depth -.1\ht0 width .17\ht0 %
  }%
  \kern .15\ht0 
}
\theoremstyle{definition}
\newtheorem{thm}{Theorem}[section]
\newtheorem{prop}[thm]{Proposition}
\newtheorem{lm}[thm]{Lemma}
\newtheorem{obs}[thm]{Observation}
\newtheorem{defin}[thm]{Definition}
\newtheorem{smpl}[thm]{Example}
\newtheorem{exe}[thm]{Exercise}
\newtheorem{const}[thm]{Construction}
\newtheorem{prob}[thm]{Problem}
\newtheorem{conj}[thm]{Conjecture}
\crefname{lm}{Lemma}{Lemmas}
\crefname{thm}{Theorem}{Theorems}
\crefname{prop}{Proposition}{Propositions}
\crefname{defin}{Definition}{Definitions}
\crefname{rem}{Remark}{Remarks}
\newcommand{\R}{\mathbb{R}}
\newcommand{\Z}{\mathbb{Z}}
\newcommand{\F}{\mathbb{F}}
\newcommand{\K}{\mathbb{K}}
\newcommand{\PP}{\mathbb{P}}
\newcommand{\one}{\mathbb{1}}
\newcommand{\CC}{\mathcal C}
\newcommand{\JJ}{\mathcal J}
\newcommand{\II}{\mathcal I}
\newcommand{\FF}{\mathcal F}
\newcommand{\vv}{\mathsf{v}}
\newcommand{\vw}{\mathsf{w}}
\newcommand{\vj}{\mathsf{j}}
\newcommand{\vx}{\mathsf{x}}
\newcommand{\vy}{\mathsf{y}}
\newcommand{\vz}{\mathsf{z}}
\newcommand{\va}{\mathsf{a}}
\newcommand{\ve}{\mathsf{e}}
\newcommand{\spn}{\mathrm{span}}
\newcommand{\rk}{\mathrm{rk}}
\newcommand{\tr}{\mathrm{tr}}
\newcommand{\Id}{\mathrm{Id}}
\newcommand{\conv}{\mathrm{conv}}
\newcommand{\maxcut}{\mathrm{maxcut}}
\newcommand{\we}{\mathrm{we}}
\newcommand{\spec}{\mathrm{spec} }
\begin{document}

\title{Lecture notes on algebraic methods in combinatorics} 

\author{Raul Penaguiao}
\email{raul.penaguiao@mis.mpg.de}
\address{Max Planck Institute for the Sciences Leipzig}
\keywords{algebraic combinatorics, combinatorial nullstellensatz}
\subjclass[2010]{}
\date{Spring semester 2023} 

\begin{abstract}
These are lecture notes of a course taken in Leipzig 2023, spring semester.
It deals with extremal combinatorics, algebraic methods and combinatorial geometry.
These are not meant to be exhaustive, and do not contain many proofs that were presented in the course.
\end{abstract}

\maketitle

\tableofcontents

\section{Preliminary definitions}

We denote the set $\{1, \dots, n\}$ by $[n]$.

\subsection{Binomial coefficient bounds}

\begin{lm}\label{lm:binom_bound}
$$\binom{n}{k} \leq e^k \left( \frac{n}{k} \right)^k \, . $$
\end{lm}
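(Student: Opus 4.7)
The plan is to bound $\binom{n}{k}$ by peeling off two classical estimates: a crude bound on $\binom{n}{k}$ in terms of $n^k/k!$, and a lower bound on $k!$ coming from the exponential series.

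First I would write $\binom{n}{k} = \frac{n(n-1)\cdots(n-k+1)}{k!}$ and drop the shifts in the numerator to obtain the standard inequality
$$\binom{n}{k} \leq \frac{n^k}{k!}.$$
This step is immediate since each factor $n-i$ is at most $n$.

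Next I would bound $k!$ from below. The cleanest way is to use the full Taylor series
$$e^k = \sum_{j=0}^{\infty} \frac{k^j}{j!} \geq \frac{k^k}{k!},$$
which rearranges to $k! \geq k^k/e^k = (k/e)^k$. Substituting this into the previous display gives
$$\binom{n}{k} \leq \frac{n^k}{(k/e)^k} = e^k \left(\frac{n}{k}\right)^k,$$
which is exactly what we want.

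There is no real obstacle here: both ingredients are standard, and the only thing to be careful about is the direction of the $k!$ inequality — we need a lower bound on $k!$, which is why isolating the single term $k^k/k!$ from the nonnegative series $\sum_j k^j/j!$ is the right move (as opposed to a Stirling-type estimate, which would be overkill for this bound).
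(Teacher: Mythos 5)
Your proof is correct: the two ingredients $\binom{n}{k} \leq \frac{n^k}{k!}$ and $k! \geq \left(\frac{k}{e}\right)^k$ (the latter from $e^k \geq \frac{k^k}{k!}$) combine exactly as you say, and this is the standard argument for this bound. The paper states this lemma without proof, so there is nothing to compare against; your argument is a perfectly good way to fill that gap.
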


\subsection{Fields}
Whenever $q$ is a power of a prime, we denote the unique field of cardinality $q$ by $\F_q$.
When $q$ is a prime number, we identify $\F_q$ with $\Z/_{q\Z}$.

\subsection{Vector spaces}

An \textbf{eigenvalue} $\lambda$ of a matrix $A$ with entries in $\K$ is a scalar such that $A - \lambda \Id $ is non-singular.
A non-zero vector $\vv $ in $\ker (A - \lambda \Id)  $ is said to be a corresponding \textbf{eigenvector}.

\section{Combinatorics}

\subsection{Clubs with rules}

\begin{prop}[Oddtowns]\label{prop:oddtown}
Let $E$ be a finite set.
A family $\CC \subset 2^E $ is called an \textbf{oddtown} of $E$ if
\begin{itemize}
\item Every $C \in \CC $ has an odd number of elements.

\item For two distinct $C, D \in \CC$, the set $C\cap D$ has an even number of elements.
\end{itemize}

Then, for any oddtown we have $|\CC| \leq |E|$
\end{prop}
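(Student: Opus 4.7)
The plan is to use a linear algebra argument over the field $\F_2$. To each set $C \in \CC$, I would associate its characteristic (indicator) vector $\vv_C \in \F_2^E$, whose $e$-th coordinate is $1$ if $e \in C$ and $0$ otherwise. The key observation is that the standard dot product $\vv_C \cdot \vv_D$ in $\F_2^E$ equals $|C \cap D| \pmod 2$, which lets me translate the combinatorial parity conditions into linear-algebraic orthogonality conditions.

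Next, I would aim to show that the family $\{\vv_C : C \in \CC\}$ is linearly independent in $\F_2^E$. Since $\dim_{\F_2} \F_2^E = |E|$, this immediately gives $|\CC| \leq |E|$, which is the desired bound. To prove linear independence, suppose $\sum_{C \in \CC} \alpha_C \vv_C = \mathbf{0}$ with scalars $\alpha_C \in \F_2$. Then for any fixed $D \in \CC$, taking the dot product of both sides with $\vv_D$ gives
\[
\sum_{C \in \CC} \alpha_C (\vv_C \cdot \vv_D) = 0 \quad \text{in } \F_2.
\]
By the oddtown axioms, $\vv_D \cdot \vv_D = |D| \equiv 1$ and $\vv_C \cdot \vv_D = |C \cap D| \equiv 0$ for $C \neq D$, so the sum collapses to $\alpha_D = 0$. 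Since $D$ was arbitrary, all coefficients vanish, establishing linear independence.

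The main subtlety, and the step worth flagging, is the choice of the correct base field. The same vectors viewed over $\Q$ or $\R$ need not be linearly independent, and the argument only works because passing to $\F_2$ converts the \emph{parity} constraints into the \emph{exact} identities $\vv_D \cdot \vv_D = 1$ and $\vv_C \cdot \vv_D = 0$ that make the Gram matrix equal to the identity. Once this is set up, the proof is essentially a one-line Gram-matrix calculation; there is no technical obstacle beyond keeping track of the mod-$2$ reductions.
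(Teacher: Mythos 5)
Your proposal is correct and follows essentially the same argument as the paper: characteristic vectors over $\F_2$, the oddtown conditions giving $\vv_C \cdot \vv_C = 1$ and $\vv_C \cdot \vv_D = 0$, and linear independence via taking the dot product of a vanishing combination with each $\vv_D$. No gaps to flag.
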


\begin{proof}
We work in $\F_2^E$.
For each $C \in \CC$, let $\vv_C$ be the vector in $\F_2^E$ such that 
$$ (\vv_C)_i =\begin{cases*}
      & 1 \text{ if $i \in C$,}\\
      & 0 \text{ otherwise.}
    \end{cases*} $$

The first and second oddtown conditions give us respectively $\vv_C \cdot \vv_C = 1$ and $\vv_C \cdot \vv_D = 0 $ for any distinct $C, D \in \CC$.
Denote by $\mathbb{0}$ the all zero vector.

We claim that $\{\vv_C\}_{C \in \CC}$ is a linearly independent set in $\F_2^E$.
It follows imediately that $|\CC| \leq |E|$.
Indeed, if $\sum_{C \in \CC} \alpha_C \vv_C = \mathbb{0}$ for scalars $\alpha_C \in \F_2$, then
$$ 0 = \mathbb{0} \cdot \vv_D = \sum_{C \in \CC} \alpha_C \vv_C\cdot \vv_D = \alpha_D\, ,$$
for any $D \in \CC$, concluding the proof.
\end{proof}

\begin{prop}[Separated collections]
Let $E$ be a finite set.
A family $\CC \subset 2^E $ is called \textbf{separated} if for any two disjoint non-empty subfamilies $\II, \JJ \subseteq \CC$ we have $\bigcup_{I\in\II} I \neq \bigcup_{J\in\JJ} J$.

Then, for any separated family we have $|\CC| \leq |E|$.
\end{prop}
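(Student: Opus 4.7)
The plan is to associate to each $C \in \CC$ its characteristic vector $\vv_C \in \R^E$ and prove that the family $\{\vv_C\}_{C \in \CC}$ is linearly independent over $\R$. Since $\dim_\R \R^E = |E|$, this will immediately yield $|\CC| \leq |E|$. The setup mirrors the oddtown argument in \cref{prop:oddtown}, but the field is now $\R$ and the decisive tool will be positivity of coefficients rather than an orthogonality relation over $\F_2$.

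To establish the independence, I would assume towards a contradiction that a non-trivial relation $\sum_{C \in \CC} \alpha_C \vv_C = \mathbb{0}$ exists, and split the indices by sign:
$$\II = \{C \in \CC : \alpha_C > 0\} \quad \text{and} \quad \JJ = \{C \in \CC : \alpha_C < 0\}\, .$$
Rearranging yields $\sum_{C \in \II} \alpha_C \vv_C = \sum_{C \in \JJ} |\alpha_C| \vv_C$. The key observation is that each side is a strictly positive linear combination of indicator vectors, so its support coincides with the union of the indexed sets. Hence $\bigcup_{C \in \II} C = \bigcup_{C \in \JJ} C$, and because $\II, \JJ$ are disjoint subfamilies by construction, this contradicts the separated hypothesis whenever both are non-empty.

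The delicate point will be the degenerate case where, say, $\JJ = \emptyset$ while $\II \neq \emptyset$. Then the relation reduces to a strictly positive combination of indicators that vanishes, which forces every $C \in \II$ to be empty. Thus the argument tacitly needs $\emptyset \notin \CC$, an assumption that is genuinely necessary: for $E = \{1\}$ the family $\CC = \{\emptyset, \{1\}\}$ is separated yet has size $|E|+1$. I would address this either by adopting the convention that all sets in $\CC$ are non-empty, or equivalently by applying the argument to $\CC \setminus \{\emptyset\}$, noting that removing an element cannot destroy the separated property.
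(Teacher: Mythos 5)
Your argument is the same as the paper's: indicator vectors in $\R^E$, a sign-split of a hypothetical vanishing combination into $\II$ and $\JJ$, and the observation that both sides of the rearranged identity have support equal to the corresponding unions, contradicting separatedness. Where you go beyond the paper is in the degenerate case: the paper's proof simply asserts a contradiction ``unless $\II$ and $\JJ$ are empty,'' silently skipping the case where exactly one of them is empty, which is precisely where $\emptyset \in \CC$ causes trouble. Your counterexample $E = \{1\}$, $\CC = \{\emptyset, \{1\}\}$ is correct: it is separated in the stated sense yet has $|\CC| = |E| + 1$, so the proposition as literally written needs the convention $\emptyset \notin \CC$ (or the bound $|E|+1$), and your fix of applying the argument to $\CC \setminus \{\emptyset\}$ is the right repair. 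So: same method, but your handling of the edge case is more careful than the paper's.
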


\begin{proof}
This time we work in $\R^E$.
For each $C \in \CC$, let $\vv_C$ be the vector in $\R^E$ such that 
$$ (\vv_C)_i =\begin{cases*}
      & 1 \text{ if $i \in C$,}\\
      & 0 \text{ otherwise.}
    \end{cases*} $$

We will show that $ \{ \vv_C\}_{C\in \CC}$ is a linearly independent set in $\R^E$.
That $|\CC| \leq |E|$ follows imediately.

Indeed, assume that $\sum_{C \in \CC} \alpha_C \vv_C = 0$.
Let $\II = \{C \in \CC | \alpha_C > 0\}$ and $\JJ = \{C \in \CC | \alpha_C < 0\}$.
Rearranging the equation above we have
$$\vv \coloneqq \sum_{C \in \II} \alpha_C \vv_C  = \sum_{C \in \JJ} (- \alpha_C) \vv_C\, .$$
Let $K = \{i \in E| \vv_i \neq 0\}$.
We have both that $\bigcup_{I\in\II} I = K$ and $\bigcup_{J\in\JJ} J = K$.
Also, $\II$ and $\JJ$ are disjoint.

This contradicts the fact that $\CC$ is separated unless $\II$ and $\JJ$ are empty.
But $\II$ and $\JJ$ empty implies $\alpha_C = 0 $ for any $C \in \CC$.
This shows that $ \{ \vv_C\}_{C\in \CC}$ is a linearly independent set, concluding the proof.
\end{proof}

\begin{prop}[Lindstorm's lemma]
Let $E$ be a finite set.
A family $\CC \subset 2^E $ is called \textbf{weakly separated} if for any two disjoint subfamilies $\II, \JJ \subseteq \CC$ we have $\bigcup_{I\in\II} I \neq \bigcup_{J\in\JJ} J$ or $\bigcap_{I\in\II} I \neq \bigcap_{J\in\JJ} J$.

Then, for any weakly separated family we have $|\CC| \leq |E| + 1$.
\end{prop}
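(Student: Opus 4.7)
The plan is to adapt the linear-algebra method of the previous proposition by enlarging the ambient space by one dimension. For each $C \in \CC$ I would assign the augmented characteristic vector $\tilde\vv_C := (\vv_C, 1) \in \R^E \oplus \R$ and aim to show that the $|\CC|$ vectors $\{\tilde\vv_C\}_{C \in \CC}$ are linearly independent in $\R^{|E|+1}$, which immediately yields $|\CC| \leq |E|+1$.

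To prove independence, I would take a relation $\sum_{C \in \CC} \alpha_C \tilde\vv_C = \mathbb{0}$ and read off the two simultaneous identities $\sum_C \alpha_C \vv_C = \mathbb{0}$ in $\R^E$ and $\sum_C \alpha_C = 0$ in $\R$. As in the separated case, split $\CC$ by sign into $\II = \{C : \alpha_C > 0\}$ and $\JJ = \{C : \alpha_C < 0\}$; the new scalar identity now forces $\II$ and $\JJ$ to be simultaneously empty (whence all $\alpha_C = 0$, finishing the argument) or simultaneously non-empty.

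In the non-trivial case I would study the common vector $\vv := \sum_{C \in \II}\alpha_C \vv_C = \sum_{C \in \JJ}(-\alpha_C)\vv_C$ together with the common positive total $T := \sum_{C \in \II}\alpha_C = \sum_{C \in \JJ}(-\alpha_C)$. Because all coefficients in each sum are strictly positive, the support $\{i : \vv_i > 0\}$ equals $\bigcup_{I\in\II} I$ from one expression and $\bigcup_{J\in\JJ} J$ from the other, yielding $\bigcup_{I\in\II} I = \bigcup_{J\in\JJ} J$. Moreover each coordinate satisfies $\vv_i \leq T$, with equality exactly when $i$ belongs to every set in the corresponding family, so the level set $\{i : \vv_i = T\}$ is both $\bigcap_{I\in\II} I$ and $\bigcap_{J\in\JJ} J$, giving $\bigcap_{I\in\II} I = \bigcap_{J\in\JJ} J$. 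Both equalities together contradict weak separation, completing the argument.

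The key insight, which is also the main obstacle, is guessing this one-dimensional extension: the proof for separated families extracted only the union from the support of $\vv$, but appending a constant coordinate creates the conserved total mass $T$ and thereby pins down the top level set as the intersection. Once the augmentation $\tilde\vv_C = (\vv_C, 1)$ is in place, the rest of the argument follows the established template.
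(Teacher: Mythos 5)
Your proposal is correct and follows essentially the same route as the paper: the paper also augments each characteristic vector by a constant coordinate (working in $\R^{E \uplus \{\star\}}$ with $(\vv_C)_\star = 1$), splits a dependence by sign, identifies the common union from the support of $\vv$ and the common intersection as the level set where $\vv_i$ equals the $\star$-coordinate (your $T$), contradicting weak separation. No gaps; the argument is the paper's argument in slightly different notation.
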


\begin{proof}
This time we work in $\R^{E \uplus \{ \star \}}$.
For each $C \in \CC$, let $\vv_C$ be the vector in $\R^{E \uplus \{ \star \}}$ such that 
$$ (\vv_C)_i =\begin{cases*}
      & 1 \text{ if $i \in C$ or if $i = \star$,}\\
      & 0 \text{ otherwise.}
    \end{cases*} $$

We will show that $\{ \vv_C\}_{C\in \CC}$ is a linearly independent set in $\R^{E \uplus \{ \star \}}$.
That $|\CC| \leq |E|+1$ follows imediately.

Indeed, as before assume that $\sum_{C \in \CC} \alpha_C \vv_C = 0$.
Let $\II = \{C \in \CC | \alpha_C > 0\}$ and $\JJ = \{C \in \CC | \alpha_C < 0\}$.
Rearranging the equation above we have
$$\vv \coloneqq \sum_{C \in \II} \alpha_C \vv_C  = \sum_{C \in \JJ} (- \alpha_C) \vv_C\, .$$
Let $K = \{i \in E| \vv_i \neq 0\}$.
We have both that $\bigcup_{I\in\II} I = K$ and $\bigcup_{J\in\JJ} J = K$.
Also, $\II$ and $\JJ$ are disjoint.
Furthermore, $\vv_{\star } = \sum_{I \in \II} \alpha_I = - \sum_{J \in \JJ} \alpha_J$.
We can see that $i \in \bigcap_{I \in \II} I$ if and only if $\vv_i = \vv_{\star }$.
Similarly, $j \in \bigcap_{J \in \JJ} J$ if and only if $\vv_j = \vv_{\star }$.
We conclude that $\bigcap_{I \in \II} I = \bigcap_{J \in \JJ} J$.

This contradicts the fact that $\CC$ is weakly separated unless $\II$ and $\JJ$ are non-empty.
This shows that $ \{ \vv_C\}_{C\in \CC}$ is a linearly independent set, concluding the proof.
\end{proof}

\begin{prop}[Fischer's inequality]\label{prop:fischer}
Let $E$ be a finite set.
A family $\CC \subseteq 2^E$ is said to be $\lambda$\textbf{-Fischer} if $|C\cap D| = \lambda$ for all distinct $C, D \in \CC$.

Then, for any $\lambda$-Fischer family, if $\lambda \neq 0$ then $|\CC| \leq |E|$.
\end{prop}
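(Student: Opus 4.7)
The plan is to mimic the previous propositions: assign to each $C \in \CC$ its characteristic vector $\vv_C \in \R^E$ (with $(\vv_C)_i = 1$ if $i \in C$ and $0$ otherwise), and show the family $\{\vv_C\}_{C \in \CC}$ is linearly independent in $\R^E$. Crucially, $\vv_C \cdot \vv_D = |C \cap D|$, so the Fischer hypothesis translates to $\vv_C \cdot \vv_D = \lambda$ whenever $C \neq D$, while $\vv_C \cdot \vv_C = |C|$.

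Before attacking linear independence, I would record two structural facts about the sizes. First, $|C| \geq \lambda$ for every $C \in \CC$: assuming $|\CC| \geq 2$ (otherwise the conclusion is trivial), any other $D \in \CC$ yields $|C| \geq |C \cap D| = \lambda$. Second, at most one set in $\CC$ can have cardinality exactly $\lambda$, because two such sets would satisfy $|C_1 \cap C_2| = \lambda = |C_1| = |C_2|$, forcing $C_1 = C_2$.

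Now suppose $\sum_{C \in \CC} \alpha_C \vv_C = 0$ for some scalars $\alpha_C \in \R$. Taking the squared norm and expanding using the inner product values above, the plan is to arrive at the identity
$$0 = \left\| \sum_{C \in \CC} \alpha_C \vv_C \right\|^2 = \sum_{C \in \CC} \alpha_C^2 \bigl(|C| - \lambda\bigr) + \lambda \left( \sum_{C \in \CC} \alpha_C \right)^2 \, .$$
Since $\lambda > 0$ and $|C| - \lambda \geq 0$, both summands on the right are non-negative, so each must vanish independently.

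From the first vanishing, $\alpha_C = 0$ for every $C$ with $|C| > \lambda$. If there is no exceptional $C$ with $|C| = \lambda$, we are done. Otherwise let $C_0$ be the unique such set; then only $\alpha_{C_0}$ can be non-zero, but the second vanishing forces $\sum_C \alpha_C = 0$, hence $\alpha_{C_0} = 0$ as well. Thus $\{\vv_C\}_{C \in \CC}$ is linearly independent and $|\CC| \leq \dim \R^E = |E|$.

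The only delicate step is isolating the edge case $|C| = \lambda$; everything else is a direct Gram matrix computation. Using the hypothesis $\lambda \neq 0$ is essential precisely to kill $\alpha_{C_0}$ via the rank-one correction term $\lambda(\sum \alpha_C)^2$, and this is where I would expect a careless proof to slip.
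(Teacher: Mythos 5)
Your proof is correct, and it is a genuine variant of the paper's argument rather than a copy of it. Both proofs ultimately rest on the same object: the Gram matrix of the characteristic vectors, which is $\lambda \one\one^T + \mathrm{diag}(|C|-\lambda)_{C\in\CC}$. But the paper splits into two cases. When some member has $|C| = \lambda$ it abandons linear algebra entirely and argues combinatorially: the sets $D\setminus C$, $D \in \CC\setminus\{C\}$, are nonempty and pairwise disjoint inside $E\setminus C$, giving $|\CC| \leq |E| - \lambda + 1 \leq |E|$. When every member has $|C| > \lambda$ it proves the Gram matrix nonsingular by solving the kernel equation explicitly, which involves dividing by $d_C = |C| - \lambda$ and therefore genuinely needs strict positivity — this is exactly why the case split is forced on the paper. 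You avoid the split by working with the quadratic form instead: the sum-of-squares identity $\sum_C \alpha_C^2(|C|-\lambda) + \lambda\bigl(\sum_C \alpha_C\bigr)^2$ only needs $|C| \geq \lambda$, and the degenerate set of size exactly $\lambda$ (of which, as you correctly observe, there is at most one) is handled by the rank-one term. What your route buys is a single uniform linear-independence argument with no division and no separate combinatorial case; what the paper's route buys is a slightly sharper bound in its first case and a kernel computation that is perhaps more mechanical. Your observation that the hypothesis $\lambda \neq 0$ enters precisely through the term $\lambda\bigl(\sum_C \alpha_C\bigr)^2$ is accurate, and it mirrors where the paper uses $\lambda \geq 1$ (in the estimate $|E| - \lambda \leq |E| - 1$ and in the sign argument for $1 = -\lambda\sum_C 1/d_C$).
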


\begin{proof}
We first deal with the case where $|C| = \lambda $ for some $C \in \CC$.
Then $\II \coloneqq \{ D \setminus C | D \in \CC \setminus \{C\} \}$ is a family of disjoint sets in $E \setminus C$, therefore
$$ |\II| \leq |E\setminus C| = |E| - \lambda \leq |E| - 1\, .$$
We conclude that $|\CC| \leq |E|$.

Now assume that $|C| > \lambda $ for all $C \in \CC$.
For $C \in \CC$, define the vectors $\vv_C$ in $\R^E$ as 
$$ (\vv_C)_i =\begin{cases*}
      & 1 \text{ if $i \in C$,}\\
      & 0 \text{ otherwise.}
    \end{cases*} $$
Let $A$ be the $|E| \times |\CC|$ matrix with column vectors $\vv_C$.

We write $\one $ for the all one vector with $|\CC|$ entries, which we also interpret as a $|\CC| \times 1$ matrix.
In this way, $\one \one^T$ is the all one matrix.
We have that $A^T A  = \lambda \one \one^T + \mathrm{diag}((d_C)_{C\in\CC})$, where $\mathrm{diag}((d_C)_{C\in\CC})$ is a diagonal matrix with entries $d_C$.
Because $|C| > \lambda$ for all $C \in \CC$, each $d_C$ is a positive integer.

We now show that $A^T A$ is full rank.
Because this is a square matrix, we is equivalently show that it is non-singular.
Assume that $A^T A \vx = 0$, let $s = \one^T \vx = \sum_{C \in \CC} \vx_C$.
Then, from the equation above and $d_i > 0$, we have
\begin{align*}
A^T A \vx &= \lambda s \one + \mathrm{diag}((d_C)_{C\in\CC}) \vx = 0\, , \\
x_i &= -\frac{\lambda s}{d_i} \, , \\
s = \sum_i x_i &= - \lambda s \sum_{C \in \CC}\frac{1}{d_C}
\end{align*}

This implies that $s = 0$, which gives $\vx = 0$, or implies $1 = -\lambda \sum_i\frac{1}{d_i} < 0$, which is impossible.
Thus, $A^T A$ is a matrix of rank $|\CC|$, therefore $\rk A \geq |\CC|$, so $|\CC| \leq |E|$
\end{proof}

\begin{prop}[Generalised Fischer inequality]
Let $E$ be a finite set, $p$ a prime and $L \subseteq \F_p$.
A family $\FF \subseteq 2^E$ is said to be $L$-Fischer if for any $A, B \in \FF $ we have $|A\cap B| \mod p \in L$ whenever $A \neq B$ and, furthermore, $|A| \mod p \not \in L$ for all $A \in \FF$.

Then $|\FF| \leq \sum_{i=0}^{|L|} \binom{|E|}{i}$.
\end{prop}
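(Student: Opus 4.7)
The plan is to adapt the polynomial method from the linear algebra arguments used above, but now enlarging the ambient space from linear functions to low-degree polynomials on $\F_p^E$. Write $n = |E|$ and, for each $A \in \FF$, let $\vv_A \in \F_p^E$ be the characteristic vector of $A$, so that $\vv_A \cdot \vv_B = |A \cap B| \bmod p$. The key objects are the polynomials
$$ P_A(\vx) = \prod_{\ell \in L} \bigl( \vv_A \cdot \vx - \ell \bigr) \in \F_p[\vx_1, \dots, \vx_n] \, . $$
These have degree at most $|L|$.

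The first step is to evaluate $P_A$ at the vectors $\vv_B$ for $B \in \FF$. When $A \neq B$, the value $\vv_A \cdot \vv_B = |A \cap B| \bmod p$ lies in $L$ by the $L$-Fischer hypothesis, so one of the factors vanishes and $P_A(\vv_B) = 0$. When $A = B$, we instead get $\prod_{\ell \in L}(|A| - \ell)$, which is nonzero in $\F_p$ precisely because $|A| \bmod p \notin L$. This triangular pattern of evaluations forces $\{P_A\}_{A \in \FF}$ to be linearly independent: a dependence $\sum_A \alpha_A P_A = 0$ evaluated at $\vv_B$ gives $\alpha_B \cdot P_B(\vv_B) = 0$, hence $\alpha_B = 0$.

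The second step is to upgrade this to a bound on $|\FF|$ by exhibiting a vector space of small dimension that contains all the $P_A$. Since each $\vv_B$ has coordinates in $\{0,1\} \subseteq \F_p$, I would replace every factor $\vx_i^k$ for $k \geq 2$ by $\vx_i$ (which changes nothing when evaluated at $\{0,1\}$-vectors), obtaining multilinear polynomials $\widetilde P_A$ of degree at most $|L|$. This multilinearisation preserves all the evaluations $P_A(\vv_B)$, so the $\widetilde P_A$ remain linearly independent as elements of the space $V$ of multilinear polynomials in $\vx_1, \dots, \vx_n$ of degree at most $|L|$.

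Finally, the space $V$ has a basis consisting of the squarefree monomials $\prod_{i \in S} \vx_i$ for $S \subseteq [n]$ with $|S| \leq |L|$, so $\dim V = \sum_{i=0}^{|L|} \binom{n}{i}$. Combining with linear independence of the $\widetilde P_A$ yields $|\FF| \leq \sum_{i=0}^{|L|} \binom{|E|}{i}$. The most delicate step is the multilinearisation: one must carefully check both that it preserves evaluations at $\{0,1\}$-vectors (hence preserves the linear independence argument from step one) and that it does not increase degree, which is true because only higher powers of a single variable are being lowered.
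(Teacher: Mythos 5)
Your proposal is correct and follows essentially the same route as the paper: the same polynomials $\prod_{\ell \in L}(\vv_A \cdot \vx - \ell)$ over $\F_p$, the same evaluation argument at the characteristic vectors to get linear independence, and the same multilinearisation step to place everything in the span of squarefree monomials of degree at most $|L|$. No gaps; the care you flag about multilinearisation preserving evaluations on $\{0,1\}$-vectors and not raising degree is exactly the point the paper also relies on.
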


\begin{proof}
We work in $\F_p^E$.
For each $F \in \FF$, let $\vv_F$ be the vector in $\F_p^E$ as above, and define the following polynomials in $\Z[\vx_e | e \in E]$:
$$f_F(\vx) \coloneqq \prod_{\ell \in L} (\vx \cdot \vv_F - \ell ) \, .$$

Note that for $A, B \in \FF$ distinct, we have $f_A(\vv_B) = 0$, whereas $f_A(\vv_A) \neq 0$.
We now rewrite each $f_F$ by replacing any monomial of the form $\prod_{e\in E}\vx_e^{\alpha_e}$ with $\prod_{\substack{e\in E\\ \alpha_E > 0}}\vx_e$.
It is still the case that for $A, B \in \FF$ distinct, we have $f_A(\vv_B) = 0$, whereas $f_A(\vv_A) \neq 0$.

We claim that $\{ f_F \}_{F \in \FF}$ is a linearly independent set.
Furthermore, because $\deg f \leq s$, each polynomial $f_F$ is in the vector space $\spn\{\prod_{e \in A} \vx_e | \, A \subseteq E, \, |A| \leq |L| \}$, so the theorem follows.

Indeed, if $\sum_{F \in \FF} f_F \alpha_F =0$, then evaluating this zero polynomial at $\vx = \vv_F $ for each $F \in \FF$, gives us that $\alpha_F f_F(\vv_F) = 0$, so $\alpha_F = 0$. 
We conclude that $\{ f_F \}_{F \in \FF}$ is a linearly independent set.
\end{proof}

The following was shown in \cite{hsieh1975intersection}:

\begin{prop}[Frankl-Wilson theorem]
Let $E$ be a finite set and $L \subseteq \Z$.
A family $\FF \subseteq 2^E$ is said to be $L$-Fischer if any two distinct $A, B \in \FF $ have $|A\cap B| \in L$.

Then $|\FF| \leq \sum_{i=0}^{|L|} \binom{|E|}{i}$.
\end{prop}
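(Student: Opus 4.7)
The plan is to adapt the polynomial method from the generalised Fischer inequality, working over $\R$ this time, but with a key new ingredient: an ordering of $\FF$ that yields a triangular system of evaluations.

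First I would enumerate $\FF = \{F_1, F_2, \dots, F_m\}$ so that $|F_1| \leq |F_2| \leq \cdots \leq |F_m|$. To each $F_i$ I would associate its indicator vector $\vv_{F_i} \in \R^E$ as in the previous proofs, together with the polynomial
$$
f_i(\vx) \coloneqq \prod_{\substack{\ell \in L \\ \ell < |F_i|}} (\vx \cdot \vv_{F_i} - \ell) \in \R[\vx_e \mid e \in E]\, .
$$
The crucial combinatorial observation is that whenever $j < i$ and $F_j \neq F_i$, one has $|F_j \cap F_i| \leq |F_j| \leq |F_i|$ with strict inequality somewhere: either $|F_j| < |F_i|$, or $|F_j| = |F_i|$ but $F_j \not\subseteq F_i$, and in either case $|F_j \cap F_i| < |F_i|$. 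Combined with $|F_j \cap F_i| \in L$, this value shows up as one of the factors of $f_i$, so $f_i(\vv_{F_j}) = 0$ for every $j < i$. On the other hand $f_i(\vv_{F_i}) = \prod_{\ell \in L, \ell < |F_i|}(|F_i| - \ell) \neq 0$ since every factor is strictly positive.

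Next I would multilinearise each $f_i$, replacing every monomial $\prod_e \vx_e^{\alpha_e}$ by $\prod_{\alpha_e > 0} \vx_e$; since the $\vv_{F_j}$ are $0/1$-vectors, this preserves all the evaluations above while keeping the total degree at most $|L|$. The resulting polynomials therefore live in the vector space
$$
V = \spn\Bigl\{\prod_{e \in A} \vx_e \,\Big|\, A \subseteq E,\ |A| \leq |L|\Bigr\}\, ,
$$
which has dimension $\sum_{i=0}^{|L|} \binom{|E|}{i}$.

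It then remains to verify that $\{f_1, \dots, f_m\}$ is linearly independent. Given a relation $\sum_i \alpha_i f_i = 0$, I would evaluate successively at $\vv_{F_1}, \vv_{F_2}, \ldots$: the triangular vanishing pattern $f_i(\vv_{F_j}) = 0$ for $j < i$ kills all but the diagonal term at the $k$-th step, yielding $\alpha_k f_k(\vv_{F_k}) = 0$ and hence $\alpha_k = 0$ by induction on $k$. This gives $m \leq \dim V$, which is exactly the claimed bound. The main conceptual obstacle, compared with the previous proposition, is realising that one cannot hope to make each $f_i$ vanish on all other $\vv_{F_j}$ simultaneously (since $|F_i|$ itself may belong to $L$), and that sorting by size to obtain only a triangular vanishing pattern is both necessary and sufficient for linear independence.
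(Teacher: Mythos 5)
Your proposal is correct and matches the paper's proof essentially step for step: sort $\FF$ by size, take $f_i(\vx) = \prod_{\ell \in L,\, \ell < |F_i|}(\vx \cdot \vv_{F_i} - \ell)$, observe the triangular vanishing $f_i(\vv_{F_j}) = 0$ for $j < i$ together with $f_i(\vv_{F_i}) \neq 0$, multilinearise, and conclude by linear independence inside the span of squarefree monomials of degree at most $|L|$. The only differences (working over $\R$ rather than $\Z$, and proving independence by successive evaluation instead of picking the minimal nonzero coefficient) are cosmetic.
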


Note that the meaning of $L$-Fischer is intrinsically different whenever $L \subseteq \Z$ and $L \subseteq \F_p$.
We hope that this flexibility of definition does not bring any ambiguity.
Remark that, this time, we do not require $|A|\not\in L$, unlike in the $p$-adic case.

\begin{proof}
We work in $\Z^E$.
Write $\FF = \{F_1, \ldots , F_k\}$ with $|F_1| \leq |F_2| \leq \ldots \leq |F_k|$.
For each $F \in \FF$, let $\vv_F$ be the vector in $\Z^E$ as above, and define for $i= 1, \ldots , k$ the following polynomials in $\Z[\vx_e | e \in E]$:
$$f_i(\vx) \coloneqq \prod_{ \substack{\ell \in L \\ \ell < |F_i|}} (\vx \cdot \vv_{F_i} - \ell) \, .$$

Note that for $i > j $ elements in $[k]$, we have $F_i \not\subseteq F_j$, thus $|F_j \cap F_i| <  |F_i|$.
So we have $f_i(\vv_{F_j}) = 0$, whereas $f_i(\vv_{F_i}) \neq 0$.
We now rewrite each $f_j$ by replacing any monomial of the form $\prod_{e\in E}\vx_e^{\alpha_e}$ with $\prod_{\substack{e\in E\\ \alpha_E > 0}}\vx_e$.
It can be observed that it is still the case that for $i > j $ elements in $[k]$, we have $f_i(\vv_{F_j}) = 0$, whereas $f_i(\vv_{F_i}) \neq 0$.

We claim that $\{ f_F \}_{F \in \FF}$ is a linearly independent set.
Furthermore, because $\deg f \leq s$, each polynomial $f_F$ is in the vector space  $\spn\{\prod_{e \in A} \vx_e | \, A \subseteq E, \, |A| \leq |L| \}$, which has dimension $\sum_{i=0}^{|L|} \binom{|E|}{i}$, so the theorem follows from the independence claim.

Indeed, if $\sum_{i = 1}^k f_{F_i} \alpha_i =0$, let $j$ be the smallest index such that $\alpha_j \neq 0$.
But then evaluating this zero polynomial at $\vx = \vv_{F_j} $ gives us that $\alpha_{F_j} f_j(\vv_{F_j}) = 0$, so $\alpha_F = 0$. 
We conclude that $\{ f_F \}_{F \in \FF}$ is a linearly independent set.
\end{proof}

The following was shown in \cite{ray1975t}:

\begin{prop}[Ray-Chaudhuri-Wilson theorem]\label{prop:RCW}
Let $E$ be a finite set, $\lambda $ be a positive integer and $L$ a set of positive integers smaller than $\lambda$.
Assume that $\FF $ is an $L$-Fischer collection of subsets of $E$, with $|F| = \lambda $ for each $F \in \FF$.
Then $|\FF| \leq \binom{|E|}{|L|}$.
\end{prop}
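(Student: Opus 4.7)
The plan is to follow the polynomial template from the preceding propositions. First, for each $F \in \FF$ set
$$f_F(\vx) := \prod_{\ell \in L}(\vx \cdot \vv_F - \ell),$$
a polynomial of degree $|L|$. Using $|F| = \lambda$ together with $\ell < \lambda$ for every $\ell \in L$, one obtains $f_F(\vv_F) = \prod_{\ell \in L}(\lambda - \ell) \neq 0$, while for distinct $F, G \in \FF$ the hypothesis gives $|F \cap G| \in L$, so $f_F(\vv_G) = 0$. Thus the matrix $(f_F(\vv_G))_{F, G \in \FF}$ is diagonal with non-zero diagonal, and $\{f_F\}_{F \in \FF}$ is linearly independent as a set of functions on $\{0,1\}^E$.

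Next, I would multilinearize as before, so that each $f_F$ becomes a squarefree polynomial of degree at most $|L|$, landing in a space $W$ of dimension $\sum_{i=0}^{|L|}\binom{|E|}{i}$. This alone only reproduces the Frankl--Wilson bound, so the real work is to shrink the ambient space from $W$ down to something of dimension $\binom{|E|}{|L|}$, and this is where the extra hypothesis $|F| = \lambda$ must enter.

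The key observation is that $\sum_e \vx_e - \lambda$ vanishes on every characteristic vector of a $\lambda$-subset, in particular on each $\vv_F$. Multiplying this relation by a squarefree monomial $\prod_{e \in A}\vx_e$ and collapsing $\vx_e^2$ to $\vx_e$ on $\{0,1\}^E$, I would derive the identity
$$(\lambda - |A|)\prod_{e \in A}\vx_e = \sum_{e \notin A}\prod_{e' \in A \cup \{e\}}\vx_{e'},$$
valid as an equation of functions on $\binom{E}{\lambda}$. Since $|L| \leq \lambda - 1$, the factor $\lambda - |A|$ is non-zero whenever $|A| \leq |L| - 1$, and the identity lets me iteratively rewrite every squarefree monomial of degree strictly less than $|L|$ as a linear combination of squarefree monomials of degree exactly $|L|$, as functions on $\binom{E}{\lambda}$. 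Hence the image of $W$ in the function space on $\binom{E}{\lambda}$ has dimension at most $\binom{|E|}{|L|}$, and the linear independence established above forces $|\FF| \leq \binom{|E|}{|L|}$.

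The main obstacle I foresee is precisely this dimension-reduction step: carefully setting up the rewriting identity and tracking that it indeed expresses the full low-degree part of $W$ inside the span of the $\binom{|E|}{|L|}$ top-degree squarefree monomials on $\binom{E}{\lambda}$. The rest is bookkeeping analogous to the Frankl--Wilson proof.
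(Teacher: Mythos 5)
Your proposal is correct, and its second half takes a genuinely different route from the paper. The first half coincides: you define $f_F(\vx)=\prod_{\ell\in L}(\vx\cdot\vv_F-\ell)$, note that $(f_F(\vv_G))_{F,G}$ is diagonal with non-zero diagonal, multilinearize, and land in the span of squarefree monomials of degree at most $|L|$. Where you diverge is in how the dimension drops from $\sum_{i\le|L|}\binom{|E|}{i}$ to $\binom{|E|}{|L|}$: the paper \emph{augments} the independent family, adding the polynomials $g_I(\vx)=(\lambda-\sum_{e}\vx_e)\prod_{e\in I}\vx_e$ for all $|I|<|L|$, proves $\{f_F\}\cup\{g_I\}$ is linearly independent (evaluating first at the $\vv_F$, then at a minimal $\vv_I$ with $\alpha_I\neq 0$), and subtracts $\sum_{i<|L|}\binom{|E|}{i}$; you instead \emph{restrict} everything to the characteristic vectors of $\lambda$-subsets and use the identity $(\lambda-|A|)\prod_{e\in A}\vx_e=\sum_{e\notin A}\prod_{e'\in A\cup\{e\}}\vx_{e'}$, valid on $\binom{E}{\lambda}$ since $|L|\le\lambda-1$ makes $\lambda-|A|$ invertible, to rewrite every low-degree squarefree monomial in terms of degree-$|L|$ ones, so the restricted ambient space already has dimension at most $\binom{|E|}{|L|}$. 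The two are dual: your rewriting identity is exactly the statement that the paper's multilinearized $g_I$ vanish on $\binom{E}{\lambda}$, so the paper counts the kernel of the restriction map while you work in its image; both yield the same bound, and your version arguably makes the role of the uniformity hypothesis $|F|=\lambda$ more transparent. One small point to make explicit: after restricting, you need the $f_F$ to stay independent as functions on $\binom{E}{\lambda}$, not just on $\{0,1\}^E$; this is immediate because all evaluation points $\vv_G$, $G\in\FF$, already lie in $\binom{E}{\lambda}$, but it should be said.
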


\begin{proof}
We work in $\Z^E$.
For each $I \subseteq E$, let $\vv_I$ be the vector in $\Z^E$ as above, and define for $F \in \FF$ the following polynomials in $\Z[\vx_e | e \in E]$:
$$f_F(\vx) \coloneqq \prod_{\ell \in L } (\vx \cdot \vv_{F} - \ell ) \, .$$

Note that for $F, G \in \FF$ distinct, we have $f_F(\vv_G) = 0$, whereas $f_F(\vv_F) = \prod_{\ell \in L } (\lambda - \ell )\neq 0$.
We now rewrite each $f_F$ by replacing any monomial of the form $\prod_{e\in E}\vx_e^{\alpha_e}$ with $\prod_{\substack{e\in E\\ \alpha_E > 0}}\vx_e$.
It can be observed that for $F, G \in \FF$, we still have $f_F(\vv_G) = 0$ if and only if $F \neq G$.

Furthermore, for $I\subseteq E$, define 
$$g_I(\vx) \coloneqq (\lambda - \sum_{i\in E} \vx_i)\prod_{i\in I} \vx_i\, .$$

Note that for sets $I, J$ in $E$, we have that $g_I(\vv_J ) = 0$ whenever $I\not\subseteq J$.
Furthermore, we have $g_I(\vv_F) = 0$ for any $F \in \FF$.
We now rewrite each $g_I$ by replacing any monomial of the form $\prod_{e\in E}x_e^{\alpha_e}$ with $\prod_{\substack{e\in E\\ \alpha_E > 0}}x_e$.
It is still the case that for sets $I, J$ in $E$, we have that $g_I(\vv_J ) = 0$ whenever $I\not\subseteq J$, as well as that $g_I(\vv_F) = 0$ for any $F \in \FF$.

We claim that $\{ f_F \}_{F \in \FF}\cup \{g_I \}_{I \, : |I| < |L| }$ is a linearly independent set.
Furthermore, because $\deg f_F \leq |L|$ for $F \in \FF$, and $\deg g_I \leq |L|$ for $|I| < |L|$, the polynomials $f_F$ and $g_I$ are in the vector space $\spn\{\prod_{e \in A} \vx_e | \, A \subseteq E, \, |A| \leq |L| \}$, which as dimension $\sum_{i=0}^{|L|} \binom{|E|}{i}$.
It follows that
$$ \sum_{i=0}^{|L|} \binom{|E|}{i} \geq \sum_{i=0}^{|L|-1} \binom{|E|}{i} + |\FF| \, ,$$
so $\binom{|E|}{|L|} \geq |\FF|$ follows from the independence claim.

Indeed, assume that 
\begin{equation}\label{eq:l_i_ness}
\sum_{F \in \FF} f_F \alpha_F + \sum_{\substack{I \subseteq E\\ |I| < |L|}} g_I \alpha_I =0 \, .
\end{equation}

For any $F \in \FF$, evaluating \eqref{eq:l_i_ness} at $\vx = \vv_F $ gives us that $\alpha_F f_F(\vv_F) = 0$, so $\alpha_F = 0$.

If there is some $I$ with $|I| < |L|$ and $\alpha_I \neq 0$, find such $I$ minimal by inclusion.
Then evaluating \eqref{eq:l_i_ness} in $\vv_I$, using that $\alpha_F = 0$ for any $F \in \FF$, gives us $\alpha_I g_I(\vv_I) = 0$, which implies $\alpha_I = 0$.
We conclude that $\{ f_F \}_{F \in \FF}\cup \{g_I \}_{I \, : |I| < |L| }$ is a linearly independent set.
\end{proof}

\begin{prop}[Frankl - Wilson]\label{prop:frankl_wilson_fixed_size}
Let $E$ be a finite set, $p$ a prime number, $\lambda$ a positive integer and $L$ a collection of elements in $\F_p$.
Assume that $\FF$ is $L$-Fischer and $|F| = \lambda $ for each $F \in \FF$.

If $\lambda \mod p\not\in L$ and that $|L| \leq \mod \lambda$, then $|\FF| \leq \binom{|E|}{|L|}$.
\end{prop}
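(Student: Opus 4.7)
The plan is to combine the polynomial setup of the Generalised Fischer inequality with the Ray-Chaudhuri-Wilson argument from \cref{prop:RCW}, but carried out over $\F_p$. I would work in $\F_p[\vx_e \mid e \in E]$, and for each $F \in \FF$ define
$$f_F(\vx) \coloneqq \prod_{\ell \in L}(\vx \cdot \vv_F - \ell)\, ,$$
while for each $I \subseteq E$ with $|I| < |L|$ I would define
$$g_I(\vx) \coloneqq \Big(\lambda - \sum_{i \in E} \vx_i\Big) \prod_{i \in I} \vx_i\, ,$$
and then multilinearize each by collapsing positive powers to exponent one, exactly as in the previous proofs.

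Next I would verify the required evaluation behaviour in $\F_p$. For $F, G \in \FF$, the $L$-Fischer hypothesis reduced modulo $p$ gives $f_F(\vv_G) = \prod_{\ell \in L}(|F\cap G| - \ell) = 0$ whenever $F \neq G$, while $f_F(\vv_F) = \prod_{\ell \in L}(\lambda - \ell) \neq 0$ in $\F_p$ because $\lambda \mod p \notin L$. The factor $\lambda - |F|$ vanishes in $\F_p$ for every $F \in \FF$, so $g_I(\vv_F) = 0$; for an arbitrary $J \subseteq E$ one has $g_I(\vv_J) = (\lambda - |J|)$ when $I \subseteq J$ and zero otherwise.

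The principal obstacle is showing that $\{f_F\}_{F \in \FF} \cup \{g_I\}_{|I|<|L|}$ is linearly independent in $\F_p[\vx_e]$. From a putative relation $\sum_F \alpha_F f_F + \sum_I \beta_I g_I = 0$, evaluating at $\vv_F$ kills every $g$-term and forces $\alpha_F = 0$ from $f_F(\vv_F) \neq 0$. Picking then $I$ minimal with $\beta_I \neq 0$ and evaluating at $\vv_I$ leaves only $\beta_I(\lambda - |I|)$, so one must argue that $\lambda \not\equiv |I| \pmod p$ for every $|I| < |L|$. This is exactly where the stated hypothesis on $|L|$ and $\lambda \mod p$ enters: interpreting it as $|L| \leq \lambda \mod p$, every such $|I|$ satisfies $0 \leq |I| < |L| \leq \lambda \mod p < p$, so $|I|$ and $\lambda \mod p$ are distinct residues modulo $p$ and thus $\lambda - |I| \neq 0$ in $\F_p$, forcing $\beta_I = 0$.

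Finally, since each $f_F$ and each multilinearized $g_I$ lies in $\spn\{\prod_{e \in A}\vx_e \mid A \subseteq E, \, |A| \leq |L|\}$, a vector space of dimension $\sum_{i=0}^{|L|}\binom{|E|}{i}$, the independence yields
$$|\FF| + \sum_{i=0}^{|L|-1}\binom{|E|}{i} \leq \sum_{i=0}^{|L|}\binom{|E|}{i}\, ,$$
which simplifies to the desired bound $|\FF| \leq \binom{|E|}{|L|}$.
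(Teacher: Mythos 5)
Your proposal is correct and follows essentially the same route as the paper: the same polynomials $f_F$ and $g_I$ over $\F_p$, the same multilinearization, the same two-stage independence argument (evaluate at $\vv_F$, then at a minimal $I$), and the same dimension count in $\spn\{\prod_{e\in A}\vx_e \mid |A|\leq |L|\}$. Your reading of the garbled hypothesis as $|L| \leq \lambda \bmod p$ is in fact slightly more general than the paper's (which uses $|L| \leq \lambda < p$ to get $g_I(\vv_I) \neq 0$), and both make the key step $\lambda - |I| \neq 0$ in $\F_p$ go through.
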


\begin{proof}
We work in $\F_p^E$.
For each $I \subseteq E$, let $\vv_I$ be the vector in $\F_p^E$ as above, and define for $F \in \FF$ the following polynomials in $\Z[\vx_e | e \in E]$:
$$f_F(\vx) \coloneqq \prod_{\ell \in L } (\vx \cdot \vv_{F} - \ell ) \, .$$

Note that for $F, G \in \FF$ distinct, we have $f_F(\vv_G) = 0$, whereas $f_F(\vv_F) = \prod_{\ell \in L } (\lambda - \ell )\neq 0$.
We now rewrite each $f_F$ by replacing any monomial of the form $\prod_{e\in E}\vx_e^{\alpha_e}$ with $\prod_{\substack{e\in E\\ \alpha_E > 0}}\vx_e$.
It can be observed that for $F, G \in \FF$, we still have $f_F(\vv_G) = 0$ if and only if $F \neq G$.

Furthermore, for $I\subseteq E$, define 
$$g_I(\vx) \coloneqq (\lambda - \sum_{i\in E} \vx_i)\prod_{i\in I} \vx_i\, .$$

Note that for sets $I, J$ in $E$, we have that $g_I(\vv_J ) = 0$ whenever $I\not\subseteq J$.
Note that $g_I(\vv_I) \neq 0$ whenever $|I| < |L|$, thanks to the condition $|L| \leq \lambda < p$.

Furthermore, we have $g_I(\vv_F) = 0$ for any $F \in \FF$.
We now rewrite each $g_I$ by replacing any monomial of the form $\prod_{e\in E}x_e^{\alpha_e}$ with $\prod_{\substack{e\in E\\ \alpha_E > 0}}x_e$.
It is still the case that for sets $I, J$ in $E$, we have that $g_I(\vv_J ) = 0$ whenever $I\not\subseteq J$, that $g_I(\vv_I) \neq 0$ for $|I| < |L|$ as well as that $g_I(\vv_F) = 0$ for any $F \in \FF$.

We claim that $\{ f_F \}_{F \in \FF}\cup \{g_I \}_{I \, : |I| < |L| }$ is a linearly independent set.
Furthermore, because $\deg f_F \leq |L|$ for $F \in \FF$, and $\deg g_I \leq |L|$ for $|I| < |L|$, the polynomials $f_F$ and $g_I$ are in the vector space $\spn\{\prod_{e \in A} \vx_e | \, A \subseteq E, \, |A| \leq |L| \}$, which as dimension $\sum_{i=0}^{|L|} \binom{|E|}{i}$.
It follows that
$$ \sum_{i=0}^{|L|} \binom{|E|}{i} \geq \sum_{i=0}^{|L|-1} \binom{|E|}{i} + |\FF| \, ,$$
so $\binom{|E|}{|L|} \geq |\FF|$ follows from the independence claim, which is done exactly as above.
\end{proof}

Here is a simple corollary from the proof above.

\begin{prop}[Frankl - Wilson generalisation]\label{prop:FWgen}
Let $E$ be a finite set, $p$ a prime number, $\lambda$ a positive integer and $L$ a collection of elements in $\F_p$.
Assume that $\FF$ is $L$-Fischer and $|F| = \lambda $ for each $F \in \FF$.

If $\lambda \mod p\not\in L$ and that $|L| \leq \mod \lambda$, then $|\FF| \leq \binom{|E|}{|L|} + \sum_{\substack{i < |L| \\ i = \lambda \mod p}} \binom{|E|}{i}$.
\end{prop}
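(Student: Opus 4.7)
The plan is to retrace the proof of \cref{prop:frankl_wilson_fixed_size} verbatim, only adjusting which auxiliary polynomials $g_I$ we keep, and then reading off the sharper bound from the dimension count. The point is that in the earlier proof we used $|L| \leq \lambda < p$ to guarantee $g_I(\vv_I) \neq 0$ for every $|I| < |L|$. If we drop the hypothesis $\lambda < p$, this nonvanishing can fail: indeed
$$g_I(\vv_I) = (\lambda - |I|)\prod_{i \in I} 1 = \lambda - |I| \in \F_p,$$
which vanishes precisely when $|I| \equiv \lambda \pmod p$. These are the $g_I$ that must be discarded.

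So first I would define $f_F(\vx) = \prod_{\ell \in L}(\vx \cdot \vv_F - \ell)$ and $g_I(\vx) = (\lambda - \sum_{i \in E} \vx_i)\prod_{i \in I} \vx_i$ over $\F_p^E$ exactly as before, multilinearise each monomial, and record the same evaluation properties: $f_F(\vv_G) = 0$ iff $F \neq G$, $g_I(\vv_J) = 0$ whenever $I \not\subseteq J$, and $g_I(\vv_F) = 0$ for every $F \in \FF$. The only new observation is that $g_I(\vv_I) \neq 0$ iff $|I| \not\equiv \lambda \pmod p$.

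Next I would consider the set
$$\mathcal{S} \coloneqq \{ f_F \}_{F \in \FF}\cup \{g_I \mid |I| < |L|,\ |I| \not\equiv \lambda \pmod p\}$$
and show linear independence by the same argument: evaluating a dependence relation at each $\vv_F$ kills the $f_F$ coefficients, then evaluating at a minimal surviving $\vv_I$ in the remaining part kills the $g_I$ coefficients one by one. All members of $\mathcal{S}$ live in $\spn\{\prod_{e \in A} \vx_e \mid A \subseteq E,\ |A| \leq |L|\}$, whose dimension is $\sum_{i=0}^{|L|}\binom{|E|}{i}$.

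Finally, counting gives
$$|\FF| + \sum_{\substack{i < |L| \\ i \not\equiv \lambda \pmod p}} \binom{|E|}{i} \;\leq\; \sum_{i=0}^{|L|} \binom{|E|}{i},$$
and rearranging yields the claimed bound
$$|\FF| \leq \binom{|E|}{|L|} + \sum_{\substack{i < |L| \\ i \equiv \lambda \pmod p}} \binom{|E|}{i}.$$
There is no real obstacle here; the only subtle point is the bookkeeping for which $g_I$ survive, and verifying that multilinearisation preserves the needed (non)vanishing at $\vv_I$, which is done exactly as in the proofs above since $\vv_I$ has entries in $\{0,1\}$.
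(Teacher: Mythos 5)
Your proposal is correct and is exactly the argument the paper intends: \cref{prop:FWgen} is stated there as a corollary of the proof of \cref{prop:frankl_wilson_fixed_size}, and the intended modification is precisely yours, namely noting that $g_I(\vv_I)=\lambda-|I| \bmod p$ and discarding the $g_I$ with $|I|\equiv\lambda \pmod p$, then rerunning the same independence and dimension count. Your bookkeeping of which $g_I$ survive and the resulting rearranged bound are both right, so there is nothing to add.
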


\begin{prop}[Erd\"os - Ko - Rado]\label{prop:EKR}
Let $E$ be a finite set and  $\lambda $ a non-negative integer such that $2\lambda \leq |E|$.
Consider $\FF$  a family of sets in $E$, such that $|F| = \lambda $ for all $F \in \FF$.
Assume further that $F\cap G \neq \emptyset $ for any $F, G \in \FF$.

Then $|\FF| \leq \binom{n-1}{\lambda - 1}$.
\end{prop}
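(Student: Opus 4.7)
The plan is to prove this by Katona's cyclic permutation double counting, a classical argument that achieves the sharp bound $\binom{n-1}{\lambda-1}$. Note that \cref{prop:RCW}, applied with $L = \{1, \ldots, \lambda-1\}$, would only give the weaker upper bound $\binom{n}{\lambda-1}$, so a finer technique is required. Set $n = |E|$. I define a \emph{cyclic ordering} of $E$ as an orbit of the rotation action on bijections $[n] \to E$; there are $(n-1)!$ such orderings, and each one determines exactly $n$ \emph{arcs of length $\lambda$}, namely the subsets of $\lambda$ cyclically consecutive elements.

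The combinatorial core of the proof is the following lemma, which I would prove first: for any cyclic ordering $\pi$ of $E$ with $n \geq 2\lambda$, a pairwise intersecting family $\mathcal{A}$ of arcs of length $\lambda$ in $\pi$ satisfies $|\mathcal{A}| \leq \lambda$. To prove it, I would fix some $A \in \mathcal{A}$ and index the remaining arcs of $\pi$ by their cyclic offset $j$ from $A$; the arcs that meet $A$ are precisely those with $j \in \{-(\lambda-1), \ldots, -1, 1, \ldots, \lambda-1\}$. I then pair the arc at offset $j \in \{1, \ldots, \lambda-1\}$ with the arc at offset $j - \lambda$: their starting positions differ by exactly $\lambda$, and since $2\lambda \leq n$ this forces them to be disjoint. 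Consequently $\mathcal{A}$ contains at most one arc from each such pair, yielding $|\mathcal{A}| \leq 1 + (\lambda-1) = \lambda$.

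With the lemma in hand, I would double count the set of pairs $(F, \pi)$ where $F \in \FF$, $\pi$ is a cyclic ordering of $E$, and $F$ is an arc in $\pi$. Fixing $F$ first and treating it as a consecutive block together with the $n-\lambda$ remaining elements yields exactly $\lambda!\,(n-\lambda)!$ cyclic orderings in which $F$ is an arc. Fixing $\pi$ first, the lemma bounds the number of admissible $F \in \FF$ by $\lambda$. Equating the two counts,
$$|\FF| \cdot \lambda! \, (n-\lambda)! \leq (n-1)! \cdot \lambda\, ,$$
and rearranging gives the desired $|\FF| \leq \binom{n-1}{\lambda - 1}$.

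The main obstacle is the arcs lemma: one must set up the pairing precisely and invoke $2\lambda \leq n$ to guarantee that paired arcs are disjoint. The boundary case $2\lambda = n$ still works, since the paired arcs become complementary halves of the cycle. The remaining steps (the count of cyclic orderings in which a fixed $\lambda$-set is an arc, and the degenerate case $\lambda = 0$, where the intersecting hypothesis forces $\FF = \emptyset$) are routine.
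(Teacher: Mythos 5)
Your proof is correct, but it is a genuinely different argument from the one in the notes. You give Katona's cycle method: the arc lemma (at most $\lambda$ pairwise intersecting $\lambda$-arcs in a cyclic ordering, proved by pairing the arc at offset $j$ with the one at offset $j-\lambda$, where $2\lambda \leq n$ guarantees disjointness) followed by a double count of pairs $(F,\pi)$, and all the steps check out, including the counts $\lambda!\,(n-\lambda)!$ and $(n-1)!$ and the boundary cases $n = 2\lambda$ and $\lambda = 0$. The notes instead prove \cref{prop:EKR} spectrally: the family $\FF$ is an independent set in the Kneser graph $K(E,\lambda)$, which is $d$-regular with $d = \binom{|E|-\lambda}{\lambda}$ and has smallest eigenvalue $-\binom{|E|-\lambda-1}{\lambda-1}$, so the Hoffman bound (\cref{prop:indep_bound}) gives $\alpha(K(E,\lambda)) \leq \binom{|E|-1}{\lambda-1}$. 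The trade-off: your argument is elementary and fully self-contained, needing no linear algebra and no knowledge of the Kneser spectrum (which the notes quote without proof), whereas the spectral route fits the algebraic theme of the course, reuses the Hoffman machinery developed for independence numbers, and is the kind of proof that more readily extends to stability-type refinements; your remark that \cref{prop:RCW} would only give $\binom{n}{\lambda-1}$ is also accurate, so a finer tool of one kind or the other really is needed.
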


This proposition will be proven using spectral graph theory, presented below.

\section{Graph theory}

\begin{defin}[Graphs]
A graph $G = (V, E) $ is a pair of two sets, a vertex set $V$, and an edge set $E$.
An edge $e \in E$ is a set of two vertices $\{v, w\}$.
These vertices may be the same (in which case $e$ is called a \textbf{loop}), and there may be several copies of the same edge in $E$ (which are called \textbf{parallel edges}).
A graph with no loops or parallel edges is called a \textbf{simple graph}.
Unless otherwise stated, all our graphs will be simple.
\end{defin}

\subsection{Ramsey theory}
The central problem in \textbf{Ramsey theory} is computing the minimal $R \coloneqq R(m, n)$ such that any edge colouring of $K_R$ into two colours, say red and blue, contains either a monochromatic red $K_m$ or a monochromatic blue $K_n$.

\begin{defin}[Ramsey edge colouring]
An edge colouring of a complete graph is said to be an $(m, n)$-Ramsey colouring if there are no monochromatic red $K_n$ and no monochromatic blue $K_m$.

the number $R(m, n)$ is the smallest value for $R$ such that no edge bicolouring of $K_R$ is $(m, n)$-Ramsey.
\end{defin}

\begin{figure}[h]
\includegraphics[scale=1]{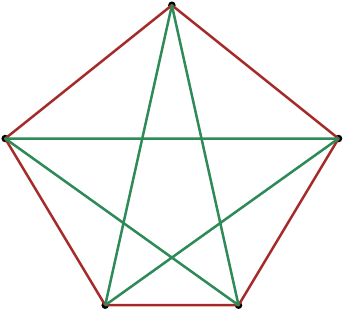}
\caption{An edge colouring of $K_5$ without any monochromatic triangle.\label{fig:k5_bicolour}}
\end{figure}

\begin{exe}\label{exe:r33}
Show that $R(3, 3) = 6$.
\cref{fig:k5_bicolour} may come in handy.
\end{exe}

The first few values obtained in \cite{chachamis2018ramsey} and in \cite{graver1968some} are shown in \cref{tab:ramsey_nums}.
It goes without saying that many of the values there presented are not as easy to compute as in the $R(3, 3)$ case from \cref{exe:r33}.
These numbers were obtained using vast amounts of ingenuity and computational power.

\begin{table}
\begin{tabular}{l|| c | c | c | c | c | c | c}
$R(m, n)$& $n = 1 $ & $n = 2 $ &$n = 3 $ &$n = 4 $ &$n = 5 $ & $n = 6$\\
\hline
$ m = 1 $& 1 & 1 & 1 & 1 & 1 & 1\\
$ m = 2 $& 1 & 2 & 3 & 4 & 5 & 6\\
$ m = 3 $& 1 & 3 & 6 & 9 & 14 & 18\\
$ m = 4 $& 1 & 4 & 9 & 18 & 25 & ??\\
$ m = 5 $& 1 & 5 & 14 & 25 & ?? & ??\\
$ m = 6 $& 1 & 6 & 18 & ?? & ?? & ??
\end{tabular}
\caption{Some of the known Ramsey numbers\label{tab:ramsey_nums}}
\end{table}

\begin{quote}
Suppose aliens invade the earth and threaten to obliterate it in a year's time unless human beings can find the Ramsey number for red five and blue five. We could marshal the world's best minds and fastest computers, and within a year we could probably calculate the value. If the aliens demanded the Ramsey number for red six and blue six, however, we would have no choice but to launch a preemptive attack.
\hfill (Paul Er\"os)
\end{quote}

\subsubsection{Probabilistic method in Ramsey bounds}
\begin{thm}
$$2^{n/_2} \leq R(n, n) \leq 2^{2n}\, . $$

\end{thm}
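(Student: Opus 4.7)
The plan is to prove the two bounds by completely different methods, since the lower bound is the birth of the probabilistic method while the upper bound is a purely combinatorial recursion.

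For the upper bound $R(n,n) \leq 2^{2n}$, I would establish the standard Erd\"os--Szekeres recursion $R(m,n) \leq R(m-1,n) + R(m,n-1)$ by a pigeonhole argument on a single vertex. Given any red/blue edge-colouring of $K_N$ with $N = R(m-1,n) + R(m,n-1)$ and a fixed vertex $v$, the other $N-1$ vertices split into its red-neighbour set $R_v$ and its blue-neighbour set $B_v$. By pigeonhole either $|R_v| \geq R(m-1,n)$ or $|B_v| \geq R(m,n-1)$; in the first case the induced colouring on $R_v$ contains either a blue $K_n$ (done) or a red $K_{m-1}$, which extends through $v$ to a red $K_m$, and the second case is symmetric. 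Combined with the trivial base cases $R(1,k) = R(k,1) = 1$, induction on $m+n$ gives $R(m,n) \leq \binom{m+n-2}{m-1}$, whence $R(n,n) \leq \binom{2n-2}{n-1} \leq 2^{2n-2} \leq 2^{2n}$.

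For the lower bound $R(n,n) \geq 2^{n/2}$, I would use Erd\"os's original first-moment argument. Set $N = \lfloor 2^{n/2} \rfloor$ and colour each edge of $K_N$ independently red or blue, each with probability $1/2$. For any fixed $n$-subset $S$ of vertices, the probability that all $\binom{n}{2}$ internal edges share a single colour is $2 \cdot 2^{-\binom{n}{2}}$, so letting $X$ count the monochromatic copies of $K_n$, linearity of expectation combined with the crude bound $\binom{N}{n} \leq N^n/n!$ yields
$$\mathbb{E}[X] = \binom{N}{n} \cdot 2^{1-\binom{n}{2}} \leq \frac{2^{n^2/2 + 1 - n(n-1)/2}}{n!} = \frac{2^{n/2+1}}{n!}.$$
This is strictly less than $1$ for $n \geq 3$, so some colouring of $K_N$ is $(n,n)$-Ramsey, giving $R(n,n) > N$ and hence $R(n,n) \geq 2^{n/2}$ by integrality. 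The boundary cases $n = 1, 2$ can be read directly from the table.

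Neither half presents a serious technical obstacle, but the delicate point is the tuning: the polynomial slack $n!$ in the denominator of $\mathbb{E}[X]$ is precisely what allows the choice $N \approx 2^{n/2}$ to succeed. I would also remark that the enormous gap between $2^{n/2}$ and $2^{2n}$ reflects a genuine mystery: despite decades of effort, the exponential growth rate of $R(n,n)$ remains an open problem, with improvements only recently shifting the base of the lower bound above $\sqrt{2}$ and of the upper bound below $4$.
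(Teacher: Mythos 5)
Your proposal is correct and follows essentially the same route as the paper: the Erd\H{o}s--Szekeres pigeonhole recursion $R(m,n)\leq R(m-1,n)+R(m,n-1)$ for the upper bound, and a random colouring of $K_N$ with $N=\lfloor 2^{n/2}\rfloor$ estimated via $\binom{N}{n}\leq N^n/n!$ for the lower bound. Your two refinements --- closing the recursion to $\binom{m+n-2}{m-1}$ instead of the cruder $2^{m+n}$, and phrasing the counting as a first-moment bound $\mathbb{E}[X]<1$ (valid already for $n\geq 3$) rather than the paper's union bound pushed below $1/2$ for $n\geq 4$ --- are only cosmetic sharpenings of the same argument.
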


\begin{proof}
For the upper bound we use the following claim: for $m, n \geq 2$ we have.
\begin{equation}\label{eq:induction_ramsey}
R(n, m) \leq R(n, m- 1) + R(n-1, m) \, .
\end{equation}
That this is true for $n = 2$ and for $m = 2$ follows from the fact that $R(1, n) = 1$ and $R(2, n) = n$.

Now consider $N = R(n, m- 1) + R(n-1, m)$, and fix an edge bicolouring of $K_N$ into, say, red and blue colours.
Our goal is to find a monochromatic blue $K_n $ or a monochromatic red $K_m$ in $K_N$.

Pick one vertex $\vv$ and partition the remaning $N-1$ vertices $\vw$ into the blue set $V_b$ and the red set $V_r$ according to the edge of $\vv \-- \vw$.
Note that $|V_b| + |V_r| = N-1$, so either $|V_b| \geq  R(n-1, m)$ or $|V_r| \geq  R(n, m- 1)$.
In either case we have found either a blue $K_n$ or a red $K_m$.

As a conclusion, from \eqref{eq:induction_ramsey} we get inductively that $R(m, n) \leq 2^{m+n}$, thus $R(n, n) \leq 2^{2n}$.

For the lower bound, we observe in \cref{tab:ramsey_nums} that $R(n, n) > 2^{n/_2}$ for $n < 4$. 
Assume now that $n\geq 4$ and let $N = \lfloor 2^{n/_2} \rfloor $ and colour each edge of $K_N$ at random uniformly.
Let $R_I$ be the event that the vertex set $I$ induces a monochromatic red complete graph, and let $B_I$ be the event that the vertex set $I$ induces a monochromatic blue complete graph.
Let $R$ be the event that some set $I$ induces a monochromatic red complete graph, and let $B$ be the event that some set $I$ induces a monochromatic blue complete graph.
Use $\overline{A}$ to denote the negation of an event $A$.
We have that:
\begin{align*}
\PP[\overline{B} \cap \overline{R}] =& \PP[\bigcap_{|I| = n} \overline{B_I} \cap \bigcap_{|I| = n} \overline{R_I}]\\
=& 1 - \PP[\bigcup_{|I| = n} B_I \cup \bigcup_{|I| = n} R_I]\\
\geq & 1 - \sum_{|I| = n} \PP[B_I] + \PP[R_I]\\
=& 1 - 2\binom{N}{n} 2^{-\binom{n}{2}}\, .
\end{align*}
Now observe that $\binom{N}{n} \leq\frac{N^n}{n!}$ and $N \leq 2^{n/_2}$, so
$$2\binom{N}{n} 2^{-\binom{n}{2}} \leq \frac{2 N^n}{n! N^{n-1}} = \frac{2N}{n!} \leq \frac{ 2 2^{n/_2}}{n!} \leq 0.5 \, ,$$
where the last inequality is equivalent to $4 \cdot 2^{n/_2} \leq n!$ which  holds for $n \geq 4$.

Thus $\PP[\overline{B} \cap \overline{R}]  \geq 0.5$, so there is a random assignment of a bicolouring that makes $K_N$ an $(n, n)$-Ramsey colouring.
We conclude that $R(n, n) \geq 2^{n/_2}$.
\end{proof}

A key issue with the lower bound described above is that it does not provide an explicit construction of a bicolouring that has neither monochromatic red $K_n$ nor a monochromatic blue $K_m$.
It just says that, with a randomly generated bicolouring, you will most probably end up with an $(n, n)$-Ramsey colouring.
But do not forget, to test that a bicolouring is $(n, n)$-Ramsey colouring is hard.

For this reason, we dedicate the next section to present some expicit constructions of bicolourings using algebraic tools.

\subsubsection{Explicit constructions}

The following has no monochromatic $K_n$.

\begin{const}[Construction for $N = O(n^2)$ - naive construction]
The union of $n-1$ red complete graphs, with blue edges between these.
\end{const}

The following is a construction from \cite{nagy1972certain}, also presented in \cite{chung1981note}.

\begin{const}[Construction for $N = O(n^3)$ - Nagy's graph]
Let $N = \binom{n-1}{3}$ and identify the vertices of $K_N$ with subsets of $[n-1]$ of size three.
Colour $A \-- B$ blue if $|A \cap B|$ is odd, and colour it red if $|A\cap B|$ is even.
This bicolouring has no monochromatic $K_n$.
\end{const}

\begin{proof}
Indeed, if $\FF = \{A_1, \ldots, A_n\}$ is a monochromatic blue clique, this is a $1$-Fischer family on $E = [n-1]$, which is impossible according to \cref{prop:fischer}.

If $\FF = \{A_1, \ldots, A_n\}$ is a monochromatic red clique, this is an oddtown family on $E = [n-1]$, which is impossible according to \cref{prop:oddtown}.
\end{proof}

The following was presented in \cite{frankl1981intersection}.

\begin{const}[Construction for $N = O(n^k)$ for $k$ arbitrarily large]
Fix $p$ prime number, $n$ positive integer, and let $N=\binom{n}{p^2-1}$.
In $K_N$, we identify the vertices with subsets of $[n]$ of size $p^2-1$.
We colour $X \-- Y $ blue if $|X\cap Y| \neq 1 \mod p$, and we colour $X \-- Y $ red otherwise.
Then there is no $\binom{n}{p-1}+1$ monochromatic complete graph in $K_N$.
\end{const}

\begin{proof}
Assume that $\CC = \{C_1, \ldots, C_m\}$ is a monochromatic blue subgraph of $K_N$.
Then it is $L$-Fischer, where $L = \{0, 1, \ldots, p-2\} \subseteq \F_p$, with all subsets of size $p^2-1$.
We can therefore use \cref{prop:frankl_wilson_fixed_size}, which gives us that $m \leq \binom{n}{|L|} = \binom{n}{p-1}$.

Now assume that $\CC = \{ C_1, \ldots,  C_m\}$ is a monochromatic red subgraph of $K_N$.
Then it is $L$-Fischer, where $L = \{p-1, 2p-1, \ldots p(p-1) - 1\}$, where each set has constant size $p^2-1$.
Thus according to \cref{prop:RCW}, we have that $m \leq \binom{n}{|L|}$, this shows that the presented colouring is $(\binom{n}{p-1}+1, \binom{n}{p-1}+1)$-Ramsey, therefore $R(\binom{n}{p-1}+1, \binom{n}{p-1}+1) \geq \binom{n}{p^2-1}$.

For $p$ fixed and $n$ large, this shows that $R(n, n) \geq O(n^p)$.
\end{proof}

\subsubsection{The chromatic polynomial}

\begin{defin}[Chromatic polynomial]
Given a graph $G$, its \textbf{chromatic polynomial} $\chi_G$ is a map $\Z_{\geq 0} \to \Z_{\geq 0}$ given by
$$\chi_G(n) \coloneqq \#\{f:[n] \to V(G) | \text{ $f(v) \neq f(w)$ for neighbour vertices $v, w$}\}\, . $$
The functions counted by $\chi_G(n)$ are called \textbf{stable colourings} of $G$ with $n$ colours.
Note that in this context we are colouring the \textbf{vertices} of the graph.
\end{defin}

\begin{defin}[Deletion and contraction of edges]
If $G=(V, E)$ is a graph and $e$ one of its edges, the \textbf{deletion of $e$} is denoted by $G\setminus e$, and is the graph with the same vertex set, and with edge set $E\setminus e$.

If $G=(V, E)$ is a graph and $e=\{v, w\}$ one of its edges, the \textbf{contraction of $e$} is denoted by $G/_e$, and is the graph with vertex set $\{\star\} \cup V \setminus\{v, w\}$, resulting in a graph with one fewer vertex.
The edge set is in bijection to $E\setminus e$.
Any edge $e$ that is incident to either $v$ or $w$ becomes now incident to the new vertex $\star $.

Note that simple graphs may not remain simple graphs under this operation.
\end{defin}

\begin{figure}
\includegraphics[scale=.61]{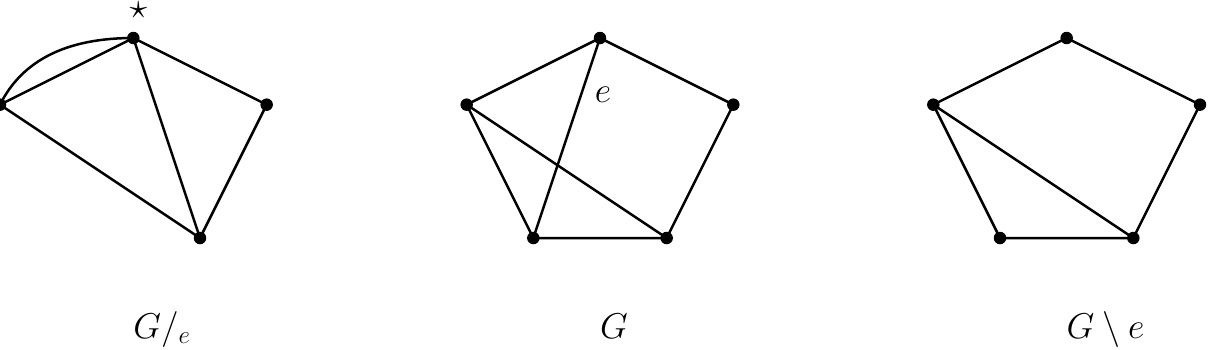}
\caption{The deletion and a contraction of an edge\label{fig:contraction}. The newly created vertex under the contraction operation is labelled with $\star$.}
\end{figure}

\begin{thm}
The chromatic polynomial of a graph is indeed a polynomial.
In fact, it satisfies a \textbf{deletion contraction formula}: for any edge $e$ of the graph $G$, 
$$\chi_G(n) = \chi_{G\setminus e}(n) - \chi_{G/_e}(n)\, .$$
\end{thm}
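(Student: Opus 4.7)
The plan is to establish the deletion-contraction formula first by a direct bijective argument, and then deduce that $\chi_G$ is a polynomial by induction on the number of edges.

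For the first step, I fix an edge $e = \{v, w\}$ of $G$ and classify the stable colourings of $G \setminus e$ according to whether $f(v) = f(w)$ or not. If $f(v) \neq f(w)$, then $f$ is automatically a stable colouring of $G$ itself, since the only constraint that $G$ adds is $f(v) \neq f(w)$. Conversely, if $f(v) = f(w)$, then $f$ descends to a well-defined map $\widetilde{f}$ on the vertex set $\{\star\} \cup V \setminus \{v, w\}$ of $G /_e$ by setting $\widetilde{f}(\star) = f(v) = f(w)$, and this map is stable on $G/_e$ exactly because $f$ was stable on $G \setminus e$. This gives a bijection between the first class and stable colourings of $G$, and between the second class and stable colourings of $G/_e$, yielding
$$\chi_{G \setminus e}(n) = \chi_G(n) + \chi_{G /_e}(n),$$
which is the desired deletion-contraction formula after rearranging.

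For the polynomiality, I induct on $|E(G)|$. In the base case $|E(G)| = 0$, every map $f : V(G) \to [n]$ is stable, so $\chi_G(n) = n^{|V(G)|}$, which is clearly a polynomial in $n$. For the inductive step, given any edge $e$ of $G$, both $G \setminus e$ and $G /_e$ have strictly fewer edges than $G$ (contraction turns $e$ into an identification, and any parallel edges it creates still reduce the edge count by one). By the inductive hypothesis, both $\chi_{G \setminus e}$ and $\chi_{G/_e}$ are polynomials, and so by the deletion-contraction formula the difference $\chi_G = \chi_{G \setminus e} - \chi_{G /_e}$ is also a polynomial.

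The only subtle point, and the main place to be careful, is handling the fact that $G /_e$ need not be simple even if $G$ is: the contraction may produce loops (if there is already an edge parallel to $e$) or parallel edges. This is not really an obstacle once one observes that the definition of $\chi$ makes perfect sense for multigraphs — the stability condition only sees the adjacency relation, so loops simply force $\chi = 0$ and parallel edges impose no extra constraint beyond one. With this understood, the induction goes through verbatim on the larger class of multigraphs, and the restriction to simple $G$ in the statement is preserved at the top level.
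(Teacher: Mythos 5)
Your proof is correct. The paper states this theorem without giving a proof, so there is nothing to compare against: your argument is the standard one, partitioning the stable colourings of $G\setminus e$ according to whether $f(v)=f(w)$ to get $\chi_{G\setminus e}(n) = \chi_G(n) + \chi_{G/_e}(n)$, and then inducting on the number of edges with base case $\chi_G(n) = n^{|V(G)|}$. Your handling of the multigraph issue is also the right touch, since it addresses exactly the caveat in the paper's definition of contraction (\emph{simple graphs may not remain simple under this operation}): extending $\chi$ to multigraphs, where a loop forces $\chi \equiv 0$ and parallel edges impose no constraint beyond a single edge, makes the induction go through without further case analysis.
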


The \textbf{chromatic number} of a graph $G$ is the smallest integer $n$ such that $\chi_G(n) \neq 0$.

\begin{smpl}
Consider the Petersen graph $P$, a graph with $10$ vertices and $15$ edges displayed in \cref{fig:petersen}.

It is possible to colour this graph in a stable way using three colours, but not two, so $\chi(P) = 3$.
It can be computed that 
$$\chi_P(n) = n ( n-1) (n-2) (x^7 -12 x^6+67 x^5 - 230 x^4 + 529 x^3 - 814 x^2 + 775 x - 352 ) \, . $$
\end{smpl}

\begin{figure}
\includegraphics[scale=1]{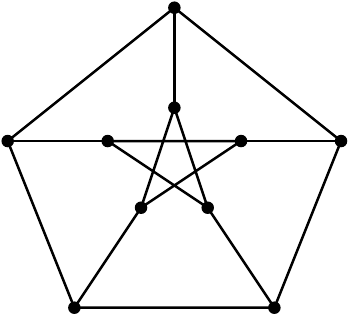}
\caption{The Petersen graph\label{fig:petersen}}
\end{figure}

\subsubsection{The chromatic number}

\begin{prop}
Let $G$ be a graph, and $\Delta $ the maximal degree of a vertex in $G$.
Then $\chi(G) \leq \Delta + 1$.
\end{prop}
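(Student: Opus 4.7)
The plan is to establish the existence of a stable colouring with $\Delta + 1 $ colours via a greedy algorithm, which directly shows $\chi_G(\Delta+1) \geq 1$ and hence $\chi(G)\leq \Delta + 1$. I would fix any ordering $v_1, v_2, \ldots, v_n$ of the vertices of $G$ and construct a stable colouring $f \colon V(G) \to [\Delta+1]$ inductively, assigning $f(v_i)$ as a function of the already determined values $f(v_1), \ldots, f(v_{i-1})$.

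The key observation at step $i$ is that $v_i$ has at most $\Delta$ neighbours in $G$, so in particular it has at most $\Delta$ neighbours among $\{v_1, \ldots, v_{i-1}\}$. Each such coloured neighbour forbids at most one colour for $v_i$, leaving at least $(\Delta+1) - \Delta = 1$ admissible colour, which I pick for $f(v_i)$. By construction, whenever $v_i v_j$ is an edge with $j < i$, the colour $f(v_i)$ differs from $f(v_j)$, so $f$ is a stable colouring.

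This shows $\chi_G(\Delta+1) \geq 1$, and since the chromatic number is the least integer $n$ with $\chi_G(n) \neq 0$, we obtain $\chi(G) \leq \Delta + 1$, as desired.

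There is no real obstacle here: the argument is a textbook greedy colouring and the only thing to verify carefully is the counting at step $i$, namely that among the $\Delta+1$ available colours at most $\Delta$ are blocked by already-coloured neighbours. It is worth mentioning that this bound is not always tight and can often be improved (for instance, Brooks' theorem sharpens it to $\chi(G)\leq \Delta$ unless $G$ is a complete graph or an odd cycle), but such refinements go beyond the stated proposition.
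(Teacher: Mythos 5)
Your proposal is correct and uses essentially the same argument as the paper: a greedy colouring over an arbitrary vertex ordering, noting that at most $\Delta$ colours are forbidden at each step, so one of the $\Delta+1$ colours is always available. Your write-up simply spells out the counting more explicitly than the paper does.
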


\begin{proof}
Use a greedy algorithm to colour the graph: when you see an uncoloured vertex, use the next available colour.
As any vertex has at most $\Delta $ neighbours, there are always available colours when using $\Delta + 1$ colours.
\end{proof}

In $\R^d$ we construct a graph, by connecting $\vx, \vy$ if $||\vx - \vy|| = 1$.
In this section we give bounds for the chromatic number of this graph

\begin{thm}[Colouring the plane]
$$\chi (\R^2) \in \{4, 5, 6, 7\}\, .$$
\end{thm}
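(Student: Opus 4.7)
The plan is to prove the two bounds separately by explicit constructions, since the statement asserts only the classical interval $\{4,5,6,7\}$. For the upper bound $\chi(\R^2)\leq 7$ I exhibit a proper $7$-coloring of $\R^2$ via a periodic hexagonal tiling; for the lower bound $\chi(\R^2)\geq 4$ I exhibit a finite unit-distance graph, the Moser spindle, of chromatic number $4$.

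For the upper bound, first I would tile $\R^2$ by closed regular hexagons of side length $s$, with a fixed convention assigning each boundary point to a unique tile so that the tiles form a partition of $\R^2$; each such hexagon has diameter $2s$. The hexagon centers form a triangular lattice $\Lambda$ with basis vectors $e_1, e_2$ of length $s\sqrt{3}$ at $60^{\circ}$. Next I would choose the sublattice $\Lambda' \subset \Lambda$ generated by $2e_1+e_2$ and $-e_1+3e_2$, of index $7$; a direct check reduces to $\min_{(m,n)\neq (0,0)} a^2+ab+b^2$ with $a=2m-n$, $b=m+3n$, and shows that the shortest nonzero vector of $\Lambda'$ has length $s\sqrt{21}$. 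Coloring each hexagon by its coset in $\Lambda/\Lambda'$ yields a $7$-coloring. Picking any $s$ in the interval $\bigl(1/(\sqrt{21}-2),\,1/2\bigr)$ (non-empty since $\sqrt{21}>4$), two same-colored points lying in the same hexagon are at distance $<2s<1$, while two same-colored points in distinct same-colored hexagons are at distance $\geq s\sqrt{21}-2s>1$. In either case the distance is not $1$, so the coloring is proper.

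For the lower bound, I would first establish the \emph{rhombus trick}: given four points $P, Q, R, S$ with $|PR|=|QR|=|PS|=|QS|=|PQ|=1$ (so $PQR$ and $PQS$ are unit equilateral triangles on opposite sides of $PQ$, giving $|RS|=\sqrt{3}$), in any proper $3$-coloring both triangles must use all three colors, and since they share $\{P, Q\}$, the vertices $R$ and $S$ are both forced to be the color not used by $\{P,Q\}$; hence $R$ and $S$ share a color. Then I would build the Moser spindle from two such rhombi sharing a common vertex $V$ (playing the role of $R$ in each) and rotate one of them about $V$ until the two remaining far vertices $S_1, S_2$ lie at distance $1$, which is possible since $|VS_1|=|VS_2|=\sqrt{3}$ and a triangle of sides $\sqrt{3},\sqrt{3},1$ exists. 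The resulting $7$-vertex, $11$-edge unit-distance graph contains the edge $S_1S_2$, but applying the rhombus trick to each rhombus forces $V$, $S_1$ and $S_2$ to share a color in any $3$-coloring, contradicting that edge. A direct $4$-coloring of the spindle is easy to write down, giving $\chi=4$.

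The main obstacle is the arithmetic bookkeeping in the upper bound: verifying that the chosen sublattice really has minimum nonzero vector of length $s\sqrt{21}$ and that the admissible range of $s$ is non-empty; once these are settled, the coloring argument is immediate. The lower bound is, once the rhombus trick is in hand, a short case analysis on a $7$-vertex graph, and produces the result with essentially no further computation.
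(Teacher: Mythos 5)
Your proposal is correct and follows essentially the same route as the paper: a finite unit-distance graph that is not $3$-colourable (the Moser spindle) for $\chi(\R^2)\geq 4$, and a periodic hexagonal $7$-colouring for $\chi(\R^2)\leq 7$. The paper merely sketches both steps via figures, whereas you supply the quantitative details (the index-$7$ sublattice with minimum vector $s\sqrt{21}$, the admissible range of $s$, and the rhombus/spindle argument), all of which check out.
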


\begin{proof}
The picture in \cref{fig:plane_chromat} shows a graph that can be embedded in the plane, in such a way that the image of any two neighbouring vertices is at distance one of each other.

This graph cannot be coloured with three colours in a proper way, so the plane cannot be coloured with three colours in a proper way.

A proper colouring of the plane using seven colours is sketched in \cref{fig:plane_chromat}.


\begin{figure}[h]
\centering
\includegraphics[scale=0.7]{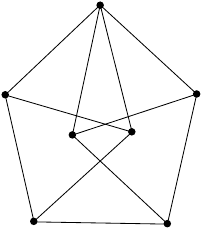}
\hspace{2cm}
\includegraphics[scale=0.3]{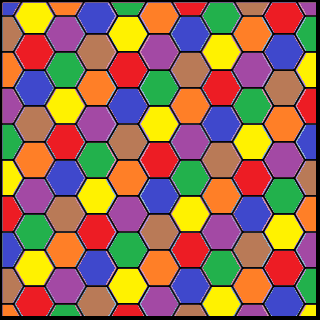}
\caption{\textbf{Left} A graph with chromatic number three, that can be isometrically embedded in the plane. \textbf{Right} A proper colouring of the plane using seven colours.\label{fig:plane_chromat}}
\end{figure}

\end{proof}

\begin{thm}
$$\chi(\R^d) \leq 9^d \, .$$
\end{thm}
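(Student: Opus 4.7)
The plan is to construct an explicit periodic coloring by tiling $\R^d$ into small axis-aligned cubes and labeling each cube by the residue of its integer index modulo a suitable $k$. Fix a side length $s>0$ with $s\sqrt{d}<1$ and partition $\R^d$ into half-open cubes $C_{(m_1,\ldots,m_d)} = \prod_{i=1}^d [m_i s,(m_i+1)s)$ indexed by $(m_1,\ldots,m_d)\in\Z^d$. Since each such cube has diameter $s\sqrt{d}<1$, no two points inside the same cube can be at unit distance.

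Next, pick an integer $k$ with $(k-1)s>1$, and color $C_{(m_1,\ldots,m_d)}$ by the tuple $(m_1\bmod k,\ldots,m_d\bmod k)\in\{0,\ldots,k-1\}^d$, yielding $k^d$ colors in total. If $\vx$ and $\vy$ lie in two distinct same-color cubes, then some coordinate $i$ satisfies $|m_i - m_i'|\ge k$, so the $i$-th coordinates are separated by at least $(k-1)s>1$ and hence $\|\vx - \vy\|>1$. Combining the two cases shows that unit-distance pairs cannot be monochromatic, so this is a proper coloring of the unit-distance graph.

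Finally, taking $s$ just below $1/\sqrt{d}$ and $k$ the smallest integer with $(k-1)s>1$ gives a coloring with at most $(\lceil \sqrt{d}\rceil+2)^d$ colors. The main obstacle is matching this with the uniform exponent $9^d$: the cube-tiling estimate grows like $(\sqrt{d}+O(1))^d$, so it sits inside $9^d$ only for $d$ up to roughly $49$. For larger $d$ one needs a more efficient non-cubic partition (for instance, by a lattice better adapted to the Euclidean metric than the cubic one) to keep the per-axis factor at most $9$; this parameter-matching step is where I expect the delicate work to lie.
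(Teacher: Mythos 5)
Your cube construction is sound as far as it goes, but it does not prove the stated bound: once the side length $s$ is below $1/\sqrt{d}$ (forced, since a cube of side $s$ has diameter $s\sqrt{d}$), you need $k-1 > 1/s \geq \sqrt{d}$ residues per axis, so the colour count is $\bigl(\lceil\sqrt{d}\rceil+2\bigr)^d = d^{\Theta(d)}$, which exceeds $9^d$ for all $d \geq 50$ (as you yourself note). The theorem asserts a bound whose base is independent of $d$, and the step you defer to the end --- finding a partition whose ``per-axis cost'' stays bounded --- is not a parameter-matching detail; it is the entire content of the theorem. So as written this is a proof only in low dimensions, with the main idea missing. (The paper omits its own proof, so there is nothing to compare against, but no amount of tuning $s$ and $k$ in a cubical grid can close this gap: the ratio of cell diameter to cell side is always $\sqrt{d}$.)

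The missing idea is to use round cells and to count conflicts by a volume ratio rather than coordinate by coordinate. Take $S \subseteq \R^d$ maximal with the property that distinct points of $S$ are at distance at least $\tfrac12$; maximality forces every point of $\R^d$ to lie at distance strictly less than $\tfrac12$ from $S$, and $S$ is countable, say $S=\{x_1,x_2,\dots\}$. The cells $V_i \coloneqq \{u : \|u-x_i\|<\tfrac12\} \setminus \bigcup_{j<i}\{u : \|u-x_j\|<\tfrac12\}$ partition $\R^d$ and each has diameter strictly less than $1$, so no unit-distance pair lies inside one cell. Now form the graph on $S$ with $x_i \sim x_j$ whenever $0<\|x_i-x_j\|<2$. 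The open balls of radius $\tfrac14$ about the centres are pairwise disjoint, and those belonging to $x_i$ and to its neighbours are contained in the ball of radius $2+\tfrac14=\tfrac94$ about $x_i$, so comparing volumes gives at most $\left(\tfrac{9/4}{1/4}\right)^d = 9^d$ centres involved, i.e.\ maximum degree at most $9^d-1$. Greedy colouring along the enumeration of $S$ therefore uses at most $9^d$ colours, and assigning every point of $V_i$ the colour of $x_i$ is proper: within a cell all distances are below $1$, and if $x_i\neq x_j$ share a colour then $\|x_i-x_j\|\geq 2$, so any $u\in V_i$, $v\in V_j$ satisfy $\|u-v\| > 2-\tfrac12-\tfrac12 = 1$. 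The reason this beats your grid is that the number of $\tfrac12$-separated points in a ball of radius $\tfrac94$ is bounded by a ratio of volumes, hence by a constant to the power $d$, with no $\sqrt{d}$ appearing anywhere; that dimension-free packing count is exactly what your proposal lacks.
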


\begin{proof}
This proof is not presented here
\end{proof}

\subsection{Spectral theory of graphs}

\subsubsection*{Preliminary observations}

The adjacency matrix of a graph $G$, denoted as $A_{G} = 
\begin{bmatrix}
a_{i,j}
\end{bmatrix}_{i, j}$ is a $V(G)\times V(G)$ matrix.
The incidence matrix of a graph $G$, denoted as $B_{G} = 
\begin{bmatrix}
b_{i,j}
\end{bmatrix}_{i, j}$, is a $V(G) \times E(G)$ matrix.
These are defined so that

$$ a_{i, j} =\begin{cases*}
      & 1 \text{ if $i \-- j$ in $G$,}\\
      & 0 \text{ otherwise.}
    \end{cases*}  \text{ and }
b_{i, e} =\begin{cases*}
      & 1 \text{ if $e$ is incident in $j$ in $G$,}\\
      & 0 \text{ otherwise.}
    \end{cases*}
    $$

\begin{smpl}
Recall that $K_3$ is the complete graph on three vertices, that is a triangle.
Then $A_{K_3} = 
\begin{bmatrix}
0 & 1 & 1\\
1 & 0 & 1\\
1 & 1 & 0
\end{bmatrix}$.

Let $G$ be the complete graph on four vertices with one edge removed.
In this way, it has four vertices and five edges.
Then $A_{G} = 
\begin{bmatrix}
0 & 1 & 1 & 0\\
1 & 0 & 1 & 1\\
1 & 1 & 0 & 1\\
0 & 1 & 1 & 0
\end{bmatrix}$.

\end{smpl}

The matrix $A_G$ is not an \textbf{invariant} of the graph $G$, as it depends on the specific order of the vertices that we pick to describe the matrix.
However, the \textbf{spectrum} does not depend on this order, so it is an invariant of $G$.

\begin{defin}[Spectrum of a graph]
Given a graph $G$, the \textbf{eigenvalues} of $A_G$, as a multiset, form the \textbf{spectrum} of $G$, denoted $\spec \, G$.
\end{defin}

A generic $n\times n$ matrix has eigenvalues that are complex numbers, possibly with some repetitions.
Because the matrix $A_G$ is symmetric, we know that its eigenvalues are real.
We also know that when there are repeated eigenvalues, these come with an increase in the dimension of the eigenspace.

\begin{thm}[Spectral theorem of symmetric matrices]\label{thm:spect}
Let $A$ be a symmetric matrix, with eigenvalues $\spec \, A = \{\lambda_i\}_i$ and corresponding eigenspaces $V_{\lambda} = \ker(A - \lambda \Id)$.

\begin{itemize}
\item All eigenvalues are real values.

\item All eigenspaces are complete, that is $\dim (V_{\lambda})$ is precisely the multiplicity of $\lambda $ in $\spec A$.

\item There is an orthogonal basis of eigenvectors $\{v_i\}_i$.
That is, we have $A \vv_i = \lambda_i \vv_i$ and $\vv_i \cdot \vv_j = 0 $ for $i \neq j$.
\end{itemize}
\end{thm}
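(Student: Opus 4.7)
The plan is to prove the three bullets in order, with the third (existence of an orthogonal eigenbasis) implying the second as a corollary. I will work over $\mathbb{C}$ when needed and use induction on $n = \dim$.

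For the first bullet, I would take any root $\lambda \in \mathbb{C}$ of the characteristic polynomial $\det(A - t \Id)$, which exists by the fundamental theorem of algebra, together with a nonzero complex eigenvector $\vv \in \mathbb{C}^n$. Then I would compute $\bar{\vv}^T A \vv$ in two ways: on one hand it equals $\lambda \|\vv\|^2$, and on the other hand, since $A$ has real symmetric entries, taking the complex conjugate gives $\vv^T A \bar{\vv} = \bar{\vv}^T A^T \vv = \bar{\vv}^T A \vv$, so the quantity is real. Since $\|\vv\|^2 > 0$, this forces $\lambda \in \R$, and choosing a real basis of $\ker(A - \lambda \Id)$ (viewed as a real subspace, which is nonempty since the matrix $A - \lambda \Id$ is real and singular) yields a real eigenvector.

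For the third bullet, I would induct on $n$. The base case $n = 1$ is immediate. For the inductive step, pick an eigenvalue $\lambda_1$ and a unit real eigenvector $\vv_1$ by the previous paragraph. Let $W = \vv_1^\perp \subset \R^n$, a subspace of dimension $n-1$. The key observation is that $A$ preserves $W$: if $\vw \in W$, then
\[
\vv_1 \cdot A\vw = (A \vv_1) \cdot \vw = \lambda_1\, \vv_1 \cdot \vw = 0,
\]
so $A\vw \in W$. The restriction $A|_W$ is still symmetric with respect to the induced inner product, so by the inductive hypothesis $W$ admits an orthogonal basis $\vv_2, \ldots, \vv_n$ of eigenvectors of $A|_W$ (hence of $A$). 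Appending $\vv_1$ yields the desired orthogonal basis of eigenvectors of $A$ on $\R^n$.

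The second bullet then follows: the orthogonal basis $\{\vv_i\}$ partitions into blocks by eigenvalue, and the span of the block for $\lambda$ sits inside $V_\lambda$, so $\dim V_\lambda \geq (\text{multiplicity of } \lambda \text{ among the } \vv_i)$. Summing over $\lambda$ gives $n$ on both sides (the left using $\sum_\lambda \dim V_\lambda \leq n$ from linear independence of distinct eigenspaces), forcing equality in every term; the multiplicity of $\lambda$ in the basis equals its algebraic multiplicity in $\spec A$ by counting, so $\dim V_\lambda$ matches the algebraic multiplicity. I expect the main obstacle to be the inductive step, specifically the verification that $A$ preserves the orthogonal complement of a chosen eigenvector — the rest is bookkeeping once one is willing to temporarily pass to $\mathbb{C}$ to guarantee that at least one eigenvalue exists.
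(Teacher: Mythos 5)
Your proposed proof is correct, and it is the standard argument: realness of eigenvalues via the computation of $\bar{\vv}^T A \vv$ and its conjugate, existence of an orthogonal eigenbasis by induction on $n$ using the fact that $A$ maps $W = \vv_1^{\perp}$ into itself (which indeed is exactly where symmetry enters), and completeness of eigenspaces deduced afterwards by the dimension count, since orthogonal diagonalization makes the characteristic polynomial of $A$ equal to that of the diagonal matrix of the $\lambda_i$. The paper states \cref{thm:spect} without any proof, so there is no argument of the author's to compare yours against; yours fills that gap correctly.

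Two small points of bookkeeping you should make explicit if you write this up. First, in the inductive step the object $A|_W$ is an endomorphism of $W$, not literally a matrix; to apply the inductive hypothesis you should fix an orthonormal basis of $W$ and observe that the matrix of $A|_W$ in that basis is again real symmetric of size $(n-1)\times(n-1)$, and that eigenvectors of that matrix correspond to eigenvectors of $A$ lying in $W$. Second, in the first bullet, the passage from a complex eigenvector to a real one deserves one line: since $\lambda$ is real, $A-\lambda\Id$ is a real singular matrix, hence its kernel over $\R$ is nontrivial (or, alternatively, take the real or imaginary part of the complex eigenvector, at least one of which is nonzero).
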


In this way, for a graph $G$, we denote $\spec \, G \coloneqq \{ \lambda_1 \geq  \dots \geq \lambda_n \}$ for the multiset of eigenvalues, ordered in decreasing order.
We also write $\{ \vv_1, \ldots , \vv_n\}$ for the corresponding orthonormal basis, so that $A_G \vv_i = \lambda_i \vv_i$.

A bipartite graph $G$ is a graph whose vertices can be partitioned $V(G) = S\uplus T$ so that both $S, T$ are independent sets.
Denote de complement of a graph $G$ by $\overline{G}$.

\begin{lm}\label{lm:prelspec}
\begin{itemize}
The following facts help us compute the spectrum of graphs: 

\item If $G$ is $d$-regular, then $\lambda_1 = d$ and $\vv_1 = \frac{1}{\sqrt{n}}(1 \cdots 1)^T$

\item If $G$ is conected, then $\lambda_1 \neq \lambda_2$.

\item A graph $G$ is bipartite if and only if $-\spec \, G = \spec \, G$.

\item If $G$ is $d$-regular, denote $\spec \, \overline{G} = \{\overline{\lambda_{n+1}} \geq \overline{\lambda_{n}} \geq \cdots \geq \overline{\lambda_2}\}$, and the corresponding orthonormal eigenbasis by $\{ \overline{\vv_{n+1}}, \ldots, \overline{\vv_2} \}$.
Then 
$\overline{\lambda_{n+1}} = n - d - 1$ and 
$$\overline{\lambda_i} = -1 - \lambda_i, \text{ and }  \overline{\vv_i} = \vv_i \text{ for all $i = 2, \ldots , n$.}$$
\end{itemize}
\end{lm}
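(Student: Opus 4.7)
The plan is to handle the four bullets in turn, each by a direct calculation with $A_G$ combined with the orthonormal eigenbasis supplied by Theorem~\ref{thm:spect}.

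For the first bullet, $d$-regularity immediately gives $A_G\one = d\one$, identifying $\vv_1 = \tfrac{1}{\sqrt{n}}\one$ as a unit eigenvector of eigenvalue $d$. To see that $d$ is maximal, for any eigenpair $(\lambda, \vv)$ I would pick a coordinate $i_0$ maximizing $|\vv_i|$ and bound
\[
|\lambda|\,|\vv_{i_0}| \;=\; \Bigl|\sum_{j\sim i_0}\vv_j\Bigr| \;\leq\; d\,|\vv_{i_0}|.
\]
For the second bullet in the $d$-regular connected setting this argument sharpens: equality forces $\vv_j = \vv_{i_0}$ for every neighbor $j$ of $i_0$, which by connectedness propagates throughout $G$, so $\vv\propto\one$ and the eigenspace of $\lambda_1 = d$ is one-dimensional. (For non-regular connected $G$ one would instead appeal to the Perron--Frobenius theorem applied to the irreducible non-negative matrix $A_G$.)

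For the third bullet, the forward direction is concrete: given a bipartition $V = S \uplus T$ and an eigenpair $(\lambda, \vv)$, I would form $\vv'$ by flipping the signs of the entries on $T$; since every neighbor of a vertex in $S$ lies in $T$ and vice versa, a coordinatewise check shows $A_G\vv' = -\lambda\vv'$, so flipping signs is a linear isomorphism between the $\lambda$- and $(-\lambda)$-eigenspaces. For the converse I would use the trace identity $\tr(A_G^m) = \sum_i \lambda_i^m$: the hypothesis $-\spec G = \spec G$ makes this vanish for every odd $m$. Since $(A_G^m)_{ii}$ is the number of closed walks of length $m$ starting at $i$, and hence non-negative, $G$ admits no closed walk of odd length; in particular it contains no odd cycle, hence is bipartite.

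For the fourth bullet the key tool is the identity $A_{\overline G} = J - I - A_G$, where $J$ is the all-ones matrix. Applying both sides to $\vv_1 = \tfrac{1}{\sqrt{n}}\one$, where $J\vv_1 = n\vv_1$, gives $A_{\overline G}\vv_1 = (n-1-d)\vv_1$, and the first bullet applied to the $(n-1-d)$-regular graph $\overline G$ identifies $n-1-d$ as the top eigenvalue $\overline{\lambda_{n+1}}$. For $i\geq 2$, orthogonality of $\vv_i$ to $\vv_1$ forces $\one^T\vv_i = 0$, hence $J\vv_i = 0$, and so $A_{\overline G}\vv_i = (-1-\lambda_i)\vv_i$, giving both $\overline{\lambda_i} = -1-\lambda_i$ and $\overline{\vv_i} = \vv_i$. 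I expect the only genuinely subtle step to be the converse of the bipartite criterion, where one must exploit that \emph{every} odd-power trace vanishes in order to rule out closed walks, and hence cycles, of arbitrary odd length; everything else reduces to unpacking definitions.
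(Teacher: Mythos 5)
The paper does not actually supply a proof of this lemma (it is stated with ``This proof is not presented here''), so there is nothing to compare against; your argument is the standard one and it is correct. The row-sum computation $A_G\one = d\one$ plus the maximum-modulus bound gives the first bullet; the equality analysis together with connectedness (or Perron--Frobenius in the non-regular case) gives simplicity of $\lambda_1$; the sign-flip isomorphism and the odd-trace/closed-walk argument give both directions of the bipartite criterion; and $A_{\overline G} = J - \Id - A_G$ applied to $\vv_1$ and to the $\vv_i$ with $\one^T\vv_i = 0$ gives the last bullet. Two small points worth making explicit when you write this up: in the simplicity argument, fix the sign convention (take $i_0$ maximizing $\vv_{i_0}$ itself, or work with the vector of absolute values) so that the equality case really forces $\vv_j = \vv_{i_0}$ rather than just $|\vv_j| = |\vv_{i_0}|$; and in the last bullet, note that $\{\vv_1,\ldots,\vv_n\}$ is an orthonormal family of $n$ eigenvectors of $A_{\overline G}$, so the values $n-d-1$ and $-1-\lambda_i$, $i = 2,\ldots,n$, exhaust $\spec\,\overline G$, with the first bullet applied to the $(n-1-d)$-regular graph $\overline G$ confirming that $n-d-1$ sits at the top of the ordering.
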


\begin{proof}
This proof is not presented here.
\end{proof}

\begin{prop}
Recall that, for a graph $G$, we write $L(G)$ for its line graph.
Then we have
$$B_G B_G^T = A_{L(G)} + 2\, \Id_{E(G)} \, .$$

Furthermore, if $G$ is a $d$-regular graph, then
$$B_G B_G^T = A_G + d\, \Id_{V(G)}\, . $$
\end{prop}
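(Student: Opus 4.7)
The plan is to verify both identities by a direct entry-wise computation, since each side is just counting the same combinatorial quantity in two different ways. Before starting I note that the first identity as written is dimensionally inconsistent ($B_G B_G^T$ is a $V(G)\times V(G)$ matrix, while $A_{L(G)}$ is indexed by $E(G)$); the intended statement must be
$$B_G^T B_G = A_{L(G)} + 2\,\Id_{E(G)} \, .$$
I will prove this corrected version, and then the $d$-regular statement, both by unpacking the matrix product and interpreting each entry as counting incidences.

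For the first identity, I would fix two edges $e, f \in E(G)$ and compute
$$(B_G^T B_G)_{e, f} = \sum_{v \in V(G)} b_{v, e} b_{v, f} \, ,$$
which equals the number of vertices simultaneously incident to $e$ and $f$. A short case split finishes the computation: if $e = f$, both endpoints of $e$ contribute and we get $2$, matching the diagonal entries of $2\,\Id_{E(G)}$; if $e \neq f$ share exactly one vertex (the definition of adjacency in $L(G)$), the entry is $1$, matching $(A_{L(G)})_{e,f}$; and if $e, f$ share no vertex, both sides are $0$. Here I use that the graph is simple so each edge has two distinct endpoints.

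For the second identity, I would fix two vertices $v, w \in V(G)$ and similarly compute
$$(B_G B_G^T)_{v, w} = \sum_{e \in E(G)} b_{v, e} b_{w, e} \, ,$$
which equals the number of edges incident to both $v$ and $w$. Again a case split: if $v = w$, this counts the edges incident to $v$, which in the $d$-regular case is exactly $d$, producing the diagonal entries of $d\,\Id_{V(G)}$; if $v \neq w$ and $v \-- w$, the only contribution is the edge $\{v, w\}$ itself, giving $1 = (A_G)_{v, w}$; otherwise the sum is $0$.

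There is no real obstacle here — both identities reduce to the double-counting observation that for a matrix built from a 0/1 incidence relation, products of the form $MM^T$ and $M^TM$ count co-incidences. The only points to be careful about are dimensional consistency (which is why I correct the statement to $B_G^T B_G$) and restricting to simple graphs so that no edge contributes twice to a single vertex and no two edges span the same pair of vertices; these are already in force by the default convention set in the graph definition.
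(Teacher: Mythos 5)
Your proof is correct and complete: both identities follow from the entry-wise double-counting you describe, and your case analysis (using simplicity of $G$ so that each edge has two distinct endpoints and no pair of vertices carries two edges) is exactly what is needed. The paper itself omits the proof of this proposition, so there is no argument to compare against; your computation is the standard one. Your correction of the first identity to $B_G^T B_G = A_{L(G)} + 2\,\Id_{E(G)}$ is also justified, since the paper defines $B_G$ as a $V(G)\times E(G)$ matrix, making $B_G B_G^T$ a $V(G)\times V(G)$ matrix that cannot equal a matrix indexed by $E(G)$; the statement as printed is a typo and your reading is the intended one.
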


\begin{proof}
This proof is not presented here.
\end{proof}

\subsubsection*{Schwenks theorem}

A \textbf{cover} of a graph $G$ with a collection of graphs $(G_1, \ldots , G_k)$ is a collection of bijective maps $f_i: V(G_i) \to V(G)$ such that:
\begin{itemize}
\item If $v \-- w$ is an edge in $G_i$, then $f_i(v) \-- f_i(w)$ is an edge in $G$.

\item If $v \-- w $ is an edge in $G$, there is a unique $i$ such that $f_i^{-1}(v) \-- f_i^{-1}(w)$ is an edge in $G_i$.
\end{itemize}

\begin{lm}
The spectrum of the Petersen graph is 
$$ \{ 3, 1, 1, 1, 1, 1, -2, -2, -2, -2 \} \, .$$
\end{lm}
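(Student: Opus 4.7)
The plan is to realise the Petersen graph $P$ as the complement of the line graph $L(K_5)$ (i.e.\ as the Kneser graph $K(5,2)$: vertices are $2$-subsets of $[5]$, adjacent when disjoint), and then use the two formulas for $B_G B_G^T$ together with the complement rule from \cref{lm:prelspec}. The computation splits into three quick steps.

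\textbf{Step 1: Spectrum of $K_5$.} Since $K_5$ is $4$-regular, $\lambda_1 = 4$ with eigenvector $\frac{1}{\sqrt 5}(1,\dots,1)^T$. The matrix $A_{K_5}+\Id$ is the all-ones matrix, of rank $1$, so the remaining eigenvalue is $-1$ with multiplicity $4$. Thus $\spec\, K_5 = \{4,-1,-1,-1,-1\}$.

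\textbf{Step 2: Spectrum of $L(K_5)$.} Using $B_{K_5}B_{K_5}^T = A_{K_5} + 4\,\Id_{V(K_5)}$ (the $d$-regular formula with $d=4$), the $5\times 5$ matrix $B_{K_5}B_{K_5}^T$ has spectrum $\{8,3,3,3,3\}$. Since $B^T B$ and $BB^T$ share the same nonzero eigenvalues with the same multiplicities, and $B_{K_5}$ is $5\times 10$, the $10\times 10$ matrix $B_{K_5}^T B_{K_5} = A_{L(K_5)} + 2\,\Id_{E(K_5)}$ has spectrum $\{8,3,3,3,3,0,0,0,0,0\}$. Subtracting $2$ from each eigenvalue gives
$$\spec\, L(K_5) = \{6,1,1,1,1,-2,-2,-2,-2,-2\}.$$

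\textbf{Step 3: Complementing.} The graph $L(K_5)$ is $6$-regular on $n=10$ vertices, and $P = \overline{L(K_5)}$ (two $2$-subsets of $[5]$ are disjoint iff the corresponding edges of $K_5$ are non-adjacent). By the complement rule in \cref{lm:prelspec}, the new top eigenvalue is $n-d-1 = 10-6-1 = 3$, while the remaining eigenvalues are transformed by $\lambda \mapsto -1-\lambda$. Applying this to $\{1,1,1,1,-2,-2,-2,-2,-2\}$ yields $\{-2,-2,-2,-2,1,1,1,1,1\}$, and together with the leading $3$ we obtain
$$\spec\, P = \{3,1,1,1,1,1,-2,-2,-2,-2\},$$
as desired.

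The only non-routine step is the identification $P \cong \overline{L(K_5)}$, which one verifies by defining the bijection sending a vertex $\{i,j\}$ of $L(K_5)$ to the edge $ij$ of $K_5$; two such vertices are non-adjacent in $L(K_5)$ exactly when the underlying edges share no endpoint, i.e.\ when the $2$-subsets are disjoint, which is the adjacency rule of the Kneser graph $K(5,2) = P$. Everything else is a mechanical application of \cref{lm:prelspec} and the proposition on $B_G B_G^T$.
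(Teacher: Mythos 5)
Your proof is correct, and in fact the paper leaves this lemma unproved (its proof environment is empty), so there is no argument of the paper to compare against; your route — identify $P$ with $\overline{L(K_5)}$ via the Kneser description $K(5,2)$, pass from $\spec K_5 = \{4,-1,-1,-1,-1\}$ to $\spec L(K_5) = \{6,1,1,1,1,-2,-2,-2,-2,-2\}$ using the incidence-matrix identities and the fact that $B^TB$ and $BB^T$ share nonzero eigenvalues, then apply the complement rule of \cref{lm:prelspec} — is the standard argument and every step checks out. One small remark: the paper's proposition on incidence matrices states both identities with the product $B_G B_G^T$, but the line-graph identity must be read as $B_G^T B_G = A_{L(G)} + 2\,\Id_{E(G)}$ (an $E(G)\times E(G)$ matrix), since $B_G$ is $V(G)\times E(G)$ in the paper's convention; you implicitly use this corrected form, and it is precisely what makes your padding-with-zeros step from the $5\times 5$ spectrum $\{8,3,3,3,3\}$ to the $10\times 10$ spectrum legitimate. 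It would be worth stating that correction explicitly if this argument is inserted as the lemma's proof.
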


\begin{proof}

\end{proof}

\begin{thm}[Schwenk's theorem]\label{thm:schwenk}
There is no cover of $K_{10}$ with $(P, P, P)$, that is using three copies of the Petersen graph.
\end{thm}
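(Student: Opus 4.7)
The plan is to encode a hypothetical cover $(P,P,P)$ of $K_{10}$ as a matrix identity and then extract a contradiction from the Petersen spectrum computed in the previous lemma.

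First I would translate the covering axioms into linear algebra. For each $i$, the bijection $f_i$ realises a spanning subgraph $H_i \subseteq K_{10}$ isomorphic to $P$, and the two axioms of a cover together say that $E(K_{10}) = E(H_1) \uplus E(H_2) \uplus E(H_3)$. Fixing one common vertex order of $K_{10}$ throughout, this partition becomes
$$A_{K_{10}} \;=\; A_{H_1} + A_{H_2} + A_{H_3}\, .$$

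Next I would restrict to the $9$-dimensional subspace $W \coloneqq \one^{\perp} \subseteq \R^{10}$. Since each $H_i$ is $3$-regular, \cref{lm:prelspec} gives that $\one$ is an eigenvector of each $A_{H_i}$ with eigenvalue $3$, so each $A_{H_i}$ preserves $W$, and on $W$ we obtain
$$A_{H_1}\big|_W + A_{H_2}\big|_W + A_{H_3}\big|_W \;=\; A_{K_{10}}\big|_W \;=\; -\Id_W,$$
the last equality because $A_{K_{10}} = J - \Id$ with $J$ the all-ones matrix, which vanishes on $W$. By the Petersen spectrum $\{3,1^5,(-2)^4\}$, each $A_{H_i}|_W$ has only the eigenvalues $1$ (multiplicity $5$) and $-2$ (multiplicity $4$). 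I would therefore introduce
$$Q_i \;\coloneqq\; \tfrac{1}{3}\bigl(A_{H_i}\big|_W + 2\,\Id_W\bigr),$$
which is precisely the orthogonal projection of $W$ onto the $5$-dimensional eigenspace $V_i \subseteq W$ of $A_{H_i}$ for the eigenvalue $1$. Summing the three defining relations,
$$Q_1 + Q_2 + Q_3 \;=\; \tfrac{1}{3}\bigl(-\Id_W + 6\,\Id_W\bigr) \;=\; \tfrac{5}{3}\,\Id_W\, .$$

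Finally, I would close with a two-line dimension count. Each $V_i$ has dimension $5$ inside the $9$-dimensional space $W$, so $\dim(V_1 \cap V_2) \geq 5 + 5 - 9 = 1$. Picking any $0 \neq v \in V_1 \cap V_2$, the displayed identity forces
$$Q_3 v \;=\; \tfrac{5}{3} v - Q_1 v - Q_2 v \;=\; -\tfrac{1}{3} v,$$
which is impossible because $Q_3$ is an orthogonal projection and thus has only eigenvalues $0$ and $1$. The one creative step — and hence the main obstacle — is spotting the shift-and-rescale $Q_i = \tfrac{1}{3}(A_{H_i}|_W + 2\,\Id_W)$ that turns the two-eigenvalue structure of the Petersen spectrum into a rigid statement about three $5$-dimensional subspaces of a $9$-dimensional ambient space; once this normalisation is in place, the contradiction becomes an automatic pigeonhole on dimensions.
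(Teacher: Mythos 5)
Your proof is correct and follows essentially the same route as the paper: write the cover as $A_{H_1}+A_{H_2}+A_{H_3}=\one\one^T-\Id$, use that the two $5$-dimensional eigenvalue-$1$ eigenspaces inside $\one^{\perp}$ (dimension $9$) must intersect non-trivially, and conclude that the third copy would need a forbidden eigenvalue. Your projection normalisation $Q_i=\tfrac13(A_{H_i}|_W+2\Id_W)$ is only a cosmetic repackaging — the relation $Q_3 v=-\tfrac13 v$ is exactly the paper's $A_3 v=-3v$, contradicting the Petersen spectrum.
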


\begin{proof}[Proof of \cref{thm:schwenk}]
If we have a cover of $K_{10}$ with three copies of $P$, then there are three matrices $A_1, A_2, A_3$ that arise from the adjadency matrix of $P$ by permuting rows and columns, that satisfy
$$A_1 + A_2 + A_3 = A_{K_{10}} = \mathbb{1}\mathbb{1}^T - \Id_{10}\, . $$

Observe that $\mathbb{1}$ is an eigenvector of $A_1, A_2, A_3$ because the Petersen graph is $d$-regular.

Because they arise by appplying a permutation to the rows and columns of the adjacency matrix of the Petersen graph, the three matrices $A_1, A_2, A_3 $ have the same spectrum.

Let $V^1_1$ be the eigenspace of $A_1$ corresponding to the eigenvector $1$.
Let $V^2_1$ be the eigenspace of $A_2$ corresponding to the eigenvector $1$.
These are five dimensional spaces orthogonal to $\mathbb{1}$, so their intersection has to be non-trivial.
Let $\vv \in V^1_1 \cap V^2_1$ be non-zero vector.
Then
\begin{align*}
& A_1 \vv + A_2 \vv + A_3 \vv  = (\mathbb{1}\mathbb{1}^T - \Id_{10}) \vv \\
\Leftrightarrow & 2 \vv + A_3 \vv = - \vv \\
\Leftrightarrow & A_3 \vv = - 3 \vv \, ,
\end{align*}
which is impossible, because $-3$ is not in the spectrum of $A_3$.
\end{proof}

\subsubsection*{Hoffman bound and spectral gaps}

\begin{lm}\label{lm:spectral_ineq}
Let $G$ be a $d$-regular graph on $n$ vertices, with spectrum $\{ \lambda_1 \leq \ldots \leq \lambda_n\}$.
Then we have
\begin{itemize}
\item $\sum_{i \-- j} (\vx_i - \vx_j)^2 \leq (d - \lambda_n)\sum_i x_i^2$.

\item If $\sum_i \vx_i = 0$ then $\sum_{i \-- j} (\vx_i - \vx_j)^2 \geq (d - \lambda_2)\sum_i x_i^2$.
\end{itemize}
\end{lm}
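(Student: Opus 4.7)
The plan is to recognize the sum on the left as a quadratic form associated to the graph Laplacian and then diagonalize using the orthonormal eigenbasis provided by \cref{thm:spect}. Concretely, I first expand
\[
\sum_{i \-- j} (\vx_i - \vx_j)^2 = \sum_{i \-- j} (\vx_i^2 - 2 \vx_i \vx_j + \vx_j^2) = \sum_i \deg(i)\, \vx_i^2 - \vx^T A_G \vx = d\, \|\vx\|^2 - \vx^T A_G \vx,
\]
where the last step uses $d$-regularity. Writing $L \coloneqq d\,\Id - A_G$, this becomes $\vx^T L \vx$, and $L$ shares the orthonormal eigenbasis $\{\vv_i\}$ of $A_G$ with eigenvalues $d - \lambda_i$ in the corresponding positions. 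Note that I am interpreting the spectrum as ordered $\lambda_1 \geq \cdots \geq \lambda_n$, in agreement with the convention fixed after \cref{thm:spect}; in particular $\lambda_1 = d$ by \cref{lm:prelspec} and $\lambda_n$ is the smallest eigenvalue of $A_G$.

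Next I expand $\vx = \sum_{i=1}^n c_i \vv_i$ in this basis. Orthonormality gives $\|\vx\|^2 = \sum_i c_i^2$ and
\[
\vx^T L \vx = \sum_{i=1}^n c_i^2 (d - \lambda_i).
\]
For the first inequality, since $\lambda_i \geq \lambda_n$ for every $i$, we have $d - \lambda_i \leq d - \lambda_n$, hence
\[
\sum_{i \-- j} (\vx_i - \vx_j)^2 = \sum_{i=1}^n c_i^2 (d - \lambda_i) \leq (d - \lambda_n) \sum_{i=1}^n c_i^2 = (d - \lambda_n) \|\vx\|^2,
\]
as required.

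For the second inequality, I use the hypothesis $\sum_i \vx_i = 0$, which is precisely the statement that $\vx \perp \mathbb{1}$. Since $\vv_1 = \tfrac{1}{\sqrt{n}}\mathbb{1}$ by \cref{lm:prelspec}, this forces $c_1 = 0$. Thus
\[
\vx^T L \vx = \sum_{i=2}^n c_i^2 (d - \lambda_i) \geq (d - \lambda_2)\sum_{i=2}^n c_i^2 = (d - \lambda_2)\|\vx\|^2,
\]
using that $\lambda_i \leq \lambda_2$ for $i \geq 2$. The only subtle point is the bookkeeping of the eigenvalue ordering (and noticing that the hypothesis $\sum \vx_i = 0$ is exactly what kills the top eigencomponent so that the bound sharpens from $d - \lambda_n$ to $d - \lambda_2$); apart from that the argument is routine.
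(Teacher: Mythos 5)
Your proof is correct and follows essentially the same route as the paper's: expand the edge sum into $d\,\vx^T\vx - \vx^T A_G \vx$, diagonalize in the orthonormal eigenbasis from \cref{thm:spect}, and bound the coefficients $d-\lambda_i$, with the hypothesis $\sum_i \vx_i = 0$ killing the component along $\vv_1 = \tfrac{1}{\sqrt{n}}\mathbb{1}$. Naming the quadratic form as the Laplacian $L = d\,\Id - A_G$ and your explicit fix of the eigenvalue ordering (the lemma's statement has the inequalities reversed relative to the convention actually used) are only cosmetic differences.
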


\begin{proof}
We start with a rearrangement:
\begin{align*}
\sum_{i \-- j} (\vx_i - \vx_j)^2 =& \sum_{i \-- j}\left( \vx_i^2 - 2 \vx_i \vx_j + \vx_j^2\right) \\
=& \sum_{i= 1}^n d \vx_i^2 - 2\sum_{i\-- j} \vx_i \vx_j \\
=& d \vx \vx^T - \vx A_G \vx^T\, . 
\end{align*}

Recall that we write $\vv_1, \ldots , \vv_n$ for the orthonormal eigenbasis of $A_G$.
Write $\vx = \sum_{i=1}^n \alpha_i \vv_i$.
In this way, we have
$$\vx \vx^T = \sum_{i=1}^n \sum_{j=1}^n \alpha_i \alpha_j \vv_i \vv_j^T = \sum_{i=1}^n \alpha_i^2\, , $$
because $\vv_i \vv_j^T = 0$ for $i \neq j$, and $\vv_i \vv_i^T = ||\vv_i||^2 = 1$.

Using the same facts, we have
\begin{align*}
\vx A_G\vx^T =& \sum_{i=1}^n \sum_{j=1}^n \alpha_i \alpha_j \vv_i A_G\vv_j^T\\
=& \sum_{i=1}^n \sum_{j=1}^n \alpha_i \alpha_j \lambda_j \vv_i \vv_j^T \\
=&\sum_{i=1}^n  \alpha_i^2 \lambda_i\, . 
\end{align*}

We conclude that
$$ \sum_{i \-- j} (\vx_i - \vx_j)^2 = \vx \vx^T - \vx A_G \vx^T = \sum_{i=1}^n  \alpha_i^2 (d  - \lambda_i) \, . $$

The first item follows from $d  - \lambda_i \leq d  - \lambda_n$, along with $\vx \vx^T = \sum_{i=1}^n \alpha_i^2$.
For the second item we are given that $\vx (1 \cdots 1)^T = \sum_{i=1}^n \vx_i = 0$, but \cref{lm:prelspec} predicts that $\vv_1 = \frac{1}{\sqrt{n}} (1 \cdots 1)^T$, so
$$0 = \vx \vv_1^T = \sum_{i=1}^n \alpha_i \vv_i \vv_1^T = \alpha_1 \, .$$

We conclude with 
$$ \sum_{i \-- j} (\vx_i - \vx_j)^2 = \sum_{i=1}^n  \alpha_i^2 (d  - \lambda_i) = \sum_{i=2}^n  \alpha_i^2 (d  - \lambda_i) \geq \sum_{i=2}^n  \alpha_i^2 (d  - \lambda_2) = \vx \vx^T(d-\lambda_2) \, . $$
\end{proof}

\begin{prop}[Hoffman bound]\label{prop:indep_bound}
Let $G$ be a $d$-regular graph with lowest eigenvalue $\lambda_n$.
Then, the independence number $\alpha(G)$ satisfies
$$\alpha(G) \leq n \frac{-\lambda_n}{d - \lambda_n} = \frac{n}{\frac{d}{-\lambda_n} + 1}\, . $$
\end{prop}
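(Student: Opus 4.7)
The plan is to apply the first inequality of Lemma \ref{lm:spectral_ineq} to a carefully shifted indicator vector of a maximum independent set. Fix an independent set $S \subseteq V(G)$ with $|S| = \alpha(G) \eqqcolon \alpha$; if $\alpha = 0$ the bound is trivial, so assume $\alpha \geq 1$. I will choose the \emph{centered} indicator vector $\vx \in \R^{V(G)}$ defined by $\vx_i = 1 - \alpha/n$ for $i \in S$ and $\vx_i = -\alpha/n$ for $i \notin S$. The defining feature of this choice is $\sum_i \vx_i = 0$, which makes $\vx$ orthogonal to the top eigenvector $\vv_1 = \frac{1}{\sqrt{n}}(1, \ldots, 1)^T$ associated to the eigenvalue $d$; this orthogonality is what will let the spectral inequality yield a non-trivial bound.

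I would then compute the two quantities appearing in the first item of Lemma \ref{lm:spectral_ineq}. A direct expansion gives $\sum_i \vx_i^2 = \alpha(1-\alpha/n)^2 + (n-\alpha)(\alpha/n)^2 = \alpha(n-\alpha)/n$. For the edge sum, independence of $S$ means no edge lies inside $S$; each of the $d\alpha$ edges with exactly one endpoint in $S$ contributes $(\vx_i - \vx_j)^2 = 1$, while edges inside $V \setminus S$ contribute $0$. Hence $\sum_{i \-- j}(\vx_i - \vx_j)^2 = d\alpha$.

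Substituting into the first inequality of Lemma \ref{lm:spectral_ineq} yields $d\alpha \leq (d - \lambda_n)\,\alpha(n-\alpha)/n$. Dividing by $\alpha > 0$ and rearranging gives $dn \leq (d - \lambda_n)(n - \alpha)$, and hence $\alpha \leq n - \frac{dn}{d - \lambda_n} = n \cdot \frac{-\lambda_n}{d - \lambda_n}$, as desired.

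The only real obstacle is the choice of vector: applied to the raw indicator of $S$, the spectral inequality would only give $d\alpha \leq (d - \lambda_n)\alpha$, which merely recovers the trivial $\lambda_n \leq 0$. Subtracting the mean kills the $\vv_1$-component while leaving the crossing-edge sum unchanged, and this orthogonality is exactly what sharpens Lemma \ref{lm:spectral_ineq} into Hoffman's bound.
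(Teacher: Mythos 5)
Your proof is correct and is essentially the paper's own argument: the paper applies the same first inequality of \cref{lm:spectral_ineq} to the vector with entries $n-a$ on the independent set and $-a$ off it, which is exactly $n$ times your centered indicator, so the computations coincide up to that scaling. Nothing further is needed.
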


\begin{proof}
Let $A$ be an independent set in $G$ of size $a$.
Define $\vx \in \R^{V(G)}$ via 
$$ \vx_i =\begin{cases*}
      & n-a \text{ if $i \in A$,}\\
      & -a \text{ otherwise.}
    \end{cases*} $$

In this way, $\sum_i \vx_i  = 0$.
Furthermore, $\sum_i \vx_i^2 = a(n-a)^2 + (n-a)a^2 = n a (n-a)$.
On the other hand, because $G$ is $d$-regular,
$$ \sum_{i \-- j} (\vx_i - \vx_j)^2 = \sum_{i \in A}\sum_{j \not\in A}(\vx_i - \vx_j)^2 = dan^2\, . $$

Applying this to \cref{lm:spectral_ineq} gives us $dan^2 \leq (d-\lambda_n)an (n-a)$, which can be rearranged to $a \leq n \frac{-\lambda_n}{d - \lambda_n}$.
\end{proof}

\begin{proof}[Proof of \cref{prop:EKR}]
Let $G$ be a graph with $\binom{|E|}{\lambda}$ vertices, which we identify with the subsets of $E$ of size $\lambda$.
We draw an edge $A \-- B$ if $A \cap B = \emptyset $.
The family $\FF $ as described is an independent set in the graph $G$, therefore $|\FF | \leq \alpha (G)$.

On the other hand, $G$ is $d$-regular, with $d = \binom{n - \lambda}{ \lambda}$.
We now see that $\lambda_n$, the smallest eigenvalue of the spectrum of $G$, is $-\binom{|E| - \lambda - 1}{\lambda - 1}$.

It follows from \cref{prop:indep_bound} that $\alpha(G) \leq |E|\frac{-\lambda_n}{d - \lambda_n} = |E|\frac{\binom{|E| - \lambda - 1}{\lambda - 1}}{\binom{n - \lambda}{\lambda} + \binom{|E| - \lambda - 1}{\lambda - 1}} = \binom{|E|-1}{\lambda - 1}$.
\end{proof}

\subsubsection*{Erd\"os-Ko-Rado}

Recall from above the Erd\"os-Ko-Rado theorem:

\begin{thm}\label{thm:EKR}
Let $E$ be a finite set, and $\mathcal F \subseteq 2^E$, such that for any $A, B \in \FF$ we have $A \cap B \neq \emptyset $.

Then $|\FF | \leq \binom{|E|-1}{r-1}$.
Furthermore, the $\FF$ that attain equality are precisely the ones given by $\FF_i \coloneqq \{ V \in \binom{E}{r} | i \in V\}$, where $i \in E$.
\end{thm}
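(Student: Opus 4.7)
The upper bound $|\FF| \leq \binom{|E|-1}{r-1}$ was already established via the Hoffman bound in \cref{prop:EKR}, so only the characterisation of extremal families is new. My plan is to re-derive the bound through Katona's cyclic double count, since this route gives combinatorial information about $\FF$'s arc structure that can be leveraged for the equality case.

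Set $n=|E|$ and identify $E$ with $[n]$. Count pairs $(F,\sigma)$ where $\sigma$ is one of the $(n-1)!$ cyclic orderings of $[n]$ and $F \in \FF$ is an \emph{arc} of $\sigma$ (a block of $r$ consecutive positions). Each fixed $F$ is an arc in exactly $r!(n-r)!$ cyclic orderings, so the pair count equals $|\FF|\cdot r!(n-r)!$. The key input is Katona's lemma: for any single $\sigma$, at most $r$ of its arcs belong to $\FF$. One proves this by fixing an arc $A \in \FF$ of $\sigma$ and grouping the $2(r-1)$ other arcs of $\sigma$ meeting $A$ into $r-1$ antipodal pairs of mutually disjoint arcs (where one uses $n \geq 2r$); the intersecting property allows at most one arc from each pair. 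Combining the counts gives $|\FF|\cdot r!(n-r)! \leq r\cdot (n-1)!$, recovering the bound.

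For equality, suppose $|\FF|=\binom{n-1}{r-1}$, so Katona's lemma is tight for every $\sigma$. A short case analysis on the antipodal pairing shows that the only way $r$ arcs of $\sigma$ can be pairwise intersecting is that they form a \emph{consecutive run} $\{a,\dots,a+r-1\},\{a+1,\dots,a+r\},\dots,\{a+r-1,\dots,a+2r-2\}$, all sharing the central element $a+r-1$. Thus for every $\sigma$ there is a distinguished element $i_\sigma \in [n]$ contained in every arc of $\sigma$ that lies in $\FF$. Note that for $n=2r$ this dichotomy fails, and indeed in that case there are extremal families that are not stars (any transversal of the $\binom{2r}{r}/2$ complementary pairs works); the stated equality characterisation therefore tacitly assumes $n>2r$.

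The main obstacle is then promoting $i_\sigma$ to a global element $i\in[n]$ shared by every $F \in \FF$. My plan is a neighbour-swap argument: if $\sigma'$ is obtained from $\sigma$ by transposing two adjacent cyclic positions, the tight arc-runs for $\sigma$ and $\sigma'$ overlap in $r-1$ of their $r$ arcs, and this large overlap forces $i_{\sigma'}=i_\sigma$. Since the graph of cyclic orderings under adjacent transpositions is connected, $i_\sigma$ is in fact a constant $i\in[n]$. Finally, each $F \in \FF$ appears as an arc in many cyclic orderings, and in any such $\sigma$ it must lie in the Katona-tight run, which by the previous step contains $i$; hence $i \in F$ for every $F \in \FF$, so $\FF \subseteq \FF_i$. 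The size equality $|\FF|=\binom{n-1}{r-1}=|\FF_i|$ then forces $\FF = \FF_i$.
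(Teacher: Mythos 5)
Your route is genuinely different from the paper's: the paper proves only the inequality, via the Kneser graph and the Hoffman bound (\cref{prop:indep_bound}), and never actually addresses the equality clause, whereas you rederive the bound by Katona's cycle count and then attack uniqueness. The double count, Katona's lemma with the antipodal pairing, the observation that equality forces every cyclic order $\sigma$ to carry exactly $r$ arcs of $\FF$, the single-order structure statement (for $n>2r$ those $r$ pairwise intersecting arcs form a run through one point $i_\sigma$), and the caveat that the uniqueness claim genuinely fails at $n=2r$ are all correct.

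The gap is in the globalization step. Sharing $r-1$ arcs does \emph{not} force $i_{\sigma'}=i_\sigma$: each shared arc lies in both runs and hence contains both $i_\sigma$ and $i_{\sigma'}$, and in a cyclic order the number of length-$r$ windows containing two prescribed elements at cyclic distance $d\geq 1$ is $r-d$; so an overlap of $r-1$ only yields $d\leq 1$, i.e.\ $i_{\sigma'}$ is either $i_\sigma$ or one of its two $\sigma$-neighbours. The swap argument as stated therefore permits the centre to drift by one position per transposition, and connectivity of the adjacent-transposition graph then gives nothing. Nor can you route around the issue: the only swaps that could move the centre are exactly those whose two altered windows meet the current run (for instance swapping $i_\sigma$ with a cyclic neighbour), and such swaps cannot be avoided when connecting two arbitrary cyclic orders. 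So the decisive step of the equality case is asserted rather than proved; you need an extra argument, e.g.\ analysing what a centre-moving swap would force on the one non-shared arc of each run and deriving a contradiction with tightness, or bypassing constancy of $i_\sigma$ altogether by showing directly that if some $G\in\FF$ avoids the centre $i$ of one tight order, then some cyclic order realising $G$ as an arc cannot have its $\FF$-arcs forming a star. Until that is supplied, only the bound $|\FF|\leq\binom{|E|-1}{r-1}$ is established by your proposal.
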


Note that, if $n < 2r$ the theorem is straightforward, as $\binom{|E|}{r} \leq \binom{|E|-1}{r-1}$.
We will therefore consider only the case where $n \geq 2r$.

For that, we construct the \textbf{Kresner graph}.

\begin{defin}[Kneser graph]
Given a set $E$ and an integer $r$, the Kneser graph $K(E, r)$ has vertex set $\binom{E}{r} = \{ V \subseteq E : |V| = r\}$, and an edge $V \-- W$ if $V \cap W = \emptyset$.
\end{defin}

\begin{smpl}
If we take $E = \{1, 2, 3, 4, 5\}$ and $r = 3$, 
\end{smpl}

\begin{lm}
The spectrum of $K(E, r)$ is 
$$ \left\{(-1)^i\binom{|E| -r - i}{r - i}^{\times \binom{|E|}{i} - \binom{|E|}{i-1}} \Big| i = 0, 1 \ldots , r \right\} \, . $$
\end{lm}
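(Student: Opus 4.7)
The plan is to diagonalise $A := A_{K(E, r)}$ by exhibiting an orthogonal decomposition of $\R^{\binom{E}{r}}$ compatible with a natural filtration. Write $n := |E|$. For each $0 \leq i \leq r$ and each $T \in \binom{E}{i}$, set
$$f_T := \sum_{\substack{B \in \binom{E}{r} \\ B \supseteq T}} e_B \in \R^{\binom{E}{r}}, \qquad U_i := \spn\{f_T : |T| = i\},$$
and $U_{-1} := \{0\}$. The identity $f_T = \frac{1}{r-i}\sum_{x \in E \setminus T} f_{T \cup \{x\}}$ gives $U_i \subseteq U_{i+1}$, and $U_r = \R^{\binom{E}{r}}$ since $f_B = e_B$ when $|B|=r$.

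The central computation is the action of $A$ on a generator $f_T$ with $|T| = i$. Reading coefficients, for each $C \in \binom{E}{r}$ the coefficient of $e_C$ in $A f_T$ counts $r$-sets $B$ with $T \subseteq B$ and $B \cap C = \emptyset$; this vanishes unless $C \cap T = \emptyset$, and otherwise equals $\binom{n-r-i}{r-i}$, the number of ways to choose the remaining $r-i$ elements of $B$ from $E \setminus (T \cup C)$. Hence
$$A f_T \;=\; \binom{n-r-i}{r-i}\sum_{\substack{C \in \binom{E}{r}\\ C \cap T = \emptyset}} e_C.$$
Inclusion-exclusion on the constraint $C \cap T = \emptyset$ rewrites the right-hand side as $\binom{n-r-i}{r-i}\sum_{S \subseteq T}(-1)^{|S|} f_S$, and isolating the leading term $S = T$ yields
$$A f_T \;=\; (-1)^i \binom{n-r-i}{r-i}\, f_T \;+\; \text{(a vector in $U_{i-1}$)}.$$
So $A$ preserves each $U_i$ and induces the scalar $\lambda_i := (-1)^i \binom{n-r-i}{r-i}$ on the quotient $U_i / U_{i-1}$.

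Since $A$ is symmetric and preserves both $U_{i-1} \subseteq U_i$, it also preserves $V_i := U_i \cap U_{i-1}^\perp$ (for symmetric $A$ and invariant subspaces $U \subseteq W$, the subspace $W \cap U^\perp$ is invariant too). For $v \in V_i$, the triangular identity above gives $Av - \lambda_i v \in U_{i-1}$; but also $Av,\lambda_i v \in V_i \subseteq U_{i-1}^\perp$, hence $Av - \lambda_i v \in U_{i-1} \cap U_{i-1}^\perp = 0$. Thus $A|_{V_i} = \lambda_i \Id$, producing the orthogonal eigendecomposition
$$\R^{\binom{E}{r}} \;=\; \bigoplus_{i=0}^{r} V_i,$$
which already pins down the set of eigenvalues as $\{\lambda_0, \ldots, \lambda_r\}$.

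The main obstacle is the multiplicity count. The claim $\dim V_i = \binom{n}{i} - \binom{n}{i-1}$ is equivalent, via $\dim V_i = \dim U_i - \dim U_{i-1}$, to the assertion that the inclusion map $e_T \mapsto f_T$ from $\R^{\binom{E}{i}}$ to $\R^{\binom{E}{r}}$ is injective for every $0 \leq i \leq r$. This is the classical rank theorem of Gottlieb for subset-inclusion matrices, valid precisely when $n \geq i + r$, and in particular under our hypothesis $n \geq 2r$. I would establish it by analysing the Gram matrix on $\R^{\binom{E}{i}}$ with entries $\binom{n - |T \cup T'|}{r - |T \cup T'|}$, showing it is positive definite by rerunning the same filtration/inclusion-exclusion machinery one dimension lower; an equivalent route is to decompose $\R^{\binom{E}{r}}$ as an $S_E$-module and apply Schur's lemma, using that the two-row Specht modules $S^{(n-i,i)}$ that appear have dimension $\binom{n}{i} - \binom{n}{i-1}$.
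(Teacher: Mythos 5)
The paper does not actually include a proof of this lemma (the proof environment is left empty), so there is nothing to compare your argument against; judged on its own terms, you have chosen the standard triangularisation argument for the Johnson/Kneser scheme, and the eigenvalue half is complete and correct. The identity $A f_T = \binom{n-r-i}{r-i}\sum_{S\subseteq T}(-1)^{|S|}f_S$ is verified correctly, the passage from the $A$-invariant flag $U_0\subseteq\cdots\subseteq U_r=\R^{\binom{E}{r}}$ to the orthogonal summands $V_i=U_i\cap U_{i-1}^{\perp}$ is sound (symmetry of $A$ is used exactly where it is needed), and the conclusion that $A$ acts on $V_i$ by the scalar $(-1)^i\binom{n-r-i}{r-i}$ is airtight; this already yields the list of eigenvalues.

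The genuine gap is the multiplicity count, and although you have located it precisely, you have not closed it. Everything reduces to $\dim U_i=\binom{n}{i}$, i.e.\ to injectivity of the map $e_T\mapsto f_T$ (Gottlieb's full-rank theorem for the inclusion matrix, valid for $n\ge i+r$, hence under $n\ge 2r$), and neither of your two sketches is yet a proof. The Gram-matrix route is not a shortcut: positive definiteness of the matrix with entries $\binom{n-|T\cup T'|}{r-|T\cup T'|}$ is exactly equivalent to the rank claim, and carrying out ``the same machinery one dimension lower'' is a second, comparably long triangular computation --- one must show the Gram operator acts on the $j$-th stratum of $\R^{\binom{E}{i}}$ by $\binom{r-j}{i-j}\binom{n-i-j}{r-i}$, which requires evaluating an alternating binomial sum and checking positivity for all $j\le i$ when $n\ge i+r$; it does not follow by merely quoting what you already did. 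The Specht-module route (two-row modules $S^{(n-j,j)}$ plus Schur's lemma) is valid but imports symmetric-group representation theory that neither your argument nor these notes develop. So as written the eigenvalues are proved, while the claimed multiplicities $\binom{n}{i}-\binom{n}{i-1}$ rest on a correctly attributed classical theorem that still needs either an explicit citation or an actual argument.
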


\begin{proof}
This proof is not presented here.
\end{proof}

\begin{proof}[Proof of \cref{thm:EKR}]
Let $G = K(E, r)$.
The set $\FF$ is an independent set in $G$, so it is enough to show that $\alpha(G) \leq \binom{|E|-1}{r-1}$.

Let $n = |V(G)| = \binom{|E|}{r}$.
Note that  $G$ is $d$-regular with $d = \binom{|E|-r}{r}$ and $\lambda_n = -\binom{|E| -r -1 }{r - 1}$.
By the Hoffman bound, we have
$$ \alpha(G) \leq -\frac{n\lambda_n}{d - \lambda_n} = \binom{|E|-1}{r-1} \, ,$$
as desided.
\end{proof}

\subsubsection*{Max cut bounds}

\begin{defin}[Max cut of a graph]
A \textbf{cut} of a graph $G$ is a partition $V = R \uplus L$.
The \textbf{weight} of a cut $V = R \uplus L$ is the number of edges that cross the cut, that is edges $e = a \-- b$ such that $a \in R$ and $b \in L$, denoted $\we(R, L)$.

The \textbf{max cut} of a graph is defined as
$$ \maxcut (G) \coloneqq \max_{V = R \uplus L} \we(R, L) \, . $$
\end{defin}

\begin{prop}
$$\maxcut(G) \geq \frac{1}{2}|E| \, .$$
\end{prop}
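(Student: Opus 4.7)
The plan is to use the probabilistic method, which gives the cleanest argument. I would pick a random partition $V = R \uplus L$ by assigning each vertex $v \in V$ independently to $R$ or $L$, each with probability $1/2$. Let $X_e$ be the indicator random variable that the edge $e = \{a,b\}$ crosses the cut. Then $\PP[X_e = 1] = \PP[a \in R, b \in L] + \PP[a \in L, b \in R] = 1/2$, since the two endpoints are assigned independently.

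By linearity of expectation, the expected weight of the random cut is
$$ \mathbb{E}[\we(R, L)] = \sum_{e \in E} \PP[X_e = 1] = \frac{|E|}{2}\, . $$
Since the random variable $\we(R, L)$ attains value at least its expectation with positive probability, there exists some partition $V = R \uplus L$ with $\we(R, L) \geq |E|/2$, which gives $\maxcut(G) \geq \frac{1}{2}|E|$.

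There are no real obstacles here; the only thing to be careful about is that one is genuinely computing the probability of an edge crossing and not, say, double counting the two orientations of the edge. As a remark, a purely deterministic proof via local search is also available: start with any partition, and as long as some vertex has strictly more neighbours on its own side than on the opposite side, move it across. Each such move strictly increases $\we(R,L)$, so the process terminates, and at a local optimum every vertex has at least half of its neighbours on the opposite side, yielding $2\,\we(R,L) \geq \sum_v \deg(v)/2 \cdot 2 = 2|E|$ after a standard double-counting, hence the same bound.
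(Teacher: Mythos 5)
Your probabilistic argument is correct and is exactly the approach the paper intends (the paper only records ``application of the probabilistic method'' without details): independent uniform assignment of vertices, $\PP[e \text{ crosses}] = \tfrac12$, linearity of expectation, and the fact that some outcome attains at least the mean. One small slip in your side remark: at a local optimum the double count gives $2\,\we(R,L) \geq \sum_v \deg(v)/2 = |E|$, hence $\we(R,L) \geq \tfrac12 |E|$, not $2\,\we(R,L)\geq 2|E|$ as you wrote; the main proof is unaffected.
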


\begin{proof}
Application of the probabilistic method.
Proof not given here.
\end{proof}

\begin{prop}
Let $G$ be $d$-regular graph on $n$ vertices, and let $\lambda_n$ be its largest eigenvalue.
Then 
$$\maxcut(G) \leq \frac{1}{2} |E| - \frac{n \lambda_n}{4} = \frac{n (d-\lambda_n)}{4} \, . $$
\end{prop}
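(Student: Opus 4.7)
The plan is to apply the first item of \cref{lm:spectral_ineq} directly to an indicator vector that encodes a cut. Note that, despite the wording of the statement, $\lambda_n$ here must be read as the smallest eigenvalue (the convention of the paper is $\spec\, G = \{\lambda_1 \geq \cdots \geq \lambda_n\}$), since the right-hand side $\frac{n(d-\lambda_n)}{4}$ correctly evaluates to $|E|$ when $\lambda_n = -d$, the bipartite case where the bound should be tight.

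First, fix any cut $V(G) = R \uplus L$ and define the vector $\vx \in \R^{V(G)}$ by $\vx_i = 1$ if $i \in R$ and $\vx_i = -1$ if $i \in L$. Since each coordinate is $\pm 1$, we immediately get $\sum_i \vx_i^2 = n$. For an edge $i \-- j$ of $G$, the quantity $(\vx_i - \vx_j)^2$ equals $4$ if the edge crosses the cut and $0$ otherwise, so
$$\sum_{i \-- j}(\vx_i - \vx_j)^2 = 4\,\we(R, L)\, .$$

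Second, by the first item of \cref{lm:spectral_ineq} applied to this $\vx$, we have
$$4\,\we(R, L) = \sum_{i \-- j}(\vx_i - \vx_j)^2 \leq (d - \lambda_n)\sum_i \vx_i^2 = n(d - \lambda_n)\, ,$$
which rearranges to $\we(R, L) \leq \frac{n(d - \lambda_n)}{4}$. Since this holds for every cut, we obtain $\maxcut(G) \leq \frac{n(d - \lambda_n)}{4}$. The equivalent form $\frac{|E|}{2} - \frac{n\lambda_n}{4}$ is then just a substitution, using that $|E| = nd/2$ because $G$ is $d$-regular.

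I do not anticipate any real obstacle: unlike the Hoffman bound, we are not required to center $\vx$ (so we may use the first item of \cref{lm:spectral_ineq} rather than the more delicate second item), and the indicator of the cut is already the natural test vector, making the argument a one-line application of the spectral inequality.
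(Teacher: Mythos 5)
Your proof is correct: the paper itself omits a proof of this proposition (``This proof is not presented here''), and your argument --- applying the first item of \cref{lm:spectral_ineq} to the $\pm 1$ indicator vector of a cut, so that $\sum_i \vx_i^2 = n$ and $\sum_{i \-- j}(\vx_i-\vx_j)^2 = 4\,\we(R,L)$, then using $|E| = nd/2$ --- is exactly the argument that lemma was set up to deliver, mirroring the paper's proof of the Hoffman bound. Your reading of $\lambda_n$ as the \emph{smallest} eigenvalue is also right; the word ``largest'' in the statement is a slip, consistent with the convention $\spec\, G = \{\lambda_1 \geq \cdots \geq \lambda_n\}$ used elsewhere and with the fact that the bound would otherwise be vacuous for a regular graph.
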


\begin{proof}
This proof is not presented here.
\end{proof}

\subsubsection*{Friendship graphs}

\begin{defin}[Windmills and friendship graphs]
A graph $G$ is said to be a \textbf{windmill} if there is a special vertex $v$, called its \textbf{centre}, that shares an edge with all other vertices and the graph resulting from removing $v$ is a perfect matching.

A graph $G$ is said to be a \textbf{friendship} graph if any two distinct vertices have a unique common neighbour.
\end{defin}

\begin{figure}[h]
\includegraphics[scale=.65]{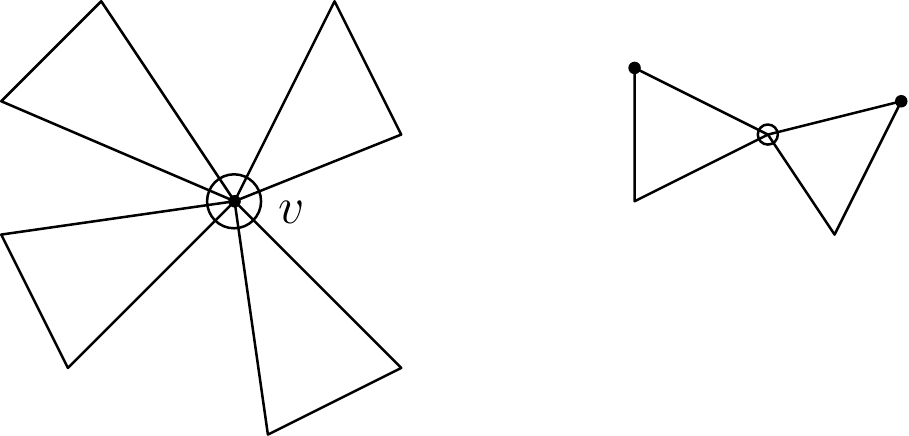}
\caption{\textbf{Left:} A windmill graph, with its centre highlighted. \textbf{Right:} A friendship graph, with a particular common neighbour highlighted.\label{fig:windmill}}
\end{figure}

It is clear that any windmill is a friendship graph.
The following result was established in 1966 and proves the converse.

\begin{thm}[Erd\"os-R\'enyi-Sos theorem]\label{thm:ERS}
Every friendship graph is a windmill.
\end{thm}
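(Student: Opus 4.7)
The plan is to follow the classical two-step approach. In the first (combinatorial) step I would show that if a friendship graph $G$ fails to be a windmill, then $G$ must be $k$-regular for some $k$. In the second (algebraic) step I would write down the matrix equation forced by the friendship property, analyse its spectrum, and show that no such regular graph exists except for the triangle, which itself is the one-bladed windmill.

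For the first step, the key lemma is: if $u, v$ are non-adjacent vertices in a friendship graph, then $\deg(u) = \deg(v)$. The argument sends each $x \in N(u)$ to the unique common neighbour of $x$ and $v$; this defines a map $f : N(u) \to N(v)$ which is injective, because if $f(x_1) = f(x_2) = y$ for distinct $x_1, x_2$ then $u$ and $y$ would be two distinct common neighbours of $\{x_1, x_2\}$ (using $y \neq u$ since $y \sim v \not\sim u$), contradicting the friendship axiom. By symmetry, $\deg(u) = \deg(v)$. Now suppose $G$ has no vertex adjacent to all others. Then for any edge $u \sim v$ I would locate a vertex $w$ non-adjacent to both $u$ and $v$, giving $\deg(u) = \deg(w) = \deg(v)$; a short case analysis rules out the degenerate situations where no such $w$ exists. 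Conversely, if some vertex $v$ is adjacent to every other vertex, friendship applied to pairs avoiding $v$ forces each such pair to have exactly one common neighbour, which must be $v$; this means $G - v$ has no path of length $2$, so it is a disjoint union of edges, and $G$ is a windmill.

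For the second step, let $G$ be a $k$-regular friendship graph on $n$ vertices with adjacency matrix $A$. The friendship condition translates to
\[
A^2 = J + (k-1)\Id,
\]
since $(A^2)_{uv}$ counts common neighbours of $u, v$, which equals $1$ for $u \neq v$ and equals $k$ on the diagonal. Applied to the eigenvector $\mathbb{1}$ (with $A\mathbb{1} = k\mathbb{1}$), this yields $k^2 = n + k - 1$, so $n = k^2 - k + 1$. On the orthogonal complement of $\mathbb{1}$ the matrix $J$ vanishes, so $A^2 = (k-1)\Id$ there, forcing eigenvalues $\pm\sqrt{k-1}$ with multiplicities $s$ and $t$ satisfying $s + t = n - 1 = k(k-1)$. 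From $\tr(A) = 0$ we obtain
\[
k + (s - t)\sqrt{k-1} = 0.
\]
For $k = 2$ this gives $n = 3$ and $G$ is a triangle, i.e.\ the one-bladed windmill. For $k \geq 3$, the integrality of $s - t$ forces $\sqrt{k-1}$ to be rational, hence $k - 1 = m^2$ for some integer $m \geq 2$; then $(t - s) m = m^2 + 1$ shows $m \mid 1$, a contradiction.

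The main obstacle is the combinatorial regularity step: the non-adjacent-degree lemma is quick, but propagating it to a global degree constant without a universal vertex needs the careful extraction of a vertex $w$ non-adjacent to both endpoints of a given edge, together with handling the small-graph boundary cases. Once regularity is established, the spectral argument is short and completely conclusive.
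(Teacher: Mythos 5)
Your proposal is correct and follows essentially the same route as the paper: reduce to the regular case and then use the matrix identity $A^2 = J + (k-1)\Id$ together with the trace-zero and integrality argument to force $k=2$, $n=3$. The only difference is that you also sketch a proof of the regularity reduction (the paper's \cref{lm:FWd}), which the notes state without proof, and your sketch of that combinatorial step is sound.
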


\begin{lm}\label{lm:FWd}
If $G$ is a friendship graph, it is either $d$-regular, or it is a windmill.
\end{lm}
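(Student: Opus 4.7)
The plan is to assume $G$ is a friendship graph which fails to be $d$-regular, and to deduce that $G$ must then be a windmill. The strategy proceeds in three stages: extract a degree constraint from the friendship condition on non-adjacent pairs; use it to locate a vertex adjacent to every other vertex; read off the windmill structure from what remains.

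The preparatory step is the auxiliary claim that non-adjacent vertices in a friendship graph always share the same degree. Given $v \not\sim w$ with unique common neighbor $u$, I would build a map $\phi \colon N(v) \to N(w)$ sending $x \in N(v)$ to the unique common neighbor of $x$ and $w$. Injectivity is forced by the friendship condition: two preimages $x_1 \neq x_2$ with $\phi(x_1) = \phi(x_2)$ would share two common neighbors, namely $v$ and $\phi(x_1)$, contradicting uniqueness. Swapping the roles of $v$ and $w$ shows $\phi$ is a bijection, hence $\deg v = \deg w$.

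The main step shows that when $G$ is not regular, a vertex $v$ of maximum degree $D$ is adjacent to every other vertex. Suppose for contradiction $v \not\sim w$ for some $w$; the auxiliary step forces $\deg w = D$. Now pick any vertex $u$ with $\deg u = d < D$, available by the non-regularity hypothesis; it must be adjacent to both $v$ and $w$, so $u$ is the unique common neighbor of the non-adjacent pair $(v,w)$. This pins $u$ down as the \emph{only} vertex of degree less than $D$. A double count of the pairs $(x,y)$ with $y \in N(u)$ the unique common neighbor of $u$ and $x \in V \setminus \{u\}$ yields $d(D-1) = n-1$; the same count applied to $v$ in place of $u$, combined with $u \in N(v)$, gives $(D-1)^2 + d = n$. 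Subtracting produces $(D-2)(D-d) = 0$, so either $D = d$ (contradicting $d < D$) or $D = 2$. The residual case $D = 2$ can be eliminated by hand: then $u$ has at most one neighbor, which cannot serve as common neighbor of $u$ with each of the $n-1$ remaining vertices.

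Once $v$ is adjacent to every other vertex, the windmill structure follows almost formally. For each $u \neq v$, any neighbor of $u$ in $V \setminus \{v\}$ is automatically a common neighbor of the adjacent pair $(v,u)$, so the friendship condition guarantees exactly one. Hence $G - v$ is a perfect matching on $V \setminus \{v\}$, exhibiting $G$ as a windmill centred at $v$. The main obstacle is the double-counting identity in the middle step: one must correctly isolate the exceptional low-degree vertex $u$ and carefully handle whether or not it lies in the neighborhood over which one is summing, since this is precisely what produces the factor $(D-2)(D-d)$ that collapses the argument.
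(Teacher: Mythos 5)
Your argument is correct and complete; note that the paper itself omits a proof of this lemma (it is stated with ``Not given here''), so there is nothing to compare against, but your write-up would serve as a valid proof. The three stages all check out: the injection $x \mapsto$ (unique common neighbour of $x$ and $w$) is well defined because $w \notin N(v)$, lands in $N(w)$, avoids $v$ since $v \not\sim w$, and is injective by uniqueness of common neighbours, giving $\deg v = \deg w$ for non-adjacent pairs; the uniqueness of the low-degree vertex $u$ and the two double counts $d(D-1) = n-1$ and $(D-1)^2 + (d-1) = n-1$ are right, and the algebra does yield $(D-2)(D-d)=0$; and once the maximum-degree vertex is adjacent to everything, the common-neighbour condition on the pairs $(v,x)$ immediately forces $G - v$ to be a perfect matching. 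One small remark: your ``by hand'' elimination of $D=2$ is phrased loosely (with $d=1$ the single neighbour $y$ of $u$ could a priori be adjacent to all other vertices; the actual obstruction is that the pair $(u,y)$ then has no common neighbour), but the case is in fact vacuous, since $u$ is adjacent to both $v$ and $w$ and hence $d \geq 2$, so $D \geq 3$ and $(D-2)(D-d)=0$ forces $D=d$ directly, a contradiction. With that simplification the residual case disappears entirely.
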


\begin{proof}
Not given here.
\end{proof}

\begin{proof}[Proof of \cref{thm:ERS}]
Assume otherwise, and let $G$ be a friendship graph that is not a windmill.
We can assume that $G$ is $d$-regular, from \cref{lm:FWd}.

Then, 
$A_{G}^2 = 
\begin{bmatrix}
d & 1 & \cdots & 1 & 1\\
1 & d & & 1 & 1\\
\vdots & & \ddots &  & \vdots\\
1 & 1 & & d & 1\\
1 & 1 & \cdots & 1 & d
\end{bmatrix}$.

The spectrum of this matrix can be directly computed as $\{n + d - 1, d-1^{\times n-1}\}$.
It is easy to see that $\spec \, A^2 = \{ \lambda^2 | \lambda \in \spec \, A\}$.
Because $G$ is $d$-regular, $d$ is the largest eigenvalue of $A_G$, and so either $d^2 = n+d-1$ or $d^2 = d-1$.

The first one gives us $n = d^2 - d + 1$ and the latter one is impossible for $d$ integer.
This gives us that the spectrum of $G$ is $\{d, \sqrt{d-1}^{\times a}, -\sqrt{d-1}^{\times b}\}$, where $a, b$ are non-negative integers.

Because $\tr(A_G ) = 0$, we have that $ d + a\sqrt{d-1} - b\sqrt{d-1} = 0$, or $b-a = \frac{d}{\sqrt{d-1}}$.

This means that $\sqrt{d-1} = x$ is an integer that divides $d^2$, but $d^2 = x^2 + 1$ so $x $ divides $1$.
We conclude that either $x = 1$ or $x = -1$.

That is either $d = 2, n = 3$ or $d = 1, n=1$.
These give us the graphs $K_1$ and $K_3$, which also are windmills.
We conclude that any friendship graph is a windmill.
\end{proof}

\subsubsection*{Sensitivity conjecture}

\begin{thm}\label{thm:zhang}
Let $W$ be a subset of $V(C_n)$ of size at least $2^{n-1}+1$.
Then $\Delta(C_n|_W) \geq \sqrt{n}$.
\end{thm}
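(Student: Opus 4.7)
The plan is to follow Huang's argument that resolved the sensitivity conjecture. The key idea is that one cannot directly hope to bound $\Delta(C_n|_W)$ using the adjacency matrix of the hypercube itself (whose eigenvalues are $n-2k$ with large multiplicities), since Cauchy interlacing applied to it is too weak. Instead, one passes to a cleverly chosen \emph{signed} adjacency matrix whose top eigenvalue is $\sqrt{n}$ with multiplicity exactly $2^{n-1}$.

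First I would construct a sequence of $2^n \times 2^n$ symmetric matrices $A_n$ recursively by
\begin{equation*}
A_1 = \begin{pmatrix} 0 & 1 \\ 1 & 0 \end{pmatrix}, \qquad A_n = \begin{pmatrix} A_{n-1} & \Id \\ \Id & -A_{n-1} \end{pmatrix}\, .
\end{equation*}
The nonzero entries of $A_n$ sit exactly at the edges of $C_n$, so $A_n$ is a signing of the adjacency matrix of the hypercube. The first routine step is to verify by induction on $n$ that $A_n^2 = n\,\Id_{2^n}$: expanding the block form of $A_n^2$ and using the inductive hypothesis $A_{n-1}^2 = (n-1)\Id$ gives $n\,\Id$ on the diagonal blocks and $A_{n-1} - A_{n-1} = 0$ off-diagonal. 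Consequently every eigenvalue of $A_n$ is $\pm\sqrt{n}$, and since $A_n$ has trace $0$, the eigenvalue $\sqrt{n}$ has multiplicity exactly $2^{n-1}$.

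Next I would apply Cauchy's interlacing theorem to the principal submatrix $A_W$ of $A_n$ indexed by $W$. Interlacing says that if $\mu_1 \geq \mu_2 \geq \dots$ are the eigenvalues of $A_W$ and $\lambda_1 \geq \dots \geq \lambda_{2^n}$ those of $A_n$, then $\mu_1 \geq \lambda_{2^n - |W| + 1}$. With $|W| \geq 2^{n-1} + 1$, the index $2^n - |W| + 1 \leq 2^{n-1}$ lands inside the block of $\sqrt{n}$ eigenvalues, so
\begin{equation*}
\mu_1 \geq \lambda_{2^{n-1}} = \sqrt{n}\, .
\end{equation*}

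Finally I would conclude by bounding the spectral radius of $A_W$ by the maximum degree of the induced subgraph $C_n|_W$. Since $A_W$ is a $\{-1,0,1\}$-matrix whose nonzero entries lie exactly at edges of $C_n|_W$, the $\ell_\infty$-norm of any row of $A_W$ equals the degree of the corresponding vertex in $C_n|_W$. Hence the spectral radius satisfies $\mu_1 \leq \|A_W\|_\infty \leq \Delta(C_n|_W)$, giving $\Delta(C_n|_W) \geq \sqrt{n}$. The main obstacle is the non-obvious construction of the signing $A_n$ and the identity $A_n^2 = n\,\Id$; once this is in hand, interlacing and the row-sum bound finish the argument mechanically.
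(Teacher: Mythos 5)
Your proposal is correct and follows essentially the same route as the paper: the same recursive signing $B_n$ with $B_n^2 = n\,\Id$, the trace argument giving multiplicity $2^{n-1}$ for $\sqrt{n}$, Cauchy interlacing applied to the principal submatrix indexed by $W$, and a bound of the top eigenvalue of that submatrix by $\Delta(C_n|_W)$. Your final step via the maximum absolute row sum is just a repackaging of the paper's Lemma~\ref{lm:adjacency_delta}, which proves the same bound by examining the coordinate of largest modulus of an eigenvector.
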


\begin{smpl}
We observe that there is some $W \subset V(C_n)$ with $2^{n-1}$ independent elements.

Specifically, if we take $W = \{ \vv \in {0, 1}^n | \sum_i \vv_i \text{ is even }\}$, this set has $2^{n-1}$ elements, none of which share an edge any edge.
So $\Delta(C_n|_W) = 0$.
\end{smpl}

To establish the proof of \cref{thm:zhang}, we need some auxiliary lemmas.
The first one reinterprets the eigenvalues of a matrix as extremals values of its corresponding quadratic form:

\begin{lm}[Maximal quadratic form]\label{lm:maximalquad_EV}
Let $A$ be an $n \times n$ symmetric matrix with spectrum $\{\lambda_1 \geq \cdots \geq \lambda_n\}$.
Then we have the following formulas, for each $j = 1, \ldots , n$:
$$\lambda_j = \max_{\substack{U \subseteq \R^n \\ \dim U = j}} \min_{\vx \in U} \frac{\vx^T A \vx}{\vx^T\vx} = \min_{\substack{U \subseteq \R^n \\ \dim U = n-j+1}} \max_{\vx \in U} \frac{\vx^T A \vx}{\vx^T\vx} \, ,$$
where the are taking maxima and minima over vector spaces $U \subseteq \R^n$ with a fixed dimension.
\end{lm}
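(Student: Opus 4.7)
The plan is to prove the classical Courant-Fischer min-max characterisation of eigenvalues using the spectral decomposition guaranteed by \cref{thm:spect}. I will fix the orthonormal eigenbasis $\vv_1, \ldots, \vv_n$ with $A \vv_i = \lambda_i \vv_i$, and expand an arbitrary vector as $\vx = \sum_i \alpha_i \vv_i$. Orthonormality yields
$$\vx^T \vx = \sum_{i=1}^n \alpha_i^2 \quad \text{and} \quad \vx^T A \vx = \sum_{i=1}^n \lambda_i \alpha_i^2,$$
so the Rayleigh quotient is a weighted average of the eigenvalues with weights $\alpha_i^2 / \sum_k \alpha_k^2$. All the analysis reduces to choosing these weights cleverly on prescribed subspaces.

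For the first identity, I will show the $\geq$ direction by exhibiting a witness subspace, and the $\leq$ direction by a dimension-count / pigeonhole on arbitrary $U$. Concretely, take $U_0 = \spn\{\vv_1, \ldots, \vv_j\}$, which has dimension $j$; any $\vx \in U_0$ satisfies $\vx^T A \vx = \sum_{i=1}^j \lambda_i \alpha_i^2 \geq \lambda_j \sum_{i=1}^j \alpha_i^2 = \lambda_j \vx^T \vx$, so $\min_{\vx \in U_0} \vx^T A \vx / \vx^T \vx \geq \lambda_j$. Conversely, given any $j$-dimensional $U \subseteq \R^n$, consider $W = \spn\{\vv_j, \ldots, \vv_n\}$, which has dimension $n-j+1$. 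Since $\dim U + \dim W = n+1 > n$, the intersection $U \cap W$ contains a non-zero $\vx$; for this $\vx$ only the coordinates $\alpha_j, \ldots, \alpha_n$ can be non-zero, hence $\vx^T A \vx = \sum_{i=j}^n \lambda_i \alpha_i^2 \leq \lambda_j \vx^T\vx$, so $\min_{\vy \in U} \vy^T A \vy / \vy^T \vy \leq \lambda_j$. Taking the maximum over $U$ gives the first equality.

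The second identity is completely symmetric: take $U_0' = \spn\{\vv_j, \ldots, \vv_n\}$ of dimension $n-j+1$, on which every Rayleigh quotient is $\leq \lambda_j$, giving the upper bound; and for any $(n-j+1)$-dimensional $U$ intersect with $\spn\{\vv_1, \ldots, \vv_j\}$ (whose dimensions sum to $n+1$) to extract a non-zero vector supported on the first $j$ eigen-coordinates, whose Rayleigh quotient is $\geq \lambda_j$, forcing $\max_{\vx \in U} \vx^T A \vx / \vx^T \vx \geq \lambda_j$.

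There is no real obstacle here since the spectral theorem is provided as a black box; the only step that requires care is the dimension count used to guarantee that $U$ meets the chosen eigen-subspace non-trivially, and I will just verify $\dim U + \dim W > n$ in each direction. I should also note, as a minor bookkeeping point, that the ratio $\vx^T A \vx / \vx^T \vx$ is well-defined on all non-zero $\vx$ in the relevant subspaces, so the $\min$ and $\max$ are attained by compactness on the unit sphere of the subspace.
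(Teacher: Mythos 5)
Your proposal is correct and follows essentially the same route as the paper's proof: expand $\vx$ in the orthonormal eigenbasis from \cref{thm:spect}, exhibit the witness subspaces $\spn\{\vv_1,\ldots,\vv_j\}$ and $\spn\{\vv_j,\ldots,\vv_n\}$ for the easy inequalities, and use a dimension count to force a non-trivial intersection of an arbitrary $U$ with the complementary eigen-span for the other two inequalities. The only cosmetic difference is that the paper phrases the intersection argument via the codimension of the orthogonal-complement conditions $\langle \vx, \vv_i\rangle = 0$ rather than via $\dim U + \dim W > n$, which is the same count.
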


\begin{proof}
Let $\{\vv_1, \cdots , \vv_n\}$ be an orthonormal eigenbasis of $A$, which we see it exists in \cref{thm:spect}.
Write $\vx$ is this basis, arising the coefficients $\{\alpha_i\}_{i=1}^n$ such that:
$$ \sum_i \alpha_i \vv_i = \vx \, . $$

Note that $\langle \vx \vv_k \rangle = \sum_{i=1}^n \alpha_i \langle \vv_i, \vv_k \rangle = \alpha_k$.

We have that 
\begin{align*}
\vx^T A \vx &= \sum_{i=1}^n \sum_{j=1}^n \alpha_i \alpha_j \vv_i^T A \vv_j \\
&= \sum_{i=1}^n \sum_{j=1}^n \alpha_i \alpha_j \lambda_j \vv_i^T \vv_j\\
&= \sum_{i=1}^n \alpha_i^2 \lambda_i\, , 
\end{align*}
and that 
$$ \vx^T \vx = \sum_{i=1}^n \sum_{j=1}^n \alpha_i \alpha_j \vv_i^T \vv_j = \sum_{i=1}^n \alpha_i^2\, . $$

For a vector space $U$, define $M(U) = \max_{\vx \in U} \frac{\vx^T A \vx}{\vx^T\vx}$ and $m(U) = \min_{\vx \in U} \frac{\vx^T A \vx}{\vx^T\vx}$.
Let as well $M = \max_{\substack{U \subseteq \R^n \\ \dim U = j}} m(U)$ and $m = \min_{\substack{U \subseteq \R^n \\ \dim U = n-j+1}} M(U)$.
Our goal is to show that $\lambda_j = M = m$.
We will split the proof into four parts: $\lambda_j \leq M$,  $\lambda_j \geq M$,  $\lambda_j \leq m$ and $\lambda_j \geq m$.

To show that  $\lambda_j \leq M$, we find a vector space $U$ such that $\dim U = j $ and $m(U) \geq \lambda_j$.
Indeed, let $U = \spn \{\vv_i\}_{i=1}^j$.
It is imediate that $\dim U = j$ and since any $\vx \in U $ can be written as $\vx = \sum_{i=1}^j \alpha_i \vv_i$, we have that 
$$\frac{\vx^T A \vx}{\vx^T \vx} = \frac{\sum_{i=1}^j \alpha_i^2 \lambda_i}{\sum_{i=1}^j \alpha_i^2} \geq \frac{\sum_{i=1}^j \alpha_i^2 \lambda_j}{\sum_{i=1}^j \alpha_i^2}= \lambda_j\, . $$

To show that $\lambda_j \geq M$, we show that any vector space $U$ with $\dim U = j $ has some vector $\vx \in U $ such that $\frac{\vx^T A \vx}{\vx^T \vx} \leq \lambda_j$.
Indeed, the space $V = \{ \vx \in \R^n | \langle \vx , \vv_i \rangle = 0 \text{ for $ i = 1, \ldots, j-1$ } \}$ has codimension $j-1$, so the intersection $U\cap V$ is non-trivial.
Let $\vx \in U \cap V$, then $\alpha_i = \langle \vx , \vv_i \rangle = 0 $ for $i = 1, \ldots, j-1$, thus 
$$\frac{\vx^T A \vx}{\vx^T \vx} = \frac{\sum_{i=j}^n \alpha_i^2 \lambda_i}{\sum_{i=j}^n \alpha_i^2} \leq \frac{\sum_{i=j}^n \alpha_i^2 \lambda_j}{\sum_{i=j}^n \alpha_i^2}= \lambda_j\, . $$

To show that $\lambda_j \leq m$, we show that any vector space $U$ with $\dim U =  n - j + 1 $ has some vector $\vx \in U $ such that $\frac{\vx^T A \vx}{\vx^T \vx} \geq \lambda_j$.
Indeed, the space $V = \{ \vx \in \R^n | \langle \vx , \vv_i \rangle = 0 \text{ for $ i = j+1, \ldots, n$ } \}$ has codimension $n-j$, so the intersection $U\cap V$ is non-trivial.
Let $\vx \in U \cap V$, then $\alpha_i = \langle \vx , \vv_i \rangle = 0 $ for $i = j+1, \ldots, n$, thus 
$$\frac{\vx^T A \vx}{\vx^T \vx} = \frac{\sum_{i=1}^j \alpha_i^2 \lambda_i}{\sum_{i=1}^j \alpha_i^2} \geq \frac{\sum_{i=1}^j \alpha_i^2 \lambda_j}{\sum_{i=1}^j \alpha_i^2}= \lambda_j\, . $$

To show that  $\lambda_j \geq m$, we find a vector space $U$ such that $\dim U = n-j+1 $ and $M(U) \leq \lambda_j$.
Indeed, let $U = \spn \{\vv_i\}_{i=j}^n$.
It is imediate that $\dim U = n-j+1$ and since any $\vx \in U $ can be written as $\vx = \sum_{i=j}^n \alpha_i \vv_i$, we have that 
$$\frac{\vx^T A \vx}{\vx^T \vx} = \frac{\sum_{i=j}^n \alpha_i^2 \lambda_i}{\sum_{i=j}^n \alpha_i^2} \leq \frac{\sum_{i=j}^n \alpha_i^2 \lambda_j}{\sum_{i=j}^n \alpha_i^2}= \lambda_j\, . $$

This concludes the proof.
\end{proof}

\begin{lm}[Interlacing eigenvalues]\label{lm:interlacing}
Let $A$ be a $(n+r)\times (n+r)$ symmetric matrix and $B$ its upper left $n\times n$ submatrix, that is for any $i, j \in \{1, \ldots, n\}$ we have $A_{i, j} = B_{i, j}$.

Note that $B$ is also symmetric matrix, so let $\spec \, A = \{\lambda_1 \geq \cdots \geq \lambda_{n+r}\}$ and $\spec \, B = \{\mu_1 \geq \cdots \geq \mu_n\}$.

We have
$$ \lambda_j \geq \mu_j \geq \lambda_{j+r}\, . $$
\end{lm}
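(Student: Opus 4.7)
The plan is to deduce the interlacing bounds directly from the Courant–Fischer style characterization of eigenvalues established in \cref{lm:maximalquad_EV}. The key observation is that because $B$ sits inside $A$ as the upper-left $n\times n$ block, one can embed $\R^n$ isometrically into $\R^{n+r}$ via $\iota(\vx) = (\vx_1,\ldots,\vx_n,0,\ldots,0)^T$, and this embedding preserves the Rayleigh quotient: for every $\vx \in \R^n$ we have $\iota(\vx)^T A\,\iota(\vx) = \vx^T B \vx$ and $\iota(\vx)^T \iota(\vx) = \vx^T \vx$. In particular, $\iota$ is injective, so it sends any $k$-dimensional subspace $U \subseteq \R^n$ to a $k$-dimensional subspace $\iota(U) \subseteq \R^{n+r}$.

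For the inequality $\lambda_j \geq \mu_j$ I would use the max-min form in \cref{lm:maximalquad_EV}. Pick a $j$-dimensional subspace $U \subseteq \R^n$ that attains
$$\mu_j = \min_{\vx \in U} \frac{\vx^T B \vx}{\vx^T \vx}\, .$$
Then $\iota(U) \subseteq \R^{n+r}$ is also $j$-dimensional, and by the Rayleigh-quotient preservation above,
$$\min_{\vy \in \iota(U)} \frac{\vy^T A \vy}{\vy^T \vy} = \min_{\vx \in U} \frac{\vx^T B \vx}{\vx^T \vx} = \mu_j\, .$$
Applying the max-min formula for $\lambda_j$ to the subspace $\iota(U)$ yields $\lambda_j \geq \mu_j$.

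For the inequality $\mu_j \geq \lambda_{j+r}$ I would dualize and use the min-max form in \cref{lm:maximalquad_EV}. Pick an $(n-j+1)$-dimensional subspace $U \subseteq \R^n$ attaining
$$\mu_j = \max_{\vx \in U} \frac{\vx^T B \vx}{\vx^T \vx}\, .$$
Then $\iota(U) \subseteq \R^{n+r}$ has dimension $n-j+1 = (n+r) - (j+r) + 1$, exactly the dimension required by the second formula in \cref{lm:maximalquad_EV} for $\lambda_{j+r}$. Again the Rayleigh-quotient identity gives $\max_{\vy \in \iota(U)} \vy^T A \vy / \vy^T \vy = \mu_j$, so the characterization of $\lambda_{j+r}$ as a minimum over all such subspaces produces $\lambda_{j+r} \leq \mu_j$.

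There is no real obstacle here once \cref{lm:maximalquad_EV} is in hand; the only point that deserves care is the verification that embedding along the first $n$ coordinates preserves both the quadratic form and the norm, which is immediate from $B$ being the upper-left block. The proof is essentially the observation that restricting the quadratic form of $A$ to the coordinate subspace $\R^n \subseteq \R^{n+r}$ recovers the quadratic form of $B$, and then running the min-max characterization in both directions.
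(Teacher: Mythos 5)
Your proof is correct and follows the same route the paper takes: deducing the interlacing bounds from the Courant--Fischer characterization in \cref{lm:maximalquad_EV} by embedding subspaces of $\R^n$ into $\R^{n+r}$ along the first $n$ coordinates, which preserves the Rayleigh quotient since $B$ is the upper-left block of $A$. In fact your write-up is more complete than the paper's, whose proof is only a one-sentence sketch of the first inequality; your dimension count $n-j+1=(n+r)-(j+r)+1$ for the bound $\mu_j \geq \lambda_{j+r}$ supplies exactly the detail the paper omits.
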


\begin{proof}
We use \cref{lm:maximalquad_EV}.
Specifically, to show that $\lambda_j \geq \mu_j$ we show that for any vector space $U\subseteq \R^{n+r}$ with $\dim U = j$ there exists $U'\subseteq \R^n$ with $m(U) \geq m(U')$.

\end{proof}

We now relate $\Delta(G)$, the largest degree of $G$, with the spectrum of matrices.

\begin{lm}\label{lm:adjacency_delta}
Let $G$ be a graph, $A_G$ be its adjacency matrix and $B$ a symmetric matrix obtained from $A_G$ by flipping the sign of some of its entries.

If $\lambda $ is the largest eigenvalues of $B$, then $\Delta(G) \geq \lambda $.
\end{lm}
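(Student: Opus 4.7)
The plan is to bound $\lambda$ by adapting the standard Perron-style estimate for the largest eigenvalue of a symmetric matrix. The crucial observation is that $B$ is obtained from $A_G$ by sign flips, so $|B_{ij}| = (A_G)_{ij}$ for every pair $i,j$; in particular the $i$-th row sum of the entrywise absolute values of $B$ is exactly the degree of vertex $i$ in $G$.

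Concretely, I would pick a unit eigenvector $\vv$ of $B$ with $B\vv=\lambda\vv$, and let $i$ be an index where $|\vv_i|$ is maximal (so in particular $|\vv_i|>0$ since $\vv\neq 0$). Reading off the $i$-th coordinate of the eigenvalue equation and applying the triangle inequality yields
\[
|\lambda|\,|\vv_i| \;=\; \Bigl|\sum_j B_{ij}\vv_j\Bigr| \;\leq\; \sum_j |B_{ij}|\,|\vv_j| \;\leq\; |\vv_i|\sum_j |B_{ij}| \;=\; |\vv_i|\cdot\deg(i) \;\leq\; |\vv_i|\,\Delta(G).
\]
Dividing through by $|\vv_i|>0$ gives $|\lambda|\leq\Delta(G)$, and in particular $\lambda\leq\Delta(G)$ (if $\lambda<0$ the conclusion is trivial since $\Delta(G)\geq 0$).

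There is essentially no obstacle beyond carefully matching $|B_{ij}|$ with $(A_G)_{ij}$, which is valid because sign flips preserve absolute values and $A_G$ has $0/1$ entries. An alternative route via \cref{lm:maximalquad_EV} is possible: one could take the maximiser $\vv$ of $\vx^T B\vx/\vx^T\vx$ and consider $\vy$ defined by $\vy_j=|\vv_j|$, observing that $\lambda=\vv^T B\vv\leq \vy^T A_G\vy\leq \lambda_1(A_G)$ because each entrywise product satisfies $B_{ij}\vv_i\vv_j\leq |B_{ij}|\,|\vv_i|\,|\vv_j| = (A_G)_{ij}\vy_i\vy_j$. However this only reduces the claim to the known fact $\lambda_1(A_G)\leq\Delta(G)$, whose proof again runs through the row-sum argument above. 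For this reason I would present the direct eigenvector-coordinate argument as the cleanest path to the conclusion.
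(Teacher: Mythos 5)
Your argument is correct and is essentially the same as the paper's proof: both pick an eigenvector for $\lambda$, look at the coordinate of maximal absolute value, and bound via the triangle inequality and the row sums $\sum_j |B_{i,j}| = \deg(i) \leq \Delta(G)$. The only cosmetic difference is how the sign of $\lambda$ is handled — you note the case $\lambda<0$ is trivial, while the paper uses $\lambda \geq \tr B = 0$ — and either is fine.
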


\begin{proof}
Let $\vv $ be the eigenvector corresponding to $\lambda$, and let $i$ be the index such that $|\vv_i |$ is maximal.
We necessarily have $\lambda \geq \tr A = 0$.
Thus we have:
\begin{align*}
\lambda  |(\vv)_i| = |(\lambda \vv)_i| &= | (B\vv)_i | = \left| \sum_{j: i\-- j} B_{i, j} \vv_j  \right| \\
&\leq \sum_{j: i\-- j} |B_{i, j}| |\vv_j| = \sum_{j : i\-- j} |\vv_j| \leq  \sum_{j : i\-- j} |\vv_i| \leq \Delta(G) |\vv_i|\, . 
\end{align*}
Note that because $\vv$ is non trivial, we have $|\vv_i|  > 0$.
Therefore $\lambda \leq \Delta(G)$.
\end{proof}

\begin{proof}[Proof of \cref{thm:zhang}]

Note that the inductive structure on $C_n$ allows us to describe the adjacency matrix inductively.
Specifically,
$$ A_{C_1} = \begin{pmatrix}
0 & 1 \\
1 & 0
\end{pmatrix} \quad \quad \quad 
A_{C_{n+1}} = \begin{pmatrix}
A_{C_n} & \Id_{2^n} \\
\Id_{2^n} & A_{C_n}
\end{pmatrix}\, . $$

Define the matrices $\{B_n\}_{n\geq 1}$ so that $B_n$ results from $A_{C_n}$ by flipping some signs:
$$ B_1 = \begin{pmatrix}
0 & 1 \\
1 & 0
\end{pmatrix} \quad \quad \quad 
B_{n+1} = \begin{pmatrix}
B_n & \Id_{2^n} \\
\Id_{2^n} & -B_n
\end{pmatrix}\, ,$$
which gives us that $B_1^2 = \Id_{2}$ and 
$$
B_{n+1}^2 = \begin{pmatrix}
B_n^2 + \Id_{2^n} & 0 \\
0 & B_n^2 + \Id_{2^n}
\end{pmatrix}\, . $$
Thus, by an inductive argument we have that $B_n^2 = n\Id_{2^n}$.
Therefore, we conclude that $\spec\,~B_n~=~\{ \underbrace{\sqrt{n}, \ldots, \sqrt{n}}_{a \text{ times}}, \underbrace{-\sqrt{n}, \ldots, -\sqrt{n}}_{b \text{ times}} \}$. 

Furthermore, $\tr B_n = 0 $ so $a\sqrt{n} - b\sqrt{n} = 0$ and $a+b = 2^n$.
We conclude that we have $\spec \, B_n = \{ \underbrace{\sqrt{n}, \ldots, \sqrt{n}}_{2^{n-1} \text{ times}}, \underbrace{-\sqrt{n}, \ldots, -\sqrt{n}}_{2^{n-1} \text{ times}} \}$. 

Let $W \subseteq V(C_n)$ be a collection of vertices of $C_n$ with $|W| > 2^{n-1}$.
This corresponds to a $|W|\times |W|$ submatrix of $B_n$, call it $D$.
After rearranging the columns of the matrix, this can be identified with the upper right submatrix of $B_n$.
Furthermore, $D$ results from the adjacency matrix of $C_n|_W$ by flipping some signs, so \cref{lm:adjacency_delta} gives us that $\Delta(W) \geq \lambda$, where $\lambda $ is the highest eigenvalue of $D$.
But by \cref{lm:interlacing}, $\lambda \geq \lambda_{2^n - |W| + 1}(B_n) = \sqrt{n}$, because $2^n - |W| + 1 \leq 2^{n-1}$, as desired.
\end{proof}

%

\subsection{Consistent colouring}

\begin{figure}[h]
\includegraphics[scale=1]{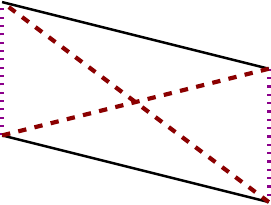}
\caption{A consistent colouring locally has either six colours or three colours distributed in this way.\label{fig:consistent_colouring}}
\end{figure}

An edge colouring of a graph $G = (V, E)$, or simply a colouring of $G$, is a map $f:E \to [k]$.
A colouring of a $K_n$ is called consistent if for any four vertices, the six edges among these vertices either have all distinct colours, or have three different colours as in \cref{fig:consistent_colouring}.

\begin{thm}
The graph $K_n$ has a consistent colouring with at most $n-1$ colours if and only if $n = 2^t $ for some integer $t$.
\end{thm}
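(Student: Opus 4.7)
The plan is to prove both directions via a group structure. For the construction when $n=2^t$, I would identify $V(K_n)$ with $\F_2^t$ and colour each edge $\{u,v\}$ by the nonzero vector $u+v \in \F_2^t$, giving $2^t - 1 = n-1$ colours. For any four vertices $a,b,c,d$, the three matching-sums $(a+b)+(c+d)$, $(a+c)+(b+d)$, $(a+d)+(b+c)$ all vanish in characteristic two, so within each of the three perfect matchings of the $K_4$ on $\{a,b,c,d\}$ the two edges carry the same colour; moreover any two edges sharing a vertex receive distinct colours (otherwise two of the four vertices would coincide), so the three matching-colours are pairwise distinct. This is exactly the three-colour pattern of the consistency condition.

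For the converse, assume a consistent colouring of $K_n$ with at most $n-1$ colours. The plan is to endow the set $S := C \sqcup \{0\}$ (where $C$ is the set of colours used) with an elementary abelian $2$-group structure of order $n$, forcing $n=2^t$. The first step is to show every colour class is a perfect matching of $K_n$. Indeed, if two edges of colour $\alpha$ shared a vertex, no $4$-tuple containing them could satisfy consistency: the all-distinct case is ruled out by the repetition of $\alpha$, and the three-matching case requires each monochromatic pair to be a matching of $K_4$. Since each colour class has at most $n/2$ edges and $K_n$ has $\binom{n}{2}$ edges coloured with at most $n-1$ colours, pigeonhole forces each class to be a perfect matching, the number of colours to be exactly $n-1$, and $n$ to be even. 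Each colour $\alpha \in C$ therefore determines a fixed-point-free involution $\sigma_\alpha$ on $V$.

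The second step defines the product: for distinct $\alpha,\beta \in C$ and any $u \in V$, set $v := \sigma_\beta(u)$ and $w := \sigma_\alpha(v)$; then $u,v,w$ are distinct and $c(u,w) \notin \{\alpha,\beta\}$, so one can try to declare $\alpha \odot \beta := c(u,w)$. The core task is well-definedness. Given a second triple $u',v',w'$ arising from some $u' \ne u$, I would apply consistency to the $4$-tuples $\{u,v,u',v'\}$ and $\{v,w,v',w'\}$: each contains two edges of the same colour ($\alpha$ and $\beta$ respectively), so both fall into Case B, yielding $c(u,u') = c(v,v')$ and $c(v,v') = c(w,w')$; setting $\delta$ for this common colour, consistency on $\{u,w,u',w'\}$ with its two $\delta$-edges gives $c(u,w) = c(u',w')$, as desired. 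Degenerate cases where some of the eight labels coincide (e.g.\ $u'=v$) must be handled separately but reduce to smaller applications of the same trick.

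Finally, extend $\odot$ to $S$ by $\alpha \odot \alpha = 0$ and $\alpha \odot 0 = \alpha$. Well-definedness immediately gives $\sigma_{\alpha \odot \beta} = \sigma_\alpha \sigma_\beta$ as permutations of $V$. Associativity of $\odot$ then follows from associativity of permutation composition (together with injectivity of $\alpha \mapsto \sigma_\alpha$, since $\sigma_\alpha$ is fixed-point-free for $\alpha \neq 0$), commutativity from the symmetry of $c$, and self-inversion is built into the definition. So $(S,\odot)$ is an elementary abelian $2$-group with $|S|=n$, whence $n=2^t$ for some integer $t$. The main obstacle is the $4$-tuple analysis in the second step, where one has to patiently deal with the coincidence cases that make the auxiliary $4$-tuples degenerate.
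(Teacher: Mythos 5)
Your proof follows essentially the same route as the paper: the same construction for $n=2^t$ (vertices $=\F_2^t$, edge $\{u,v\}$ coloured $u+v$), and for the converse the same idea of turning the $n-1$ colours plus an identity element into an elementary abelian $2$-group of order $n$. Your $\alpha\odot\beta$, defined through composing the colour-class involutions $\sigma_\alpha\sigma_\beta$, is exactly the paper's operation ``$c+d$ is the colour of the third side of the triangle,'' and your well-definedness argument via the three auxiliary $4$-sets $\{u,v,u',v'\}$, $\{v,w,v',w'\}$, $\{u,w,u',w'\}$ is precisely the ``three squares'' the paper invokes without detail; you add the useful preliminary observation (matching colour classes, hence exactly $n-1$ colours and $n$ even) and the clean passage to associativity through $\sigma_{\alpha\odot\beta}=\sigma_\alpha\sigma_\beta$. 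The degenerate coincidence cases (e.g.\ $u'=v$) are acknowledged but not worked out; that is no worse than the paper, and they do reduce to single applications of consistency (for $u'=v$ use the $4$-set $\{u,v,\sigma_\alpha(v),\sigma_\alpha(u)\}$).

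There is, however, one step that fails as written, in the forward direction: the three matching-sums $(a+b)+(c+d)$, $(a+c)+(b+d)$, $(a+d)+(b+c)$ do \emph{not} vanish in general --- each of them equals $a+b+c+d$, which is nonzero for, say, $a=0$, $b=e_1$, $c=e_2$, $d=e_3$. So it is not true that every $4$-set of your construction exhibits the three-colour pattern. The construction is nonetheless consistent, and the correct verification is a two-case check: edges sharing a vertex always get distinct colours (equality of $a+b$ and $a+c$ would force $b=c$), while opposite edges get equal colours exactly when $a+b+c+d=0$; hence a $4$-set shows the three-matching pattern when $a+b+c+d=0$ and has all six colours distinct otherwise, and both outcomes are allowed by the definition of consistency. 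With that one-line repair your argument is complete and matches the paper's.
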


\begin{proof}
Let $\CC \coloneqq\{ c_1, \dots, c_k\}$ be the collection of colours.
First, because each vertex has $n-1$ neighbours, and no two neightbour edges can have the same colour, we have $k \geq n-1$.
We are given that $k \leq n-1$, thus $k = n-1$. 
We will endow $\CC \cup \{\vec{0}\}$ with a structure of $\F_2$ vector space.

Incidentally, let $c$ be the colour of an edge $v_1 \-- v_2$, and $d$ be the colour of an edge $v_1 \-- v_3$.
We define $c + d $ to be the colour of the edge $v_1 \-- v_3$.
That this is well defined follows from a simple application of the consistency property in three squares.
This shows that $|\CC \cup \{\vec{0}\}| = n$ is a power of two.

For the converse result, we set $V = \F_2^n$ and colour an edge $\vv \-- \vw$ with the colour $\vv - \vw$.
It is a straightforward observation that this is a consistent colouring.
\end{proof}

\section{Geometry}

\subsection{Extremal geometry}

\subsubsection*{Silvester's Theorem}

\begin{prop}
Consider $X$, a set of $m$ points in the plane, not all colinear.
Then there are at least $m$ lines that go through at least two points in $X$.
\end{prop}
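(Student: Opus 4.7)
The plan is to invoke Fischer's inequality (\cref{prop:fischer}) in a dualised setup. Write $\mathcal{L}$ for the set of lines passing through at least two points of $X$; the goal is to show $|\mathcal{L}| \geq m$. For each point $p \in X$, define
$$ L_p \coloneqq \{ \ell \in \mathcal{L} \mid p \in \ell\} \subseteq \mathcal{L} \, .$$
I will then verify that $\{L_p\}_{p \in X}$ forms a $1$-Fischer family on the ground set $\mathcal{L}$, and that the $L_p$ are pairwise distinct, so that the Fischer bound yields $m \leq |\mathcal{L}|$.

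For the $1$-Fischer condition, I note that for any two distinct points $p, q \in X$, a line $\ell \in L_p \cap L_q$ is precisely a line in $\mathcal{L}$ containing both $p$ and $q$. Since two points determine a unique line (and that line automatically belongs to $\mathcal{L}$), this intersection has exactly one element, so $|L_p \cap L_q| = 1$ for all distinct $p, q \in X$.

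For distinctness of the $L_p$: suppose $L_p = L_q$ with $p \neq q$. By hypothesis $X$ is not collinear, so there is a point $r \in X$ not lying on the line through $p$ and $q$. Then the line through $p$ and $r$ belongs to $L_p$ but not to $L_q$ (it does not contain $q$), contradicting $L_p = L_q$. This is the only place where non-collinearity enters, and it is the main conceptual step — indeed, the distinctness fails exactly when $X$ is collinear, which is consistent with the necessity of the hypothesis.

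With both facts in hand, $\{L_p\}_{p \in X}$ is a collection of $m$ distinct subsets of $\mathcal{L}$ satisfying the $1$-Fischer property, and $\lambda = 1 \neq 0$, so \cref{prop:fischer} yields $m \leq |\mathcal{L}|$, as desired. The one subtlety to be careful about when writing the full argument is to confirm that whenever $p, q \in X$ are distinct, the line they determine is indeed in $\mathcal{L}$ (which is immediate from the definition of $\mathcal{L}$, since it contains both of $p, q$), so that $L_p \cap L_q$ is genuinely non-empty and equal to a singleton rather than empty.
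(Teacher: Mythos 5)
Your proof is correct, but it takes a different route from the one in the notes. You dualise: the ground set becomes the line set $\mathcal L$, each point $p$ is replaced by the set $L_p$ of lines of $\mathcal L$ through $p$, and you check that these are $m$ pairwise distinct sets with $|L_p\cap L_q|=1$ (the unique line through $p$ and $q$, which lies in $\mathcal L$), so \cref{prop:fischer} with $\lambda=1$ gives $m\le|\mathcal L|$; non-collinearity enters only to make $p\mapsto L_p$ injective, and your argument for that is fine (if $q$ lay on the line $pr$ then $r$ would lie on the line $pq$). This is the classical de Bruijn--Erd\H{o}s-via-Fisher argument, and it is perfectly rigorous given that \cref{prop:fischer} as stated covers the degenerate case $|C|=\lambda$ as well. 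The notes instead argue directly: assuming $|\mathcal L|<m$, the linear forms $f_\ell(\vx)=\sum_{x\in\ell}\vx_x$ have a common nontrivial zero $\vx$, and expanding $0=\sum_{\ell}f_\ell(\vx)^2$ using that every pair of points lies on exactly one line of $\mathcal L$ yields the sum-of-squares identity $0=\sum_{x}(a_x-1)\vx_x^2+\bigl(\sum_x \vx_x\bigr)^2$, where non-collinearity gives $a_x\ge 2$, forcing $\vx=0$, a contradiction. Your version is shorter and modular, reusing an earlier result of the notes at the cost of the injectivity check; the notes' version is self-contained and makes visible exactly where the hypothesis is used (each point lies on at least two lines). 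Ultimately the two are close cousins, since the proof of \cref{prop:fischer} is itself a positive-definiteness/rank argument of the same flavour.
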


\begin{figure}[h]
\includegraphics[scale=.5]{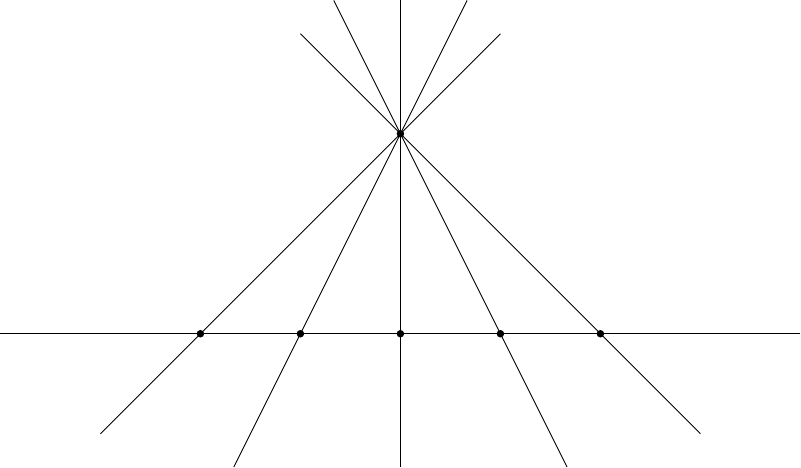}
\caption{The unique distribution of points that minimizes the number of lines.\label{fig:sylvester}}
\end{figure}

\begin{proof}
Let $\mathcal L$ be the collection of lines that go through at least two points in $X$.
For each line $\ell \in \mathcal L$, define the linear function in $\R^X$.
$$ f_{\ell}(\vx) = \sum_{x\in \ell} \vx_x \, .$$

Assume that $|\mathcal L | < m$.
Thus, there exists a non trivial solution to $f_{\ell}(\vx) = 0 $ for all $\ell \in \mathcal L$.
For such a solution $\vx$ we have
\begin{align*}
0 &= \sum_{\ell \in \mathcal L} \left( f_{\ell}(\vx) \right)^2 \\
&= \sum_{\ell \in \mathcal L} \left(\sum_{x \in \ell}\vx_x^2 \right) + \left( \sum_{x, y \in \ell} \vx_x \vx_y \right)\\
&= \sum_{x \in X} a_x\vx_x^2 + \sum_{x, y \in X}2 a_{x, y} \vx_x \vx_y  \, ,
\end{align*}
where $a_x$ is the number of lines $\ell \in \mathcal L$ that go through $x$, and $a_{x, y}$ is the number of lines that go through $x$ and $ y$.
It is clear that $a_{x, y} = 1$.
Furthermore, not all points are colinear, so $a_x\geq 2$ for each $x$.
We can rearrange the above expression as follows:
\begin{align*}
0 &=  \sum_{x \in X} a_x\vx_x^2 + \sum_{x, y \in X}2 a_{x, y} \vx_x \vx_y\\
&=  \sum_{x \in X} (a_x-  1) \vx_x^2 + \left(\sum_{x \in X}\vx_x \right)^2  \, .
\end{align*}

This is a sum of squares so it is only zero whenever each square is zero, that is $\vx = 0$, a contradiction.
We conclude that either all points are colinear, or $|\mathcal L | \geq m$.
\end{proof}

\subsubsection*{Kakeya problem}

A set $A \subseteq \F_q^n$ is called a \textbf{Kakeya set} if, for any non-zero $\vv\in\F_q^n$, there exists $\vw \in A$ such that $\forall_{\lambda \in \F_q}$ we have  $\vw + \lambda vv \in A$.

Interest in these sorts of sets arose with the original \textbf{Kakeya problem}, in \cite{fujiwara1917some}:

\begin{prob}[Kakeya problem]
Find a figure of least area on which a segment of length one can be turned through 360 degrees by a continuous movement.
\end{prob}

The solution to this problem, presented by \cite{besicovitch1928kakeya}, displayed, for each chosen $\varepsilon > 0$, a set with area smaller than $\varepsilon $ with the desired property, minus the continuous movement.
In the discrete world, we still require that segments in all directions are still in the Kakeya set, and it turns out that these will always be large.
This was established in \cite{dvir2009size}:

\begin{thm}[Dvir's solution to the discrete Kakeya problem]\label{thm:kakeya}
For all $n > 0 $ integer, there is a constant $c_n$ such that any Kakeya set $A \subseteq \F_q^n$ has $|A| \geq c_n q^n$.
\end{thm}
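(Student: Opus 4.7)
The plan is to apply the polynomial method. Let $V$ be the vector space of polynomials $f \in \F_q[\vx_1, \ldots, \vx_n]$ of total degree at most $q-1$; a stars-and-bars count gives $\dim V = \binom{n+q-1}{n} \geq q^n/n!$, so I anticipate $c_n = 1/n!$ will work (with a trivial adjustment if $q$ is small relative to $n$). Suppose for contradiction that $|A| < \dim V$. Imposing that a polynomial in $V$ vanish on $A$ is a linear system with $|A|$ equations in $\dim V$ unknowns, so there exists a nonzero $f \in V$ vanishing on $A$. Let $d = \deg f$ and decompose $f = f_0 + f_1 + \cdots + f_d$ into its homogeneous components; since $A$ is nonempty, $f$ cannot be a nonzero constant, so $d \geq 1$.

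Next I would use the Kakeya hypothesis to force the top form $f_d$ to vanish in every direction. Fix $\vv \in \F_q^n \setminus \{0\}$ and choose $\vw \in A$ with $\vw + \lambda \vv \in A$ for all $\lambda \in \F_q$. The univariate polynomial $g(\lambda) \coloneqq f(\vw + \lambda \vv)$ has degree at most $d \leq q-1$ and vanishes at every element of $\F_q$, hence $g \equiv 0$. Extracting the coefficient of $\lambda^d$ in $g(\lambda)$ gives precisely $f_d(\vv)$, since contributions from $f_i$ with $i < d$ cannot reach degree $d$ in $\lambda$; therefore $f_d(\vv) = 0$. Varying $\vv$ and using that $f_d$ is homogeneous of positive degree (so $f_d(0) = 0$), we conclude that $f_d$ vanishes identically on $\F_q^n$.

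The final step invokes the standard fact that any nonzero $h \in \F_q[\vx_1, \ldots, \vx_n]$ with $\deg h < q$ must be nonzero at some point of $\F_q^n$; this is proven by induction on $n$, the base case being that a univariate polynomial of degree less than $q$ has fewer than $q$ roots. Since $\deg f_d = d \leq q-1 < q$, this forces $f_d = 0$, contradicting $d = \deg f$. The main obstacle is really conceptual rather than technical: seeing that the Kakeya property isolates precisely the leading homogeneous component of a low-degree polynomial vanishing on $A$. Once that observation is in hand, the vanishing lemma closes the argument and everything else reduces to a dimension count and a degree comparison.
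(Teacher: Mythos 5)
Your proposal is correct and follows essentially the same route as the paper: a dimension count produces a nonzero polynomial of degree at most $q-1$ vanishing on $A$, the Kakeya property restricted to lines isolates the top homogeneous component, and the root-counting lemma yields the contradiction. The only difference is cosmetic: you show $f_d$ vanishes at every direction and then kill it with the root count, whereas the paper first picks a direction where $f_d$ does not vanish and derives the contradiction on that single line; the two arrangements are logically interchangeable.
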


The best known $c_n$ is $2^{-n}$, we will show that $|A| \geq \binom{n+q-1}{n}$, which asymptotically gives $c_n = \frac{1}{n!}$.

\begin{lm}\label{lm:rootcount}
Let $f \in \F_q[x_1, \ldots, x_n]$ be a non-trivial polynomial of degree $d$.
Then there are at most $d q^{n-1}$ tuples $(z_1, \ldots, z_n)$ such that $f(z_1, \ldots, z_n) = 0$.
\end{lm}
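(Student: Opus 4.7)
The plan is to prove this by induction on the number of variables $n$, which is the standard Schwartz--Zippel style argument specialised to a finite field.

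For the base case $n=1$, a non-trivial univariate polynomial of degree $d$ over a field has at most $d$ roots, and $d q^{n-1} = d$, so the statement holds. For the inductive step, I would write $f$ as a polynomial in $x_n$ with coefficients in $\F_q[x_1, \ldots, x_{n-1}]$, say
$$ f(x_1, \ldots, x_n) = \sum_{i=0}^{k} g_i(x_1, \ldots, x_{n-1}) \, x_n^i \, , $$
where $g_k \neq 0$ and $k \leq d$. Note that $\deg g_k \leq d - k$.

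Then I would partition the zero set of $f$ in $\F_q^n$ according to whether the leading coefficient vanishes on the projection to the first $n-1$ coordinates. For tuples $(z_1, \ldots, z_{n-1})$ with $g_k(z_1, \ldots, z_{n-1}) \neq 0$, the specialised polynomial $f(z_1, \ldots, z_{n-1}, x_n) \in \F_q[x_n]$ is non-trivial of degree exactly $k$, hence contributes at most $k$ values of $z_n$ making $f$ vanish; summing over at most $q^{n-1}$ such tuples gives at most $k q^{n-1}$ zeros. For tuples with $g_k(z_1, \ldots, z_{n-1}) = 0$, the inductive hypothesis applied to $g_k$ (of degree at most $d-k$) bounds the number of such tuples by $(d-k) q^{n-2}$; for each we allow all $q$ values of $x_n$, contributing at most $(d-k) q^{n-1}$ zeros.

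Adding the two cases gives $k q^{n-1} + (d-k) q^{n-1} = d q^{n-1}$, completing the induction. The only subtle point is handling the case $k = 0$ (where $f$ does not depend on $x_n$ at all), but then $g_0 = f$ is itself non-trivial of degree $d$ in $n-1$ variables and the inductive hypothesis bounds its zero set in $\F_q^{n-1}$ by $d q^{n-2}$; allowing all $q$ values of $x_n$ gives $d q^{n-1}$ directly. I do not expect any serious obstacle; the only care needed is to separate the leading-coefficient vanishing locus from the rest so that the induction goes through cleanly.
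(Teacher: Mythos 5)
Your proof is correct and follows essentially the same route as the paper: induction on the number of variables, splitting the zeros according to whether the leading coefficient in the last variable vanishes, and applying the inductive hypothesis to that leading coefficient. The only cosmetic differences are that the paper proves the univariate base case by an inner induction on $d$ via long division, and organizes the final count as $Mq + (q^n - M)d'$ before bounding it, whereas your additive bookkeeping reaches the same bound $dq^{n-1}$ directly.
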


\begin{proof}
We act by induction on $n$.
For the base case $n = 1$ we use induction again on $d$.
If $\deg f = 1$, then $f$ is a linear polynomial in only one variable, so it has a unique zero, so the theorem holds.

If $\deg f = d+1$, then either $f$ has no roots (in which case the theorem holds), or it has some root $\alpha$.
We use polynomial long division to wrote $f(x) = g(x) (x-\alpha) + r$, for some $g$ polynomial with $\deg(g) = d$ and $r \in \F_q$.
By substituting $x \mapsto \alpha$, we note that $r = 0$.
Furthermore, $g$ has at most $d$ roots by induction hypothesis, so $f(x) = (x-\alpha) g(x) $ has at most $d+1$ roots.

Now for the induction step on $n$, assume that $f$ is a polynomial in $n+1$ variables, and write
$$ f(x_1, \ldots, x_{n+1} ) = x_{n+1}^{d'} g_{d'}(x_1, \ldots, x_n) + x_{n+1}^{d'-1} g_{d'-1}(x_1, \ldots, x_n) + \cdots + g_0(x_1, \ldots, x_n)\, . $$

For a tuple $(z_1, \ldots, z_{n+1})$ such that $f(z_1, \ldots, z_{n+1}) = 0$ there are two possibilies:
\begin{enumerate}
\item The tuple $(z_1, \ldots, z_{n})$ vanishes on $g_{d'}$, that is $g_{d'}(z_1, \ldots, z_{n}) = 0$.

\item The tuple $(z_1, \ldots, z_{n})$ does not vanish on $g_{d'}$, and the scalar $z_{n+1} \in \F_q$ vanishes on the $d'$ degree polynomial $p_{z_1, \ldots, z_{n}}(x) \coloneqq \sum_{i=0}^{d'} a_i x^i$, where $a_i = g_{i}(z_1, \ldots, z_n)$.
\end{enumerate}

We count the tuples that satisfy each item.
Observe that $\deg g_{d'} \leq d - d'$.
By induction assumption on $n$, the number $M$ of tuples $(z_1, \ldots, z_{n})$ that vanish on $g_{d'}$ is at most $(d - d')q^{n-1}$, note that the number of tuples $(z_1, \ldots, z_{n})$ considered in item 1 is $ M q$.

For each tuple $(z_1, \ldots, z_{n})$ that does not vanish on $g_{d'}$, the polynomial $p_{z_1, \ldots, z_{n}}$ is a degree $d'$ polynomial so it has at most $d'$ zeroes.
It follows that there are at most $(q^n - M)d'$ tuples $(z_1, \ldots, z_{n+1})$ considered on item 2.

We conclude that there are at most 
\begin{align*}
M q + (q^n - M)d' = d' q^n + M(q - d')&\leq d'q^n + (d - d')q^{n-1}(q - d') \\
 &= d' q^n + (d - d')q^{n} - d'(d-d')q^{n-1} \\
 &= d q^n - d'(d-d')q^{n-1} \leq d q^n \, ,
\end{align*}
 zeroes - note that $d' \leq q$ - as desired.
\end{proof}

\begin{lm}\label{lm:monomialcount}
The number of monomials of degree at most $d$ in $n$ variables is $\binom{d+n}{n}$.
\end{lm}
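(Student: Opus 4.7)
The plan is to reduce the counting of monomials to a classical stars-and-bars problem. A monomial of degree at most $d$ in the variables $x_1, \ldots, x_n$ is uniquely determined by its exponent tuple $(a_1, \ldots, a_n) \in \Z_{\geq 0}^n$ subject to the constraint $a_1 + \cdots + a_n \leq d$. So first I would set up the bijection between such monomials and these tuples, reducing the lemma to counting the set
$$ S \coloneqq \{(a_1, \ldots, a_n) \in \Z_{\geq 0}^n \mid a_1 + \cdots + a_n \leq d\} \, .$$

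Next, the key idea is to convert the inequality into an equality by introducing a slack variable $a_{n+1} \coloneqq d - (a_1 + \cdots + a_n) \geq 0$. This gives a bijection between $S$ and the set
$$ T \coloneqq \{(a_1, \ldots, a_{n+1}) \in \Z_{\geq 0}^{n+1} \mid a_1 + \cdots + a_{n+1} = d\} \, .$$

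Finally, I would invoke (or prove, if one wants to be self-contained) the standard stars-and-bars identity: the number of non-negative integer solutions to $y_1 + \cdots + y_k = m$ is $\binom{m+k-1}{k-1}$, obtained by the classical bijection with sequences of $m$ stars and $k-1$ bars. Applying this with $k = n+1$ and $m = d$ yields $|T| = \binom{d+n}{n}$, as desired.

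The argument is essentially routine and presents no real obstacle; the only thing worth spelling out carefully is the slack-variable bijection, since it is the step that packages the ``at most $d$'' into a clean equality count. If one prefers to avoid invoking stars and bars as a black box, the whole statement can alternatively be proved by induction on $n$ (or on $d$) using the Pascal-type identity $\binom{d+n}{n} = \binom{d+n-1}{n-1} + \binom{d+n-1}{n}$, splitting the monomials according to whether $x_n$ appears or not.
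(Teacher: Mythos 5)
Your proposal is correct and follows essentially the same route as the paper: the paper's proof is precisely the stars-and-bars bijection, selecting $n$ boxes among $d+n$ so that the gaps $a_1, \ldots, a_n$ give the exponents and the final gap plays the role of your slack variable $a_{n+1}$. The only cosmetic difference is that you phrase the bijection via the slack-variable equality $a_1 + \cdots + a_{n+1} = d$ and then quote the standard identity, while the paper builds the box-selection bijection directly.
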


\begin{proof}
This is the classical \textit{stars and bars} argument.
Given $d+n$ boxes alligned horizontally, there are $\binom{d+n}{n}$ ways of selecting $n$ many boxes.
We will construct a bijection between each selection and a monomial of degree at most $d$.

\begin{figure}
\includegraphics[scale=.5]{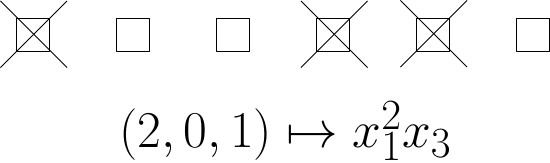}
\caption{A selection of boxes for $n=d=3$\label{fig:starsnbars}}
\end{figure}

Specifically, if $b_1 < b_2 < \cdots < b_n$ are the positions of the selected boxes, and $a_1, \ldots, a_{n+1}$ is the number of boxes in between the selected boxes, so that $a_i = b_i - b_{i-1} - 1$ (interpreting $b_0 = 0$ and $b_{n+1} = d+n+1$), then we have both $a_i \in \Z_{\geq 0}$ and $\sum_i a_i = d$.
The construction of the $a_i$ is displayed in \cref{fig:starsnbars} for $d = n = 3$.
To the tuple $(a_i)_{i=1}^n$ it corresponds the mononmial $x_1^{a_1}\cdots x_n^{a_n}$.
This mapping is invertible, and this concludes the bijective proof.
\end{proof}

\begin{proof}[Proof of \cref{thm:kakeya}]
The case $n=1$ is easy to deal with, so let us assume that $n \geq 2$.
Let $A$ be a Kakeya set, and assume for sake of contradiction that $|A| < \binom{q+n-1}{n}$.
The vector space $V$ of polynomials in $\F_q[x_1, \ldots, x_n]$ of degree at most $q-1$ has dimension $\binom{q+n-1}{n}$, according to \cref{lm:monomialcount}.

We now argue that there is some non-trivial polynomial $f\in V$ such that $f(a) = 0 $ for all $a \in A$.
Indeed, because $|A| < \binom{q+n-1}{n}$, and each condition $f(a) = 0 $ is a linear condition, we have that $\dim \{ f \in V | f(a) = 0 \forall_{a \in A} \} > 0$.

Let $k \coloneqq \deg f\leq q-1$, and let $f_k$ be the \textit{top component of} $f$, that is the terms in $f$ with degree $k$.
This polynomial has at most $k q^{n-1}$ roots, from \cref{lm:rootcount}, so because $n>1$, there is some non-zero vector $\vv \in \F_q^n$ such that $f(\vv) \neq 0$.
However, by the Kakeya property, there is some $\vw \in \F_q^n$ such that $\vw + \lambda \vv \in A $ for any scalar $\lambda \in \F_q$.
Thus, $f(\vw + \lambda \vv ) = 0$ for any $\lambda \in \F_q$.

However, as a polynomial expression in $\lambda $, the polynomial $f(\vw + \lambda \vv )= f(\vv)_k \lambda^k + \mathcal O (\lambda^{k-1})$ has degree $k \leq q-1$, so according to \cref{lm:rootcount} it has at most $q-1$ roots.
Therefore, there is some $\lambda $ such that $(\vw + \lambda \vv ) \neq 0$, a contradiction.
\end{proof}

\subsubsection*{Two-distance sets}

Let $D$ be a finite set of positive real numbers.
A set $S\subseteq \R^d$ is called a $D$-distance set if for any two distinct points $\vx, \vy\in S$ we have $||\vx - \vy || \in D$.

If $D = \{ \delta \}$, then $S$ can only have $d+1$ points, and this bound is tight: for instance take the vertices of the $d$ simplex.
If $D$ has size two, we do not know what is the maximal size of $S$.

\begin{smpl}
Let $V = \{ \vv \in \{0, 1\}^{n+1} | \sum_{i=1}^{n+1} \vv_i = 2 \}$.
This is a set with $|V| = \binom{n+1}{2}$.
It is clear that $V\in \R^{n+1}$.
Note however that $V$ lies on the $n$-dimensional hyperplane $\sum_i \vv_i = 2$, so there is an isometric map sending $V$ to $\R^n$.

We can see as well that this is a $\{\sqrt{2}, 2\}$-distance set: if $\vx, \vy \in V$ are distinct vectors, then they either differ in two or four entries, depending on whether the entries with $1$ overlap or not.
In one case we have $||\vx - \vy || = \sqrt{2}$ and in the other case we have $||\vx - \vy || = \sqrt{4} = 2$.

This tells us that there is a $\{\sqrt{2}, 2\}$-distance set in $\R^n$ with $\binom{n+1}{2}$ many elements.
\end{smpl}

\begin{thm}
If $S\subseteq \R^n$ is a $\{\delta_1, \delta_2 \}$-distance set, then $|S| \leq \frac{(n+1)(n+4)}{2}$.
\end{thm}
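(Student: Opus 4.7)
The plan is to apply the polynomial method in the spirit of the Fischer-type arguments above. For each $\vx \in S$, associate the polynomial
\[
f_\vx(\vy) = \bigl(\|\vy - \vx\|^2 - \delta_1^2\bigr)\bigl(\|\vy - \vx\|^2 - \delta_2^2\bigr) \in \R[y_1,\ldots,y_n].
\]
By the two-distance hypothesis, for any $\vy \in S \setminus \{\vx\}$ we have $\|\vy - \vx\| \in \{\delta_1, \delta_2\}$, so $f_\vx(\vy) = 0$, while $f_\vx(\vx) = \delta_1^2 \delta_2^2 \neq 0$. This gives the usual linear independence: if $\sum_{\vx \in S} \alpha_\vx f_\vx = 0$, evaluating at any $\vx_0 \in S$ forces $\alpha_{\vx_0} f_{\vx_0}(\vx_0) = 0$, hence $\alpha_{\vx_0}=0$.

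The crux is to exhibit a \emph{small} vector subspace of $\R[y_1,\ldots,y_n]$ containing every $f_\vx$. Expanding $\|\vy-\vx\|^2 = \|\vy\|^2 - 2\,\vx\cdot\vy + \|\vx\|^2$ and writing $u=\|\vy\|^2$, $v=2\,\vx\cdot\vy$, the polynomial $f_\vx$ becomes a quadratic in $u-v$ with scalar coefficients depending on $\vx$. Its expansion involves $u^2$, $uv$, $v^2$, $u$, $v$, and constants. Since $v^2 = 4\sum_{i,j} x_i x_j\, y_i y_j$ lies in the span of the monomials $\{y_i y_j\}_{i\leq j}$, $uv$ lies in the span of $\{\|\vy\|^2 y_i\}_{i=1}^n$, $u^2 = (\|\vy\|^2)^2$ is a single polynomial, and $v$ lies in $\spn\{y_i\}$, every $f_\vx$ belongs to
\[
W \coloneqq \spn\Bigl(\{1\} \cup \{y_i\}_{1\leq i\leq n} \cup \{y_i y_j\}_{1\leq i\leq j\leq n} \cup \{\|\vy\|^2 y_i\}_{1\leq i\leq n} \cup \{(\|\vy\|^2)^2\}\Bigr).
\]

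The main obstacle (and the step that has to be done carefully) is to verify that these generators are linearly independent in $\R[y_1,\ldots,y_n]$, so that $\dim W$ is exactly their count. This can be checked by inspecting leading monomials: the family $\{1\} \cup \{y_i\} \cup \{y_i y_j\}_{i\leq j}$ consists of distinct monomials of degree $\leq 2$; each $\|\vy\|^2 y_k$ contains the unique cubic monomial $y_k^3$ (not present in any other generator); and $(\|\vy\|^2)^2$ is the only generator of degree $4$. Counting gives
\[
\dim W = 1 + n + \binom{n+1}{2} + n + 1 = \frac{n^2+5n+4}{2} = \frac{(n+1)(n+4)}{2}.
\]
Combining with the linear independence of $\{f_\vx\}_{\vx \in S}$ inside $W$ yields $|S| \leq \dim W = \frac{(n+1)(n+4)}{2}$, as desired.
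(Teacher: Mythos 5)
Your proposal is correct and follows essentially the same route as the paper: the same polynomials $f_{\vx}$, the same evaluation argument for linear independence, and the same spanning space (the paper lists the generators as $(\sum_i x_i^2)^2$, $x_k\sum_i x_i^2$, $x_k^2$, $x_kx_j$, $x_k$, $1$, which is exactly your $W$), giving the bound $\frac{(n+1)(n+4)}{2}$. The only difference is that you additionally verify linear independence of the generators, which is not needed for the upper bound since $\dim W$ is at most the number of generators in any case.
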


\begin{proof}
For each $\vv \in S$, construct the following polynomial $f_{\vv} \in \R[x_1, \ldots, x_n]$:
$$f_{\vv} (\vx) \coloneqq \left(||\vv - \vx||^2 - \delta_1^2\right)\left(||\vv - \vx||^2 - \delta_1^2\right) \, . $$

Note that for $\vv \neq \vw $ elements of $S$, we have $f_{\vv} (\vw) = 0$ and $f_{\vv}(\vv) = \delta_1^2\delta_2^2$.

It can be seen that $f_{\vv}$ is in the vector space
$$\spn \{\left(\sum_{i=1}^n\right)^2, x_k \sum_{i=1}^n x_i^2, x_k^2, x_kx_j, x_k, 1 | k, j \in \{1, \ldots, n\} \}\, ,$$
which has dimension $1 + n + n + \binom{n}{2} + n + 1 = \frac{(n+1)(n+4)}{2}$.

On the other hand, $\{f_{\vv}\}_{\vv \in S}$ is a linearly independent set of polynomials: if we take a linear combination such that $0 = \sum_{\vv \in S} \alpha_{\vv} f_{\vv}$, by evaluating at some $\vw \in S$ we get
$$ 0 = \sum_{\vv \in S} \alpha_{\vv} f_{\vv}(\vw) = \alpha_{\vw} f_{\vw}(\vw) = \alpha_{
\vw} \delta_1^2 \delta_2^2 \Rightarrow \alpha_{\vw} = 0 \, ,$$
which means that $\{f_{\vv}\}_{\vv \in S}$ is a linearly independent set.
\end{proof}

\subsubsection*{Joints Problem}

A \textbf{joint} in a collection $\mathcal L$ of lines in $\R^3$ is an intersection of at least three non-coplanar lines.

\begin{thm}\label{thm:joints}
Tere exists a constant $C$ such that given $N$ lines in $\R^2$ forming $J$ joints we have that
$$ J \leq C N^{3/_2}\, ,$$
furthermore, this bound is tight.
\end{thm}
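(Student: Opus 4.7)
The plan is to use the polynomial method, following the arguments of Guth--Katz and Quilodr\'an (the statement should read $\R^3$ rather than $\R^2$, since joints are defined there). Let $d$ be the least integer with $\binom{d+3}{3} > J$. The space of polynomials in $\R[x,y,z]$ of degree at most $d$ has dimension $\binom{d+3}{3}$ by \cref{lm:monomialcount}, and each vanishing condition at a joint is a linear constraint; so there is a nonzero $p\in\R[x,y,z]$ of degree at most $d$ vanishing on all $J$ joints, with $d = O(J^{1/3})$. Fix $p$ of minimal degree among such nonzero polynomials.

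The key lemma is that some line of the configuration carries at most $d$ joints. Suppose for contradiction every line has more than $d$ joints. The restriction of $p$ to any such line is a univariate polynomial of degree at most $d$ with more than $d$ zeroes, hence identically zero; so $p$ vanishes on every line of the collection. Now fix any joint $P$ and pick three non-coplanar lines of the collection through $P$, with direction vectors $v_1,v_2,v_3$. Differentiating $t\mapsto p(P+t v_i)\equiv 0$ at $t=0$ gives $\nabla p(P)\cdot v_i=0$ for $i=1,2,3$, and non-coplanarity forces $\nabla p(P)=0$. Each partial derivative of $p$ is then a polynomial of degree at most $d-1$ vanishing on every joint, and by minimality of $d$ each partial is identically zero. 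But then $p$ is constant, and vanishing at any joint forces $p\equiv 0$, contradicting nontriviality.

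To conclude, iterate: pick a line $\ell$ with at most $d$ joints and delete it. Only joints lying on $\ell$ can cease to be joints in the reduced configuration, so at most $d$ joints are destroyed at this step. The reduced configuration still has at most $J$ joints, so the same polynomial-method argument applies again and produces a new line with at most $d$ joints. After $N$ such removals the collection is empty, and so are the joints; tracking losses gives $J \leq N d = O(N J^{1/3})$, which rearranges to $J \leq C N^{3/2}$ for an absolute constant $C$. For tightness, take the $k\times k\times k$ integer grid in $\R^3$: the $3k^2$ axis-parallel lines produce $k^3$ joints, yielding $N=3k^2$ and $J=k^3=(N/3)^{3/2}$, which matches the upper bound up to a multiplicative constant. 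The delicate step is the gradient-vanishing argument at each joint: this is where the non-coplanarity in the definition of a joint is crucially used and where minimality of $\deg p$ is converted into the needed contradiction.
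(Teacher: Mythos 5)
Your proposal is correct and follows essentially the same route as the paper: a minimal-degree nonzero polynomial vanishing on all joints, vanishing along every line if each line had too many joints, gradient annihilation at a joint via the three non-coplanar directions, the minimality contradiction, and then iterative deletion of a low-joint line, plus the same grid example for tightness. The only differences are cosmetic — you bound the total degree by $d=O(J^{1/3})$ using the $\binom{d+3}{3}$ dimension count while the paper bounds the degree in each variable by $J^{1/3}$ — and you rightly note that the statement's $\R^2$ should read $\R^3$.
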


\begin{smpl}
This example shows that the bound in \cref{thm:joints} is tight when $C = 3^{-3/_2}$.
Specifically, take an integer $n$, we can find $3n^2$ lines that intersect in $n^3$ joints. 
For $n=3$ we can see the example in \cref{fig:joints}.
The general construction is as follows: we take the collection of $n^2$ lines with direction $(0, 0, 1)$ that go through $(a, b, 0)$, where $a, b\in [n]$, $n^2$ lines with direction $(0, 1, 0)$ that go through $(a, 0, b)$, where $a, b\in [n]$, $n^2$ lines with direction $(1, 0, 0)$ that go through $(0, a, b)$, where $a, b\in [n]$.

In this line configuration, any point $(a, b, c) $ with $a, b, c \in [n]$ is a joint, so $J = n^3$, whereas $N = 3 n^2$.

\begin{figure}[h]
\includegraphics[scale=.6]{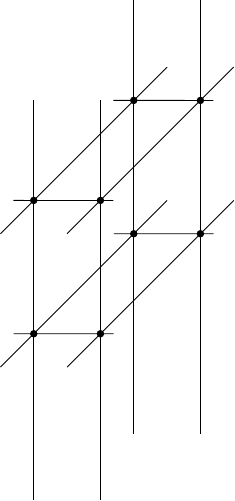}
\caption{A construction of a collection of lines with high number of joints.\label{fig:joints}}
\end{figure}
\end{smpl}

\begin{proof}
Now, we establish the inequality for $C = 3^{3/_2}$.
First, we show that for any collection of lines $\mathcal L$, there is a line with at most $3 J^{1/_3}$ joints.

Once this is established, the theorem follows by iteratively removing a line from $\mathcal L$.
Each removal takes away at most $3 J^{1/_3}$ joints with it.
Once this is done to completion, so that there are no more lines left, we have removed at most $N 3 J^{1/_3}$ joints, so $J \leq 3 N J^{1/_3} \Rightarrow J \leq (3N)^{3/_2}$.

Acting by contradiction, assume that there is no line with at most $3 J^{1/_3}$ joints. 
We consider a polynomial $p \in \R[x, y, z]$ such that 
\begin{enumerate}
\item It is non-zero;

\item It vanishes at all joints;

\item It has degree \textbf{on each variable} at most $J^{1/_3}$.
\end{enumerate}

A polynomial satisfying item 3 can be written as a combination of $(\lfloor J^{1/_3}\rfloor + 1)^3 > J$ monomials, item 2 amounts to $J$ linear equations, so by dimension analysis such a non-zero polynomial exists.
We pick $p$ that minimizes the degree.

The polynomial $p$ restricted to any line is a polynomial of degree at most $3J^{1/_3}$.
Because each line has, by contradiction hypothesis, at least $3 J^{1/_3} + 1$ joints, this polynomial must be identically zero in each line.

Consider the polynomials
$$q_x \coloneqq \frac{\partial}{\partial x} p \, , q_y \coloneqq \frac{\partial}{\partial y} p \, , q_z \coloneqq \frac{\partial}{\partial z} p \, .$$
It follows from the previous paragraph that all directional derivatives of $p$ at each joint $\vj$ vanish, so $p_x(\vj) = p_y(\vj) = p_z(\vj) = 0$.

Furthermore, $p_x, p_y, p_z$ also satisfy item 3, as these polynomials have smaller degree than $p$.
By minimality of $p$, we must have $p_x \equiv p_y \equiv p_z  \equiv 0$, which means that $p$ is the constant polynomial.
This is a contradiction.
\end{proof}
%
%
%
%
%
%

\subsubsection*{Nearly orthogonal vectors}

A set $Y \subseteq \R^d$ is said to be \textbf{orthogonal} if any two distinct points $\vx, \vy \in Y$ have $\vx \perp \vy$.
A set $X\subseteq \R^d$ is said to be \textbf{nearly orthogonal} if any three distinct points $\vx, \vy, \vz \in X $ have either $\vx\perp \vy, \vy\perp\vz$ or $\vz\perp\vx$.

\begin{thm}[Rosenfeld, 1991]\label{thm:nearly}
A \textbf{nearly orthogonal set} $X\subseteq \R^d$ has $|X| \leq 2d$.
\end{thm}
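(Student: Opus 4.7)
The plan is a moment argument on the Gram matrix of $X$. After a mild reduction I may assume all $\vv\in X$ are unit vectors: any pair of positively collinear vectors in $X$ would collapse on normalization, and every remaining element of $X$ would then be forced orthogonal to their common direction, yielding a nearly orthogonal set in a $(d-1)$-dimensional subspace that an induction on $d$ handles. Henceforth write $X = \{\vv_1,\dots,\vv_n\}$ as distinct unit vectors, and let $G\in \R^{n\times n}$ be the Gram matrix $G_{ij} = \langle \vv_i,\vv_j\rangle$: it is positive semidefinite, satisfies $G_{ii} = 1$, and $\rk(G) \leq d$ since $G = V^T V$ for the $d\times n$ matrix $V$ of columns $\vv_i$.

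Next, setting $s \coloneqq \sum_{i<j} G_{ij}^2$, I would compute two power-sum traces. Splitting diagonal and off-diagonal entries immediately gives $\tr(G^2) = n + 2s$. For $\tr(G^3) = \sum_{i,j,k} G_{ij}G_{jk}G_{ki}$, I would partition the sum according to $|\{i,j,k\}|$: the singleton terms contribute $n$, the terms with exactly two distinct indices contribute $6s$ (each unordered pair $\{i,j\}$ appearing in three ordered placements, each giving $G_{ij}^2$), and the terms with three distinct indices contribute zero. This last vanishing is the crucial use of the nearly orthogonal hypothesis: for any three distinct $i,j,k$, triangle-freeness of the non-orthogonality graph forces at least one of $G_{ij},G_{jk},G_{ki}$ to vanish. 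Hence $\tr(G^3) = n + 6s$.

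Finally, I would apply Cauchy--Schwarz to the non-negative eigenvalues $\lambda_1,\dots,\lambda_n$ of $G$ in two distinct ways. Since at most $d$ eigenvalues are non-zero and $\sum\lambda_i = n$, we have $n^2 \leq d\sum\lambda_i^2 = d\,\tr(G^2)$, which rearranges to $s\geq n(n-d)/(2d)$. Applying Cauchy--Schwarz with $u_i = \sqrt{\lambda_i}$ against $v_i = \lambda_i^{3/2}$ gives $(\tr G^2)^2 \leq \tr(G)\,\tr(G^3)$, so $(n+2s)^2 \leq n(n+6s)$, simplifying to $s\leq n/2$. Combining the two estimates produces $n(n-d)/(2d) \leq n/2$, equivalently $n\leq 2d$. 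I expect the main obstacle to be identifying the $\tr(G^3)$ identity: the decomposition by $|\{i,j,k\}|$ is routine, but seeing that triangle-freeness of the non-orthogonality graph collapses the three-distinct-indices part to zero is the non-obvious step that links the combinatorial hypothesis to the spectral estimate.
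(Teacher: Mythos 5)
Your proof is correct, but the key step runs along a genuinely different line from the paper's. Both arguments start the same way: normalize to unit vectors, form the Gram matrix $G=V^TV$ with $\rk G\leq d$ and $\tr G = n$, and finish with the rank--trace inequality $(\tr G)^2 \leq \rk(G)\,\tr(G^2)$ (the paper's \cref{lm:rank_ineq}). The difference is how the bound $\tr(G^2)\leq 2n$ is obtained. The paper uses the hypothesis in the form ``for each fixed $\vx_i$, the vectors not orthogonal to it are pairwise orthogonal'' and then invokes Bessel/Parseval (\cref{lm:paserval}) to bound the off-diagonal contribution of each row by $\|\vx_i\|^4=1$, giving $\tr(G^2)\leq 2n$ in one stroke. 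You instead use the hypothesis only as triangle-freeness of the non-orthogonality graph, which makes the three-distinct-index terms of $\tr(G^3)$ vanish, so $\tr(G^3)=n+6s$ exactly with $s=\sum_{i<j}G_{ij}^2$; a second Cauchy--Schwarz among the (nonnegative, since $G$ is PSD) eigenvalue power sums, $(\tr G^2)^2\leq \tr(G)\,\tr(G^3)$, then yields $s\leq n/2$, i.e.\ the same estimate $\tr(G^2)\leq 2n$. Your route avoids the Bessel lemma entirely and isolates the combinatorial input as a statement about odd moments of the Gram matrix, which is a pleasant ``moment method'' packaging; the paper's route is shorter once Bessel is on the table. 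Your explicit reduction handling positively collinear pairs (collapse two such vectors, note everything else is orthogonal to their common line, and induct on $d$) is also more careful than the paper, which simply assumes unit vectors. Two cosmetic points: the two-distinct-index terms of $\tr(G^3)$ come from six ordered triples per unordered pair (your ``three placements'' per ordered pair), though your total $6s$ is right; and the final simplification $(n+2s)^2\leq n(n+6s)$ gives $4s^2\leq 2ns$, hence $s\leq n/2$, exactly as you state.
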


We first establish a few important Lemmas.

\begin{lm}[Paserval]\label{lm:paserval}
For a set $Y \subseteq \R^d$ of \textbf{orthogonal vectors} with unit length, and $\vv \in \R^d$, we have 
$$ \sum_{\vy \in Y} \langle \vv, \vy\rangle^2 \leq ||\vv ||^2 \, . $$
\end{lm}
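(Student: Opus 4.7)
The plan is to use the standard orthogonal projection argument. Let $Y = \{\vy_1, \ldots, \vy_k\} \subseteq \R^d$ be orthogonal unit vectors and $\vv \in \R^d$. I would define the candidate projection
$$\vp \coloneqq \sum_{\vy \in Y} \langle \vv, \vy\rangle \vy \quad\text{and}\quad \vw \coloneqq \vv - \vp,$$
and show that $\vw$ is orthogonal to each $\vy_k \in Y$. Indeed, using $\langle \vy_i, \vy_j\rangle = \delta_{ij}$ (since $Y$ is orthogonal with unit length),
$$\langle \vw, \vy_k\rangle = \langle \vv, \vy_k\rangle - \sum_{\vy \in Y} \langle \vv, \vy\rangle \langle \vy, \vy_k\rangle = \langle \vv, \vy_k\rangle - \langle \vv, \vy_k\rangle = 0.$$

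Next I would compute $||\vv||^2$ by Pythagoras. Since $\vw \perp \vy_k$ for every $\vy_k \in Y$, we also have $\vw \perp \vp$. Thus
$$||\vv||^2 = ||\vw + \vp||^2 = ||\vw||^2 + ||\vp||^2.$$
Expanding $||\vp||^2$ using orthonormality of $Y$ once more gives
$$||\vp||^2 = \Big\langle \sum_{\vy \in Y} \langle \vv, \vy\rangle \vy, \sum_{\vy' \in Y} \langle \vv, \vy'\rangle \vy'\Big\rangle = \sum_{\vy \in Y} \langle \vv, \vy\rangle^2.$$
Combining and discarding the nonnegative term $||\vw||^2$ yields the desired inequality $\sum_{\vy \in Y} \langle \vv, \vy\rangle^2 \leq ||\vv||^2$.

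There is really no obstacle here — the only subtlety is making sure to use that the vectors in $Y$ have unit length (not merely being pairwise orthogonal) when computing $\langle \vp, \vp\rangle$ and when verifying $\vw \perp \vy_k$. The inequality is then an immediate consequence of the Pythagorean identity.
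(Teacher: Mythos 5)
Your proof is correct, and it takes a slightly different route from the notes. You argue via orthogonal projection: you split $\vv$ into the component $\sum_{\vy \in Y}\langle \vv, \vy\rangle \vy$ lying in $\spn Y$ and a remainder $\vw$ orthogonal to every element of $Y$, then apply the Pythagorean identity and discard $||\vw||^2$. The notes instead extend $Y$ to a full orthonormal basis $\{\vy_1,\ldots,\vy_d\}$ of $\R^d$ via Gram--Schmidt, expand $||\vv||^2 = \sum_{i=1}^d \langle \vv,\vy_i\rangle^2$, and drop the coefficients belonging to the added basis vectors. Both proofs hinge on the same mechanism (orthonormality lets you compute a squared norm as a sum of squares, and you throw away a nonnegative piece), but your version avoids the basis-extension and Gram--Schmidt step entirely, which makes it a bit more self-contained and is the form of the argument that survives in infinite-dimensional inner product spaces (it is the standard proof of Bessel's inequality); the notes' version buys a one-line coordinate computation once the extended basis is available. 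Your closing remark is on point: the unit-length hypothesis is exactly what is needed both in checking $\vw \perp \vy$ and in evaluating the squared norm of the projection.
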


\begin{proof}
We start by extending $Y = \{\vy_1, \ldots, \vy_k\}$ to a base $\{\vy_1, \ldots, \vy_d\}$ of $\R^d$.
This is possible because non-zero orthogonal vectors form a linearly independent set. 
This extension can further be orthogonalized via the Graham-Schmidt algorithm.

In this way, for a vector $\vv\in \R^d$ we have $\vv = \sum_{i=1}^d \alpha_i \vy_i$, where for each $i=1, \ldots, d$ it can be observed that $\alpha_i = \langle \vv, \vy_i\rangle$.
Thus 
\begin{align*}
|| \vv ||^2 &= \langle \vv, \vv \rangle  = \sum_{i=1}^d \sum_{j=1}^d \alpha_i \alpha_j  \langle \vy_i,  \vy_j \rangle \\ 
&= \sum_{i=1}^d \alpha_i^2 \geq \sum_{i=1}^k \alpha_i^2 = \sum_{i=1}^k \langle \vv, \vy_i\rangle^2,
\end{align*}
as desired.
\end{proof}

\begin{lm}\label{lm:rank_ineq}
If $A$ is a symmetric matrix, we have 
$$ \rk \, A \geq \frac{(\tr A)^2}{\tr (A^2)} \,  . $$
\end{lm}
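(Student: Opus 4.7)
The plan is to invoke the spectral theorem for symmetric matrices (\cref{thm:spect}) to reduce the inequality to a statement about eigenvalues, and then apply Cauchy--Schwarz.

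More concretely, write $\spec\, A = \{\lambda_1, \ldots, \lambda_n\}$ and let $r = \rk A$. Since $A$ is symmetric, \cref{thm:spect} guarantees that the eigenspaces are complete, so $r$ equals the number of non-zero eigenvalues in $\spec\, A$. Also, by diagonalising $A$ in an orthonormal eigenbasis, both the trace and the trace of $A^2$ are basis-independent and equal to the sums of eigenvalues and of their squares respectively: $\tr A = \sum_i \lambda_i$ and $\tr(A^2) = \sum_i \lambda_i^2$. Note that we may restrict both of these sums to the non-zero eigenvalues without changing their values.

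Next I would apply the Cauchy--Schwarz inequality in $\R^r$ to the vector $(\lambda_i)_{\lambda_i \neq 0}$ and the all-one vector of length $r$:
$$\left( \sum_{\lambda_i \neq 0} \lambda_i \cdot 1 \right)^2 \leq \left( \sum_{\lambda_i \neq 0} \lambda_i^2 \right) \left( \sum_{\lambda_i \neq 0} 1^2 \right) \, .$$
The left-hand side equals $(\tr A)^2$, the first factor on the right equals $\tr(A^2)$, and the second factor equals $r = \rk A$. Rearranging (and observing that the inequality is trivial when $\tr(A^2) = 0$, since then $A = 0$ and both sides are zero under the convention $0/0 = 0$) gives exactly the desired bound.

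There is no real obstacle here: the only thing to be a bit careful about is excluding the zero eigenvalues before applying Cauchy--Schwarz, since otherwise one would only conclude $\rk A \geq (\tr A)^2 / (n \cdot \tr(A^2)/n)$, which is the weaker bound with $n$ in place of $\rk A$. Restricting to non-zero eigenvalues is precisely what turns the $n$ into the rank.
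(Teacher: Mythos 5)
Your proof is correct and follows essentially the same route as the paper: identify $\rk A$ with the number of non-zero eigenvalues via the spectral theorem, express $\tr A$ and $\tr(A^2)$ as sums over eigenvalues, and apply Cauchy--Schwarz to the non-zero eigenvalues against the all-one vector. Your extra remark handling the degenerate case $\tr(A^2)=0$ is a small refinement the paper omits, but otherwise the arguments coincide.
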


\begin{proof}
Let $r = \rk A$, which is the number of non-zero eigenvalues of a matrix $A$, with multiplicity.
Write $\lambda_1, \ldots, \lambda_r$ for the non-zero eigenvalues of $A$.

Recall from Cauchy inequality we have
$$\left( \underbrace{1 + \cdots + 1}_{r \text{ times}} \right)\left(\sum_{i=1}^r \lambda_i^2 \right) \geq \left( \sum_{i=1}^r \lambda_i \right)^2\, . $$

With this we have $ r \, \tr (A^2) = r \sum_{i=1}^r \lambda_i^2 =  \left( \sum_{i=1}^r \lambda_i \right)^2 = (\tr A)^2 $ which can be rearranged to the desired inequality.
\end{proof}

\begin{proof}[Proof of \cref{thm:nearly}]
Let $X = \{\vx_1, \ldots, \vx_m\}\subseteq \R^d$ be a nearly orthogonal set and assume all vectors are unit vectors.
We wish to show that $m \leq 2 d$.
Let $B = \begin{pmatrix}
| &  & | \\ \vx_1 & \cdots & \vx_d \\ | & & | \end{pmatrix}$ and $A = B^T B$.
Note that $\rk A \leq \rk B\leq d$ and $\tr A = \sum_{i=1}^m \langle \vx_i, \vx_i\rangle = m$.

However,
\begin{align*}
\tr (A^2) &= \sum_{i=1}^m (A^2)_{i, i} = \sum_{i=1}^m \sum_{j=1}^m A_{i, j} A_{j, i} = \sum_{i=1}^m \sum_{j=1}^m \langle \vx_i, \vx_j \rangle^2 \\
&= \sum_{i=1}^m \left(\langle \vx_i, \vx_i \rangle^2 \right)+ \sum_{i=1}^m\left( \sum_{j : \vx_j \perp \vx_i } \langle \vx_i, \vx_j \rangle^2 \right) + \sum_{i=1}^m\left( \sum_{\substack{j : \vx_j \not\perp \vx_i \\ j \neq i}} \langle \vx_i, \vx_j \rangle^2 \right) \\
&= m + \left( \sum_{j : \vx_j \perp \vx_i } \langle \vx_i, \vx_j \rangle^2 \right) + \left( \sum_{\substack{j : \vx_j \not\perp \vx_i \\ j \neq i}} \langle \vx_i, \vx_j \rangle^2 \right)\\
&= m + \left( \sum_{\substack{j : \vx_j \not\perp \vx_i \\ j \neq i}} \langle \vx_i, \vx_j \rangle^2 \right) \leq m + \sum_{i=1}^m ||\vx_i ||^4  = 2m  \, , 
\end{align*}
where we use \cref{lm:paserval}, for each $i$, on the orthonormal set $\{\vx_j : \vx_j \not\perp \vx_i\}$.

Together with \cref{lm:rank_ineq}, this gives us $ 2m \geq \tr(A^2) \geq \frac{(\tr A )^2}{\rk A}\geq \frac{m^2}{d}$, which can be rearranged to $m \leq 2d $.
\end{proof}

\subsubsection*{Efficient projection}

\begin{thm}[Johnson - Linderstrauss Theorem]\label{thm:JL}
Fix $d >> 1$ and $\varepsilon \in (0, 1)$.
Let $P \subseteq \R^d$ with $|P| = n$ finite.

Then there exists a linear map $A:\R^d \to \R^k$, with $k = C_{\varepsilon} \log (n)$, such that for any two distinct $\vx, \vy \in P$ we have
$$ 1 - \varepsilon \leq \frac{||A(\vx) - A(\vy)||}{||\vx - \vy||} \leq 1 + \varepsilon \, . $$
\end{thm}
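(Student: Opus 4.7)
The plan is to use the probabilistic method: construct $A$ randomly and show that, with positive probability, the resulting map satisfies the distance-preservation bound for all $\binom{n}{2}$ pairs simultaneously. Specifically, I would take $A$ to be a $k \times d$ matrix whose entries are i.i.d.\ Gaussian random variables with mean $0$ and variance $1/k$, so that for every fixed $\vv \in \R^d$, $\mathbb{E}[\|A\vv\|^2] = \|\vv\|^2$.

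The first step is to prove a one-vector concentration estimate: for any fixed unit vector $\vv \in \R^d$,
$$ \mathbb{P}\bigl[\, \bigl| \|A\vv\|^2 - 1 \bigr| > \varepsilon \,\bigr] \leq 2 \exp(-c\, \varepsilon^2 k) $$
for some absolute constant $c > 0$. This reduces to a tail bound for a sum of squares of $k$ independent standard Gaussians (a chi-squared random variable with $k$ degrees of freedom, rescaled by $1/k$), and follows from a standard Chernoff/Laplace-transform computation on the moment generating function of $\chi^2_k$. This concentration inequality is the technical heart of the argument and is where I expect the main work to lie, since everything else is essentially bookkeeping.

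The second step is to apply the concentration bound to each of the $\binom{n}{2}$ normalized difference vectors $\vv_{\vx, \vy} \coloneqq (\vx - \vy)/\|\vx - \vy\|$ for distinct $\vx, \vy \in P$, and use a union bound. Linearity of $A$ gives
$$ \frac{\|A\vx - A\vy\|^2}{\|\vx - \vy\|^2} = \|A \vv_{\vx, \vy}\|^2, $$
so the event that \emph{some} pair violates $(1-\varepsilon)^2 \leq \|A\vv_{\vx,\vy}\|^2 \leq (1+\varepsilon)^2$ has probability at most
$$ \binom{n}{2} \cdot 2\exp(-c\, \varepsilon^2 k) < n^2 \exp(-c\, \varepsilon^2 k). $$

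Finally, choose $k = C_\varepsilon \log n$ with $C_\varepsilon$ a large enough constant depending only on $\varepsilon$ (concretely, $C_\varepsilon > 3/(c\varepsilon^2)$ suffices), so that the above failure probability is strictly less than $1$. Then there exists at least one realization of $A$ for which every pair satisfies the desired distortion bound; noting that the square-root of a quantity in $[(1-\varepsilon)^2, (1+\varepsilon)^2]$ lies in $[1-\varepsilon, 1+\varepsilon]$ (after possibly shrinking $\varepsilon$ by a constant factor absorbed into $C_\varepsilon$) gives the statement of the theorem.
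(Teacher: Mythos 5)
Your proposal is correct and follows exactly the approach the paper indicates (its proof consists only of the remark ``probabilistic method with a random matrix''): a random Gaussian projection, a chi-squared concentration estimate for a single fixed vector, and a union bound over the $\binom{n}{2}$ normalized difference vectors with $k = C_\varepsilon \log n$. You in fact supply substantially more detail than the paper does, and the one step you leave as standard (the sub-Gaussian tail bound for $\chi^2_k$ via the moment generating function) is indeed routine.
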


\begin{proof}[Proof of \cref{thm:JL}]
Probabilistic method with a random matrix
\end{proof}

\begin{lm}\label{lm:JL_1}
Fix $n$ integer and $\varepsilon \in (0, 1)$.
Let $A = [a_{i, j}]_{\substack{i = 1, \ldots , n \\ j = 1, \ldots , n}}$ be a symmetric matrix.
Assume that $a_{i, i} = 1$ and $|a_{i, j}| < \varepsilon $ for all $i, j$ distinct in $[n]$.

Then $\rk \,A \geq n \frac{1}{1 + (n-1)\varepsilon^2}$.
In particular, if $\varepsilon < (n-1)^{-0.5}$, then $\rk A \geq 0.5 n$ and if $\varepsilon < (n-1)^{-1.5}$ then $\rk \,A = n$.
\end{lm}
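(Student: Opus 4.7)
The plan is to feed the matrix $A$ directly into \cref{lm:rank_ineq}, which provides the inequality $\rk A \geq (\tr A)^2/\tr(A^2)$ for any symmetric matrix. Since the diagonal entries of $A$ are all equal to $1$, the numerator is immediate: $\tr A = n$, so $(\tr A)^2 = n^2$.

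For the denominator, I expand $\tr(A^2) = \sum_{i,j} a_{i,j}^2$ using symmetry and split off the diagonal contribution, which gives exactly $n$. Each off-diagonal term satisfies $a_{i,j}^2 < \varepsilon^2$, and there are $n(n-1)$ such terms, so
\[
\tr(A^2) = n + \sum_{i \neq j} a_{i,j}^2 < n + n(n-1)\varepsilon^2 = n\bigl(1 + (n-1)\varepsilon^2\bigr).
\]
Plugging into the lemma yields
\[
\rk A \geq \frac{n^2}{n\bigl(1 + (n-1)\varepsilon^2\bigr)} = \frac{n}{1 + (n-1)\varepsilon^2},
\]
which is the main inequality.

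The two consequences then follow by elementary manipulations, using that $\rk A$ is an integer. For the first, if $\varepsilon < (n-1)^{-1/2}$, then $(n-1)\varepsilon^2 < 1$, so the denominator is less than $2$ and $\rk A > n/2$. For the second, I need $\rk A = n$, i.e.\ $\rk A > n-1$; rearranging, this happens whenever $(n-1)^2 \varepsilon^2 < 1$, which is implied by the stronger hypothesis $\varepsilon < (n-1)^{-3/2}$ (assuming $n \geq 2$).

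There is no real obstacle here; the only thing to be careful about is the strictness of the inequalities and the use of the integrality of $\rk A$ to upgrade a strict rational bound into an integer rank lower bound, especially in the second corollary where one must check that $n/(1+(n-1)\varepsilon^2) > n-1$ is equivalent to $\varepsilon < (n-1)^{-1}$, a condition weaker than the assumed $\varepsilon < (n-1)^{-3/2}$.
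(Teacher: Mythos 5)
Your proposal is correct and follows essentially the same route as the paper: the paper re-derives the bound $\rk A \, \tr(A^2) \geq (\tr A)^2$ inline via Cauchy--Schwarz (which is exactly the content of \cref{lm:rank_ineq} that you cite) and then performs the same computation $\tr A = n$, $\tr(A^2) \leq n + n(n-1)\varepsilon^2$. Your treatment of the two ``in particular'' claims via integrality of the rank is a welcome addition, since the paper's proof stops at the main inequality.
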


Recall that $\rk \, A$ is the number of eigenvalues of $A$ that are non zero.
Recall as well that $\tr \, A$ is the sum of the eigenvalues.
It is a known fact that this is the same as the sum of the diagonal entries of the matrix $A$.

\begin{proof}
Let $\lambda_1, \ldots, \lambda_n$ be the eigenvalues of $A$, with multiplicity.
Because $A$ is symmetric, all eigenvalues are real numbers, so we have from Cauchy-Schwarz inequality that
$$\rk \, A \,  \tr (A^2)= \left(\sum_{\lambda_i \neq 0} 1 \right)\left( \sum_{\lambda_i \neq 0} \lambda_i^2\right) \geq \left( \sum_{\lambda_i \neq 0} \lambda_i \right)^2 = (\tr\,  A)^2 = n^2 \, . $$

Rewritting and using that $|a_{i, j}| < \varepsilon$ for $i\neq j$ we have 
\begin{align*}
\rk \, A &\geq n^2/\tr (A^2) = \frac{n^2}{\sum_{i, j} a_{i, j}^2} \\
&\geq \frac{n^2}{n + n(n-1)\varepsilon^2} = \frac{n}{1 + (n-1)\varepsilon^2}\, ,
\end{align*}
which is the desired inequality.
\end{proof}

\begin{lm}\label{lm:JL_2}
Fix a positive integer $k$ and let $A = [a_{i, j}]_{\substack{i = 1, \ldots , n \\ j = 1, \ldots , n}}$ be a symmetric matrix.
Let $B = [a_{i, j}^k]_{\substack{i = 1, \ldots , n \\ j = 1, \ldots , n}}$.
Then $\rk B \leq \binom{k + \rk A - 1}{k}$.
\end{lm}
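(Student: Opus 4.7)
The plan is to use the spectral decomposition of $A$ together with a multinomial expansion to express $B$ explicitly as a short sum of rank-one matrices, with exactly $\binom{k+\rk A - 1}{k}$ summands.

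First, let $r = \rk A$. Since $A$ is symmetric, \cref{thm:spect} gives an orthonormal eigenbasis, and collecting the nonzero eigenvalues we may write
\[ A = \sum_{s=1}^{r} \lambda_s \vu_s \vu_s^T \, ,\]
with $\lambda_s \neq 0$ for each $s$. In particular, for every pair $i,j$ we have $a_{i,j} = \sum_{s=1}^{r} \lambda_s (\vu_s)_i (\vu_s)_j$.

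Next I would raise this expression to the $k$-th power and apply the multinomial theorem over multi-indices $\alpha = (\alpha_1, \ldots, \alpha_r) \in \Z_{\geq 0}^r$ with $|\alpha| = k$:
\[ a_{i,j}^k = \sum_{|\alpha| = k} \binom{k}{\alpha_1, \ldots, \alpha_r} \prod_{s=1}^{r} \lambda_s^{\alpha_s} (\vu_s)_i^{\alpha_s} (\vu_s)_j^{\alpha_s} \, . \]
For each such $\alpha$, introduce the vector $\vw_\alpha \in \R^n$ with entries $(\vw_\alpha)_i \coloneqq \prod_{s=1}^r (\vu_s)_i^{\alpha_s}$, and the scalar $c_\alpha \coloneqq \binom{k}{\alpha_1,\ldots,\alpha_r} \prod_{s=1}^r \lambda_s^{\alpha_s}$. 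Then the identity above rewrites cleanly as
\[ B = \sum_{|\alpha| = k} c_\alpha \, \vw_\alpha \vw_\alpha^T \, . \]

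Finally, each summand $c_\alpha \vw_\alpha \vw_\alpha^T$ has rank at most $1$, so $\rk B$ is bounded by the number of multi-indices $\alpha \in \Z_{\geq 0}^r$ with $|\alpha| = k$. By the stars-and-bars count (identical to \cref{lm:monomialcount}), this number is $\binom{k + r - 1}{k} = \binom{k + \rk A - 1}{k}$, completing the proof. There is no real obstacle here: the only thing to be careful about is that the decomposition of $A$ uses exactly $r = \rk A$ vectors (not $n$), so that the multinomial sum ranges over $r$-tuples and the stars-and-bars count delivers the claimed binomial rather than the larger $\binom{k+n-1}{k}$.
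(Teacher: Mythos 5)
Your proof is correct and rests on the same mechanism as the paper's: expand $a_{i,j}^k$ multinomially over an $r$-term rank-one decomposition of $A$ and count the multi-indices $\alpha \in \Z_{\geq 0}^r$ with $|\alpha| = k$ by stars and bars, exactly as in \cref{lm:monomialcount}. The only real difference is that you get the $r$-term decomposition from the spectral theorem (so you genuinely use the symmetry of $A$), whereas the paper expands the rows of $B$ in a basis $\vv^{(1)}, \ldots, \vv^{(r)}$ of $\mathrm{rowspace}(A)$ and bounds $\dim \mathrm{rowspace}(B)$ directly, a version of the same computation that would work for any matrix of rank $r$, symmetric or not.
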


\begin{proof}
Let $\vv^{(1)}, \ldots, \vv^{(r)} $ be a basis of $\mathrm{rowspace}(A)$, where $r = \rk A$.
Then $\vw \in \mathrm{rowspace}(B)$ is in 
$$V \coloneqq \spn \left\{ \left((\vv_j^{(1)})^{t_1} \cdots (\vv_j^{(r)})^{t_r} \right)_{j = 1, \ldots, n}\right\}_{(t_1, \ldots , t_t) \in \Z_{\geq 0}^r | \sum_i t_i = k } \, .$$

It is easy to see that $\dim V = \binom{k + r - 1}{k}$, which leads to the desired result.
\end{proof}

\begin{lm}\label{lm:JL_3}
Fix $n$ integer and $\varepsilon \in (0, 0.5)$.
Let $B = [b_{i, j}]_{\substack{i = 1, \ldots , n \\ j = 1, \ldots , n}}$ be a matrix, not necessarily symmetric.
Assume that $b_{i, i} = 1$ and $|b_{i, j}| < \varepsilon $ for all $i, j$ distinct in $[n]$.

If $\varepsilon > (n-1)^{-.5}$, then there exists a constant $C$ independent of $n$ and $\varepsilon$ such that 
$$ \rk B \geq \frac{C \log n}{\varepsilon^2 \log (\varepsilon^{-1})}\, . $$
\end{lm}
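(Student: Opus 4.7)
The plan is to reduce to the symmetric setting of \cref{lm:JL_1,lm:JL_2} via an entrywise power amplification. First, I would symmetrize: let $S \coloneqq \tfrac{1}{2}(B + B^T)$. This matrix is symmetric, has $1$'s on the diagonal, off-diagonal entries of absolute value strictly less than $\varepsilon$, and satisfies $\rk S \leq \rk B + \rk B^T = 2\rk B$ by subadditivity of rank. It therefore suffices to lower bound $\rk S$.

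Next, I would pick the minimal positive integer $k$ with $\varepsilon^k \leq (n-1)^{-1/2}$, so that $k \asymp \frac{\log n}{2 \log(\varepsilon^{-1})}$. The hypothesis $\varepsilon > (n-1)^{-1/2}$ guarantees $k \geq 1$, and $\varepsilon < 1/2$ makes $\log(\varepsilon^{-1}) > \log 2 > 0$, so the choice is well-posed. The entrywise power $S^{\circ k}$ is then symmetric with $1$'s on the diagonal and off-diagonal entries of absolute value at most $\varepsilon^k \leq (n-1)^{-1/2}$, so \cref{lm:JL_1} yields
$$\rk S^{\circ k} \geq \frac{n}{1 + (n-1)\varepsilon^{2k}} \geq \frac{n}{2}\, .$$

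On the other side, \cref{lm:JL_2} gives the upper bound $\rk S^{\circ k} \leq \binom{k + \rk S - 1}{k}$. Combining both inequalities and applying \cref{lm:binom_bound} in the form $\binom{k + r - 1}{k} \leq \left(\frac{e(k+r)}{k}\right)^k$, I would solve for $\rk S$ to obtain
$$\rk S \geq \frac{k}{e}\left(\frac{n}{2}\right)^{1/k} - k\, .$$
With the chosen $k$ one has $n^{1/k}$ of order $\varepsilon^{-2}$, so $\rk S$ is of order $\frac{\log n}{\varepsilon^2 \log(\varepsilon^{-1})}$, and dividing by $2$ yields the bound claimed for $\rk B$.

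The main obstacle is the final bookkeeping: I must verify that $n^{1/k}$ genuinely dominates $e$, so that the $-k$ subtractive term is absorbed rather than destructive, and that taking the integer ceiling in the choice of $k$ does not degrade the asymptotic. Both facts follow from the hypotheses $\varepsilon > (n-1)^{-1/2}$ (which keeps $k$ small relative to $\log n$) and $\varepsilon < 1/2$ (which keeps $\log(\varepsilon^{-1})$ bounded away from zero), and the absolute constant $C$ can be extracted cleanly from these estimates.
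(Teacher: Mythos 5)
Your overall strategy is the paper's: symmetrize via $B+B^T$, take an entrywise (Hadamard) power, and play \cref{lm:JL_1} against \cref{lm:JL_2} with \cref{lm:binom_bound}. The gap is in the choice of $k$, and it is not absorbable bookkeeping. You take the \emph{minimal} $k$ with $\varepsilon^k \leq (n-1)^{-1/2}$ and apply \cref{lm:JL_1} to the full $n\times n$ power. Minimality only gives $n-1 > \varepsilon^{-2(k-1)}$, so all you can conclude is $n^{1/k} \gtrsim \varepsilon^{-2+2/k}$; the claim that ``$n^{1/k}$ is of order $\varepsilon^{-2}$'' fails whenever $k$ stays bounded, i.e.\ whenever $\varepsilon$ is a fixed negative power of $n$. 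The point is that an additive $+1$ of rounding in $k$ is harmless in the prefactor $k/e$ but sits inside the exponential $n^{1/k}$, where it costs a factor $n^{1/k(k+1)}$, polynomial in $n$ for bounded $k$. Concretely, take $\varepsilon = n^{-0.4}$: then $k=2$, your chain gives $\binom{\rk S+1}{2} \geq n/2$, i.e.\ $\rk B \gtrsim \sqrt{n}$, while the lemma asserts $\rk B \geq \frac{C\log n}{\varepsilon^2\log(\varepsilon^{-1})} \asymp n^{0.8}$ --- and that stronger bound is actually true (apply \cref{lm:JL_1} directly to an $\lfloor\varepsilon^{-2}\rfloor\times\lfloor\varepsilon^{-2}\rfloor$ principal submatrix), so it is your derivation that is too lossy, not the statement.

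The paper avoids this by rounding the other way and shrinking the matrix instead of enlarging the power: in the main case $2\log(\varepsilon^{-1}) \leq \log n$ it sets $k = \lfloor \frac{\log n}{2\log(\varepsilon^{-1})}\rfloor \geq 1$ and $n' = \lfloor \varepsilon^{-2k}\rfloor \leq n$, passes to the $n'\times n'$ principal submatrix $B_0$ (so $\rk B \geq \rk B_0$), and applies \cref{lm:JL_1} to the entrywise $k$-th power of $B_0$. Then $\varepsilon^k \leq (n')^{-1/2}$, so that power has rank at least $n'/2$, and now $(n')^{1/k}$ genuinely is of order $\varepsilon^{-2}$; rounding $k$ down costs only a constant factor in the final bound. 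If you adjust your argument to choose $k$ by flooring and to apply \cref{lm:JL_1} to a submatrix of size roughly $\varepsilon^{-2k}$ rather than to all of $[n]$, the rest of your steps (the symmetrization, the use of \cref{lm:JL_2}, and the inversion via \cref{lm:binom_bound}) go through as you wrote them.
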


Note that the case where $\varepsilon < (n-1)^{-.5}$ has already been covered in \cref{lm:JL_3}.

\begin{proof}
Note that 
$$\rk(B + B^T) \leq \dim( \mathrm{rowspace}(B) ) + \dim( \mathrm{rowspace}(B^T) ) =  \rk B + \rk B^T  = 2 \, \rk \, B\, . $$ 
Therefore, it is sufficient to show the result for a symmetric matrix $B$.

We split the remaning part of the proof in two cases: when $2 \log(\varepsilon^{-1}) \leq \log n$ and when $2 \log(\varepsilon^{-1}) \geq \log n$.

In the first case define $k = \lfloor \frac{\log n}{2 \log ( \varepsilon^{-1})} \rfloor$ and $n' = \lfloor \varepsilon^{-2k} \rfloor$, in such a way that 
$$ n' \leq \frac{1}{\varepsilon^{2k}} \leq \left( \frac{1}{\varepsilon} \right)^{\frac{\log n}{2 \log(\varepsilon^{-1})}} = n\, . $$
Furthermore, $\varepsilon^k \leq \frac{1}{\sqrt{n}} \leq \frac{1}{\sqrt{n-1}}$.
Let $B_0$ be the $n'\times n'$ submatrix of $B$ arising from the first $n'$ rows and $n'$ columns of $B$, and let $B^{(k)}$ be the matrix $B_0$ with each entry raised to the $k$-th power.
Note how $B^{(k)}$ is still a symmetric function.
From \cref{lm:JL_1} we have that $\rk \, B^{(k)} \geq .5 \, n'$.

On the other hand, \cref{lm:JL_2} gives us that 
$$\rk \, B^{(k)} \leq \binom{k + \rk\, B_0 - 1}{k} \leq e^k \left( \frac{k + \rk \, B_0}{k}\right)^k =  e^k \left(1 +  \frac{\rk \, B_0}{k}\right)^k\, , $$
where we are using the bound presented in \cref{lm:binom_bound}.

Puting it all together and using $n' \leq \varepsilon^{-2k}$ we get 
$$ \rk \, B_0 \leq k \left( \frac{1}{2^{1/_k} e} \sqrt[k]{n'} - 1\right)  \leq  \frac{k}{2^{1/_k} e} \sqrt[k]{n'} \leq \frac{1}{2e  2^{1/_k} }\frac{\log n}{\log(\varepsilon^{-1})}\varepsilon^{-2} \leq \frac{C \log n}{\varepsilon^2 \log(\varepsilon^{-1})}\, , $$
where $C = \frac{1}{2e}$.

For the other side, note that $2 \log(\varepsilon^{-1}) \geq \log n$ implies $\varepsilon \leq n^{-.5}$
\end{proof}

\subsection{Convex geometry}

A set $C \subseteq \R^d$ is said to be \textbf{convex} if for any points $\vx, \vy \in C$ and a real $\lambda \in [0, 1]$ we have $t\vx + (1-t)\vy \in C$.
The \textbf{convex hull} of a set $S$ is the smallest convex set $\conv S $ that contains $S$.
Because the convexity condition is closed for arbitrary intersections, it can be defined as $\conv S \coloneqq \bigcap_{S \subseteq C \text{ convex}} C$ or equivalently
$$ \conv S = \{ \sum_{i=1}^m \alpha_i\vv_i | \vv_i \in S, \, \alpha_i \in [0, 1], \sum_i \alpha_i = 1\} \, . $$

\begin{thm}[Caratheodory Theorem]\label{thm:caratheodory}
Let $S \subseteq \R^d$.
If $\vv \in \conv (S)$ then $\vv = \sum_{i=1}^{d+1} \alpha_i \va_i$ for some $\va_i \in S, \alpha_i \in \R_{\geq 0}$, such that $\sum_i \alpha_i = 1$.
\end{thm}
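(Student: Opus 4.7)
The plan is to start from an arbitrary convex representation $\vv = \sum_{i=1}^m \alpha_i \va_i$ with $\va_i \in S$, $\alpha_i > 0$, $\sum_i \alpha_i = 1$, chosen so that $m$ is minimal among all such representations, and to argue that this forced minimality yields $m \leq d+1$. Suppose for contradiction that $m \geq d+2$.

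The next step exploits a dimension count. The $m$ vectors $(\va_1, 1), \ldots, (\va_m, 1)$ live in $\R^{d+1}$, and since $m > d+1$ they must be linearly dependent. So there exist scalars $\beta_1, \ldots, \beta_m$, not all zero, such that
\[
\sum_{i=1}^m \beta_i \va_i = 0 \quad \text{and} \quad \sum_{i=1}^m \beta_i = 0 \, .
\]
Because the $\beta_i$ sum to zero but are not all zero, at least one of them is strictly positive. Then, for any real $t$, one may write
\[
\vv = \sum_{i=1}^m (\alpha_i - t\beta_i) \va_i \, ,
\]
and the new coefficients still sum to $1$.

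The key step is to tune $t$ so as to zero out one term without making any coefficient negative. Set $t = \min\{ \alpha_i/\beta_i : \beta_i > 0 \}$; this minimum exists because at least one $\beta_i$ is positive and is achieved at some index $i_0$. For this value of $t$ one checks that $\alpha_{i_0} - t\beta_{i_0} = 0$, while for every other $i$ we have $\alpha_i - t\beta_i \geq 0$: this is clear when $\beta_i \leq 0$, and when $\beta_i > 0$ it follows from the minimality in the definition of $t$. Dropping the zero term yields a convex combination of at most $m-1$ elements of $S$ equal to $\vv$, contradicting the minimality of $m$.

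The only place that requires genuine care is the choice of $t$ and the verification that no coefficient turns negative; everything else is bookkeeping. The overall approach is classical, relying on the single idea that $m > d+1$ points in $\R^d$ together with the affine constraint $\sum \alpha_i = 1$ must exhibit an affine dependence that can be used to trim one term.
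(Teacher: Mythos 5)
Your proposal is correct and follows essentially the same argument as the paper: take a minimal-length convex representation, lift the points to $(\va_i,1)\in\R^{d+1}$ to obtain an affine dependence $\sum_i\beta_i\va_i=0$, $\sum_i\beta_i=0$, and shift by $t=\min_{\beta_i>0}\alpha_i/\beta_i$ to kill one coefficient while keeping all others nonnegative, contradicting minimality. No gaps; the one point needing care (that some $\beta_i>0$ so the minimum exists, and that $t\geq 0$ keeps the $\beta_i\leq 0$ terms nonnegative) is handled in your write-up just as in the paper.
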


\begin{proof}
Because $\vv \in \conv S$, there is an integer $m$, real numbers $\alpha_1, \ldots, \alpha_m $ and vectors $\vx_1, \ldots, \vx_m \in S $ such that $\vv = \sum_{i=1}^{m} \alpha_i \va_i$ and $\sum_i \alpha_i = 1$.
The ones that give us $m$ minimal, and assume for sake of contradiction that $m > d+1$.

For each $i \in [m]$, let $\vv_{i} = (\vx_i , 1)$.
Because $m > d+1$, the vectors $\{ \vv_i \}_{i=1}^m$ are linearly dependent, so there is some non-trivial vanishing linear combination $\sum_{i=1}^m \beta_i \vv_i = 0$.
Note that this gives us $\sum_{i=1}^m \vx_i = 0 $ and $\sum_i \beta_i = 0$, therefore we have, for any real $t$, that $\vv = \sum_{i=1}^{m} (\alpha_i - t \beta_i) \va_i$ and $\sum_i ( \alpha_i  - t \beta_i ) = 1$.

The proof is concluded when we will find $t$ such that $\alpha_i - t \beta_i \geq 0$.
Indeed, let $t_0 = \min_{i : \beta_i > 0} \frac{\alpha_i}{\beta_i}$.
Because $\sum_i \beta_i = 0$ and the linear combination $\sum_{i=1}^m \beta_i \vv_i$ was taken to be non-trivial, there is some $\beta_i > 0$, so $t_0 $ is well defined and positive.
The minimality condition guarantees that for each $i$ we have $\alpha_i - t_0 \beta_i \geq 0$.
Furthermore, there is an index $I$ such that $\alpha_i - t_0 \beta_i = 0$.
This means we can write $\vv = \sum_{i=1, i \neq I}^{m} (\alpha_i - t \beta_i) \va_i$.
This contradicts the minimality of $m$.
\end{proof}

\begin{lm}[Radon's Lemma]\label{lm:radon}
If $S\subseteq \R^d$ with $|S| \geq d+2$ there exists a partition $S = L \uplus R$ such that $$\conv (L) \cap \conv (R) \neq \emptyset \, .$$
\end{lm}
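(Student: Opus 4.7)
My plan is to exhibit the desired partition directly from an affine dependence relation on $d+2$ points, so I may assume without loss of generality that $S = \{\vx_1, \ldots, \vx_{d+2}\}$ has exactly $d+2$ elements (if $|S| > d+2$, pick any $d+2$ of them and extend the partition arbitrarily afterwards).

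First I would lift each point to $\R^{d+1}$ by setting $\vv_i = (\vx_i, 1)$. Since we have $d+2$ vectors in a $(d+1)$-dimensional space, they are linearly dependent, so there exist scalars $\beta_1, \ldots, \beta_{d+2}$, not all zero, with
\[ \sum_{i=1}^{d+2} \beta_i \vv_i = 0\, . \]
Reading off the last coordinate yields $\sum_i \beta_i = 0$, and reading off the remaining coordinates yields $\sum_i \beta_i \vx_i = 0$. Since the $\beta_i$ are not all zero and sum to zero, some are strictly positive and some are strictly negative.

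Next I would set $L = \{\vx_i : \beta_i > 0\}$ and $R = \{\vx_i : \beta_i \leq 0\}$, so that $S = L \uplus R$ with both parts non-empty. Let $t = \sum_{i : \beta_i > 0} \beta_i = -\sum_{i : \beta_i < 0} \beta_i > 0$. Rearranging the dependence yields
\[ \sum_{i : \beta_i > 0} \frac{\beta_i}{t} \vx_i \;=\; \sum_{i : \beta_i < 0} \frac{-\beta_i}{t} \vx_i \, , \]
and by construction both sides are convex combinations (non-negative coefficients summing to $1$). The common value lies in $\conv(L) \cap \conv(R)$, as desired.

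There is no serious obstacle here; this is essentially a linear-algebra argument in the same spirit as the Carathéodory proof just above. The only mild subtlety is making sure the partition is genuine: I must place indices with $\beta_i = 0$ consistently on one side (I put them in $R$), and I must check that both the positive and negative index sets are non-empty, which follows from the two constraints $\sum \beta_i = 0$ and $(\beta_i) \neq 0$. Everything else is routine normalization.
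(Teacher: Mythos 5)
Your proof is correct and follows essentially the same route as the paper: lift the points to $\R^{d+1}$ via $\vx_i \mapsto (\vx_i,1)$, extract a nontrivial linear dependence, and split the indices by the sign of the coefficients to get a common point of the two convex hulls. If anything, you are slightly more careful than the paper's own argument, since you explicitly assign the zero-coefficient points to one side so that $L$ and $R$ genuinely partition $S$.
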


We observe that this is tight.
Indeed, if we take $S$ to be $d+1$ points in general position, then no such partition can be found.

\begin{proof}
Let $S = \{\vv_1, \ldots, \vv_m \}$ with $m\geq d+2$.
Let $\vx_i = (\vv_i, 1)\in \R^{d+1}$.
These are linearly dependent vectors, so there is some non-trivial linear combination $\sum_{i=1}^m \beta_i \vx_i = 0$.
This means that $\sum_{i=1}^m \beta_i = 0$ and $\sum_{i=1}^m \beta_i \vv_i = 0$.

Let $I = \{i\in [m] | \beta_i  > 0 \}$ and $J = \{j\in [m] | \beta_j < 0 \}$.
Define $b = \sum_{i\in I} \beta_i = \sum_{j\in J} - \beta_j$.
Because $(\beta_i)_{i=1}^m$ is non-trivial, $B>0$.

Thus, we can take $\vec{s} = \sum_{i\in I} \frac{\beta_i}{B} \vv_i = \sum_{j\in J}\frac{-\beta_j}{B}\vx_i$.
This satisfies both $\vec{s} \in \conv \{\vx_i\}_{i\in I}$ and $\vec{s} \in \conv \{\vx_j\}_{j\in J}$, so these have non-empty intersections, as desired.
\end{proof}

\begin{thm}[Helly's theorem]\label{thm:helly}
Let $m, d $ be integers such that $m \geq d+1$.
Take $C_1, \ldots , C_m \subseteq \R^d$ convex sets such that every $d+1$ many such sets have non-empty intersection.
Then 
$$\bigcap_i C_i \neq \emptyset \, . $$
\end{thm}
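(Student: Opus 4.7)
The plan is to prove this by induction on $m$, using \cref{lm:radon} (Radon's Lemma) as the key combinatorial input. The base case $m = d+1$ is precisely the hypothesis, so there is nothing to show.

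For the inductive step, assume the statement holds for $m-1$ sets, and suppose $m \geq d+2$. The idea is to produce, for each index $i \in [m]$, a witness point $\vx_i \in \bigcap_{j \neq i} C_j$: such a point exists by applying the inductive hypothesis to the collection $\{C_j\}_{j \neq i}$, which still satisfies the $(d+1)$-wise intersection condition and consists of $m-1$ sets. This yields $m$ points $\vx_1, \ldots, \vx_m$ in $\R^d$, each one lying in every $C_j$ except possibly $C_i$.

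Since $m \geq d+2$, \cref{lm:radon} applies to the set $\{\vx_1, \ldots, \vx_m\}$ and produces a partition $[m] = I \uplus J$ together with a point
$$\vec{s} \in \conv\{\vx_i\}_{i \in I} \cap \conv\{\vx_j\}_{j \in J}\, .$$
I then claim $\vec{s} \in \bigcap_{k=1}^m C_k$. To verify this, fix any $k \in [m]$; without loss of generality $k \in I$. Then for every $j \in J$ we have $k \neq j$, so $\vx_j \in C_k$. By convexity of $C_k$, the convex hull $\conv\{\vx_j\}_{j \in J}$ is contained in $C_k$, hence $\vec{s} \in C_k$. The symmetric argument handles $k \in J$ using the hull on the $I$ side.

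The main subtlety is just ensuring the bookkeeping on the partition: each $C_k$ must be "covered" by the opposite side of the Radon partition, which works precisely because the witnesses $\vx_i$ are defined to miss only the set with matching index. No further estimates or calculations are required, so I do not anticipate any serious obstacle beyond correctly invoking Radon.
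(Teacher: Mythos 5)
Your proof is correct and follows essentially the same route as the paper: induction on $m$, using the inductive hypothesis to produce the witness points $\vx_i \in \bigcap_{j\neq i} C_j$, and then applying Radon's Lemma (\cref{lm:radon}) to find a point in both convex hulls, which lies in every $C_k$ by the same opposite-side argument. No gaps beyond the ones the paper itself glosses over (e.g.\ possibly coinciding witness points, in which case one is done immediately).
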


\begin{proof}
We use induction on $m$.
The theorem is imediate for $m = d+1$, as the theorem condition is the same as the desired result.

We now prove the induction step.
Take $m$ and a collection $\{C_i\}_{i=1}^m$ of convex sets such that every $d+1$ many such sets have non-empty intersection, but $\bigcap_i C_i = \emptyset$.

By induction hypothesis, for each $j = 1, \ldots, m$ we have some $a_j \in \bigcap_{\substack{i=1 \\ i\neq j}}^m C_i$.
By \cref{lm:radon}, there is a partition $\{a_j\}_{j=1}^m = I \uplus J$ such that $\conv \, I \cap \conv \, J \neq \emptyset$.
Let $\vx \in \conv \, I \cap \conv \, J $.
We will show that $\vx \in \bigcap_i C_i$.

Assume with no loss of generality that $I = \{a_1, \ldots, a_l\} $ and $J= \{ a_{l+1} , \ldots, a_m\}$.
We have that $\vx \in C_i$ for $i = 1, \ldots, l$, as for these $i$ we have $J \subseteq  C_i$, so $\conv \, J \subseteq C_i$.
Symilarly, we have that $\vx \in C_i$ for $i = l+1, \ldots, m$, as for these $i$ we have $I \subseteq  C_i$, so $\conv \, I \subseteq C_i$.
We conclude that $\vx \in \bigcap_i C_i$
\end{proof}

Any hyperplane $h \subseteq \R^d$ disconnects $\R^d \setminus h$ into two components.
We write $h^+$ and $h^-$ for these open sets, and $\overline{h}^+$, $\overline{h}^-$ for their respective closures.
These closed sets are called \textbf{hyperspaces}.

\begin{defin}[Centerpoint]
Given a finite set $X \subseteq \R^d$, an $\alpha$-centerpoint $\vy \in \R^d$ is a point such that, for any hyperplane $h$ that contains $\vy$ we get
$$ |X \cap h^+|, |X\cap h^-| \geq \alpha |X| \, . $$
\end{defin}

\begin{thm}[Centerpoint theorem]\label{thm:centerpoint}
Every finite set $X$ has an $\alpha$-centerpoint for $\alpha = \frac{1}{d+1}$.
\end{thm}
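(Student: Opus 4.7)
The plan is to deduce the theorem from Helly's theorem (\cref{thm:helly}), applied to a well-chosen finite family of convex hulls. Let $n=|X|$ and consider
$$ \mathcal{C} \coloneqq \left\{ \conv(T) : T \subseteq X,\ |T| > \tfrac{d}{d+1}\, n \right\}\, , $$
a finite collection of convex subsets of $\R^d$. The point $\vy$ that Helly produces in $\bigcap \mathcal C$ will be the desired centerpoint.

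The first step is to verify the $(d+1)$-wise intersection hypothesis of Helly's theorem. Given any $T_1,\dots,T_{d+1}\in\mathcal{C}$, each complement satisfies $|X\setminus T_i| < \tfrac{n}{d+1}$, so a union bound gives
$$ \Bigl| \bigcup_{i=1}^{d+1} (X\setminus T_i)\Bigr| \leq \sum_{i=1}^{d+1} |X\setminus T_i| < (d+1)\cdot \tfrac{n}{d+1} = n\, , $$
so $T_1\cap\cdots\cap T_{d+1}\neq\emptyset$. Any point of this intersection lies in every $\conv(T_i)$, so the $(d+1)$-intersection property holds. \cref{thm:helly} then provides a point $\vy\in\bigcap_T\conv(T)$.

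It remains to argue that this $\vy$ is a $\tfrac{1}{d+1}$-centerpoint. Suppose for contradiction that some hyperplane $h\ni\vy$ has, say, $|X\cap h^+| < \tfrac{n}{d+1}$. Then $T\coloneqq X\setminus h^+\subseteq\overline{h^-}$ has $|T| > \tfrac{d}{d+1}n$, so $T\in\mathcal{C}$ and $\vy\in\conv(T)\subseteq\overline{h^-}$. This is consistent with $\vy\in h$, so no direct contradiction appears yet. The plan is to perturb $h$ slightly within the family of hyperplanes through $\vy$ to a nearby $h'$ such that (a) $h'\cap X\subseteq\{\vy\}$, and (b) no point of $X$ crosses to the $+$ side, i.e.\ $|X\cap h^{+\prime}|\le |X\cap h^+|$. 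For (a), the hyperplanes through $\vy$ meeting a given point of $X$ form a codimension-one subvariety in the $(d-1)$-dimensional space of hyperplanes through $\vy$, so a generic small perturbation avoids all of $X\setminus\{\vy\}$. For (b), one picks the direction of perturbation so that every point of $X\cap h\setminus\{\vy\}$ is pushed into $h^{-\prime}$. After such a perturbation, $T'\coloneqq X\cap h^{-\prime}$ still has $|T'| > \tfrac{d}{d+1}n$, so $\vy\in\conv(T')$. But $T'$ lies strictly inside the open convex set $h^{-\prime}$, hence so does $\conv(T')$, which contradicts $\vy\in h'$.

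The main obstacle is the perturbation argument in the last step, and specifically subcase (b): one needs to move $h$ around $\vy$ in a direction that simultaneously steers every point of $X\cap h\setminus\{\vy\}$ to the same open side $h^{-\prime}$. This is possible by a separating-hyperplane argument inside $h$ whenever $\vy\notin\conv(X\cap h\setminus\{\vy\})$; the remaining edge case, where $\vy$ already lies in the convex hull of the points of $X$ on $h$, can be handled by descending to the lower-dimensional hyperplane $h$ and invoking Carathéodory's theorem (\cref{thm:caratheodory}) to replace $T$ with a bounded combinatorial witness, or by a limiting/generic-position argument.
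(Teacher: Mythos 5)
Your Helly set-up is essentially the paper's: the paper takes the hulls $\conv(\overline{h}^+\cap X)$ over halfspaces containing more than $\tfrac{d}{d+1}n$ points, you take hulls of \emph{all} subsets of that size, and both verify the $(d+1)$-wise intersection property by the same union bound. The genuine gap is in your final verification. Your tilt of $h$ about $\vy$ with property (b) requires all of $X\cap h\setminus\{\vy\}$ to lie strictly on one side of a $(d-2)$-flat through $\vy$ inside $h$, i.e.\ it requires $\vy\notin\conv\bigl(X\cap h\setminus\{\vy\}\bigr)$; you flag the complementary case but only gesture at a fix, and in fact no fix can exist for the claim you are trying to establish there. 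With the \emph{open} halfspaces $h^{\pm}$ (which is how you, following the paper's definition, read the centerpoint condition), the statement is simply false: for $X$ three collinear points in $\R^2$ and $\alpha=\tfrac13$, every candidate $\vy$ admits a line through it (the line of the points, or a parallel line) with fewer than one point strictly on some side. So the inequality ``$|X\cap h^+|\geq\tfrac{n}{d+1}$ for the open side'' is not provable, and your perturbation genuinely breaks down exactly when many points of $X$ lie on $h$ around $\vy$. (A secondary issue: even when the tilt exists, your count $|T'|>\tfrac{d}{d+1}n$ can fail by one when $\vy\in X$ and $(d+1)\mid n$.)

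What should be proved --- and what the paper's own verification actually proves, despite the open-halfspace wording of its definition --- is the closed-halfspace property: assume $|\overline{h}^+\cap X|<\tfrac{n}{d+1}$ and derive a contradiction. Under that reading your construction closes immediately, with no perturbation at all: $T\coloneqq X\cap h^-$ has more than $\tfrac{d}{d+1}n$ points and lies in the open convex set $h^-$, so $\vy\in\conv(T)\subseteq h^-$, contradicting $\vy\in h$. The paper reaches the same contradiction slightly differently, translating $h$ parallel to itself (possible since $X$ is finite) so that the large point set stays in a closed halfspace strictly missing $\vy$. So your proof is repaired by retargeting the contradiction hypothesis to the closed halfspace and deleting the tilting step; as written, the perturbation argument is a gap, not a technicality.
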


\begin{obs}
This is tight.
Indeed, assume that $\alpha > \frac{1}{d}$, if we take $X$ to be a collection of $d+1$ points in $\R^d$ in general position, these generate $d+1$ distinct hyperplanes $D_1, \ldots, D_{d+1}$.
Any $\alpha$-centerpoint $\vy$ is contained in a hyperplane parallel to each $D_i$, and the condition $|X \cap h^+|, |X\cap h^-| \geq \alpha |X|$ forces this hyperplane to be precisely $D_i$.
However, there is no point $\vy \in \bigcap_i D_i = \emptyset$.
\end{obs}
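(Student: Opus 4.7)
The plan is to assume for contradiction that $X$ admits an $\alpha$-centerpoint $\vy$ and to deduce that $\vy$ must lie on every facet hyperplane of the simplex generated by $X$, which is impossible. First I label the $d+1$ points as $v_1, \ldots, v_{d+1}$, the vertices of a $d$-simplex in general position, and for each $i$ I let $D_i$ denote the affine hyperplane spanned by the $d$ vertices $\{v_j : j \neq i\}$, i.e., the facet opposite $v_i$. Because $\alpha > 1/d$ and $|X| = d+1$, we have $\alpha |X| > (d+1)/d > 1$, so the $\alpha$-centerpoint condition forces both $|X \cap h^+| \geq 2$ and $|X \cap h^-| \geq 2$ for every hyperplane $h$ through $\vy$.

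The key step is, for each $i$, to consider the hyperplane $h_i$ through $\vy$ parallel to $D_i$ and show that $h_i$ must coincide with $D_i$, equivalently $\vy \in D_i$. Indeed, if $h_i \neq D_i$, then these are two distinct parallel hyperplanes, so $D_i$ lies entirely in one open half-space of $h_i$, say $h_i^-$. All $d$ points $\{v_j : j \neq i\}$ of $X \cap D_i$ therefore lie in $h_i^-$, and the opposite closed half-space $h_i^+$ meets $X$ in at most the singleton $\{v_i\}$ (possibly in nothing, if $v_i$ is on the $h_i^-$ side as well). Hence $|X \cap h_i^+| \leq 1 < 2$, contradicting the $\alpha$-centerpoint condition at the hyperplane $h_i$.

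Applying this to every $i \in \{1, \ldots, d+1\}$ forces $\vy \in \bigcap_{i=1}^{d+1} D_i$. But this intersection is empty: already $\bigcap_{i \neq j} D_i = \{v_j\}$ (the unique vertex lying on all facets other than $D_j$), and $v_j \notin D_j$ by definition of $D_j$, so no point lies on all $d+1$ facet hyperplanes simultaneously. This contradicts the existence of $\vy$, proving the observation. I expect the only subtle point to be a careful handling of closed versus open half-spaces together with the parallel-hyperplane trick; once one observes that two distinct parallel hyperplanes cleanly separate a configuration, the bound $|X \cap h_i^+| \leq 1$ is immediate from $|X \setminus D_i| = 1$.
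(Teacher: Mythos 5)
Your proof is correct and follows essentially the same route as the paper's sketch: the hyperplane through the putative centerpoint parallel to each facet $D_i$ is forced (by the integer count $\alpha|X| > 1$) to coincide with $D_i$, and $\bigcap_i D_i = \emptyset$ gives the contradiction; you merely fill in the counting and the emptiness of the intersection that the paper leaves implicit. One cosmetic point: the paper's $h^+$ denotes an open half-space ($\overline{h}^+$ the closed one), so phrase the bound as $|X \cap h_i^+| \leq 1$ for the open side, which your argument already yields.
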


\begin{proof}[Proof of \cref{thm:centerpoint}]
Consider $\mathcal X $ the collection of hyperspaces $\overline{h}^+$ that contain more than $\frac{d}{d+1}|X|$ points of $X$, and let 
$$ \mathcal Y \coloneqq \{ \conv \, (\overline{h}^+ \cap X) | \overline{h}^+ \in \mathcal X \}\, . $$

Note that $\mathcal Y $ is finite, as there are only finitely many subsets of $X$.
We claim that $\bigcap_{Y \in \mathcal Y} Y \neq \emptyset$.
Indeed, from \cref{thm:helly}, it is sufficient to show that for any $d+1$ sets $C_1, \ldots, C_{d+1} \in \mathcal Y$ we have $\bigcup_{i=1}^{d+1} C_i \neq \emptyset$.

\begin{align*}
|X \cap C_1\cap \cdots \cap C_{d+1} |  &= |X| - |X \cap (C_1 \cap \cdots \cap C_{d+1})^c | \\
&= |X| - |X \cap (C_1^c \cup \cdots \cup C_{d+1}^c) | \\
&= |X| - |(X \cap C_1^c) \cup \cdots \cup (X \cap C_{d+1}^c)|\\
&\geq |X| - \sum_i |X \cap C_i^c|\\
&> |X| - \sum_i \frac{1}{d+1} |X| = 0 \, ,
\end{align*}
where we note that $|X \cap C_i^c| < \frac{1}{d+1}|X|$ because, by assumption, $X \cap C_i = \overline{h}^+ \cap X$ contains more than $\frac{d}{d+1}|X|$ points.

Therefore we conclude that $\bigcup_{i=1}^{d+1} C_i \neq \emptyset$.
We claim that any point $\vy \in \bigcup_{i=1}^{d+1} C_i $ is a $\frac{1}{d}$-centerpoint.
Indeed, for sake of contradiction assume that there is some hyperplane $h$ through $\vy$ such that $|\overline{h}^+ \cap X|< \frac{1}{d+1}|X|$, then  $|h^+ \cap X|< \frac{1}{d+1}|X|$.
Because $X$ is a discrete set, we can find a parallel hyperplane $g$ such that $\overline{g}^+\cap X = h^+ \cap X$.
Note how  $\vy \not \in \conv \, (\overline{g}^+ \cap X)$, so $\vy \not \in \bigcup_{i=1}^{d+1} C_i $, a contradiction.
\end{proof}

\begin{thm}[Colorful Caratheodory Theorem]\label{thm:colorful_c}
Let $M_1, \ldots, M_{d+1} \subseteq \R^d$ finite sets such that $\va \in \bigcap_{i = 1}^{d+1} \conv (M_i) \neq \emptyset$.
Then, there exists $\vx_1, \dots, \vx_{d+1}$ such that $\vx_i \in M_i$ for all $i = 1, \ldots, d+1$ and $\va \in \conv (\{\vx_1, \ldots, \vx_{d+1}\})$.
\end{thm}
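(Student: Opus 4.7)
The plan is to use an extremal argument: among all colorful selections we pick one whose convex hull is closest to $\va$, then show that if $\va$ is not already in that hull, one of the $\vx_i$'s can be swapped out to strictly decrease the distance, contradicting minimality.

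The setup: each $M_i$ is finite, so the product $M_1 \times \cdots \times M_{d+1}$ is finite. Among all tuples $(\vx_1, \dots, \vx_{d+1})$ in it, choose one that minimizes the Euclidean distance from $\va$ to $T := \conv\{\vx_1, \dots, \vx_{d+1}\}$. Assume for contradiction that $\va \notin T$, and let $\vy \in T$ be the (unique) closest point to $\va$. Let $H$ be the hyperplane through $\vy$ orthogonal to $\va - \vy$; by the nearest-point property for convex sets, $H$ strictly separates $\va$ from $T$, so $T \subseteq \overline{H^-}$ while $\va \in H^+$.

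Next, because $\vy$ lies on the boundary of $T$, I claim there is an index $j$ with $\vy \in F := \conv\{\vx_i : i \neq j\}$. Indeed, the affine span of $T$ has dimension at most $d$, so applying \cref{thm:caratheodory} inside that affine span writes $\vy$ as a convex combination of at most $d$ of the $\vx_i$'s; any index not used in the combination serves as $j$. Now I use the hypothesis $\va \in \conv(M_j)$: since $\va$ lies strictly in $H^+$, not every point of $M_j$ can lie in $\overline{H^-}$ (otherwise $\conv(M_j) \subseteq \overline{H^-}$, excluding $\va$). Pick $\vx_j' \in M_j$ with $\vx_j' \in H^+$, and replace $\vx_j$ by $\vx_j'$ to form a new colorful simplex $T'$.

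To finish, observe that $F \subseteq T'$ (so $\vy \in T'$) and $\vx_j' \in T'$, hence the whole segment from $\vy$ to $\vx_j'$ lies in $T'$. Since $\vy \in H$ and $\vx_j'$ is strictly on the $\va$ side of $H$, moving a tiny distance from $\vy$ toward $\vx_j'$ produces a point of $T'$ strictly closer to $\va$ than $\vy$, contradicting the minimality of $T$. The main obstacle is the existence of the index $j$: one must rule out the possibility that $\vy$ genuinely requires all $d+1$ vertices to express it, and this is where the dimension bound from Caratheodory is essential, together with the observation that a closest point $\vy \neq \va$ must lie on the boundary of $T$ rather than its interior.
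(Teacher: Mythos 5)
Your overall strategy is the same as the paper's: minimize the distance from $\va$ over all choices $(\vx_1,\ldots,\vx_{d+1}) \in M_1\times\cdots\times M_{d+1}$, take the nearest point $\vy$ of the hull $T$, introduce the supporting hyperplane $H$ through $\vy$ orthogonal to $\va - \vy$, find $\vx_j' \in M_j \cap H^+$ using $\va \in \conv(M_j)$, and move slightly from $\vy$ toward $\vx_j'$ to contradict minimality. Those parts are argued correctly (the last step is exactly the paper's Lemma~\ref{lm:closeness}, which you reprove on the spot).

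The gap is in the existence of the free index $j$. You justify $\vy \in \conv\{\vx_i : i \neq j\}$ by saying that the affine span of $T$ has dimension at most $d$, so \cref{thm:caratheodory} expresses $\vy$ using at most $d$ of the $\vx_i$. That is off by one: Caratheodory in a $d$-dimensional affine space allows $d+1$ points, so when the $\vx_i$ are affinely independent this argument produces no unused index at all, and that is precisely the case you must handle. Your closing remark that $\vy$ lies on the boundary of $T$ points in the right direction but is not carried out: ``boundary'' has to mean the relative boundary, and one still needs an argument that a relative-boundary point of the hull of $d+1$ points lies in the hull of some $d$ of them. The clean repair, and what the paper does, is to use the hyperplane $H$ you already have: since $T \subseteq \overline{H}^-$ and $\vy \in H$, in any convex representation $\vy = \sum_i \alpha_i \vx_i$ one has $\sum_i \alpha_i \langle \vx_i - \vy, \va - \vy\rangle = 0$ with every summand nonpositive, so each $\vx_i$ with $\alpha_i > 0$ lies on $H$; hence $\vy$ belongs to the convex hull of those $\vx_i$ lying in the $(d-1)$-dimensional affine subspace $H$, and Caratheodory applied inside $H$ gives a representation with at most $d$ of the $\vx_i$, leaving an index $j$ with $\alpha_j = 0$. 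With that step repaired, your proof coincides with the paper's.
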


\begin{lm}\label{lm:closeness}
Let $H$ be a hyperplane, $\vx \in H$ and $\va, \vy$ on the same side of $H$ such that $\vx \-- \va$ is perpendicular to $H$.
Then there exists some $\lambda \in [0, 1]$ such that $(1-\lambda) \vx +\lambda \vy$ is closer to $\va$ than $\vx$.
\end{lm}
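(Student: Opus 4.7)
The plan is to use a first-order perturbation argument on the squared distance. Define
$$f(\lambda) \coloneqq \|(1-\lambda)\vx + \lambda \vy - \va\|^2 \, , $$
so that $f(0) = \|\vx - \va\|^2$ is the current distance squared from $\vx$ to $\va$. It suffices to exhibit some $\lambda \in (0, 1]$ with $f(\lambda) < f(0)$, and I will achieve this by showing $f'(0) < 0$ and invoking differentiability of $f$ at $0$.

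First I would compute the derivative. Expanding $f(\lambda) = \|\vx - \va + \lambda(\vy - \vx)\|^2$ and differentiating gives
$$f'(0) = 2 \langle \vy - \vx , \, \vx - \va \rangle = -2\langle \vy - \vx, \, \va - \vx\rangle \, . $$
Thus the task reduces to proving that $\langle \vy - \vx , \va - \vx \rangle > 0$.

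The key geometric step is to use the hypothesis that $\vx \- \va$ is perpendicular to $H$, so that $\vn \coloneqq \va - \vx$ is a non-zero normal vector to $H$ pointing toward the half-space that contains $\va$. Since $\vy$ lies on the same side of $H$ as $\va$, we have $\vy \notin H$ (and in particular $\vy \neq \vx$), and the signed distance of $\vy$ from $H$ in the direction $\vn$ is strictly positive. Writing $\vy - \vx = \vp + \vq$ where $\vp \in H$ and $\vq$ is a positive multiple of $\vn$, the orthogonality $\langle \vp, \vn\rangle = 0$ gives $\langle \vy - \vx, \vn \rangle = \langle \vq, \vn\rangle > 0$, as required.

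With $f'(0) < 0$ established, a Taylor expansion around $\lambda = 0$ yields $f(\lambda) = f(0) + \lambda f'(0) + O(\lambda^2)$, so for sufficiently small $\lambda > 0$ (which can be chosen inside $[0, 1]$) we have $f(\lambda) < f(0)$; taking square roots gives the desired strict inequality on distances. There is no real obstacle here; the only delicate point is interpreting \emph{same side} as strictly excluding $H$ itself, so that $\vy \neq \vx$ and the normal component of $\vy - \vx$ is genuinely positive rather than merely non-negative.
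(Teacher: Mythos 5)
Your proposal is correct and follows essentially the same route as the paper: both differentiate the squared distance $f(\lambda)$ along the segment and conclude from $f'(0) = -2\langle \vy - \vx, \va - \vx\rangle < 0$ that small $\lambda > 0$ improves the distance. Your orthogonal decomposition of $\vy - \vx$ into a component in $H$ and a positive multiple of the normal $\va - \vx$ just makes explicit the acute-angle assertion the paper states without proof, which is a welcome refinement rather than a different argument.
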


\begin{proof}
Let $f(\lambda)  = || (1-\lambda) \vx +\lambda \vy - \va ||^2 - ||\vx - \va||^2$.
Note that $f(0) = 0$.
We claim that $f'(0) < 0$, which means that for $\lambda >0 $ small enough, we have $f(\lambda) < 0$ and thus  $\va$ is closer to $(1-\lambda) \vx +\lambda \vy$ than $\vx$.
Indeed, note that

\begin{align*}
f(\lambda) &=  || (1-\lambda) \vx +\lambda \vy - \va ||^2 - ||\vx - \va||^2\\
&=  || (1-\lambda) \vx +\lambda \vy - \vx + \vx - \va ||^2 - ||\vx - \va||^2\\
&=  || \lambda(\vy - \vx ) + \vx - \va ||^2 - ||\vx - \va||^2\\
&=  || \lambda(\vy - \vx )||^2 + 2\langle \lambda(\vy - \vx ),  \vx - \va\rangle + || \vx - \va ||^2 - || \vx - \va ||^2 \\
&=   \lambda^2 ||(\vy - \vx )||^2 - 2\lambda \langle \vx - \vy,  \vx - \va\rangle\\
f'(0) &=  -2 \langle \vx - \vy,  \vx - \va\rangle < 0\, ,\\
\end{align*}
because the angle between the vectors $\vx - \vy,  \vx - \va$ is less than $\frac{\pi}{2}$.
\end{proof}

\begin{proof}[Proof of \cref{thm:colorful_c}]
Assume otherwise, and let $\vx_i \in M_i$ be chosen such that the distance between $\va $ and $X \coloneqq \conv(\{\vx_1, \ldots, \vx_{d+1}\})$ is minimal.
Because the sets $M_i$ are all finite, this minimum exists and is attained by some $\{\vx_i\}$.
For sake of contradiction assume that this distance is positive, so that there is some $\vx_0 \in X $ that is the closest point in $X$ to $\va$.

Let $H$ be the hyperplane perpendicular to $\vx_0 \-- \va$ that passes through $\vx$.
Let $H^+$ be the component of $\R^d\setminus H $ that contains $\va$.
If there exists some $\vy \in H^+ \cap X$, then we can construct a point $\vy \in X$ closer to $\va$ than $\vx$, which is impossible by minimality of $\vx_0$.
Therefore $H^+ \cap X = \emptyset$.

Now write $\vx_0 \in X \cap H$, which is a $d-1$-dimensional convex set.
By \cref{thm:caratheodory} we can write $\vx_0$ as a convex combination of $d$ terms, say $\sum_{i=1}^{d+1} \alpha_i \vx_i$ where $\alpha_j = 0$.
Recall that $\va \in \conv M_j$, so there is some $\hat{\vx}_j \in H^+$.
Replace $\vx_j$ by $\hat{\vx}_j$, and let $X' = \conv\{\vx_i\}$ be the new convex hull.
Note that from \cref{lm:closeness} there exists some point $\vv \in \conv\{\vx_0 \hat{\vx_j}\} \subseteq X'$ such that $\vv$ is closer to $\va$ than $\vx_0$.
This contradicts the minimality assumption and concludes the proof.
\end{proof}

The following is a generalisation of \cref{lm:radon}.

\begin{thm}[Tveberg's theorem]
Let $r$ be an integer, and $S \subseteq \R^d$ be such that $|S| \geq (r-1)(d+1) + 1$.
Then there exists a partition $S = S_1\uplus \cdots \uplus S_r$ such that $\bigcap_{i=1}^r \conv S_i \neq \emptyset$.
\end{thm}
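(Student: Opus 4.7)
The plan is to apply Sarkaria's tensor trick, reducing the statement to the Colorful Carathéodory Theorem (\cref{thm:colorful_c}). Write $S=\{\vv_1,\ldots,\vv_N\}$ with $N=(r-1)(d+1)+1$. The strategy is to transport the problem into a higher-dimensional ambient space where a single Colorful Carathéodory application encodes the desired partition.

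First, I choose auxiliary vectors $\vw_1,\ldots,\vw_r\in\R^{r-1}$ such that $\sum_{j=1}^r\vw_j=0$ and any $r-1$ of them are linearly independent; concretely, translate the vertices of a standard $(r-1)$-simplex so that the centroid sits at the origin. The key consequence is the rigidity statement that the only linear relations $\sum_j a_j\vw_j=0$ are the scalar multiples of $(1,\ldots,1)$. Next, lift each $\vv_i$ to $\hat{\vv}_i\coloneqq(\vv_i,1)\in\R^{d+1}$ and, for each $i\in[N]$, define
$$M_i\coloneqq\bigl\{\hat{\vv}_i\otimes\vw_j\mid j=1,\ldots,r\bigr\}\subseteq\R^{(d+1)(r-1)}.$$
Since $\sum_j\vw_j=0$, averaging within each $M_i$ gives $0=\hat{\vv}_i\otimes\bigl(\tfrac{1}{r}\sum_j\vw_j\bigr)\in\conv M_i$. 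The ambient dimension is $(d+1)(r-1)=N-1$, so \cref{thm:colorful_c} applies with $\va=0$ and produces a map $\sigma:[N]\to[r]$ together with non-negative weights $\alpha_i$ summing to $1$ satisfying
$$\sum_{i=1}^N\alpha_i\,\hat{\vv}_i\otimes\vw_{\sigma(i)}=0.$$

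Finally, I regroup. Setting $\vu_j\coloneqq\sum_{i:\sigma(i)=j}\alpha_i\hat{\vv}_i\in\R^{d+1}$, the relation becomes $\sum_{j=1}^r\vu_j\otimes\vw_j=0$. Reading off one coordinate of $\R^{d+1}$ at a time and applying the rigidity from the first step forces $\vu_1=\cdots=\vu_r$. Write this common vector as $(\bar{\vv},s)$; comparing last coordinates and summing over $j$ gives $rs=\sum_i\alpha_i=1$, whence $s=1/r>0$. Partition $S$ as $S_j\coloneqq\{\vv_i:\sigma(i)=j\}$. Each $S_j$ is non-empty (because $\sum_{\sigma(i)=j}\alpha_i=s>0$), and the coefficients $r\alpha_i$ are non-negative and sum to $1$ over each class, so
$$r\bar{\vv}=\sum_{i:\sigma(i)=j}(r\alpha_i)\vv_i\in\conv S_j\qquad\text{for every }j\in[r],$$
which exhibits the Tverberg point in the common intersection.

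The main obstacle is the rigidity used to conclude $\vu_1=\cdots=\vu_r$ from $\sum_j\vu_j\otimes\vw_j=0$: projecting onto each of the $d+1$ coordinate axes of $\R^{d+1}$ yields a scalar relation among the $\vw_j$, and the "any $r-1$ linearly independent" property of the auxiliary vectors pins that relation down to the diagonal. Every other step — verifying $0\in\conv M_i$, matching the dimension count $(d+1)(r-1)+1=N$, and the final bookkeeping that turns the vanishing linear combination into convex combinations inside each $S_j$ — is routine once this rigidity is in hand.
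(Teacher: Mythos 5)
Your proposal is correct and follows essentially the same route as the paper: Sarkaria's tensor trick with lifted points $(\vv_i,1)$, auxiliary vectors summing to zero (the paper takes $\vec{e}_1,\ldots,\vec{e}_{r-1},-\sum_i\vec{e}_i$, which has the same rigidity property as your centered simplex vertices), an application of the Colorful Carath\'eodory Theorem to the sets $M_i$ of outer products, and the same regrouping to extract the common point with weight $1/r$ in each class. The only presentational difference is that the paper verifies the equality of the group sums by multiplying the relation by $\vec{e}_t^T$ rather than invoking the rigidity of linear relations among the auxiliary vectors, which is the same computation.
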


\begin{proof}
Write $S = \{\vx_0, \ldots, \vx_m\}$.
It is enough to establish the result for $m = (d+1)(r-1)$.
Define $\vx_i \in \R^{d}$ for $i = 0, \ldots, m$ so that $(\vx_i)_j = \begin{cases}&1, \text{ if $j \in F_i$ } \\  &0, \text{ otherwise.}\end{cases}$, and let $\vy = (\vx, 1) \in \R^{d+1}$.
Define $\vv_i = \vec{e}_i \in\R^{r-1}$ for $i = 1, \ldots, r-1$, and define $\vv_r = - \sum_{i=1}^{r-1} \vec{e}_i$.
Write $\vec{0} \in\R^{d+1}$ for the all zero vector and $\mathbb{0} \in \R^{(d+1)\times (r-1)}$ for the all zero matrix.

Define $M_i \coloneqq \{\vv_j\vy^T_i | j=1, \ldots, r \} \subseteq \R^{(r-1) \times (d+1)}$ for $i = 0, \ldots, m$.
Note that $\sum_{i=1}^r \vv_i = \vec{0}$, so $\mathbb{0} \in \conv \, M_i$ for all $i$.
There are $m+1$ such sets in $\R^m$, so \cref{thm:colorful_c} gives us, for each $i$, a point $\vz_i\in M_i$ and a coeficient $\alpha_i\geq 0$ such that 
\begin{equation}\label{eq:col}
 \mathbb{0} = \sum_{i=0}^m \alpha_i \vz_i , \quad \quad \sum_{i=0}^m \alpha_i = 1\, . 
\end{equation}

Let $f:\{0, \ldots, m\} \to \{1, \ldots, r\}$ such that $\vz_i = \vv_{f(i)}\vy_i^T$.
Multiplying by $\vec{e}_t^T$, for $t = 1, \ldots, r-1$, on the left of the first equation of \eqref{eq:col} gives us 
$$ \vec{0}^T = \left( \sum_{i: f(i) = t} \alpha_i \vy_i^T\right) - \left(\sum_{i: f(i) = r} \alpha_i \vy_i^T \right) \, . $$

We therefore get that $ \sum_{i: f(i) = t} \alpha_i \vy_i^T$ does not depend on $t = 1, \ldots, r$.
Recall that $\vy_i = (\vx_i, 1)$, so this gives us that $A \coloneqq  \sum_{i: f(i) = t} \alpha_i$ does not depend on $t$, so $r A = \sum_i \alpha_i = 1$ gives $A = \frac{1}{r}$.

We conclude that $\vv \coloneqq \sum_{i: f(i) = t} \frac{\alpha_i}{A}\vx_i$ does not depend on $t=1, \ldots, r$.
Thus, $\vv \in \conv\{\vx_i\}_{i: f(i) = t}$ for any $t=1, \ldots, r$.

Therefore $\bigcap_t \conv\{\vx_i\}_{i: f(i) = t}\neq \emptyset$, and our desired partition was found.
\end{proof}

\subsection{Borsuk-Ulam theorem}

\begin{thm}[Ham-Sandwish theorem]
Let $X_1 ,  \ldots , X_d \subseteq  \R^d$ be finite sets.
Then there exists a hyperplane $h \subseteq \R^d$ such that for all $i=1, \ldots, d$ we have 
$$ |X_i \cap \overline{h}^+| , |X_i \cap \overline{h}^-| \geq \frac{1}{2} n\, . $$
\end{thm}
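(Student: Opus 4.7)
The plan is to derive the Ham-Sandwich theorem from the Borsuk-Ulam theorem, which asserts that any continuous map $S^d \to \R^d$ must identify some antipodal pair of points. First, I would parametrize the oriented affine hyperplanes of $\R^d$ by the unit sphere $S^d \subseteq \R^{d+1}$: to $(\vv, b) \in S^d$ with $\vv \in \R^d$ and $b \in \R$, associate the hyperplane $h_{\vv, b} = \{\vx \in \R^d : \vv \cdot \vx = b\}$ with closed positive half-space $\overline{h}^+_{\vv, b} = \{\vx \in \R^d : \vv \cdot \vx \geq b\}$. The antipode $(-\vv, -b)$ describes the same unoriented hyperplane but swaps the two half-spaces, so $\overline{h}^+_{-\vv, -b} = \overline{h}^-_{\vv, b}$.

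Next, I would like to define $f : S^d \to \R^d$ by $f_i(\vv, b) = |X_i \cap \overline{h}^+_{\vv, b}|$ so that the Borsuk-Ulam conclusion $f(\vv, b) = f(-\vv, -b)$ is exactly the statement that each $X_i$ is bisected. The obstacle is continuity: because each $X_i$ is a finite set, this counting function jumps as the hyperplane sweeps across a point. I would therefore smooth the configuration: replace each $p \in X_i$ by the uniform probability measure on a ball of radius $\varepsilon$ around $p$, giving a continuous probability measure $\mu^\varepsilon_i$ of total mass one, and define the continuous map $f^\varepsilon_i(\vv, b) = \mu^\varepsilon_i(\overline{h}^+_{\vv, b})$. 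Applying Borsuk-Ulam to $g^\varepsilon(\vv, b) = (f^\varepsilon_i(\vv, b) - f^\varepsilon_i(-\vv, -b))_{i=1}^d$ yields a point $(\vv_\varepsilon, b_\varepsilon) \in S^d$ where $h_{\vv_\varepsilon, b_\varepsilon}$ bisects every smoothed measure exactly.

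Finally, I would extract by compactness of $S^d$ a convergent subsequence $(\vv_\varepsilon, b_\varepsilon) \to (\vv_0, b_0)$ as $\varepsilon \to 0$. The map $(\vv, b) \mapsto |X_i \cap \overline{h}^+_{\vv, b}|$ is upper semi-continuous since $\overline{h}^+_{\vv, b}$ is topologically closed. In the limit, every $p \in X_i$ strictly on the positive side of $h_{\vv_0, b_0}$ lies in $\overline{h}^+_{\vv_\varepsilon, b_\varepsilon}$ and contributes full smoothed mass $\tfrac{1}{|X_i|}$ for small $\varepsilon$; every point strictly on the negative side contributes vanishing smoothed mass inside $\overline{h}^+_{\vv_\varepsilon, b_\varepsilon}$; and every point lying on $h_{\vv_0, b_0}$ itself belongs to both $\overline{h}^+_{\vv_0, b_0}$ and $\overline{h}^-_{\vv_0, b_0}$. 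Combining these observations with the exact bisection of $\mu^\varepsilon_i$ gives both $|X_i \cap \overline{h}^+_{\vv_0, b_0}| \geq \frac{1}{2}|X_i|$ and $|X_i \cap \overline{h}^-_{\vv_0, b_0}| \geq \frac{1}{2}|X_i|$. The main obstacle is precisely this smoothing-then-limiting manoeuvre that sidesteps the discontinuity of the naive counting function; once it is in place, Borsuk-Ulam does the real topological work.
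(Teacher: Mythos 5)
Your proposal is correct and follows exactly the route the paper intends: the paper's own ``proof'' is left as a stub (``uses the Borsuk--Ulam theorem, see Mato\v{u}\v{s}ek, TBD''), and your smoothing-then-limiting argument is the standard way to carry that out. The only point to add for full rigour is that the limit point $(\vv_0, b_0)\in S^d$ cannot be one of the poles $(\vec{0},\pm 1)$ (which do not define a hyperplane): since each $h_{\vv_\varepsilon, b_\varepsilon}$ exactly bisects measures supported in a fixed bounded region, it must meet that region, so $|b_\varepsilon|/\|\vv_\varepsilon\|$ stays bounded and hence $\|\vv_\varepsilon\|$ is bounded away from $0$, so $\vv_0\neq \vec{0}$ and the limiting hyperplane is genuine.
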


\begin{proof}
This claim uses the Borsuk Ulam theorem, to be found in \cite{matouvsek2003using}.
TBD

\end{proof}

\section{Chevalier - Warning}

Commonly used in number theory, the Chevalier-Warning theorem guarantees that there are some solutions to a polynomial equation, provided that there are sufficient variables.
This result has lead to important conjectures in abstract algebra, like Artin's conjecture.

We will state and prove Chevalier - Warning, stated originally in \cite{chevalley1935demonstration}.

\begin{thm}[Chevalier - Warning Theorem]\label{thm:CW}
Fix $q$ a power of a prime $p$, and consider $f_1, \ldots, f_t$ polynomials in $\F_q[\vx_1, \ldots , \vx_m]$ such that $\sum_i \deg (f) < m$.

Then the number of common zeroes of the polynomials $\{ f_i\}_{i=1}^t $ is a multiple of $p$.
\end{thm}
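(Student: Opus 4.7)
The plan is to encode the common vanishing locus of the $f_i$ by a single polynomial whose values are the indicator function of that locus, and then to bound the degree of that polynomial and exploit a power-sum identity over $\F_q$.

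First, define
\[ P(\vx) \coloneqq \prod_{i=1}^t \left( 1 - f_i(\vx)^{q-1}\right) \in \F_q[\vx_1, \ldots, \vx_m]\, . \]
Since $a^{q-1} = 1$ for every $a \in \F_q^*$ and $0^{q-1} = 0$, the polynomial $P$ evaluates to $1$ on common zeros of the $f_i$ and to $0$ elsewhere. Denoting by $N$ the number of common zeros, this gives the identity
\[ N \cdot 1_{\F_q} = \sum_{\vx \in \F_q^m} P(\vx) \, , \]
where the right-hand side is computed in $\F_q$. Showing this sum equals $0 \in \F_q$ is equivalent to proving $p \mid N$, which is our goal.

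Next, I would establish the following auxiliary power-sum identity, for any integer $a \geq 0$:
\[ \sum_{x \in \F_q} x^a = \begin{cases} -1 & \text{if } a > 0 \text{ and } (q-1) \mid a \, , \\ 0 & \text{otherwise.} \end{cases} \]
The case $a=0$ gives $q = 0$ in $\F_q$. For $a > 0$ with $(q-1) \nmid a$, fix a generator $g$ of $\F_q^*$ and evaluate the geometric sum $\sum_{i=0}^{q-2} g^{ia} = (g^{a(q-1)} - 1)/(g^a - 1) = 0$. The remaining case is immediate since $x^{k(q-1)} = 1$ on $\F_q^*$.

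Finally, expand $P$ as a linear combination of monomials $\vx_1^{a_1} \cdots \vx_m^{a_m}$. By hypothesis $\deg P \leq (q-1)\sum_i \deg f_i < (q-1)m$, so every such monomial satisfies $\sum_i a_i < (q-1)m$, which forces at least one exponent $a_i$ to be strictly less than $q-1$. Since
\[ \sum_{\vx \in \F_q^m} \vx_1^{a_1} \cdots \vx_m^{a_m} = \prod_{i=1}^m \sum_{x_i \in \F_q} x_i^{a_i} \, , \]
the auxiliary identity makes the $i$-th factor vanish for that index, so the whole product is zero. Summing over all monomials of $P$ gives $\sum_{\vx \in \F_q^m} P(\vx) = 0$ in $\F_q$, concluding the proof.

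The main conceptual step is the construction of $P$; after that the argument is a routine degree count. The only mild technical care is in the power-sum identity, where one must keep track of the convention $0^0 = 1$ so that the $a = 0$ case evaluates to $q = 0 \in \F_q$ rather than being ill-defined.
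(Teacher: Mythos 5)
Your proposal is correct and follows essentially the same route as the paper: the indicator polynomial $\prod_{i=1}^t\bigl(1 - f_i(\vx)^{q-1}\bigr)$, the degree bound $(q-1)\sum_i \deg f_i < (q-1)m$ forcing some exponent below $q-1$ in each monomial, and the vanishing of $\sum_{x\in\F_q} x^a$ for such exponents. The only difference is cosmetic: you prove the power-sum identity (in slightly greater generality) via a generator of $\F_q^*$ and a geometric series, while the paper's \cref{lm:sum_infield} gets the needed case by multiplying the sum by $y^r$ for some $y$ with $y^r\neq 1$; both are fine.
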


\begin{lm}\label{lm:sum_infield}
Let $q$ be a power of a prime and $r$ a positive integer with $r < q - 1$. 
Then we have
$$ \sum_{x \in \F_q} x^r = 0 \, . $$
\end{lm}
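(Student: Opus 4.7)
The plan is to exploit the cyclic group structure of $\F_q^\times$ together with a geometric series computation. Since $r$ is positive, the $x=0$ contribution to the sum vanishes, so it suffices to evaluate $\sum_{x \in \F_q^\times} x^r$.

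Fix a generator $g$ of the cyclic group $\F_q^\times$, which has order $q-1$. Then as $x$ ranges over $\F_q^\times$, we may reindex $x = g^i$ for $i=0, 1, \ldots, q-2$, turning the sum into
$$\sum_{x \in \F_q^\times} x^r = \sum_{i=0}^{q-2} (g^r)^i \, .$$

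The crucial point is that $g^r \neq 1$: this is precisely where the hypothesis $1 \le r \le q-2$ (equivalently, $r$ is not a multiple of $q-1$) is used, since $g$ has order exactly $q-1$. Because $g^r - 1$ is a nonzero element of $\F_q$ and hence invertible, the standard geometric series identity applies:
$$\sum_{i=0}^{q-2} (g^r)^i = \frac{(g^r)^{q-1} - 1}{g^r - 1}\, .$$
The numerator equals $(g^{q-1})^r - 1 = 1^r - 1 = 0$, which concludes the computation.

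There is no real obstacle here; the only subtle point is ensuring that the denominator $g^r - 1$ is nonzero so that the geometric sum formula is valid in $\F_q$, and this is exactly what the constraint $r < q-1$ (with $r > 0$) guarantees. It may be worth noting for the reader that the hypothesis is sharp: when $r$ is a positive multiple of $q-1$, every $x \in \F_q^\times$ satisfies $x^r = 1$ and the sum equals $q-1 = -1 \neq 0$ in $\F_q$.
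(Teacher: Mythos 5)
Your proof is correct, but it takes a different route from the notes. You reindex the sum over $\F_q^\times$ via a generator $g$ and evaluate a geometric series, using that $g^r \neq 1$ when $0 < r < q-1$ and that $(g^r)^{q-1} = 1$. The notes instead avoid any appeal to the cyclic structure of $\F_q^\times$: they observe, via the root-counting bound of \cref{lm:rootcount}, that the polynomial $x^r - 1$ has at most $r < q-1$ roots, hence there is some nonzero $y$ with $y^r \neq 1$; then the substitution $x \mapsto yx$ permutes $\F_q$, so $y^r \sum_{x\in\F_q} x^r = \sum_{x\in\F_q}(yx)^r = \sum_{x\in\F_q} x^r$, forcing the sum to vanish. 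Your argument buys an explicit closed-form evaluation and makes the sharpness at $r \equiv 0 \pmod{q-1}$ transparent, but it leans on the theorem that $\F_q^\times$ is cyclic, which these notes never establish; the paper's scaling trick needs only the elementary root bound already proved here, so it is more self-contained in this context. If you want to keep your version, you should either cite the cyclicity of the multiplicative group of a finite field or note that the same averaging idea works with any single element $y$ satisfying $y^r \neq 1$, which is exactly the paper's shortcut.
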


\begin{proof}
We have shown in \cref{lm:rootcount} that any one-variable polynomial of degree $d$ has at most $d$ roots in any field.
Therefore, because $r < q-1$, there is some non-zero $y \in \F_q$ such that $y^r - 1 \neq 0$.
Thus we have 
$$y^r \sum_{x \in \F_q} x^r =  \sum_{x \in \F_q} (yx)^r =  \sum_{x \in \F_q} x^r \, , $$
which implies that $ \sum_{x \in \F_q} x^r = 0$.
\end{proof}

\begin{proof}[Proof of \cref{thm:CW}]
Consider the polynomial 
$$ N(\vx) \coloneqq N(x_1, \ldots, x_m ) \coloneqq \prod_{i=1}^t ( 1 - f_i(\vx )^{q-1} )\, . $$

Recall that $x^{q-1} - 1 $ vanishes in all non-zero field elements $x \in \F_q$.
If $\vx  $ is a common root of $\{f_i\}_{i=1}^t$ then $N(\vx) = 1$, whereas if $\vx $ does not vanish in some $f_i$ then $N(\vx) = 0$.
Thus $\sum_{\vx \in \F_q^m} N(\vx) = \# \{ \text{ common roots of } \{f_i\}_i \}$ modulo $p$.

Note that $\deg N = \sum_{i= 1}^t (q-1)\deg f_i < m(q-1)$, so for any monomial in $N(\vx )$, say $ x_1^{\alpha_1}\cdots x_m^{\alpha_m}$ there is some $i$ such that $\alpha_i < q-1$.
Using \cref{lm:sum_infield} we have 
$$\sum_{(x_1, \ldots, x_m) \in \F_q^m} x_1^{\alpha_1}\cdots x_m^{\alpha_m} = \left(\sum_{x\in\F_q}x^{\alpha_1} \right) \cdots \left(\sum_{x\in\F_q}x^{\alpha_1} \right) = 0\, . $$

Using this on every monomial of $N$, we get $\sum_{\vx \in \F_q^m} N(\vx) = 0$, as desired.
\end{proof}

The following lemma will be useful in applying \cref{thm:CW}.

\begin{lm}\label{lm:power}
Let $q$ be a power of a prime and $x \in \F_q$ a non-zero element.
Then $x^ {q-1} = 1 $.
\end{lm}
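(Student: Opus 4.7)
The plan is to use a standard counting argument via a multiplication-by-$x$ bijection on $\F_q^*$, which is essentially a direct form of Lagrange's theorem applied to the multiplicative group of the field, without needing to invoke abstract group theory.

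First I would note that, since $x\ne 0$, multiplication by $x$ gives a map $\phi_x:\F_q^*\to\F_q^*$, $y\mapsto xy$. I would check injectivity (if $xy=xy'$ then $x(y-y')=0$ and since $\F_q$ is a field and $x\ne 0$ we get $y=y'$), so that $\phi_x$ is a bijection on the finite set $\F_q^*$.

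Next I would compare the product of all elements of $\F_q^*$ with itself after reindexing via $\phi_x$:
$$\prod_{y\in\F_q^*} y \;=\; \prod_{y\in\F_q^*}\phi_x(y) \;=\; \prod_{y\in\F_q^*}xy \;=\; x^{q-1}\prod_{y\in\F_q^*} y\, .$$
The product $P\coloneqq\prod_{y\in\F_q^*}y$ is a product of non-zero field elements, hence non-zero, so I can cancel $P$ on both sides to conclude $x^{q-1}=1$.

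The only potential obstacle is to make sure all the steps remain elementary and do not quietly assume the result itself; in particular, the cancellation step requires that $\F_q$ has no zero divisors, which is immediate since it is a field. No step here is truly hard, so the proof is essentially a one-line bijection argument. An alternative I would mention is that this is exactly the statement that $\F_q^*$ is a group of order $q-1$ under multiplication, so the result is Lagrange's theorem for this group; I would prefer the direct bijection proof to keep the exposition self-contained.
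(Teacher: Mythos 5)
Your proof is correct, but it takes a different route from the one in the paper. The paper's proof invokes Lagrange's theorem for finite groups: the order $t$ of the subgroup $\{1, x, x^2, \ldots\}$ of the multiplicative group $\F_q \setminus \{0\}$ divides $q-1$, so $x^{q-1} = (x^t)^l = 1$. Your argument instead avoids group theory altogether: multiplication by $x$ is a bijection of $\F_q^*$ onto itself, so $\prod_{y \in \F_q^*} y = \prod_{y \in \F_q^*} xy = x^{q-1} \prod_{y \in \F_q^*} y$, and cancelling the non-zero product gives the result. Interestingly, the alternative you mention at the end (Lagrange applied to $\F_q^*$) is exactly the paper's proof. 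What your version buys is self-containedness within these notes --- it only uses that a field has no zero divisors, rather than importing a group-theoretic theorem that is not proved here; what the paper's version buys is brevity and a statement of the structural reason (the order of $x$ divides $q-1$), which is slightly more informative than the bare identity. Both proofs are complete; your cancellation step is justified exactly as you say, since $\prod_{y \in \F_q^*} y$ is a product of non-zero elements of a field and hence non-zero.
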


\begin{proof}
The set $\F_q\setminus \{0\}$ is a group with the multiplication operation, so Lagrange theorem for groups tells us that the order $t$ of the finite subgroup $\{1, x, x^2, \ldots \} \subseteq \F_q\setminus\{0\} $ divides $|\F_q\setminus \{0\}| = q-1$.

Therefore, $q-1 = tl $ for some integer $l$, so $x^{q-1} = (x^t)^l = 1^l = 1$, as desired.
\end{proof}

\subsection{Berge-Sauer conjecture}

A \textbf{subgraph} $H\leq G$ of a graph $G$ is a graph $H = (V, E)$ such that $V \subseteq V(G)$ and $E \subseteq E(G) $.
Notice that to obtain a subgraph of $G$ we remove some edges and some vertices.

\begin{conj}[Berge-Sauer conjecture]
Every $4$-regular simple graph contains a $3$-regular graph.
\end{conj}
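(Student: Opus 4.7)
The plan is to apply \cref{thm:CW} over $\F_3$ in the spirit of Alon, Friedland and Kalai. Let $G=(V,E)$ be a $4$-regular simple graph. To every edge $e\in E$ I attach a variable $x_e\in\F_3$, and to every vertex $v\in V$ the polynomial
$$f_v(x)\coloneqq \sum_{e\ni v} x_e^2 \in \F_3[x_e\mid e\in E] \, .$$
Because $0^2=0$ and $1^2=2^2=1$ in $\F_3$, each $x_e^2$ is exactly the indicator of $x_e\neq 0$, so $f_v(x)$ is the number of incident edges carrying a non-zero value, reduced modulo $3$.

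The point of this setup is that a non-trivial common root $x$ of the family $\{f_v\}_{v\in V}$ immediately produces the required subgraph. Indeed, if $S\coloneqq\{e\in E:x_e\neq 0\}$, then at every $v\in V$ the degree of $v$ in $S$ is divisible by $3$; since $G$ is $4$-regular, this degree must be $0$ or $3$, and the vertices of degree $3$ carry a $3$-regular subgraph. So the whole task is to produce one non-trivial zero.

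Running the degree count for \cref{thm:CW}, each $f_v$ has degree $2$, so $\sum_v \deg f_v = 2|V|$, whereas the number of variables is $|E| = 2|V|$. The hypothesis $\sum_v \deg f_v < m$ of \cref{thm:CW} thus holds with equality instead of strictly, which is the precise point where the naive approach fails. The standard remedy is to augment $G$ by a single auxiliary edge $e^{\star}=\{a,b\}$ (producing a multigraph with $2|V|+1$ edges), which loosens the count to $2|V|<2|V|+1$. Then \cref{thm:CW} applies and forces the number of common roots of $\{f_v\}$ to be a positive multiple of $p=3$, giving at least two non-trivial roots $x$ with the accompanying set $S\subseteq E\cup\{e^{\star}\}$. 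The endpoints $a,b$ now have ambient degree $5$, so the constraint $f_a=f_b=0$ still forces their degree in $S$ to lie in $\{0,3\}$.

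The delicate point — and the step I expect to be the main obstacle — is eliminating the auxiliary edge: I need a non-trivial solution $x$ with $e^{\star}\notin S$, so that $S$ is a subgraph of the original simple graph $G$. A priori $e^{\star}$ might lie in every non-trivial solution, which is exactly why the direct Chevalley-Warning argument only yields the conclusion for the multigraph-plus-edge version. To close the gap I would try two complementary tactics: first, choose $e^{\star}$ parallel to an already existing edge of $G$, so that the multi-edge can be \emph{absorbed} into the original copy (flipping the value $x_{e^{\star}}\leftrightarrow x_e$ where $e$ is the parallel simple edge) without leaving $G$; second, average or iterate over many choices of $e^{\star}$ and use the divisibility-by-$3$ count to pigeonhole a solution whose support avoids the auxiliary edge. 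Successfully carrying either route across is exactly the bridge from the Alon--Friedland--Kalai multigraph theorem to the Berge--Sauer conjecture for simple graphs, and this is the combinatorial heart of the problem.
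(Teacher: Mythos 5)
There is a genuine gap, and it is worth being precise about its nature: the statement you were asked to prove is stated in the paper as a \emph{conjecture}, explicitly described there as still open, and the paper itself offers no proof of it. What the paper does prove (as \cref{thm:3reggraphs}, following Alon, Friedland and Kalai) is exactly the weakened statement your argument actually establishes: a graph obtained by adding one extra edge to a $4$-regular graph contains a $3$-regular subgraph. Your setup --- the polynomials $f_v(\vx)=\sum_{e\ni v}x_e^2$ over $\F_3$, the observation that the degree count $\sum_v\deg f_v=2|V|$ exactly equals $|E|=2|V|$ for a $4$-regular graph so that \cref{thm:CW} fails by one, and the remedy of adjoining an auxiliary edge to make the inequality strict --- is precisely the paper's proof of that weaker theorem. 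But the subgraph you extract lives in the augmented multigraph and may use the auxiliary edge $e^{\star}$, so you have not produced a $3$-regular subgraph of the original simple graph $G$; you acknowledge this yourself, and that acknowledged step is not a technicality but the entire open content of the conjecture.

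Neither of your proposed repairs closes it. If you take $e^{\star}$ parallel to an existing edge $e$, the swap $x_{e^{\star}}\leftrightarrow x_e$ does handle the case where the support contains $e^{\star}$ but not $e$ (both edges have the same endpoints, so every $f_v$ is invariant under the swap); the problem is the case where the support contains \emph{both} parallel copies, and then there is no way to delete or merge them without changing the residues at the two endpoints by $2$ modulo $3$, destroying the vanishing of $f_a$ and $f_b$. The averaging/pigeonhole idea over many choices of $e^{\star}$ is not an argument as stated: Chevalley--Warning gives divisibility of the number of common zeros by $3$, not any control over which zeros avoid the auxiliary variable, and no counting of this kind is carried out. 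So the proposal should be read as a correct reproduction of the paper's \cref{thm:3reggraphs}, together with an honest but unresolved sketch of the missing bridge to the Berge--Sauer conjecture itself.
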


This conjecture is still open, but a slight modification was established in \cite{alon1984every}.
Note that the condition that the graph is simple is necessary: If one takes three vertices with six edges forming three pairs of parallel edges, we get a $4$-regular graph that does not contain a $3$-regular graph.
The theorem below does not require the simple graph assumption.

\begin{thm}\label{thm:3reggraphs}
Every graph $G$ arising from adding an edge $f$ to a $4$-regular graph contains a $3$-regular graph.
\end{thm}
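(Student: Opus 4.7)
The plan is to apply the Chevalley--Warning theorem (\cref{thm:CW}) over the field $\F_3$, following the Alon--Friedland--Kalai strategy. Let $G = (V, E)$ be the graph obtained by adding an edge $f$ to a $4$-regular graph $G_0$; thus $|E| = 2|V| + 1$ and every vertex of $G$ has degree $4$, except the two endpoints of $f$, which have degree $5$. I will introduce one variable $x_e \in \F_3$ per edge $e \in E$, and extract a nonempty subset $S \subseteq E$ inducing a $3$-regular subgraph from a nontrivial common zero of a well-chosen polynomial system.

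The setup rests on the elementary identity $y^2 = \mathbb{1}[y \neq 0]$ in $\F_3$, which follows from \cref{lm:power}. For each vertex $v \in V$ define
\[
  f_v(\mathbf{x}) \coloneqq \sum_{e \ni v} x_e^{2} \in \F_3[\{x_e\}_{e\in E}].
\]
Then for any evaluation $\mathbf{x} \in \F_3^E$, writing $S(\mathbf{x}) = \{e \in E : x_e \neq 0\}$, one has $f_v(\mathbf{x}) \equiv d_{S(\mathbf{x})}(v) \pmod 3$, where $d_S(v)$ denotes the degree of $v$ in the subgraph $(V, S)$. Hence a common zero of $\{f_v\}_{v \in V}$ is exactly a choice of $\mathbf{x}$ such that the support $S(\mathbf{x})$ induces a subgraph in which every vertex has degree divisible by $3$.

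Now I verify the Chevalley--Warning degree hypothesis: $\deg f_v = 2$ for every vertex, so $\sum_{v \in V} \deg f_v = 2|V|$, whereas the number of variables is $|E| = 2|V| + 1$. Thus $\sum_v \deg f_v < |E|$ and \cref{thm:CW} applies, giving that the number of common zeros of the system $\{f_v = 0\}_{v \in V}$ is divisible by $3$. The all-zero assignment is a solution, so there must exist at least two nontrivial solutions; fix one such $\mathbf{x}^\star$ and let $S \coloneqq S(\mathbf{x}^\star) \neq \emptyset$.

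For this $S$, every vertex $v$ satisfies $d_S(v) \equiv 0 \pmod 3$ and $d_S(v) \leq \deg_G(v) \leq 5$, forcing $d_S(v) \in \{0, 3\}$. The subgraph $(V', S)$ with $V' \coloneqq \{v : d_S(v) = 3\}$ is therefore $3$-regular, and it is nonempty because any edge in $S$ contributes $1$ to the $S$-degree of each endpoint, placing both endpoints in $V'$. The one mild subtlety worth being careful about is the convention on parallel edges and loops in the definition of ``4-regular graph'', but as long as $f$ is not a loop these do not affect the argument; in any case the only arithmetic input needed is $|E| > 2|V|$ and $\Delta(G) \leq 5$, both of which survive as long as $f$ connects two distinct vertices. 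The genuinely substantive step is the degree count $2|V| < |E|$, which is tight and is exactly what allows the single extra edge $f$ to unlock Chevalley--Warning.
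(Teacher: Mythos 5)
Your proposal is correct and follows essentially the same route as the paper's proof: the same polynomials $f_v(\vx)=\sum_{e\ni v}x_e^2$ over $\F_3$, the same degree count $2|V|<|E|=2|V|+1$ to invoke \cref{thm:CW}, and the same extraction of a nonempty support on which every degree lies in $\{0,3\}$. Your explicit remark that the endpoints of $f$ have degree $5$ (so $\deg_G(v)\leq 5<6$) and the caveat about $f$ not being a loop are welcome clarifications, but they do not change the argument.
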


Fix a graph $G = (V(G), E(G))$.
Recall that for a vertex $c\in V(G)$, we write $\deg_G v$ for the number of incident edges at $v$ in the graph $G$.

\begin{proof}[Proof of \cref{thm:3reggraphs}]
For each vertex $v\in V(G)$ consider $f_v \in \F_3[\vx_e | e\in E(G) ]$ defined by
$$f_v(\vx) = \sum_{e\in E(G) \text{ s.t. } v\in e} x_e^2 \, . $$

Note that $\sum_{v \in V(G)} \deg f_v = 2 |V(G)|$.
However, the sum of the incidence of the vertices double counts the edges, so
$$ |E(G)| - 1 = \frac{1}{2} \sum_{v \in V(G)} \deg_{G \setminus f} v = 2 |V(G)| \, . $$
Therefore, $\sum_{v \in V(G)} \deg f_v < |E(G)|$, so we can use \cref{thm:CW}.

That is, the number of common zeroes of $\{f_v\}_{v \in V(G)}$ is a multiple of three.
Since these polynomials have no constant term, they all vanish at $\vx = (0, \ldots, 0)$.
Therefore, there is at least one other common zero $\vy = (\vy_e)_{e \in E(G)}$.
Fix this zero, let $W \subseteq E(G)$ be the set of edges $e \in E(G)$ such that $\vy_e \neq 0$, and let $V' \subseteq V(G)$ be the set of vertices incident to some edge in $W$.
We claim that $H = (V', W)$ is the desired $3$-regular subgraph.

Indeed, using \cref{lm:power}, for any $v \in V'$, $f_v(\vy) = \sum_{\substack{e\in E(G) \\ v \in e}} \vy_e^2 = \sum_{\substack{e\in W \\ v \in e}} 1 = \deg_H(v)$ is zero modulo three.
However $0 < \deg_H(v) \leq \deg_G(v) < 6$, so $\deg_H (v) = 3$.
Furthermore, $V'$ is non empty, as $\vy $ was taken to be non-zero, so $H$ is $3$-regular, as desired.
\end{proof}

\subsection{Aditive number theory}

Let $G$ be an abelian group.
We define 
$$g(G) = \min \left\{ s \in \Z_{\geq 0} \Big| \forall_{g_1, \ldots, g_s \in G} \exists_{I \subseteq [s]} \text{ s.t. } I \neq \emptyset \, \text{ and } \sum_{i \in I} g_i = 0 \right\}\, . $$

\begin{smpl}
One can see that for any integer $n \geq 1 $ we have $g(\Z_n) = n$.
Indeed, $g(\Z_n ) \geq n$ as we can take $g_1 = \cdots = g_{n-1} = 1$ and there is no non-trivial subset of $[n-1]$ that results in a zero sum.

On the other hand, if $g_1, \ldots, g_n \in \Z_n $ then in the following list
$$ g_1, g_1 + g_2, \ldots, g_1 + \cdots + g_n \, , $$
there is either a zero or a repeated value of $\Z_n$.
In either case, we can find a sum that vanishes, so $g(\Z_n) \geq n$.
\end{smpl}

The following generalization requires the use of Chevalier - Warning theorem.

\begin{thm}[Olsen's theorem]
Let $p$ be a prime number and $G = \Z_p^{\oplus k} = \underbrace{\Z_p \oplus \cdots \oplus \Z_p}_{\text{ $k$ times } }$.
Then $g(G) = k(p-1)+ 1$.
\end{thm}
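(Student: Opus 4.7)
The plan is to prove both inequalities $g(G) \geq k(p-1) + 1$ and $g(G) \leq k(p-1) + 1$, where the upper bound will be the substantive part and will use \cref{thm:CW} together with \cref{lm:power}.

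For the lower bound, I would exhibit $k(p-1)$ elements with no vanishing subset sum. Specifically, let $e_1, \ldots, e_k$ be the standard basis of $\Z_p^{\oplus k}$ and consider the multiset consisting of each $e_j$ taken exactly $p-1$ times, totaling $k(p-1)$ elements. Any non-empty subset $I$ of this multiset has sum of the form $\sum_{j=1}^k c_j e_j$ with $0 \leq c_j \leq p-1$. For this to vanish in $\Z_p^{\oplus k}$, each $c_j$ must be divisible by $p$, forcing $c_j = 0$ for all $j$ and hence $I = \emptyset$. This shows $g(G) > k(p-1)$.

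For the upper bound, fix $s = k(p-1) + 1$ and arbitrary elements $g_1, \ldots, g_s \in \Z_p^{\oplus k}$. Writing $g_i = ((g_i)_1, \ldots, (g_i)_k)$, I would define the polynomials
$$f_j(\vx_1, \ldots, \vx_s) \coloneqq \sum_{i=1}^s (g_i)_j \, \vx_i^{p-1} \in \F_p[\vx_1, \ldots, \vx_s] \qquad \text{for } j = 1, \ldots, k.$$
Each $f_j$ has degree $p-1$, so $\sum_{j=1}^k \deg f_j = k(p-1) < s$, satisfying the hypothesis of \cref{thm:CW}. Hence the number of common zeros of $\{f_j\}_{j=1}^k$ in $\F_p^s$ is a multiple of $p$. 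Since $(0, \ldots, 0)$ is such a common zero, and the count is divisible by $p \geq 2$, there must exist a non-zero common zero $(\vy_1, \ldots, \vy_s) \in \F_p^s$.

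The final step is to extract the desired subset: let $I = \{ i \in [s] \mid \vy_i \neq 0 \}$, which is non-empty. Using \cref{lm:power}, we have $\vy_i^{p-1} = 1$ for $i \in I$ and $\vy_i^{p-1} = 0$ otherwise, so for each coordinate $j$,
$$\sum_{i \in I} (g_i)_j = \sum_{i=1}^s (g_i)_j \, \vy_i^{p-1} = f_j(\vy_1, \ldots, \vy_s) = 0 \quad \text{in } \F_p.$$
This gives $\sum_{i \in I} g_i = 0$ in $G$, proving $g(G) \leq s$. The main obstacle is essentially bookkeeping: one must verify the degree bound $k(p-1) < k(p-1)+1$ works precisely (which is why the statement is sharp), and set up the polynomial $f_j$ correctly so that its vanishing at $(\vy_1, \ldots, \vy_s)$ translates, via \cref{lm:power}, into the coordinate-wise vanishing of a subset sum.
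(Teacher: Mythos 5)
Your proposal is correct and follows essentially the same route as the paper: the lower bound via the multiset of standard basis vectors each repeated $p-1$ times, and the upper bound via the polynomials $f_j(\vx) = \sum_i (g_i)_j \vx_i^{p-1}$, applying \cref{thm:CW} to get a non-trivial common zero and \cref{lm:power} to convert it into a vanishing subset sum. No gaps to report.
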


\begin{proof}
One can see that for any integer $k \geq 1 $ we have $g(\Z_p^{\oplus k}) \geq k(p-1)+1$.
Indeed, take the following  $g_{j, i} = \ve_j$, the $j$-th element of the canonical basis, for $i=1, \ldots, p-1$ and $j=1, \ldots, k$.
There is no non-trivial subset that results in a zero sum:
assume otherwise, so that there are $a_1, \ldots, a_k \in \{0, \ldots, p-1\}$ with $0 \leq a_j \leq p-1$ for which $\sum_{j=1}^k a_j g_{j, 1} = (0, \ldots,  0) $.
Then $a_j = 0$ modulo $p$, so this corresponds to the empty set.

For the remaning inequality, that $g(\Z_p^k) \leq k(p-1)+1$, fix $k(p-1)+1$ elements $g^{(0)}, \ldots, g^{(k(p-1))} \in \Z_p^k$, and consider the following polynomials $f_i \in \F_p[x_0, \ldots, x_{k(p-1)}]$ for $i = 1, \ldots, k$:
$$f_j(\vx ) \coloneqq \sum_{i=0}^{k(p-1)} x_i^{p-1}g^{(i)}_j \, .$$

Note how $\sum_{j=1}^ k \deg(f_j) = k(p-1)$ is less than the number of variables, so the number of common zeroes is a multiple of $p$.
The all-zero vector is a common zero, so there is some non-trivial solution $\vy$ such that $f_j(\vy) = 0$ for $j=1, \ldots, k$.

Let $L \coloneqq \{ i\in\{0, \ldots, k(p-1)\} | \vy_i \neq 0 \}$.
The condition $f_j(\vy) = 0$, together with \cref{lm:power} gives us $\sum_{i\in L} g^{(i)}_j = 0$.
Since $j$ is generic, we get that $\sum_{i\in L} g^{(i)} = 0$ in $\Z_p^k$.
As $\vy $ was chosen to be non-trivial, the set $L$ is non-empty, so this is the desired set.
\end{proof}

\begin{thm}[Erd\"os-Ginzburg-Ziv Theorem]
Let $a_1, \ldots, a_{2n-1} \in \Z_n $.
Then there exists a set $I\subseteq [2n-1] $ of size $n$ such that $\sum_{i\in I} a_i = 0$.
\end{thm}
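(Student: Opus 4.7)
The plan is to prove the theorem first for $n=p$ prime by a direct application of \cref{thm:CW}, and then bootstrap to arbitrary $n$ by a multiplicative induction that reduces the composite case to its prime factors.

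For the prime case $n = p$, I would set up two polynomials in $\F_p[x_1, \ldots, x_{2p-1}]$, mimicking the strategy already used in \cref{thm:3reggraphs}:
\begin{align*}
f_1(\vx) &\coloneqq \sum_{i=1}^{2p-1} a_i x_i^{p-1}\, ,\\
f_2(\vx) &\coloneqq \sum_{i=1}^{2p-1} x_i^{p-1}\, .
\end{align*}
Their total degree is $\deg f_1 + \deg f_2 = 2(p-1) < 2p-1$, which is precisely the number of variables, so \cref{thm:CW} applies. The all-zero tuple is a common root, hence there is at least one more common root $\vy \in \F_p^{2p-1}$. Put $L = \{i : \vy_i \neq 0\}$. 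By \cref{lm:power}, $\vy_i^{p-1} = 1$ for $i \in L$ and $0$ otherwise, so $f_2(\vy) = 0$ forces $|L| \equiv 0 \pmod p$, and $f_1(\vy) = 0$ forces $\sum_{i \in L} a_i \equiv 0 \pmod p$. Since $1 \leq |L| \leq 2p-1$, necessarily $|L| = p$, as required.

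For the reduction to primes, I would induct on the number of prime factors of $n$ and prove the following multiplicative step: if the theorem holds for $p$ and for $m$, then it holds for $n = pm$. Given $2n - 1$ elements $a_1, \ldots, a_{2n-1} \in \Z_n$, reduce them modulo $p$ via the natural surjection $\pi: \Z_n \to \Z_p$. As long as at least $2p-1$ elements remain, the prime case allows me to extract a fresh subset $S_j$ of size $p$ whose sum lies in $\ker \pi$; this can be repeated while $jp \leq 2pm - 2p$, that is, $2m - 1$ times. For each such $S_j$, the sum $\sum_{i \in S_j} a_i$ equals $p \cdot c_j$ for a unique $c_j \in \Z_m$ (using the identification $\ker \pi \cong \Z_m$). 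Applying the inductive hypothesis to $c_1, \ldots, c_{2m-1} \in \Z_m$ yields $J \subseteq [2m-1]$ of size $m$ with $\sum_{j \in J} c_j \equiv 0 \pmod m$. Setting $I = \bigcup_{j \in J} S_j$ gives $|I| = pm = n$ and
$$\sum_{i \in I} a_i = \sum_{j \in J} p c_j = p \sum_{j \in J} c_j \equiv 0 \pmod{pm} = 0 \pmod n\, .$$

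The main obstacle is primarily bookkeeping: one must verify carefully that after each extraction of a $p$-subset there are still enough elements left to invoke the prime case again, and one must be careful with the identification of $\ker \pi$ with $\Z_m$ (which works cleanly because $p \Z_n$ is a cyclic subgroup of order $m$ inside $\Z_n$). The prime case itself is essentially a template application of Chevalley--Warning once the two polynomials $f_1, f_2$ are chosen, so no serious difficulty arises there.
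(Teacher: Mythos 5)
Your proposal is correct and follows essentially the same route as the paper: the prime case via Chevalley--Warning applied to the same two polynomials $\sum_i a_i x_i^{p-1}$ and $\sum_i x_i^{p-1}$ (with \cref{lm:power} forcing $|L|=p$ and $\sum_{i\in L}a_i=0$), and then the composite case by the same greedy extraction of $2m-1$ zero-sum $p$-subsets followed by the inductive hypothesis applied to the quotient elements. The only cosmetic difference is that the paper runs the induction step for an arbitrary factorization $n=mt$ while you peel off a prime factor, which changes nothing of substance.
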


For a generic aditive group $G$, we define
$$ f(G, n) = \min \left\{ s \in \Z_{\geq 0} \Big| \forall_{f_1, \ldots, f_s \in G} \exists_{I \subseteq [s]} \text{ s.t. } |I| = n \, \text{ and } \sum_{i \in I} f_i = 0 \right\} \, ,$$
so that the theorem becomes $ f(\Z_n, n) = 2n - 1$.

\begin{proof}
As usual, we provide a multiset in $\Z_n$ of size $2n-2$ to show that $f(\Z_n, n) \geq 2n-1$.
Specifically, let $f_{(0, j)} = 0$ and $f_{(1, j)} = 1$ for $j=1, \ldots, n-1$.
There is no subset $I$ with $n$ elements with zero sum:
assume otherwise, so that there are $a_0, a_1 \in \{0, \ldots, n-1\}$ with $0 \leq a_j \leq n-1$ for which $a_0 + a_1 = n$ and $ a_1 = 0$ modulo $n$.
This with $0 \leq a_1 \leq n-1$ gives $a_1 = 0$, so $a_0 = n$, which is impossible.

For the remaning inequality, that $f(\Z_n, n) \leq 2n - 1$, we act by induction on the number of prime factors of $n$, counting multiplicity.
That is, if $n = \prod_i p_i^{\alpha_i}$ is the factorization of $n$ into primes, then our induction step is on $\sum_i \alpha_i$.

The base case is when $\sum_i \alpha_i = 1$, that is when $n$ is prime.
So let us assume that $n$ is a prime number.
Fix $2n - 1$ elements $f_{0}, \ldots, f_{2n  - 2} \in \Z_n$, and consider the following polynomials $a_1, a_2 \in \F_p[x_0, \ldots, x_{2n  - 2 }]$:
\begin{align*}
a_1(\vx ) &\coloneqq \sum_{i=0}^{2n-2} x_i^{n-1}f_i \, , \\
a_2(\vx ) &\coloneqq \sum_{i=0}^{2n-2} x_i^{n-1} \, .
\end{align*}

We have that $\deg a_1 + \deg a_2 = 2n-2 < 2n-1$, which is the number of variables.
Thus \cref{thm:CW} tells us that the number of common roots of $a_1, a_2$ is a multiple of $n > 1$.
Furthermore, the all-zero vector is a common root, so there is a non-trivial common root $\vy = (y_0 , \ldots, y_{2n-2}) \in \Z_n^{2n-1}$.

Let $L \coloneqq \{ i\in\{0, \ldots, 2n - 2\} | y_i \neq 0 \}$.
The condition $a_2(\vy) = 0$, together with \cref{lm:power} gives us $\sum_{i\in L} 1 = 0$.
Therefore $|L|\leq 2n-1$ is a multiple of $n$.
As $\vy $ was chosen to be non-trivial, the set $L$ is non-empty, so $|L| = n$.
The condition $a_1(\vy) = 0$, together with \cref{lm:power} gives us $\sum_{i\in L} f_i = 0$, so this is the desired set.

Now for the induction step.
If $n$ is not prime, then $n = mt$ where $m, t > 1$ and by induction hypothesis $f(\Z_m, m) = 2m - 1$ and $f(\Z_t, t) = 2t - 1$.
We wish to show that $f(\Z_n, n) \leq 2n-1$, so we consider $2n - 1$ elements $f_{1}, \ldots, f_{2n  - 1} \in \Z_n$.
Because $n$ is a multiple of $m$, we can analyse elements of $\Z_n$ modulo $m$, which defines a projection $\pi: \Z_n \to \Z_m$ that is a group homomorphism.
Let $g_i = \pi(f_i)$ of $i = 1, \ldots, 2n - 1$.

Let $T_1 = [2n-1]$ and for $i = 1, \ldots, 2t-1$ define iteratively
\begin{align*}
I_i &\subseteq T_i \text{ such that $|I_i| = m$ and $\sum_{j\in I_i} g_j = 0$,} \\
T_i &\coloneqq T_{i-1} \setminus I_{i-1} \, .
\end{align*}

Note that $|T_i| = |T_1| - (i-1)m \geq  |T_1| - ((2t-1) - 1)m = 2n - 1 - 2tm  + 2m = 2m - 1$, so such an $I_i \subseteq T_i $ exists by induction hypothesis (remember that $f(\Z_m, m) = 2m-1$).
For $j=1, \ldots, 2t-1$, define $h_j m \coloneqq  \sum_{i \in I_j} f_i$.
This is possible because we know that $ \sum_{i \in I_j} f_i = 0 $ modulo $m$.
Furthermore, we can identify $h_j $ with an element in $\Z_t$.
By induction hypothesis, there is some set $I \subseteq [2t-1]$ such that $|I| = t$ and $\sum_{j\in I} h_j = 0 $ modulo $t$.

Thus $\sum_{j\in I} \sum_{i \in I_j} f_i = 0$ gives us a sum of $mt = n$ terms that is zero, so this concludes the induction step.
\end{proof}

\subsection{Kemnitz conjecture}

The natural generalisation of Erd\"os-Ginzburg-Ziv Theorem requires a higher dimensional group.
Specifically, for integers $n, k$, we can compute $f(\Z_n^{\oplus k}, n)$.
The answer, for $k=2$, was conjectured by Kemnitz in 1983 and proved in \cite{reiher2007kemnitz}.

\begin{thm}[Reiher's Theorem]\label{thm:RT}
$$f(\Z_n^2, n ) = 4n - 3 \, . $$
\end{thm}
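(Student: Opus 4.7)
The plan is to split the theorem into the routine lower bound $f(\Z_n^2, n) \geq 4n-3$ and the substantially harder upper bound $f(\Z_n^2, n) \leq 4n-3$, which is the content of Reiher's theorem.

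For the lower bound, I would exhibit a multiset of $4n-4$ elements in $\Z_n^2$ admitting no zero-sum subset of size exactly $n$. The natural candidate is to take $n-1$ copies of each of $(0,0)$, $(1,0)$, $(0,1)$, $(1,1)$. If some $n$-subset has zero sum, and we use $a, b, c, d$ copies of these four vectors respectively with $a+b+c+d = n$ and $0 \leq a,b,c,d \leq n-1$, then we need $b+d \equiv 0$ and $c+d \equiv 0 \pmod n$. Since $b+d, c+d \in \{0, \ldots, 2n-2\}$, each is either $0$ or $n$. A short case analysis on the four possibilities rules all of them out, establishing $f(\Z_n^2, n) \geq 4n - 3$.

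For the upper bound, I would first reduce to the case $n = p$ prime by induction on the number of prime factors of $n$, counting multiplicity. Given $n = mt$ with $m,t > 1$ and the result for both $m$ and $t$, the strategy mirrors the one used above for Erd\"os-Ginzburg-Ziv: out of $4n-3 \geq 4m-3$ elements one iteratively extracts $m$-subsets of $\Z_n^2$ whose coordinatewise sum is zero modulo $m$, obtaining $4t-3$ such $m$-subsets. Dividing the sum of each $m$-subset by $m$ produces $4t-3$ elements of $\Z_t^2$, to which the inductive hypothesis gives a zero-sum $t$-collection; reassembling yields a zero-sum $mt$-subset in $\Z_n^2$. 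The bookkeeping is the same as in the prime-power reduction for Erd\"os-Ginzburg-Ziv.

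The heart of the argument is the prime case. Given $f_0, \ldots, f_{4p-4} \in \Z_p^2$, I would introduce three polynomials in $\F_p[x_0, \ldots, x_{4p-4}]$,
\begin{align*}
a_1(\vx) &\coloneqq \sum_{i=0}^{4p-4} x_i^{p-1} (f_i)_1 \, , \\
a_2(\vx) &\coloneqq \sum_{i=0}^{4p-4} x_i^{p-1} (f_i)_2 \, , \\
a_3(\vx) &\coloneqq \sum_{i=0}^{4p-4} x_i^{p-1} \, ,
\end{align*}
whose total degree is $3(p-1) = 3p-3 < 4p-3$. Applying \cref{thm:CW} and using \cref{lm:power} as in the proof of Erd\"os-Ginzburg-Ziv, every common zero $\vy$ of $a_1, a_2, a_3$ yields a subset $L_\vy = \{i : y_i \neq 0\}$ with $\sum_{i \in L_\vy} f_i = 0$ in $\Z_p^2$ and $|L_\vy| \equiv 0 \pmod p$. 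Since $|L_\vy| \leq 4p-3$, we obtain $|L_\vy| \in \{p, 2p, 3p\}$ for every non-trivial common zero.

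This is where the main obstacle lies: Chevalley-Warning alone only provides subsets whose size is a multiple of $p$, and one must rule out being stuck with only $|L|=2p$ or $|L|=3p$. The upgrade to $|L| = p$ exactly is the technically delicate part of Reiher's proof, and it requires substantially more than a single application of \cref{thm:CW}. The plan is to exploit Warning's stronger counting statement (which forces the number of common zeros to be at least $p^{4p-3-3(p-1)}$) together with a parity/cardinality argument that combines several zero-sum subsets of sizes $p, 2p, 3p$ to produce one of size exactly $p$. Concretely, one would analyze the symmetric difference of two zero-sum subsets of size $2p$ to detect a zero-sum $p$-subset, and handle the residual case of $3p$-subsets by restricting to a carefully chosen subcollection of the $4p-3$ given elements so that only $|L|=p$ survives. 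Executing this reduction cleanly is the core difficulty of Reiher's argument and is what separates this theorem from the easier $\Z_p$-version proved above.
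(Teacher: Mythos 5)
Your lower bound (the multiset of $n-1$ copies each of $(0,0),(1,0),(0,1),(1,1)$) and your reduction of the upper bound to the prime case via multiplicativity are both correct, and they match the scaffolding the paper sets up (the multiplicativity lemma and the base cases); note the paper itself does not print a proof of the prime case either. But the prime case is the entire content of Reiher's theorem, and your plan for it has a genuine gap. The three-polynomial Chevalley--Warning argument only delivers a nonempty zero-sum index set $L$ with $|L| \in \{p, 2p, 3p\}$, and the devices you propose for upgrading this to $|L| = p$ do not work as stated. Warning's lower bound on the number of common zeros gives no control over which of the three sizes occur: all nontrivial zeros could a priori correspond to sets of size $2p$ or $3p$. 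And the symmetric-difference idea fails algebraically: if $A, B$ are two zero-sum index sets of size $2p$, then $\sum_{i \in A \setminus B} f_i = -\sum_{i \in A \cap B} f_i = \sum_{i \in B \setminus A} f_i$, which is in general nonzero, so $A \setminus B$ (or $A \triangle B$) is not a zero-sum set, and its size need not be $p$ in any case. So the ``parity/cardinality argument'' you defer to is precisely the missing proof, not a routine reduction.

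What actually closes this gap in Reiher's argument is a quantitative refinement: instead of extracting a single zero-sum subset, one counts them. With the paper's notation $(x|J)$ for the number of zero-sum sub-multisets of $J$ of size $x$, one proves congruences modulo $p$ relating $(p-1|J)$, $(p|J)$, $(2p-1|J)$, $(2p|J)$, $(3p-1|J)$, $(3p|J)$ for multisets $J$ of sizes $3p-3$, $3p-2$, $3p-1$, $4p-3$ (these are exactly the identities listed in the paper's lemma preceding the theorem); their proofs use Chevalley--Warning-type polynomial identities together with Lucas' and Fermat's theorems and the Erd\"os--Ginzburg--Ziv theorem for $\Z_p$. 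One then assumes $(p|J) = 0$ for a multiset $J$ of size $4p-3$ and derives a contradiction by combining these congruences (applying them also to sub-multisets of $J$ obtained by deleting a few elements), e.g.\ via the relation $(p-1|J) = (3p-1|J)$ in the $(p|J)=0$ case. Without developing these counting identities, your outline establishes only the easy half of the theorem plus the statement that zero-sum subsets of size $p$, $2p$, or $3p$ exist, which falls short of $f(\Z_n^2,n) \le 4n-3$.
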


\begin{lm}
$$f(\Z_2^2, 2 ) = 5\, . $$
\end{lm}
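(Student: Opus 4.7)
The plan is to verify both inequalities $f(\Z_2^2, 2) \geq 5$ and $f(\Z_2^2, 2) \leq 5$ directly, exploiting the fact that $\Z_2^2$ has only four elements and that every element of $\Z_2^2$ is its own inverse.

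For the lower bound, I would exhibit a multiset of four elements in $\Z_2^2$ with no two-element subset summing to zero. Since $-g = g$ for every $g \in \Z_2^2$, a two-element subset $\{a, b\}$ sums to zero precisely when $a = b$. Thus it suffices to list four pairwise distinct elements, and the natural choice is to take all four elements of the group, namely $(0,0), (1,0), (0,1), (1,1)$. No two of these coincide, so no subset of size two sums to zero, proving $f(\Z_2^2, 2) \geq 5$.

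For the upper bound, fix any sequence $g_1, \ldots, g_5 \in \Z_2^2$. Since $|\Z_2^2| = 4$, the pigeonhole principle yields indices $i \neq j$ with $g_i = g_j$. Then $\{i, j\}$ is a two-element subset of $[5]$ satisfying $g_i + g_j = 2g_i = 0$. This shows $f(\Z_2^2, 2) \leq 5$.

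There is no serious obstacle here: both directions reduce to elementary pigeonhole reasoning once one observes that zero-sum pairs in $\Z_2^2$ are exactly pairs of equal elements. Combining the two bounds yields $f(\Z_2^2, 2) = 5$, as claimed.
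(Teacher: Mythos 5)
Your proof is correct and complete: the observation that in $\Z_2^2$ every element is its own inverse, so a pair sums to zero exactly when the two entries of the sequence coincide, immediately gives the lower bound (the four distinct group elements form a zero-sum-free sequence of length $4$ for $n=2$) and the upper bound (pigeonhole on five elements in a four-element group). Note that the paper itself omits the proof of this lemma, so there is nothing to compare against; your elementary argument is exactly the natural one and fits the definition of $f(G,n)$ used in the text.
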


\begin{proof}
This proof is not presented here.
\end{proof}

\begin{lm}
The set $L = \{n \in \Z_{\geq 0} | f(\Z_n^2, n ) = 4n - 3 \} $ is multiplicative.
That is, if $m, n\in L$ then $mn\in L$.
\end{lm}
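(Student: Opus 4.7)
The plan is to show both $f(\Z_{mn}^2, mn) \geq 4mn - 3$ and $f(\Z_{mn}^2, mn) \leq 4mn - 3$. The lower bound is unconditional and does not use $m, n \in L$: I would take $mn - 1$ copies each of $(0, 0), (1, 0), (0, 1), (1, 1)$, giving a sequence of $4(mn-1)$ elements, and run a short case analysis on how many copies of each could appear in an alleged zero-sum $mn$-subset to conclude no such subset exists.

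For the upper bound I would use a two-step extraction that parallels the prime-multiplicativity step in the proof of Erd\"os--Ginzburg--Ziv above. Starting from a sequence $f_1, \ldots, f_{4mn-3} \in \Z_{mn}^2$, project entrywise modulo $m$ to view each $f_i$ as an element of $\Z_m^2$. Whenever at least $4m-3$ projected elements remain, the hypothesis $m \in L$ lets me peel off an $m$-subset whose sum is zero in $\Z_m^2$. After $k$ such extractions, $4mn - 3 - km$ elements remain, which is $\geq 4m - 3$ precisely when $k \leq 4n - 4$. So I can extract $4n-3$ pairwise disjoint subsets $I_1, \ldots, I_{4n-3} \subseteq [4mn-3]$, each of size $m$, with $\sum_{i \in I_j} f_i \equiv 0 \pmod m$ in $\Z_{mn}^2$.

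For each $j$, the sum $\sum_{i \in I_j} f_i$ lies in the subgroup $m \cdot \Z_{mn}^2$, so it can be written as $m h_j$ for a unique $h_j \in \Z_n^2$ (identifying $m \cdot \Z_{mn}^2$ with $\Z_n^2$ via multiplication by $m$). Now I apply $n \in L$ to the sequence $h_1, \ldots, h_{4n-3}$ to find $J \subseteq [4n-3]$ with $|J| = n$ and $\sum_{j \in J} h_j \equiv 0$ in $\Z_n^2$. Setting $I \coloneqq \bigsqcup_{j \in J} I_j$, we obtain $|I| = mn$ and
\[
\sum_{i \in I} f_i \;=\; m \sum_{j \in J} h_j \;\equiv\; 0 \pmod{mn}
\]
in $\Z_{mn}^2$, since $\sum_{j \in J} h_j$ being zero in $\Z_n^2$ means any integer lift is divisible by $n$, so $m$ times it is divisible by $mn$. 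This furnishes the required $mn$-element zero-sum subset.

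The main obstacle is the bookkeeping of this two-step extraction: checking that $4mn-3$ elements is exactly enough to afford $4n-3$ disjoint $m$-blocks (no slack, so the count matters), and correctly identifying the quotient $m \cdot \Z_{mn}^2 \cong \Z_n^2$ so that the second application of the multiplicativity hypothesis is legitimate. Beyond that, no new algebraic ingredient beyond the prime-case template already used for Erd\"os--Ginzburg--Ziv via Chevalley--Warning is required.
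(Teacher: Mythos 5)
Your proposal is correct: the unconditional lower bound via $mn-1$ copies each of $(0,0),(1,0),(0,1),(1,1)$ (where a zero-sum $mn$-subset would force each of the coordinate sums $b+d$ and $c+d$ to be $0$ or $mn$, all cases being impossible), and the upper bound by greedily extracting $4n-3$ disjoint zero-sum-mod-$m$ blocks of size $m$ (the count $4mn-3-km\geq 4m-3$ iff $k\leq 4n-4$ is right), identifying $m\cdot\Z_{mn}^2\cong\Z_n^2$, and applying $n\in L$ to the block sums, all go through. The paper omits its own proof of this lemma, but your argument is exactly the natural adaptation of the induction step the paper does give for the Erd\"os--Ginzburg--Ziv theorem, so nothing further is needed.
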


\begin{proof}
This proof is not presented here.
\end{proof}

\begin{thm}[Lucas' theorem]
Let $p$ be a prime number, and $a, b $ positive integers along with their expansion in base $p$ given by:
$$a = \sum_{i=0}^M a_ip^i, \, \, b = \sum_{i=0}^M b_ip^i \, ,$$
that is, $M\geq 0 $ is an integer and $a_i, b_i \in \{ 0 , \ldots , p-1 \}$ for $i=0, \ldots M$.

Then 
$$\binom{a}{b} = \binom{a_M}{b_M} \cdots \binom{a_1}{b_1} \binom{a_0}{b_0} \text{ modulo $p$.}$$
\end{thm}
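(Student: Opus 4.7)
The plan is to prove Lucas' theorem via a generating function identity in $\F_p[x]$, comparing coefficients of $x^b$ on both sides. The starting observation is the \emph{freshman's dream}: in $\F_p[x]$, we have $(1+x)^p = 1 + x^p$, because $\binom{p}{k} \equiv 0 \pmod{p}$ for $1 \leq k \leq p-1$ (the numerator $p!$ carries one factor of $p$ that the denominator $k!(p-k)!$ does not cancel).

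First I would iterate this identity: a quick induction on $i$ shows $(1+x)^{p^i} \equiv 1 + x^{p^i} \pmod{p}$ for every $i \geq 0$. Then, writing $a = \sum_{i=0}^M a_i p^i$, I would compute in $\F_p[x]$:
$$(1+x)^a = \prod_{i=0}^M \left((1+x)^{p^i}\right)^{a_i} \equiv \prod_{i=0}^M \left(1+x^{p^i}\right)^{a_i} \pmod{p}.$$
The left-hand side has $\binom{a}{b}$ as the coefficient of $x^b$, by the usual binomial theorem. The right-hand side can be expanded factor-by-factor using the binomial theorem to obtain
$$\prod_{i=0}^M \sum_{c_i = 0}^{a_i} \binom{a_i}{c_i} x^{c_i p^i} = \sum_{(c_0, \ldots, c_M)} \left(\prod_i \binom{a_i}{c_i}\right) x^{\sum_i c_i p^i},$$
where each $c_i$ ranges over $\{0, 1, \ldots, a_i\}$, and in particular $c_i \in \{0, \ldots, p-1\}$.

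The key step, and really the main subtlety, is identifying the coefficient of $x^b$ on the right-hand side. Since each $c_i$ satisfies $0 \leq c_i \leq p-1$, the tuple $(c_0, \ldots, c_M)$ is a base-$p$ representation of $\sum_i c_i p^i$. By uniqueness of base-$p$ representations, the only tuple with $\sum_i c_i p^i = b = \sum_i b_i p^i$ is $c_i = b_i$ for all $i$. Hence the coefficient of $x^b$ on the right-hand side is exactly $\prod_i \binom{a_i}{b_i}$. Comparing the two coefficients in $\F_p$ yields the identity $\binom{a}{b} \equiv \prod_i \binom{a_i}{b_i} \pmod{p}$, as claimed. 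Note that if some $b_i > a_i$, then $\binom{a_i}{b_i} = 0$ and the factorization on the right correctly gives zero, consistent with the convention $\binom{a_i}{b_i} = 0$ in that case.
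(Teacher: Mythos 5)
Your proof is correct and complete: the freshman's dream $(1+x)^p \equiv 1+x^p$ in $\F_p[x]$, its iteration to $(1+x)^{p^i} \equiv 1+x^{p^i}$, and the comparison of the coefficient of $x^b$ on both sides of $(1+x)^a \equiv \prod_{i=0}^M (1+x^{p^i})^{a_i}$ is the standard generating-function proof of Lucas' theorem; you correctly handle the key uniqueness point (each exponent tuple with $0 \leq c_i \leq a_i \leq p-1$ is a base-$p$ expansion, so only $c_i = b_i$ contributes) and the degenerate case $b_i > a_i$. The paper itself omits the proof of this theorem (it is stated with ``This proof is not presented here''), so there is nothing to compare against; your argument would fill that gap as written.
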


\begin{proof}
This proof is not presented here.
\end{proof}

\begin{thm}[Fermat's little theorem]
For $a \in \Z$ and $p$ prime, we have
$$a^p = a \text{ modulo $p$.}$$
\end{thm}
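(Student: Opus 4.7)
The plan is to reduce Fermat's little theorem immediately to \cref{lm:power}, which has already been established earlier in the paper. Working in $\F_p$, the statement $a^p \equiv a \pmod{p}$ splits into two cases. If $a \equiv 0 \pmod{p}$, then both sides vanish and the identity is immediate. If $a \not\equiv 0 \pmod{p}$, then $a$ represents a non-zero element of $\F_p$, so \cref{lm:power} with $q = p$ yields $a^{p-1} = 1$ in $\F_p$. Multiplying both sides by $a$ gives $a^p = a$ in $\F_p$, which is precisely the desired congruence modulo $p$.

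As a self-contained alternative, one could instead argue by induction on $a \geq 0$. The base case $a = 0$ is trivial. For the induction step, the binomial theorem gives
$$(a+1)^p = \sum_{k=0}^{p} \binom{p}{k} a^k \, .$$
The key observation is that for $0 < k < p$ the binomial coefficient $\binom{p}{k} = \frac{p!}{k!\,(p-k)!}$ is divisible by $p$, since the factor of $p$ in the numerator cannot be cancelled by any factor in the denominator when $0 < k < p$. Hence $(a+1)^p \equiv a^p + 1 \equiv a+1 \pmod{p}$ by the induction hypothesis. The extension to negative $a$ follows from $(-a)^p \equiv -a^p \pmod p$ when $p$ is odd, while the case $p = 2$ is immediate since $a^2 - a = a(a-1)$ is always even.

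The main obstacle is essentially absent in either approach. Taking the route through \cref{lm:power} makes the proof a one-line consequence of Lagrange's theorem applied to the multiplicative group $\F_p^\times$. If one preferred the inductive binomial argument, the only substantive point is verifying that $p \mid \binom{p}{k}$ for $0 < k < p$, which is a standard divisibility observation on $p!$. I would present the first approach for brevity and consistency with the paper's prior development.
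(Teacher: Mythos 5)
Your proof is correct. The paper itself omits a proof of this theorem (it only states ``This proof is not presented here''), so there is nothing to compare against; your first argument --- reducing to \cref{lm:power}, which the paper has already proved via Lagrange's theorem, and handling $a \equiv 0 \pmod p$ separately --- is the natural route given the paper's prior development, and your alternative inductive argument via $p \mid \binom{p}{k}$ for $0 < k < p$ is also sound.
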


\begin{proof}
This proof is not presented here.
\end{proof}

We now introduce useful notation for the proof of the Kemnitz conjecture.
Let $p$ be a prime number, $J, K$ be a multisets of elements in $\Z_p$ and $x$ an integer.
We denote $\sum K $ for $\sum_{k\in K} k \in \Z_p$, and we denote $(x|J)$ for the number of multisets $I\subseteq J$ such that $|I| = x$ and $\sum I = 0$.

\begin{lm}
Let $p$ be a prime number, and let $J$ be a multiset of elements in $\Z_p$.
\begin{equation}
\text{If $|I| = 3p - 3$, we have } 1 - (p-1|J) - (p|J) + (2p-1|J) + (2p|J) = 0 \text{ modulo $p$,}
\end{equation}
\begin{equation}
\text{If $|I| \in \{ 3p - 2, 3p - 1\} $, we have } 1 - (p|J) + (2p|J) = 0 \text{ modulo $p$,}
\end{equation}
\begin{equation}
\text{If $|I| = 4p - 3$, we have } 1 - (p|J) + (2p|J) - (3p|J) = 0 \text{ modulo $p$,}
\end{equation}
\begin{equation}
\text{If $|I| = 4p - 3$, we have } (p-1|J) - (2p-1|J) + (3p-1|J) = 0 \text{ modulo $p$,}
\end{equation}
\begin{equation}
\text{If $|I| = 4p - 3$, we have } 3 - 2 (p-1|J) - 2 (p|J) + (2p-1|J) + (2p|J) = 0 \text{ modulo $p$,}
\end{equation}
\begin{equation}
\text{If $|I| = 4p - 3$ and $(p|J) = 0$, we have } (p-1|J) = (3p-1|J) \text{ modulo $p$,}
\end{equation}
\end{lm}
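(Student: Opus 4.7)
The plan is to apply the Chevalley--Warning theorem (\cref{thm:CW}) in $\mathbb{F}_p[x_j : j \in J]$, using the two degree-$(p-1)$ polynomials
\[
f_0(\vx) \;=\; \sum_{j \in J} j \cdot x_j^{p-1}, \qquad S(\vx) \;=\; \sum_{j \in J} x_j^{p-1}.
\]
By Fermat's little theorem (\cref{lm:power}), $x_j^{p-1}$ equals $1$ when $x_j \neq 0$ and $0$ otherwise; hence if $L = L(\vx) \coloneqq \{j : x_j \neq 0\}$ denotes the support of $\vx$, then $f_0(\vx) \equiv \sum_{j \in L} j$ and $S(\vx) \equiv |L| \pmod p$, and the number of $\vx \in \mathbb{F}_p^J$ with a prescribed support $L$ is $(p-1)^{|L|} \equiv (-1)^{|L|} \pmod p$. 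For any $c \in \mathbb{F}_p$, Chevalley--Warning applied to $\{f_0,\, S - c\}$ (combined degree $2(p-1)$) yields, whenever $2(p-1) < |J|$, the identity
\[
\sum_{\substack{0 \leq \ell \leq |J| \\ \ell \equiv c \,(\mathrm{mod}\, p)}} (-1)^{\ell}\, (\ell|J) \;\equiv\; 0 \pmod p.
\]

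Identities (2) and (3) are the case $c = 0$ applied at $|J| \in \{3p-2, 3p-1\}$ and at $|J| = 4p-3$ respectively, and identity (4) is the case $c = -1$ at $|J| = 4p-3$. Identity (1) is the sum of the $c = 0$ identity and the negation of the $c = -1$ identity at $|J| = 3p-3$; both are admissible because $2(p-1) < 3p-3$ for every prime $p$.

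For identity (5), I would sum identity (1) over every sub-multiset $J' \subseteq J$ of size $3p-3$. Swapping the order of summation, each term $(x|J')$ contributes $(x|J) \cdot \binom{|J|-x}{3p-3-x}$, while the constant $1$ contributes $\binom{4p-3}{3p-3}$. Writing $4p-3$ in base $p$ as $(3, p-3)_p$ and applying Lucas' theorem, these binomials reduce modulo $p$ to $\binom{4p-3}{3p-3} \equiv 3$, $\binom{3p-2}{2p-2} \equiv \binom{3p-3}{2p-3} \equiv 2$, and $\binom{2p-2}{p-2} \equiv \binom{2p-3}{p-3} \equiv 1$, reproducing precisely the coefficients $3, -2, -2, 1, 1$ of identity (5).

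Identity (6) is the main obstacle. Eliminating variables between (3), (4), and (5) under the hypothesis $(p|J) \equiv 0$ shows that (6) is equivalent to the relation $(2p|J) \equiv -3 \pmod p$, which does not come from the two-polynomial Chevalley--Warning identities above. I expect (6) to require one additional Chevalley--Warning input, obtained by taking $f_1$ of strictly higher degree (for instance $f_1 = S(S+1)(S-1)$, which is admissible at $|J| = 4p-3$ since $(p-1) + 3(p-1) = 4p-4 < 4p-3$), followed by another round of Lucas-theorem bookkeeping that cancels the spurious terms using $(p|J) \equiv 0$.
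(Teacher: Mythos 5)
The paper itself does not print a proof of this lemma (its proof is ``not presented here''), so I am judging your argument on its own terms. The core of your scheme is sound and it does deliver the first five congruences in the stated ($\Z_p$) setting: counting the common zeros of $f_0$ and $S-c$ by support $L$, using $(p-1)^{|L|}\equiv(-1)^{|L|}$, correctly gives $\sum_{\ell\equiv c\ (p)}(-1)^{\ell}(\ell|J)\equiv 0$ whenever $2(p-1)<|J|$; the specializations you list for the second, third and fourth congruences check out, and your Lucas bookkeeping for the fifth is correct ($\binom{4p-3}{3p-3}\equiv 3$, $\binom{3p-2}{2p-2}\equiv\binom{3p-3}{2p-3}\equiv 2$, $\binom{2p-2}{p-2}\equiv\binom{2p-3}{p-3}\equiv 1$). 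One caveat: as this lemma is used for Reiher's theorem, $J$ should be a multiset in $\Z_p^2$ (the ``$\Z_p$'' in the statement is a slip), so you need \emph{three} polynomials (one per coordinate plus $S-c$) and the degree condition becomes $3(p-1)<|J|$. That kills exactly your derivation of the first congruence, since at $|J|=3p-3$ the separate $c=0$ and $c=-1$ congruences are no longer justified ($3(p-1)=3p-3$ is not $<|J|$); the standard repair is an auxiliary variable, i.e.\ apply \cref{thm:CW} to $f_1,f_2$ and $x_0^{p-1}+S$ in the $3p-2$ variables $x_0,(x_j)_{j\in J}$ and sort the common zeros by whether $x_0=0$, which produces precisely the combined congruence. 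The rest, including your summation argument for the fifth congruence, adapts verbatim.

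The genuine gap is the sixth congruence, and the direction you indicate for it cannot close it. Your reduction of it to $(2p|J)\equiv -3$ (using the fourth and fifth congruences and $(p|J)=0$) is correct, but the proposed extra Chevalley--Warning input is vacuous: the common zeros of $\{f_0,\,S(S-1)(S+1)\}$ are exactly the disjoint union of the common zeros of $\{f_0,\,S-c\}$ for $c\in\{-1,0,1\}$, so the resulting congruence is just the sum of three congruences you already have; the same holds for \emph{any} auxiliary polynomial built from $f_0$ and $S$, because the number of zeros with prescribed support depends only on $\bigl(\sum_{j\in L} j,\ |L|\bmod p\bigr)$. So no further Chevalley--Warning application to $J$ itself can produce the missing relation; the hypothesis $(p|J)=0$ must be used combinatorially, for instance by applying the already established congruences to sub-multisets of $J$ (every $T\subseteq J$ inherits $(p|T)=0$, so e.g.\ every sub-multiset $T$ of size $3p-2$ or $3p-1$ satisfies $(2p|T)\equiv -1$) together with a double-counting step --- which is the flavour of argument in Reiher's original proof. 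As written, the sixth congruence therefore remains unproved. Incidentally, under the literal $\Z_p$ reading it would be vacuous: by the Erd\H{o}s--Ginzburg--Ziv theorem any $4p-3\geq 2p-1$ elements of $\Z_p$ contain a zero-sum sub-multiset of size $p$, so the hypothesis $(p|J)=0$, read as a count, never holds --- a further indication that the intended ambient group is $\Z_p^2$, where your argument needs the adjustments above.
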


\begin{proof}
This proof is not presented here.
\end{proof}

\begin{proof}[Proof of \cref{thm:RT}]
This proof is not presented here.
\end{proof}

\section{Combinatorial Nullstelensatz}

This section is motivated by a classical result in abstract algebra, shown in \cite{hilbert1893ueber}.

\begin{thm}[Nullstellensatz]
Let $\F$ be an \textbf{algebraically closed field}, and $f, g_1, \ldots, g_m\in \F[\vx_1, \ldots, \vx_n]$ polynomials.
If $f$ vanishes in all $\vv$ that are common roots of $g_1, \ldots, g_m$, then there exists an integer $k$ and polynomials $h_1, \ldots, h_m \in \F[\vx_1, \ldots, \vx_n]$ such that
$$ f^k = \sum_{i=1}^m h_i g_i \, . $$
\end{thm}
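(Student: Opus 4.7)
The plan is to reduce the strong form stated here to the \textbf{weak Nullstellensatz}: if $g_1, \ldots, g_m \in \F[\vx_1, \ldots, \vx_n]$ have no common zero in $\F^n$, then $(g_1, \ldots, g_m) = \F[\vx_1, \ldots, \vx_n]$. The reduction is the classical Rabinowitsch trick. Introduce a fresh variable $\vy$ and consider the $m+1$ polynomials $g_1, \ldots, g_m, 1 - \vy f$ in $\F[\vx_1, \ldots, \vx_n, \vy]$. They have no common zero in $\F^{n+1}$: at any $(\vv, c)$ with $g_i(\vv) = 0$ for all $i$, the hypothesis forces $f(\vv) = 0$, so $1 - cf(\vv) = 1 \neq 0$. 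The weak form therefore produces polynomials $h_1, \ldots, h_{m+1} \in \F[\vx, \vy]$ with
$$ 1 = \sum_{i=1}^m h_i(\vx, \vy)\, g_i(\vx) + h_{m+1}(\vx, \vy)\bigl(1 - \vy f(\vx)\bigr)\, .$$

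To finish the reduction, I would work inside the field of rational functions $\F(\vx_1, \ldots, \vx_n)$ and substitute $\vy = 1/f(\vx)$, which annihilates the final term and gives $1 = \sum_i h_i(\vx, 1/f)\, g_i(\vx)$. Multiplying through by $f^k$, where $k$ is the largest power of $\vy$ appearing in any $h_i$, clears all denominators and yields an identity in $\F[\vx]$ of the form $f^k = \sum_i h_i'(\vx)\, g_i(\vx)$, as desired.

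It remains to prove the weak Nullstellensatz. My route is to classify the maximal ideals of $R \coloneqq \F[\vx_1, \ldots, \vx_n]$: I claim that every maximal ideal $\mathfrak m \subset R$ has the form $(\vx_1 - a_1, \ldots, \vx_n - a_n)$ for some $(a_1, \ldots, a_n) \in \F^n$. Granting this, if $(g_1, \ldots, g_m)$ were a proper ideal, it would be contained in some maximal ideal, hence in the ideal of functions vanishing at a point $(a_1, \ldots, a_n)$, contradicting the assumption that the $g_i$ have no common zero. For the classification, consider the quotient $K = R/\mathfrak m$, which is a field and a finitely generated $\F$-algebra. Zariski's lemma asserts that such a field is a finite algebraic extension of $\F$; since $\F$ is algebraically closed, this forces $K = \F$, and the images of the $\vx_i$ in $K$ supply the coordinates $a_i$, so that $\vx_i - a_i \in \mathfrak m$ and hence $(\vx_1 - a_1, \ldots, \vx_n - a_n) \subseteq \mathfrak m$, with equality by maximality.

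The main obstacle is Zariski's lemma itself, which is the deep input of the whole proof and is not purely combinatorial. A standard approach is through Noether normalization (write any finitely generated $\F$-algebra as a finite module over a polynomial subalgebra, then use that a field cannot be a finite module over a polynomial ring in positive dimension), but one can also give a direct induction on the number of generators using the structure of PIDs. For a lecture-notes treatment it is reasonable to isolate Zariski's lemma as a separate black-box statement, since everything else in the proof, namely the Rabinowitsch trick and the deduction from the classification of maximal ideals, is essentially formal.
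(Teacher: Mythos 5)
The paper itself offers no proof of this statement: it is quoted as classical background (with a citation to Hilbert) purely to motivate the combinatorial versions that follow, so there is no in-paper argument to compare yours against. Judged on its own, your outline is the standard and correct route: the Rabinowitsch trick reducing the strong form to the weak form, the substitution $\vy = 1/f$ in $\F(\vx_1,\ldots,\vx_n)$ followed by clearing denominators with $f^k$, and the deduction of the weak form from the classification of maximal ideals of $\F[\vx_1,\ldots,\vx_n]$ via Zariski's lemma. Two small points deserve a remark if you write this up: the substitution step tacitly assumes $f$ is not the zero polynomial (the case $f \equiv 0$ is trivial, since then $f^1 = \sum_i 0\cdot g_i$), and the degenerate case where the $g_i$ have no common zero at all is handled directly by the weak form. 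The real caveat is the one you name yourself: Zariski's lemma (or, equivalently, Noether normalization) is the entire analytic content of the theorem, and as long as it is invoked as a black box your text is a correct reduction rather than a complete proof. That is a defensible choice for lecture notes of this kind --- the paper treats the full statement the same way, as an unproven classical input --- but if you intend the proof to be self-contained you must either prove Zariski's lemma (the induction-on-generators argument using that $\F[t]$ is a PID is the shortest) or cite it explicitly as an external result, exactly as you propose.
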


We will be interested in its combinatorial versions, with applicatios in Aditive number theory and chromatic invariants.

\begin{thm}[Combinatorial Nullstellensatz - I]\label{thm:NSS-1}
Let $q$ be a power of a prime and $f\in \F_q[x_1, \ldots, x_n]$.
Take $S_1, \ldots, S_n \subseteq \F_q$ such that $f$ vanishes in $S_1\times \cdots \times S_n$.

Define $g_i\coloneqq \prod_{s \in S_i} (x_i - s)$ for $i=1, \ldots, n$.
Then, there exist polynomials $h_1, \ldots, h_n \in \F_q[x_1, \ldots, x_n]$ such that $\deg h_i \leq \deg f - \deg g_i$ and 
$$f = \sum_{i=1}^n h_i g_i \, .$$
\end{thm}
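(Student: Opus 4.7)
The plan is to mimic the standard polynomial long-division argument. Each $g_i = \prod_{s\in S_i}(x_i - s)$ is monic in $x_i$ of degree $d_i := |S_i|$, so the tautology $x_i^{d_i} = g_i(x_i) - (g_i(x_i) - x_i^{d_i})$ lets me replace any monomial $x_1^{\alpha_1}\cdots x_n^{\alpha_n}$ with $\alpha_i \geq d_i$ by a multiple of $g_i$ (namely $x_1^{\alpha_1}\cdots x_i^{\alpha_i - d_i}\cdots x_n^{\alpha_n}\cdot g_i$) plus monomials of strictly smaller $x_i$-degree and no larger total degree.

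First I would run this reduction iteratively: divide $f$ by $g_1$ in the variable $x_1$ to obtain $f = h_1 g_1 + r_1$ with $\deg_{x_1} r_1 < d_1$; then divide $r_1$ by $g_2$ to obtain $r_1 = h_2 g_2 + r_2$ with $\deg_{x_2} r_2 < d_2$; and so on. Because $g_i$ only involves the variable $x_i$, the division by $g_i$ preserves the bounds $\deg_{x_j} < d_j$ established in earlier steps for $j < i$. After $n$ steps I obtain $f = \sum_{i=1}^n h_i g_i + \tilde f$ with $\deg_{x_i}\tilde f < d_i$ for every $i$, and the monomial-replacement rule above yields $\deg h_i \leq \deg f - d_i = \deg f - \deg g_i$.

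Second, I would argue that $\tilde f$ is the zero polynomial. Each $g_i$ vanishes on $S_1\times\cdots\times S_n$ by construction, so the identity $\tilde f = f - \sum_i h_i g_i$ together with the hypothesis on $f$ forces $\tilde f$ to vanish identically on the grid $S_1\times\cdots\times S_n$. I would then prove by induction on $n$ that any polynomial with $\deg_{x_i} < |S_i|$ for each $i$ which vanishes on this grid is identically zero. The base case $n=1$ is the classical fact that a univariate polynomial with more roots than its degree must be zero. For the inductive step, write $\tilde f = \sum_{j=0}^{d_1-1} p_j(x_2,\ldots,x_n)\, x_1^j$; fixing any $(y_2,\ldots,y_n)\in S_2\times\cdots\times S_n$ gives a univariate polynomial in $x_1$ of degree less than $d_1$ with $d_1$ distinct roots, forcing $p_j(y_2,\ldots,y_n) = 0$ for every $j$ and every such tuple, and the induction hypothesis then kills each $p_j$.

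The main technical nuisance is the total-degree tracking in the first step, since the division algorithm most naturally only controls the $x_i$-degree. I would handle this by the monomial-by-monomial bookkeeping noted above: the reduction step eliminating $x_i^{\alpha_i}$ from $x_1^{\alpha_1}\cdots x_n^{\alpha_n}$ contributes to $h_i$ only the monomial $x_1^{\alpha_1}\cdots x_i^{\alpha_i - d_i}\cdots x_n^{\alpha_n}$, whose total degree is at most $\deg f - d_i$, and produces correction terms of total degree at most $\deg f$ that feed into subsequent rounds. Iterating this bound across all reductions yields the desired bound on $\deg h_i$, and combined with $\tilde f = 0$ from the induction step delivers the representation $f = \sum_{i=1}^n h_i g_i$.
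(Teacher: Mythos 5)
Your proposal is correct and follows essentially the same route as the paper: iterated division of $f$ by the univariate-in-$x_i$ polynomials $g_i$ with total-degree bookkeeping for the quotients, followed by the observation that the remainder vanishes on the grid and the induction-on-$n$ lemma (the paper's \cref{lm:large_vanishing_poly}) showing that a polynomial with $\deg_{x_i} < |S_i|$ vanishing on $S_1\times\cdots\times S_n$ is identically zero. Your inline proof of that lemma and your handling of the total-degree bound on the $h_i$ match the paper's argument in substance.
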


If $f\in \K[\vx]$ is a polynomial, we denote $[\vx^\alpha] f\in \K$ for the coefficient of $f$ in the monomial $\vx^{\alpha}$.

\begin{thm}[Combinatorial Nullstellensatz - II]\label{thm:NSS-2}
Let $q$ be a power of a prime.
Let $f \in \F_q[x_1, \ldots, x_n]$, consider sets $S_i \subseteq \F_q$ and consider non-negative integers $t_1, \ldots, t_n$ such that $\deg f = \sum_{i=1}^n t_i$, $|S_i| > t_i$ for $i=1, \ldots, n$ and $\left[ \prod_{i=1}^n x_i^{t_i} \right] f \neq 0$.

Then there exists $\vy \in S_1\times \cdots \times S_n$ such that $f(\vy) \neq 0$.
\end{thm}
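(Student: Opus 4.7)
The plan is to argue by contradiction, using Combinatorial Nullstellensatz I (\cref{thm:NSS-1}) to expose the coefficient $\left[\prod_i x_i^{t_i}\right]f$ and show it must vanish. Specifically, suppose for contradiction that $f(\vy) = 0$ for every $\vy \in S_1 \times \cdots \times S_n$. Then \cref{thm:NSS-1} applies with the polynomials $g_i \coloneqq \prod_{s\in S_i}(x_i - s)$: there exist $h_1, \ldots, h_n \in \F_q[x_1, \ldots, x_n]$ with $\deg h_i \leq \deg f - \deg g_i = \deg f - |S_i|$ and
$$ f = \sum_{i=1}^n h_i g_i \, . $$

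The next step is to extract the coefficient of the monomial $\vx^{(t_1, \ldots, t_n)} = \prod_{i=1}^n x_i^{t_i}$ from both sides. By hypothesis this coefficient is nonzero on the left, so it suffices to show that it is zero in each summand $h_i g_i$ on the right. The key observation is that the monomial $\prod_i x_i^{t_i}$ has total degree exactly $\sum_j t_j = \deg f$, so any contribution to it from the product $h_i g_i$ must come from the top-degree homogeneous parts of $h_i$ and $g_i$: indeed, $\deg(h_i g_i) \leq \deg f$ and the only way to attain this bound is to pair the degree-$(\deg f - |S_i|)$ part of $h_i$ with the degree-$|S_i|$ part of $g_i$.

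Now, the degree-$|S_i|$ part of $g_i = \prod_{s \in S_i}(x_i - s)$ is simply $x_i^{|S_i|}$. Hence the degree-$\deg f$ component of $h_i g_i$ factors as $h_i^{\text{top}} \cdot x_i^{|S_i|}$, where $h_i^{\text{top}}$ is the homogeneous component of $h_i$ of degree $\deg f - |S_i|$. For $\prod_j x_j^{t_j}$ to appear in this product, $h_i^{\text{top}}$ would have to contain the monomial $x_i^{t_i - |S_i|}\prod_{j\neq i} x_j^{t_j}$; but since $|S_i| > t_i$ by assumption, this would require a strictly negative exponent on $x_i$, which is impossible.

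We conclude that $\left[\prod_j x_j^{t_j}\right] h_i g_i = 0$ for every $i$, so summing over $i$ gives $\left[\prod_j x_j^{t_j}\right] f = 0$, contradicting the hypothesis. The main (mild) obstacle is the bookkeeping in the degree argument, namely verifying that all contributions to the top-degree monomial really come from the top-degree parts of each factor, since $\deg h_i g_i$ could in principle be strictly less than $\deg f$, but in that case the coefficient of $\prod_j x_j^{t_j}$ is trivially zero, so the argument goes through uniformly.
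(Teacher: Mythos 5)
Your proposal is correct and follows essentially the same route as the paper: assume $f$ vanishes on $S_1\times\cdots\times S_n$, invoke \cref{thm:NSS-1} to write $f=\sum_i h_i g_i$ with $\deg h_i \leq \deg f - \deg g_i$, and then show $\bigl[\prod_j x_j^{t_j}\bigr] h_i g_i = 0$ for each $i$ by a degree count using $|S_i| > t_i$, contradicting $\bigl[\prod_j x_j^{t_j}\bigr] f \neq 0$. The only cosmetic difference is that the paper first shrinks to $|S_i| = t_i+1$ and argues via exponent bookkeeping on pairs $\vx^\alpha\vx^\beta = \vx^{\mathbf t}$, whereas you argue directly with the top homogeneous components (noting the top part of $g_i$ is $x_i^{|S_i|}$); both verifications are equivalent.
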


For a multivariate polynomial $f \in \K[x_1, \ldots, x_n]$, we denote $\deg_{x_i} f $ for the largest exponent of $x_i$ in any monomial of $f$, and $\deg f$ is the total degree of $f$.

\begin{lm}\label{lm:large_vanishing_poly}
Let $q$ be a power of a prime.
Let $f\in \F_q[x_1, \ldots, x_n]$, $t_i$ non-negative integers and $S_i \subseteq \F_q$ for $i=1, \ldots, n$ such that $\deg_{x_i} f \leq t_i < |S_i|$.
If $f$ vanishes on $S_1\times \cdots \times S_n$, then $f$ is the zero polynomial.
\end{lm}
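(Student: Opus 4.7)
The plan is to proceed by induction on the number of variables $n$, using the one-variable case (that a polynomial of degree $d$ over a field has at most $d$ roots, established as a warm-up in \cref{lm:rootcount}) as the base. The whole argument is short and the only real content is reducing the multivariate vanishing hypothesis to a collection of univariate vanishing hypotheses.

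For the base case $n=1$, the polynomial $f \in \F_q[x_1]$ has degree at most $t_1$, and vanishes on $S_1$, a set with more than $t_1$ elements. By \cref{lm:rootcount} (or just the standard fact proven there for $n=1$), a non-zero polynomial of degree $d$ in one variable has at most $d$ roots, so $f$ must be the zero polynomial.

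For the inductive step, assume the lemma holds for polynomials in $n-1$ variables. Write $f$ as a polynomial in $x_n$ with coefficients in $\F_q[x_1, \ldots, x_{n-1}]$:
$$ f(x_1, \ldots, x_n) = \sum_{j=0}^{t_n} x_n^j \, g_j(x_1, \ldots, x_{n-1}) \, , $$
where each $g_j$ has $\deg_{x_i} g_j \leq \deg_{x_i} f \leq t_i$ for $i=1, \ldots, n-1$. Fix any tuple $(a_1, \ldots, a_{n-1}) \in S_1 \times \cdots \times S_{n-1}$. Then the univariate polynomial $f(a_1, \ldots, a_{n-1}, x_n) \in \F_q[x_n]$ has degree at most $t_n$ and vanishes on $S_n$, which has more than $t_n$ elements, so by the base case it is identically zero. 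This means that each coefficient $g_j(a_1, \ldots, a_{n-1}) = 0$ for every $j$.

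Since this holds for all $(a_1, \ldots, a_{n-1}) \in S_1 \times \cdots \times S_{n-1}$, each $g_j$ vanishes on $S_1 \times \cdots \times S_{n-1}$ and satisfies the degree bounds of the lemma with $n-1$ variables. By the induction hypothesis, each $g_j$ is the zero polynomial, hence $f$ itself is zero, as desired. No step here is genuinely hard; the only subtle point to be careful about is ensuring that the partial degree bounds $\deg_{x_i} g_j \leq t_i$ are inherited properly when expanding $f$ in powers of $x_n$, which is immediate from the fact that collecting coefficients cannot increase the degree in any other variable.
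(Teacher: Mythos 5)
Your proposal is correct and follows essentially the same route as the paper's own proof: induction on $n$ with the univariate root-counting bound (\cref{lm:rootcount}) as the base case, expanding $f$ in powers of $x_n$, and concluding that each coefficient polynomial $g_j$ vanishes on $S_1 \times \cdots \times S_{n-1}$ so the induction hypothesis applies. Nothing is missing; your remark on the inherited partial-degree bounds is the same (implicit) observation the paper relies on.
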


\begin{proof}
We act by induction on $n$, the number of variables.
For $n = 1$, assume for sake of contradction that $f$ is not the zero polynomial.
Note that we have $\deg f = \deg_{x_1} f \leq t_1$, so \cref{lm:rootcount} gives us that $f$ vanishes in at most $t_1 $ points of $\F_q$.
But $|S_1| > t_1$, so if $f$ vanishes on $S_1$ we get a contradiction.

For the induction step, the assumption $\deg_{x_n} f \leq t_n$ allows us to write
$$f(x_1, \ldots, x_n) = \sum_{j=0}^{t_n} g_j(x_1, \ldots, x_{n-1}) x_n^j \, ,$$
for some polynomials $g_i \in \F_q[x_1, \ldots, x_{n-1}]$.

Assume that $f$ vanishes on $S_1\times \cdots \times S_n$.
For each $(s_1, \ldots, s_{n-1}) \in S_1\times \cdots \times S_{n-1}$, the one variate polynomial $f(s_1, \ldots, s_{n-1}, x) \in\F_q[x]$ has degree at most $t_n$ and vanishes in $S_n$, which has more than $t_1$ elements, so it must be the zero polynomial.
This concludes that $g_j(s_1, \ldots, s_{n-1}) = 0$, that is each $g_j$ vanishes in $S_1\times \cdots \times S_{n-1}$ and has $\deg_{x_i} g_j \leq t_i < |S_i|$.
By induction hypothesis, $g_j$ must be the zero polynomial, so $f $ is also the zero polynomial, concluding the proof.
\end{proof}

\begin{proof}[Proof of \cref{thm:NSS-1}]
We can assume with no loss of generality that $|S_i| = t_i + 1$, as shrinking the sets $S_i$ does not break the theorem assumptions.
Note that $g_i(\vx) = \prod_{s\in S_i} (x_i - s) = x_i^{t_i + 1} + p_i(x_i)$, where $p_i$ is a one variable polynomial of degree at most $t_i$.

By iteratively applying the division algorithm $n$ times, there are polynomials $h_1, \ldots, h_n$ and $\hat{f}$ such that 
$$f = \sum_{i=1}^n h_i g_i + \hat{f}, \, \, \, \, \, \text{ where $\deg_{x_i}\hat{f} \leq t_i$ and $\deg h_i \leq \deg f - \deg g_i$ for $i=1, \ldots, n$.} $$

Note how both $f$ and $ \sum_{i=1}^n h_i g_i $ vanish on $S_1\times \cdots \times S_n$, so $\hat{f}$ also vanishes on $S_1\times \cdots \times S_n$.
Thus, from \cref{lm:large_vanishing_poly}, $\hat{f}$ is the zero polynomial, and we get the desired expression.
\end{proof}

\begin{proof}[Proof of \cref{thm:NSS-2}]
We can assume with no loss of generality that $|S_i| = t_i + 1$, as shrinking the sets $S_i$ does not break the theorem assumptions.
For sake of contradiction, assume that $f$ vanishes on $S_1\times \cdots \times S_n$.
From \cref{thm:NSS-1}, there are polynomials $h_1, \ldots, h_n$ with $\deg h_i \leq \deg f - \deg g_i $ and $f = \sum_{i=1}^n h_ig_i$.

Note that $\deg h_i \leq \deg f - \deg g_i < \sum_{\substack{j = 1 \\ j \neq i}}^n t_j$ for $i=1, \ldots, n$.
Let $ \vx^\mathbf{t} \coloneqq \prod_{i=1}^n x_i^{t_i} $.
Fix $i$, so that we have
$$ \left[  \vx^\mathbf{t} \right]  h_i g_i  = \sum_{\substack{\vx^\alpha \vx^\beta = \vx^\mathbf{t}}} [\vx^\alpha]h_i [\vx^\beta]g_i \, . $$

The condition $\vx^\alpha \vx^\beta = \vx^\mathbf{t}$ tells us $ \sum_j \alpha_j + \sum_j \beta_j = \sum_j t_j$.
If $\sum_j \alpha_j \geq \sum_{\substack{j = 1 \\ j \neq i}}^n t_j$ we have $[\vx^\alpha]h_i = 0 $, and if  $\sum_j \alpha_j  < \sum_{\substack{j = 1 \\ j \neq i}}^n t_j$ we have $ \sum_j \beta_j > t_i$, in which case $\vx^\alpha \vx^\beta = \vx^\mathbf{t} $ is impossible.

Therefore 
$$ \left[  \vx^\mathbf{t} \right]  h_i g_i  = \sum_{\substack{\vx^\alpha \vx^\beta = \vx^\mathbf{t}}} [\vx^\alpha]h_i [\vx^\beta]g_i = 0\, , $$
which is a contradiction with the original assumption that $  \left[  \vx^\mathbf{t} \right] f \neq 0$.
This concludes the proof.
\end{proof}

\subsubsection*{Aditive group theory}

For $A, B\subseteq \Z_n$, write $A+B \coloneqq \{a+b | a\in A, \, b\in B\}$ and write $A\hat{+}B \coloneqq \{a+b | a\in A, \, b\in B, \, a \neq b\}$.

\begin{thm}[Cauchy-Davenport theorem]
Let $p$ be a prime number and $A, B\subseteq \Z_p$ non-empty sets, then $|A+B| \geq \min \{p, |A| + |B| - 1\}$.
\end{thm}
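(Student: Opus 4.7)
The plan is to apply the Combinatorial Nullstellensatz II (\cref{thm:NSS-2}) to a carefully chosen polynomial. First I would dispose of the easy case by a pigeonhole reduction: if $|A|+|B| \geq p+1$, then for every $c \in \Z_p$ the sets $A$ and $c-B$ satisfy $|A|+|c-B| > p$, so they intersect in $\Z_p$, witnessing $c \in A+B$; hence $A+B = \Z_p$ and the bound holds trivially. So I would assume $|A|+|B| \leq p+1$, which means the target inequality reads $|A+B| \geq |A|+|B|-1$.

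Next I would argue by contradiction: suppose $|A+B| \leq |A|+|B|-2$. Since $|A|+|B|-2 \leq p-1$, I can enlarge $A+B$ to a set $C \subseteq \Z_p$ of cardinality exactly $|A|+|B|-2$. Define
$$f(x,y) \coloneqq \prod_{c\in C}(x+y-c) \in \F_p[x,y].$$
For any $(a,b)\in A\times B$, the element $a+b$ lies in $A+B\subseteq C$, so one factor kills $f(a,b)$; thus $f$ vanishes on $A\times B$. By construction $\deg f = |C| = |A|+|B|-2$, so setting $t_1 = |A|-1$ and $t_2 = |B|-1$ gives $t_1 + t_2 = \deg f$ and $|A| > t_1$, $|B| > t_2$.

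The crucial verification is the coefficient $[x^{t_1}y^{t_2}]f$. Since $\prod_{c\in C}(x+y-c)$ is a polynomial in the single variable $x+y$, its terms of top total degree contribute $(x+y)^{|C|}$, and terms of strictly lower degree cannot produce the monomial $x^{t_1}y^{t_2}$ because $t_1+t_2 = |C|$. Therefore
$$[x^{t_1}y^{t_2}]f = [x^{t_1}y^{t_2}](x+y)^{t_1+t_2} = \binom{|A|+|B|-2}{|A|-1}.$$
Because $|A|+|B|-2 \leq p-1$, every integer appearing in the numerator and denominator of this binomial coefficient is strictly less than $p$, so it is a nonzero element of $\F_p$.

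Applying \cref{thm:NSS-2} with $S_1 = A$, $S_2 = B$ and these exponents $t_1, t_2$, I would conclude that there exists $(a,b) \in A\times B$ with $f(a,b) \neq 0$, contradicting the vanishing of $f$ on $A\times B$. The potential obstacle is the reduction step and ensuring the binomial coefficient is nonzero in $\F_p$; both turn out to be routine, and the real work is the clean application of the Nullstellensatz, which is the model use case for these polynomial methods.
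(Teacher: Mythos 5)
Your proof is correct and follows essentially the same route as the paper: the same pigeonhole argument when $|A|+|B|$ exceeds $p$, and otherwise the same contradiction via the polynomial $f(x,y)=\prod_{c\in C}(x+y-c)$ with $t_1=|A|-1$, $t_2=|B|-1$ and \cref{thm:NSS-2}, with the coefficient $[x^{t_1}y^{t_2}]f=\binom{|A|+|B|-2}{|A|-1}$. If anything, your justification that this binomial coefficient is nonzero in $\F_p$ (because $|A|+|B|-2<p$) is cleaner than the paper's remark at that step.
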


\begin{proof}
If $|A| + |B| > p$, then for any $z \in \Z_p$ we have 
$$ |A \cap (z - B)| = |A| + |z-B| - |A \cup (z-B)| \geq |A| + |B| - p > 0 \, ,$$
so there is some $x \in A \cap (z - B)$, so we have $z \in A + B$.
Since $z$ was generic, we conclude that $\Z_p \subseteq A+B$ and $|A+B| \geq p$.

If $|A| + |B| \leq p$, assume for sake of contradiction that $|A+B| < |A| + |B| - 1$ and let $C \subseteq \Z_p$ such that $A+B \subseteq C $ and $|C| = |A| + |B| - 2$.
Define the polynomial $f \in \Z_p[x, y]$ of degreee $|A|+|B|-2$ given by:
$$ f(x, y) \coloneqq \prod_{c \in C} (x + y - c) \, . $$

Let $t_1 = |A| - 1$ and $t_2 = |B| -1 $.
Note that $f$ vanishes in $A\times B$.
From \cref{thm:NSS-2} we have that $[x^{t_1} y^{t_2}] f = 0 $.

On the other hand, $\deg f = t_1+t_2$ so we can compute 
$$[x^{t_1} y^{t_2}] f = [x^{t_1} y^{t_2}]\prod_{c \in C} (x + y - c) = [x^{t_1} y^{t_2}] (x+y)^{|C|} = \binom{|C|}{t_1} \, , $$
which is not a multiple of $p$, as $t_1, t_2 > 0$.
We get a contradiction.
\end{proof}

The following result was established in \cite{da1994cyclic}, solving a conjecture from Erd\"os and Heilbronn from 1966.

\begin{thm}[Silva-Hamidoune Theorem]
Let $p$ be a prime number and $A\subseteq \Z_p$ a non-empty set, then $|A\hat{+}A| \geq \min \{p, 2|A| - 3\}$.
\end{thm}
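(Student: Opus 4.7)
The plan is to adapt the Combinatorial Nullstellensatz proof of Cauchy--Davenport, inserting an extra factor of $(x-y)$ into the auxiliary polynomial to encode the distinctness constraint hidden in $A\hat{+}A$. Write $n = |A|$. I will first dispose of the large regime $2n - 3 \geq p$ by a short intersection argument: for any target $z \in \Z_p$, the sets $A$ and $z - A$ each have size $n$, so
$$|A \cap (z - A)| \geq 2n - p \geq 3.$$
Since at most one element $a$ of $\Z_p$ can satisfy $a = z - a$, the intersection contains some $a$ with $a \neq z - a$, giving $z = a + (z - a) \in A\hat{+}A$. Hence $A\hat{+}A = \Z_p$, and the bound holds with equality.

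From now on assume $2n - 3 < p$ and argue by contradiction, supposing $|A\hat{+}A| \leq 2n - 4$. Enlarge to a set $C \supseteq A\hat{+}A$ with $|C| = 2n - 4$ and define
$$f(x, y) \coloneqq (x - y)\prod_{c \in C}(x + y - c) \in \F_p[x, y],$$
of degree $2n - 3$. This $f$ vanishes on $A \times A$: when $x = y \in A$ the leading factor is zero, and when $x, y \in A$ are distinct their sum lies in $A\hat{+}A \subseteq C$. I will then invoke \cref{thm:NSS-2} with $S_1 = S_2 = A$ and exponents $t_1 = n - 1$, $t_2 = n - 2$, so that $t_1 + t_2 = 2n - 3 = \deg f$ and $|A| = n > t_i$.

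The coefficient $[x^{n-1} y^{n-2}] f$ receives contributions only from the top-degree component $(x - y)(x + y)^{2n-4}$, so
$$[x^{n-1} y^{n-2}] f = \binom{2n-4}{n-2} - \binom{2n-4}{n-1} = \frac{1}{n-1}\binom{2n-4}{n-2},$$
which is the $(n-2)$-th Catalan number $C_{n-2}$. Since $2n - 4 < p$, every factor of $(2n-4)!$ is coprime to $p$, and therefore so is the integer $C_{n-2}$; in particular this coefficient is nonzero in $\F_p$. Then \cref{thm:NSS-2} produces some $(a, b) \in A \times A$ with $f(a, b) \neq 0$, contradicting the vanishing of $f$ on $A \times A$. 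This completes the proof.

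The main obstacle I anticipate is choosing the auxiliary polynomial so that (i) it vanishes on the right set and (ii) a monomial with exponents bounded by $|A|$ has a nonzero coefficient modulo $p$. The factor $(x - y)$ is precisely what sharpens Cauchy--Davenport's $2n - 1$ to $2n - 3$, and the asymmetric split $(t_1, t_2) = (n-1, n-2)$ (forced by the odd total degree $2n - 3$) is what collapses the binomial difference to a single nonzero Catalan number. Modular non-vanishing then follows automatically, once the initial case split has guaranteed that all relevant factorials stay strictly below $p$.
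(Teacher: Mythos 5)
Your proof is correct, and it takes a genuinely different route from the one printed in these notes: the notes' proof of this theorem is essentially a verbatim repetition of the Cauchy--Davenport argument (it even refers to an undefined set $B$ and uses the polynomial $\prod_{c\in C}(x+y-c)$ with $|C|=|A|+|B|-2$), so it only addresses the unrestricted sumset bound $2|A|-1$ and never engages the distinctness constraint in $A\hat{+}A$ or the constant $-3$. Your insertion of the factor $(x-y)$, the choice $|C|=2|A|-4$, and the asymmetric exponents $(t_1,t_2)=(n-1,n-2)$ are exactly the missing ingredients (this is the Alon--Nathanson--Ruzsa argument), and your coefficient computation checks out: only the top-degree part $(x-y)(x+y)^{2n-4}$ contributes, giving $\binom{2n-4}{n-2}-\binom{2n-4}{n-1}=\frac{1}{n-1}\binom{2n-4}{n-2}$, the Catalan number $C_{n-2}$, which is a unit modulo $p$ since $2n-4<p$ makes $(2n-4)!=C_{n-2}\,(n-1)!\,(n-2)!$ prime to $p$. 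The hypotheses of \cref{thm:NSS-2} ($|A|>t_i$ and $\deg f=t_1+t_2$) are verified, and your handling of the regime $2|A|-3\ge p$ via $|A\cap(z-A)|\ge 3$ together with the fact that $a=z-a$ has at most one solution is sound. The only cosmetic point is the degenerate range $|A|\le 2$, where $|C|=2|A|-4\le 0$: there the statement is trivial (or the contradiction hypothesis is vacuous), and it is worth disposing of these cases explicitly before constructing $C$, so that the polynomial and the binomial coefficients are well defined. In short, your argument buys what the printed proof does not: it actually proves the Erd\H{o}s--Heilbronn bound.
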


\begin{proof}
If $2|A| > p + 3$, then for any $z \in \Z_p$ we have 
$$ |A \cap (z - A)| = |A| + |z-A| - |A \cup (z-A)| \geq 2|A| - p > 3 \, ,$$
so there are distinct $x_1, x_2 \in A \cap (z - B)$, so we have $z \in A + B$.
Since $z$ was generic, we conclude that $\Z_p \subseteq A+B$ and $|A+B| \geq p$.

If $|A| + |B| \leq p$, assume for sake of contradiction that $|A+B| < |A| + |B| - 1$ and let $C \subseteq \Z_p$ such that $A+B \subseteq C $ and $|C| = |A| + |B| - 2$.
Define the polynomial $f \in \Z_p[x, y]$ of degreee $|A|+|B|-2$ given by:
$$ f(x, y) \coloneqq \prod_{c \in C} (x + y - c) \, . $$

Let $t_1 = |A| - 1$ and $t_2 = |B| -1 $.
Note that $f$ vanishes in $A\times B$.
From \cref{thm:NSS-2} we have that $[x^{t_1} y^{t_2}] f = 0 $.

On the other hand, $\deg f = t_1+t_2$ so we can compute 
$$[x^{t_1} y^{t_2}] f = [x^{t_1} y^{t_2}]\prod_{c \in C} (x + y - c) = [x^{t_1} y^{t_2}] (x+y)^{|C|} = \binom{|C|}{t_1} \, , $$
which is not a multiple of $p$, as $t_1, t_2 > 0$.
We get a contradiction.
\end{proof}

\subsubsection{Hyperplane arrangements}

\begin{thm}
Let $n$ be a positive integer, $\vec{0}$ be the all-zero vector in $\R^n$ and let $H = \{0, 1\}^n\setminus \{\vec{0}\}$.
Assume that $H_1, \ldots, H_m$ are hyperplanes in $\R^n$ such that $H \subseteq \bigcup_{i=1}^n H_i$ and $\vec{0}\not\in \bigcup_{i=1}^n H_i$, 
Then $m \geq n$.
\end{thm}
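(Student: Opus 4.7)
The plan is to apply the polynomial method: convert the covering hypothesis into a polynomial identity, multilinearize, and then read off the degree bound. First, for each $i$, write $H_i = \{\vx \in \R^n : L_i(\vx) = 0\}$ where $L_i(\vx) = \langle \va_i, \vx\rangle - b_i$ is affine linear; since $\vec{0} \notin H_i$ we have $L_i(\vec{0}) = -b_i \neq 0$. Define
$$ P(\vx) \coloneqq \prod_{i=1}^m L_i(\vx) \in \R[x_1, \ldots, x_n], $$
which has total degree $m$, vanishes on $H$ (every point of $H$ lies in some $H_i$ and hence annihilates the corresponding factor), and satisfies $P(\vec{0}) = \prod_i (-b_i) \neq 0$.

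Next, I would multilinearize $P$: iteratively substitute $x_j^k \mapsto x_j$ for each $k \geq 2$, obtaining a polynomial $\tilde{P}$ with $\deg_{x_j} \tilde{P} \leq 1$ for all $j$ and $\deg \tilde{P} \leq m$. Since $x_j^k = x_j$ for every $x_j \in \{0,1\}$, the polynomial $\tilde{P}$ agrees with $P$ on every point of $\{0,1\}^n$; in particular $\tilde{P}$ still vanishes on $H$ and $\tilde{P}(\vec{0}) = P(\vec{0}) \neq 0$. Introduce the benchmark polynomial
$$ Q(\vx) \coloneqq P(\vec{0}) \prod_{j=1}^n (1 - x_j), $$
which is multilinear, vanishes on $H$, evaluates to $P(\vec{0})$ at $\vec{0}$, and has total degree exactly $n$ since the coefficient of $x_1 \cdots x_n$ equals $(-1)^n P(\vec{0}) \neq 0$.

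The difference $\tilde{P} - Q$ is then a multilinear polynomial vanishing on all of $\{0,1\}^n$. Invoking \cref{lm:large_vanishing_poly} with $S_j = \{0,1\}$ and $t_j = 1$ (the lemma is stated over $\F_q$, but its proof only uses \cref{lm:rootcount} and transfers verbatim to any field, in particular $\R$) forces $\tilde{P} - Q \equiv 0$. Comparing degrees yields $m \geq \deg \tilde{P} = \deg Q = n$, which is the desired inequality.

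The conceptual obstacle is resisting the temptation to feed $P$ directly into \cref{thm:NSS-2}: since $\deg P = m$, a monomial $\prod_j x_j^{t_j}$ of degree $n$ would have no coefficient in $P$ to exploit, and the direct application is vacuous. The multilinearization step sidesteps this by passing to a canonical representative in the $2^n$-dimensional space of multilinear polynomials, where uniqueness on $\{0,1\}^n$ pins $\tilde{P}$ down to $Q$ and thereby forces $\deg \tilde{P}$ to be exactly $n$.
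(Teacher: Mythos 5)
Your argument is correct, and in fact the paper does not supply a proof of this theorem (it is stated with ``proof not presented here''), so there is nothing to match it against line by line; I can only compare it with the argument the surrounding section clearly intends. Every step checks out: $P=\prod_{i=1}^m L_i$ has degree $m$, vanishes on $H$, and is nonzero at $\vec{0}$; multilinearization does not increase total degree and preserves values on $\{0,1\}^n$; $\tilde{P}-Q$ is multilinear and vanishes on all of $\{0,1\}^n$, so \cref{lm:large_vanishing_poly} (whose proof is indeed field-independent, since its base case only needs that a nonzero univariate polynomial of degree $t_1$ has at most $t_1$ roots) forces $\tilde{P}=Q$, and $n=\deg Q=\deg\tilde{P}\le\deg P=m$. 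The route the section is set up for is slightly different: assume $m\le n-1$ and apply \cref{thm:NSS-2} (over $\R$, with $S_j=\{0,1\}$, $t_j=1$) directly to the single polynomial
\begin{equation*}
f(\vx)\;=\;\prod_{i=1}^m L_i(\vx)\;-\;\Bigl(\prod_{i=1}^m L_i(\vec{0})\Bigr)\prod_{j=1}^n(1-x_j)\, ,
\end{equation*}
which vanishes on all of $\{0,1\}^n$ yet has $\deg f=n$ and nonzero coefficient on $x_1\cdots x_n$ precisely because $m<n$, giving an immediate contradiction. So your remark that \cref{thm:NSS-2} cannot be used is true only for $P$ alone; subtracting the correction term is the standard fix. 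Your multilinearization argument buys a proof that bypasses the Nullstellensatz machinery entirely, resting only on the uniqueness of multilinear representations (i.e.\ \cref{lm:large_vanishing_poly}), at the mild cost of having to justify transporting that lemma from $\F_q$ to $\R$, which you do flag correctly.
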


\begin{proof}
This proof is not presented here.
\end{proof}

\section*{Aknowledgments}
The author is supported by the Max Planck institute for the sciences. 
These notes are based on a lecture by Benny Sudakov at ETH, in 2014.
These notes benefited tremendously from coments and suggestions of students from Algebraic Methods in Combinatorics 2023.

\bibliographystyle{alpha}
\bibliography{bibli}

\end{document}